\definecolor{slightblue}{rgb}{.8, .8, 1}
\definecolor{hair}{RGB}{100,225,190}
\definecolor{ruby}{RGB}{220,50,120}
\definecolor{grass}{RGB}{150,220,110}
\definecolor{ceruleanblue}{rgb}{0.16, 0.32, 0.75}
\definecolor{deepcarmine}{rgb}{0.66, 0.13, 0.24}
\definecolor{otterbrown}{rgb}{0.4, 0.26, 0.13}
\definecolor{sapphire}{rgb}{0.03, 0.15, 0.4}
\newtheorem{theorem}{Theorem}[section] \newtheorem{lemma}[theorem]{Lemma}
\newtheorem{proposition}[theorem]{Proposition}
\theoremstyle{definition} 
\newtheorem{definition}[theorem]{Definition}
\newtheorem{remark}[theorem]{Remark} \numberwithin{equation}{section}
\numberwithin{figure}{section}
\newcommand{\Cb}{\mathbb{C}}
\newcommand{\Eb}{\mathbb{E}}
\newcommand{\Hb}{\mathbb{H}}
\newcommand{\Nb}{\mathbb{N}}
\newcommand{\Pb}{\mathbb{P}}
\newcommand{\Rb}{\mathbb{R}}
\newcommand{\Zb}{\mathbb{Z}}
\newcommand{\Ac}{\mathcal{A}}
\newcommand{\Bc}{\mathcal{B}}
\newcommand{\Cc}{\mathcal{C}}
\newcommand{\Dc}{\mathcal{D}}
\newcommand{\Ec}{\mathcal{E}}
\newcommand{\Fc}{\mathcal{F}}
\newcommand{\Kc}{\mathcal{K}}
\newcommand{\Lc}{\mathcal{L}}
\newcommand{\Uc}{\mathcal{U}}
\newcommand{\wt}{\widetilde}
\DeclareFontFamily{OMX}{yhex}{}
\DeclareFontShape{OMX}{yhex}{m}{n}{<->yhcmex10}{}
\DeclareSymbolFont{yhlargesymbols}{OMX}{yhex}{m}{n}
\DeclareMathAccent{\wideparen}{\mathord}{yhlargesymbols}{"F3}
\newcommand*\rel@kern[1]{\kern#1\dimexpr\macc@kerna}
\newcommand*\wb[1]{%
	\begingroup
	\def\mathaccent##1##2{%
		\rel@kern{0.8}%
		\overline{\rel@kern{-0.8}\macc@nucleus\rel@kern{0.2}}%
		\rel@kern{-0.2}%
	}%
	\macc@depth\@ne
	\let\math@bgroup\@empty \let\math@egroup\macc@set@skewchar
	\mathsurround\z@ \frozen@everymath{\mathgroup\macc@group\relax}%
	\macc@set@skewchar\relax
	\let\mathaccentV\macc@nested@a
	\macc@nested@a\relax111{#1}%
	\endgroup
}
\def \eps {\varepsilon}
\newcommand{\dist}{\mathrm{dist}}
\newcommand{\diam}{\mathrm{diam}}
\newcommand{\ind}{\mathbf{1}}
\newcommand{\loc}{\mathrm{loc}}
\newcommand{\exc}{\mathrm{exc}}
\newcommand{\CLE}{\mathrm{CLE}}
\newcommand{\Fill}{\mathrm{Fill}}
\newcommand{\cl}{\mathrm{CL}}
\newcommand{\half}{1/2}
\newcommand{\thmGFF}{1.1} 
\newcommand{\thmGFFcarpet}{1.10} 
\newcommand{\newxi}{\xi^+}
\newcommand{\neweta}{\eta^+}
\title{Percolation of discrete GFF in dimension two\\ I. Arm events in the random walk loop soup}
\author{Yifan Gao \and Pierre Nolin \and Wei Qian}
\date{City University of Hong Kong}
\begin{document}

\maketitle
	
\begin{abstract}
In this work, which is the first part of a series of two papers, we study the random walk loop soup in dimension two. More specifically, we estimate the probability that two large connected components of loops come close to each other, in the subcritical and critical regimes. The associated four-arm event can be estimated in terms of exponents computed in the Brownian loop soup, relying on the connection between this continuous process and conformal loop ensembles (with parameter $\kappa \in (8/3,4]$).

Along the way, we need to develop several useful tools for the loop soup, based on separation for random walks and surgery for loops, such as a ``locality'' property and quasi-multiplicativity. The results established here then play a key role in a second paper, in particular to study the connectivity properties of level sets in the random walk loop soup and in the discrete Gaussian free field.

\bigskip

\textit{Key words and phrases: random walk loop soup, Brownian loop soup, conformal loop ensemble, Gaussian free field, arm exponent, separation.}
\end{abstract}

\tableofcontents

\section{Introduction} \label{sec:intro}

The central object of study in this work is the random walk loop soup (RWLS), in dimension two. More precisely, we consider the RWLS on the square lattice $\Zb^2$, as well as on subsets of it, for any intensity $\alpha$ at most $\half$. This range of values for $\alpha$ encompasses the so-called subcritical ($\alpha <\half$) and critical ($\alpha = \half$) regime for the corresponding continuum object, known as the Brownian loop soup.

The RWLS is defined as a random collection of finite loops on $\Zb^2$, and such loops can be grouped into connected components, or clusters, in a natural way. We aim to describe the structure of macroscopic clusters in a finite, large, domain $D \subseteq \Zb^2$, and in particular the contact points between them. In other words, how they are coming close to each other, so that they are almost connected. These geometric considerations will then be exploited in the companion paper \cite{GNQ2024b} to establish connectivity properties for the occupation time field of the RWLS, and, through Le Jan's isomorphism  \cite{LeJa2010}, two-sided level sets of the discrete Gaussian free field (where the field remains, in absolute value, below a given level).

\subsection{Background: random walk loop soup}

Consider the square lattice $\Zb^2$, whose vertices are the points in the plane with integer coordinates, and with edges connecting any two vertices which lie at a Euclidean distance $1$. The random walk loop soup $\Lc$ on it was constructed in \cite{LTF2007} by Lawler and Trujillo Ferreras, as a Poisson process of random walk loops. It is produced by the measure $\alpha \nu$, where the parameter $\alpha > 0$ is called the intensity, for a well-chosen measure $\nu$ on random walk loops (see \eqref{eq:nu} below for a precise definition).

The RWLS has a counterpart in the continuum, called the Brownian loop soup (BLS), which was introduced by Lawler and Werner in \cite{LW2004}. This process is a natural random collection of loops in $\Rb^2$ displaying conformal invariance.
The BLS plays a key role in the construction of the conformal loop ensemble (CLE) process with parameter $\kappa \in (8/3,4]$ by Sheffield and Werner \cite{SW2012}. Introduced in \cite{MR2494457}, CLE itself arises (or is conjectured to arise) as the scaling limit of many statistical mechanics models on lattices. The BLS is a random collection $\wt \Lc$ of Brownian (i.e. continuous) loops, obtained as a Poisson process corresponding to $\alpha \wt\nu$, for some canonical measure $\wt\nu$ on Brownian loops in $\Rb^2$ (we refer to \eqref{eq:nu_D} for its precise definition). We also consider the RWLS and the BLS in subsets $D$ of, respectively, $\Zb^2$ and $\Rb^2$, by only keeping the loops which remain entirely in $D$. We denote the corresponding processes in $D$ by $\Lc_D$ and $\wt \Lc_D$, respectively.

For both the RWLS and the BLS (in $\Zb^2$ and $\Rb^2$, or in subsets of them), we can introduce a notion of connectivity for loops, as follows. Two loops $\gamma$ and $\gamma'$ are considered as neighbors if they overlap (their images have a non-empty intersection), and more generally, they are connected if we can go from one to the other by ``jumping'' a finite number of times between neighboring loops. This produces connected components of loops, also called clusters, which form a partition of all loops in the collection.

The existence of a phase transition at $\alpha = \half$ for the BLS, and some of its properties, were established by Sheffield and Werner in their seminal work \cite{SW2012}, using the link between the BLS and CLE. 
More precisely, in any given bounded domain $D$, for all values $\alpha < \half$, which corresponds to the subcritical regime, there are infinitely many connected components of loops. This remains true at $\alpha = \half$ (critical case), although at this value, the clusters ``almost touch''. But for $\alpha > \half$ (supercritical regime), the picture becomes totally different, with only one cluster of loops, which is dense in $D$.

In a separate paper \cite{GNQ2024c}, we extract arm exponents for the BLS from explicit computations with CLE. In the present paper, we use these exponents as a key input, and we explain how to transfer them to the discrete setting.
In the aforementioned paper \cite{LTF2007} where the RWLS was introduced, a version of convergence of the RWLS to the BLS was shown. Later, works by van de Brug, Camia and Lis \cite{BCL2016} and Lupu  \cite{Lu2019} proved that the clusters in the RWLS converge in a certain sense to CLE. We emphasize however that the convergence established in \cite{BCL2016, Lu2019} is qualitative, and to the best of our knowledge, no precise rate is known for this convergence. On the other hand, since the arm exponents describe the rate of decay of rare arm events, this qualitative statement of convergence would not allow one to transfer the continuum exponents to the discrete ones.
The goal of the current paper is to develop a toolbox to study arm events of the RWLS which, among other things, will allow us to
prove that the arm exponents for the RWLS indeed coincide with the continuum ones.

\subsection{Motivation: relation to the companion paper \cite{GNQ2024b}}
We can associate with the RWLS an occupation field in a natural way, which measures the total amount of time that each vertex is visited by all loops (i.e. the combined contribution of all loops in $\Lc_D$). This is a natural and important object, especially because at the critical intensity $\alpha = \half$, it has the same distribution as ($1/2$ of) the square of the (discrete) Gaussian free field (GFF) in $D$, from Le Jan's isomorphism \cite{LeJa2010} (see also the lecture notes \cite{MR2815763}, and the references therein). Consequently, valuable geometric information derived for the RWLS at criticality, about loops and clusters of loops, can then be translated into properties of the GFF.

In fact, this work on the RWLS, as well as \cite{GNQ2024b}, originated from questions about the connectivity of the two-sided level sets of the GFF in, e.g., $B_N := [-N,N]^2 \cap \Zb^2$ (denote it by $(\varphi_N(v))_{v \in B_N}$), which are sets of the form
$$\{v \in B_N : -\lambda \leq \varphi_N(v) \leq \lambda\},$$
where we may allow the threshold $\lambda > 0$ to depend on $N$. We then extended our analysis to the RWLS in the whole regime $\alpha \in (0,\half]$. However, we are not discussing these questions explicitly in the present paper, since they are the focus of the second part \cite{GNQ2024b}. Let us just mention briefly one of the results that we establish there, by playing with loops. Consider the GFF $(\varphi_N(v))_{v \in B_N}$ in $B_N$ with Dirichlet boundary conditions, i.e. value $0$ outside of $B_N$: for all $\eps > 0$,
\begin{equation} \label{eq:result_GFF}
\Pb \big( \Cc_N( \{v \in B_N : - \eps \sqrt{\log N} \leq \varphi_N(v) \leq \eps \sqrt{\log N} \} ) \big) \longrightarrow 1 \quad \text{as $N \to \infty$},
\end{equation}
where $\Cc_N$ refers to the existence of a horizontal crossing (left-to-right path) in the box $B_N$ (we actually prove more precise properties, see Theorems~\thmGFF{} and~\thmGFFcarpet{} in \cite{GNQ2024b}). Note that the maximum of the GFF in $B_N$ is of order $\log N$ \cite{BDG2001}, and its average value is of order $\sqrt{\log N}$. Moreover, it is not difficult to prove that the two-sided level set appearing in the above probability has a vanishing density as $\eps \searrow 0$.

For our proofs, a specific power-law upper bound plays a central role, on the event that two large clusters of loops (in the RWLS) approach close to each other. In the present paper, we indeed establish this bound, with an exponent $\xi(\alpha)$ that we identify (as explained earlier) with an exponent for the BLS computed in \cite{GNQ2024c}. We then use this estimate to analyze the RWLS, and prove results such as \eqref{eq:result_GFF} above. This is achieved in \cite{GNQ2024b} through a summation technique from \cite{BN2022}, itself inspired by \cite{KMS2015}. These arguments are based in principle on the fact that the above exponent for two clusters is strictly larger than $2$ (or, more generally, the dimension of the ambient space), which is true for all $\alpha < \half$. However, we were able to adapt them in order to handle the critical regime $\alpha = \half$, where the exponent is $2$ exactly.

\subsection{Main results: toolbox for arm events}

We now discuss some of the main properties of arm events that we establish in this paper. We are not at all aiming to be exhaustive, and at this stage, the statements that we provide are necessarily rather vague, and also weaker than the actual results. We refer the reader to the corresponding results in Sections~\ref{sec:sep}, \ref{sec:arm-lwrs} and \ref{sec:quasi-mult} for precise statements.

\begin{itemize}
\item We first derive a \emph{separation property} for a pair of random walks inside a RWLS. This technical result can then easily be generalized to packets of random walks, and it happens to be instrumental to perform surgery on random walk loops. For example, if we have two loops in two distinct clusters, cutting them along the boundary of a box produces two packets of two random walks (each) in the loop soup. This separation property is often instrumental for statistical mechanics models on lattices, and it has already been established for various processes, e.g.\ famously for near-critical Bernoulli percolation in two dimensions \cite{Ke1987a}. It turns out to be a basic building block in all our later reasonings.
\end{itemize}

We then combine repeatedly our separation result with a-priori estimates for random walk measures, in order to establish several useful properties of arm events, through suitable constructions with loops. For $1 \leq l < d$, for any subset of vertices $D\subseteq \Zb^2$ containing $B_d$, we denote by $\Ac_D(l,d)$ the \emph{four-arm event} that in the RWLS in $D$ with intensity $\alpha$, there exist two outermost clusters crossing the annulus $A_{l,d} := B_d \setminus B_{l-1}$, i.e. each of these clusters connects the boundaries of $B_d$ and $B_l$. For concreteness, we focus on this arm event, which is the one appearing in chains of clusters in the companion paper \cite{GNQ2024b}, even though the same properties would hold for a larger number of crossing clusters.

\begin{itemize}
\item \emph{Locality}. If $D \supseteq B_{2d}$, we can replace $\Ac_D(l,d)$ by $\Ac_{\loc}(l,d) := \Ac_{B_{2d}}(l,d)$. We call it the ``local'' arm event, involving only the loops in $B_{2d}$ (instead of the whole of $D$). We have the following (see Proposition~\ref{prop:locality} for a slightly stronger statement).
\begin{proposition}
There exists a universal constant $C>0$ such that for any intensity $\alpha\in (0,\half]$, all $1 \leq l\le d/2$, and all subsets $D \supseteq B_{2d}$ of $\Zb^2$,
$$\Pb(\Ac_D(l,d)) \le C \, \Pb(\Ac_{\loc}(l,d)).$$
\end{proposition}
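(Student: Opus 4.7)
The plan is to decouple the loops of $\Lc_D$ that remain inside $B_{2d}$ from those that exit it, and to show that any contribution of the latter to the four-arm event can be rearranged to use only the former, at bounded multiplicative cost. To this end, I would write $\Lc_D = \Lc^{\mathrm{in}} \sqcup \Lc^{\mathrm{out}}$, where $\Lc^{\mathrm{in}} := \Lc_{B_{2d}}$ consists of the loops of $\Lc_D$ entirely contained in $B_{2d}$, and $\Lc^{\mathrm{out}}$ collects the loops that also touch $\Zb^2 \setminus B_{2d}$. By the Poissonian structure of the RWLS these two are independent, and the event $\Ac_{\loc}(l,d)$ is measurable with respect to $\Lc^{\mathrm{in}}$ alone.

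\medskip

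Since adjoining the loops of $\Lc^{\mathrm{out}}$ to $\Lc^{\mathrm{in}}$ can only merge clusters, on the event $\Ac_D \setminus \Ac_{\loc}$ at least one of the crossing $\Lc_D$-clusters must be obtained from several non-crossing sub-clusters of $\Lc^{\mathrm{in}}$ by gluing them together via at least one ``bridging'' loop $\gamma \in \Lc^{\mathrm{out}}$; such a $\gamma$ necessarily enters $A_{l,d}$ and therefore traverses the buffer annulus $A_{d,2d}$. I would then perform surgery on $\gamma$: its trace inside $B_{2d}$ is a finite collection of random walk excursions between points of $\partial B_{2d}$, each passing near the $\Lc^{\mathrm{in}}$-sub-clusters that it bridges. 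Conditionally on $\Lc^{\mathrm{in}}$, I would replace each such excursion by a short loop entirely contained in $B_{2d}$ that plays the same connectivity role, by closing up the excursion inside $A_{d,2d}$ using a random walk path. The ratio of loop measures between the original excursion (going out to some point near $\partial D$ and returning) and its short-loop replacement is bounded by a universal constant, by a-priori comparison estimates for the random walk measure (of Beurling type and for the two-dimensional potential kernel).

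\medskip

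The main obstacle will be ensuring that the replacement short loops actually hit the correct $\Lc^{\mathrm{in}}$-sub-clusters so as to re-establish the broken connectivity inside $A_{l,d}$. This requires the relevant sub-clusters to have well-separated landing points on $\partial B_{d}$ and on some intermediate scale inside the buffer annulus, so that the new short loops can hook onto them with uniformly positive conditional probability, without having to thread narrow corridors. This is exactly where the separation property for pairs of random walks inside a RWLS, developed in Section~\ref{sec:sep}, comes in as the essential input. A secondary technical difficulty is to handle the simultaneous presence of several bridging loops, which is controlled by a union bound combined with the a-priori decay of the loop measure at scale $d$, ensuring that the cumulative surgery cost remains $O(1)$.
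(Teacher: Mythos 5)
Your high-level plan — split the configuration into local and non-local loops, identify the ``bridging'' crossing loop, do surgery to localize it, and invoke separation to make the surgery affordable — is correct in spirit and captures the two key ideas (separation + loop surgery) that drive the paper's proof. However, several steps as described have genuine gaps.

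\textbf{Surgery mechanics.} You propose replacing \emph{each} excursion of the bridging loop $\gamma$ inside $B_{2d}$ by a separate short loop. This changes the number of loops in the configuration, which for a Poisson process is a nontrivial change of measure that you do not address. The paper instead keeps exactly \emph{one} excursion of $\gamma$ inside a smaller box $B_{1.2d}$, extends its endpoints by random walks to $\partial B_{1.5d}$, and replaces the whole remaining part $\omega$ of $\gamma$ by a single short arc $\omega'$ staying in a $d/100$-sausage — thereby producing exactly one new loop $\gamma' = \Uc(\eta\oplus\xi_2\oplus\omega'\oplus\xi_1^R)$ with multiplicity $1$, well inside $B_{2d}$. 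The ``bounded ratio of loop measures'' you assert is not automatic: the quantitative input is the non-disconnecting path-mass bound $\mu^{z,w}(M^{z,w}_{r,R}) \lesssim (R/r)^{-1/2}\log(R/r)$ (Lemma~\ref{lem:non-disconnecting paths}, with Lemma~\ref{lem:non-disconnecting paths-1} for ratios near $1$) for the mass of $\omega$, together with the Green's function lower bound of Lemma~\ref{lem:green_sausage} for the mass of $\omega'$. Your appeal to ``Beurling-type'' estimates does not produce these bounds. Also, cutting at the scale $2d$ as you suggest leaves no room inside $B_{2d}$ to re-close the loop; the paper deliberately works at intermediate scales $1.2d$, $1.5d$, $1.8d$ to leave that room — and hence its notion of ``crossing loop'' is a loop crossing $A_{d,1.8d}$ without disconnecting, rather than a loop that merely exits $B_{2d}$.

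\textbf{Multiple bridging loops.} You hope to control several bridging loops by ``a union bound combined with the a-priori decay of the loop measure at scale $d$.'' This is too vague: the loop measure restricted to loops crossing $A_{d,1.8d}$ has mass of constant order (not small), so no decay saves you. The paper uses Palm's formula (Lemma~\ref{lem:Palm formula}) to reduce to the case of \emph{exactly} one or two crossing loops, via the combinatorial identity \eqref{eq:Pk}--\eqref{eq:rec}; the resulting factor is $e^{\alpha\nu(\cl)}$, which is bounded because the probability of having no crossing loop is bounded below (by Lemma~\ref{lem:crossing loops}). You also omit the case where no bridging loop is needed at all (the paper's $\Ec_0$); there, the event is decreasing in the exterior loop soup and is handled by FKG (Lemma~\ref{lem:E0}), an argument you would need to supply separately.

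\textbf{Role of separation.} You describe separation as ensuring the replacement loops ``hit the correct sub-clusters.'' That is part of it, but the more delicate use in the paper is keeping the \emph{two} crossing clusters from accidentally merging during the surgery: the two packets of random walks $(\xi^1_1,\xi^1_2)$ and $(\xi^2_1,\xi^2_2)$ coming from the two bad clusters must land well-separated on $\partial B_{1.5d}$, inside the ambient frontier loop soup, so that the two closing arcs can be placed disjointly and the resulting clusters remain distinct (this is the inequality $\bar Q^{2,2}(1.5d) > 1/40$ in Step~2 of Lemma~\ref{lem:E1}). In the one-bad-cluster case ($\Ec_1$) the relevant separation statement is instead a non-disconnection quality $\bar Q^{2}(1.5d)$ with respect to the other, good crossing cluster (Proposition~\ref{prop:sep-disc}). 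These distinctions matter and are not captured by the heuristic you sketch.

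In short: the architecture (surgery + separation) is right, but the concrete steps — which loop decomposition to use, how to control the multiplicity of crossing loops, and which separation statement applies in which case — all require the specific devices (Palm's formula, the $\Ec_0/\Ec_1/\Ec_2$ split, FKG, the explicit excursion--walk--arc decomposition, and the non-disconnection path-mass estimates) that your outline omits or misstates.
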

This estimate allows us to bound four-arm events in terms of ``localized'' versions of them, which makes appear some very useful independence. In particular, it can be used to establish a quasi-multiplicativity property, analogous to the one used in 2D critical percolation to derive arm exponents.

\item \emph{Quasi-multiplicativity}. We prove the following upper bound (see also Proposition~\ref{lem:quasi} below).
\begin{proposition}
	For any $\alpha\in (0,\half]$, there exists $c_1=c_1(\alpha)>0$ such that for all $1 \leq d_1 \le d_2/2\le d_3/16$, and all subsets $D \supseteq B_{2d_3}$ of $\Zb^2$,
$$\Pb( \Ac_{D}(d_1,d_3) ) \le c_1\, \Pb( \Ac_{\loc}(d_1,d_2) )\, \Pb( \Ac_{D}(4d_2,d_3) ).$$
\end{proposition}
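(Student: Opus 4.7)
The plan is to implement a two-scale quasi-multiplicativity argument by combining (i) the locality proposition just stated, (ii) the independence of disjoint Poisson loop sub-soups, and (iii) the separation property from Section~\ref{sec:sep}.

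First I would use the trivial chain of inclusions $\Ac_D(d_1,d_3) \subseteq \Ac_D(d_1,d_2) \cap \Ac_D(4d_2,d_3)$, valid because any two outermost crossing clusters of $A_{d_1,d_3}$ automatically cross the two sub-annuli, to reduce the problem to bounding the joint probability on the right-hand side. To decouple the two factors I would decompose $\Lc_D = \Lc_{\mathrm{in}} \sqcup \Lc_{\mathrm{out}}$, where $\Lc_{\mathrm{in}}$ gathers the loops of $\Lc_D$ contained in $B_{2d_2}$ and $\Lc_{\mathrm{out}}$ the remaining loops; these are independent Poisson collections. Any loop intersecting $A_{4d_2,d_3}$ automatically exits $B_{2d_2}$ and therefore belongs to $\Lc_{\mathrm{out}}$, and furthermore two $\Lc_D$-clusters crossing $A_{4d_2,d_3}$ already correspond to two distinct $\Lc_{\mathrm{out}}$-clusters (otherwise they would also coincide as $\Lc_D$-clusters). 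Hence $\Ac_D(4d_2,d_3)$ is controlled, up to a universal multiplicative constant, by a purely $\Lc_{\mathrm{out}}$-measurable event.

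The core remaining step is then a conditional form of the locality proposition,
\[
\Pb\bigl( \Ac_D(d_1,d_2) \mid \Lc_{\mathrm{out}} \bigr) \le C\, \Pb(\Ac_{\loc}(d_1,d_2)),
\]
valid uniformly over a typical realization of $\Lc_{\mathrm{out}}$. Given this inequality, the independence of $\Lc_{\mathrm{in}}$ and $\Lc_{\mathrm{out}}$ immediately produces the factorized upper bound, which combined with the previous step yields the desired bound on $\Pb(\Ac_D(d_1,d_3))$.

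The main obstacle is this conditional locality step. The difficulty is that loops of $\Lc_{\mathrm{out}}$ leak into $B_{2d_2}$ as random-walk-like excursions, and those excursions can merge previously distinct $\Lc_{\mathrm{in}}$-clusters and thereby wipe out the four-arm structure inside $B_{2d_2}$. My plan for handling this is to expose the trace of $\Lc_{\mathrm{out}}$ on $B_{2d_2}$ (the hitting positions on $\partial B_{2d_2}$ together with the corresponding interior excursions) and then invoke the separation property from Section~\ref{sec:sep}: on $\Ac_{\loc}(d_1,d_2)$ one may at a bounded cost force the four inner arms to land at well-separated positions on $\partial B_{2d_2}$, after which the additional excursions coming from $\Lc_{\mathrm{out}}$ create cluster mergers at only a further bounded multiplicative cost. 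This is exactly the kind of surgery argument that the separation result is designed to support, and it is the main technical input of the proof.
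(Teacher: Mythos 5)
Your factorization idea — splitting $\Lc_D$ into $\Lc_{\mathrm{in}} := \Lc_{2d_2}$ and $\Lc_{\mathrm{out}} := \Lc_D \setminus \Lc_{2d_2}$ and exploiting their Poissonian independence — is indeed the skeleton of what the paper ultimately produces: the truncated events $\overrightarrow\Ac_D(d_1,d_2)$ and $\overleftarrow\Ac_D(4d_2,d_3)$ are measurable with respect to $\Lc_{2d_2}$ and $\Lc_D\setminus\Lc_{2d_2}$ respectively. However, there are two genuine gaps in the way you propose to reach that factorization.

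First, it is not automatic that $\Ac_D(4d_2,d_3)$ is captured by a purely $\Lc_{\mathrm{out}}$-measurable event. A chain of loops in $\Lc_D$ that crosses $A_{4d_2,d_3}$ may well use $\Lc_{\mathrm{in}}$-loops as connectors inside $B_{2d_2}$: an $\Lc_{\mathrm{out}}$-loop touching $\partial B_{4d_2}$ can dip into $B_{2d_2}$, meet a small $\Lc_{\mathrm{in}}$-loop there, which in turn meets another $\Lc_{\mathrm{out}}$-loop reaching $\partial B_{d_3}$. So the $\Lc_{\mathrm{out}}$-part of a crossing cluster need not itself be an $\Lc_{\mathrm{out}}$-cluster crossing the annulus. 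The paper handles this via the \emph{reversed} locality result (Proposition~\ref{prop:in-locality}), showing $\Pb(\Ac_D(4d_2,d_3))\lesssim\Pb(\overleftarrow\Ac_D(4d_2,d_3))$ where the truncated inward event is genuinely $\Lc_D\setminus\Lc_{2d_2}$-measurable; this step cannot be skipped.

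Second, and more importantly, the conditional inequality $\Pb(\Ac_D(d_1,d_2)\mid\Lc_{\mathrm{out}})\le C\,\Pb(\Ac_{\loc}(d_1,d_2))$ is \emph{false} uniformly in $\Lc_{\mathrm{out}}$, and your proposed remedy is aimed at the wrong danger. You worry that $\Lc_{\mathrm{out}}$-excursions may \emph{merge} inner clusters and destroy the four-arm structure; but a merger can only \emph{decrease} $\Pb(\Ac_D(d_1,d_2)\mid\Lc_{\mathrm{out}})$, which is harmless for an upper bound. The real danger is that $\Lc_{\mathrm{out}}$ may \emph{help}: conditioned on $\Lc_{\mathrm{out}}$ containing (even a single) loop reaching from outside $B_{2d_2}$ all the way into $B_{d_1}$, the event $\Ac_D(d_1,d_2)$ becomes overwhelmingly likely, while $\Pb(\Ac_{\loc}(d_1,d_2))$ may be arbitrarily small in $d_2/d_1$. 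So the conditional bound cannot hold pointwise, and passing to ``typical'' $\Lc_{\mathrm{out}}$ does not repair it, because the contribution from atypical $\Lc_{\mathrm{out}}$ is not negligible compared to $\Pb(\Ac_{\loc}(d_1,d_2))\Pb(\Ac_D(4d_2,d_3))$. This is exactly what the paper's proof spends its effort on: it identifies (via Palm's formula) the finitely many scale-spanning crossing loops responsible for the help, and in the hardest case (a single loop in $\cl_{d_2,4d_2}$ contributing to \emph{both} sub-annulus crossings) it performs a two-sided surgery that literally cuts that loop into two pieces — one producing a loop confined to $B_{1.8 d_2}$ feeding $\overrightarrow\Ac_D(d_1,d_2)$, the other a loop confined to the complement of $B_{2.2d_2}$ feeding $\overleftarrow\Ac_D(4d_2,d_3)$ — with the (two-sided) separation lemma guaranteeing the reconnections are affordable. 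Your sketch does not contain this cutting step, and without it the factorization cannot be obtained.
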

This is a key property to connect discrete arm events to the corresponding arm events in the Brownian loop soup, and thus to derive an upper bound (Theorem~\ref{prop:armp}) for the discrete event, with the same exponent $\xi(\alpha)$ (computed directly on CLE in \cite{GNQ2024c}):
\begin{theorem}
	For all $\alpha\in(0,1/2]$ and all $\eps>0$, there exists $c_2=c_2(\alpha,\eps)>0$ such that for all $1 \leq d_1<d_2$, and all subsets $D \supseteq B_{2d}$ of $\Zb^2$,
	\begin{equation}
	\Pb( \Ac_D(d_1,d_2) ) \le c_2\, (d_2/d_1)^{-\xi(\alpha)+\eps}.
	\end{equation}
\end{theorem}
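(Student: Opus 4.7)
The proof combines the quasi-multiplicativity of Proposition~\ref{lem:quasi} with a single-scale comparison between the RWLS and the BLS at a fixed modulus $R_0=R_0(\alpha,\eps)$. The loss $\eps$ absorbs both the multiplicative constants picked up at each application of quasi-multiplicativity and the gap between the sharp BLS exponent and what one can extract at a finite scale. Fix $\eps>0$ and, using the arm-exponent estimate for the Brownian loop soup established in \cite{GNQ2024c}, choose $R_0\ge 16$ so large that the continuum 4-arm probability across any annulus of modulus $R_0$ is at most $R_0^{-\xi(\alpha)+\eps/4}$.

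\textbf{Single-scale transfer.} The technical heart of the proof is the uniform-in-$r$ estimate
$$\Pb(\Ac_{\loc}(r,R_0 r)) \le c_3(\alpha,\eps)\, R_0^{-\xi(\alpha)+\eps/2} \quad \text{for all } r\ge 1.$$
Locality confines the event to the fixed domain $B_{2R_0 r}$; after rescaling by $1/r$, the RWLS in this domain approximates the BLS in $B_{2R_0}$, and combining the qualitative convergence of clusters from \cite{BCL2016,Lu2019} with the separation property for packets of random walks and the surgery for loops developed in Sections~\ref{sec:sep}--\ref{sec:arm-lwrs} provides the uniform control needed to transfer the continuum $4$-arm estimate to the discrete setting. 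For the finitely many small values of $r$ where such an approximation is too coarse, a trivial bound is absorbed into $c_3$.

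\textbf{Iteration.} Assume $d_2/d_1 \ge 4R_0$ (otherwise the statement is trivial). Iterating Proposition~\ref{lem:quasi} $k=\lfloor \log(d_2/d_1)/\log(4R_0)\rfloor$ times, with intermediate scale equal to $R_0$ times the current inner scale, and bounding the remaining arm factor by $1$, yields
$$\Pb(\Ac_D(d_1,d_2)) \le c_1^k \prod_{i=0}^{k-1} \Pb\!\left(\Ac_{\loc}\!\left((4R_0)^i d_1,(4R_0)^i R_0 d_1\right)\right) \le (c_1 c_3)^k R_0^{-k(\xi(\alpha)-\eps/2)}.$$
Since $(4R_0)^k \le d_2/d_1$, the right-hand side is at most $C(\alpha,\eps)\,(d_2/d_1)^{-\xi(\alpha)+\eps}$, provided $R_0$ is further enlarged so that both $\log R_0/\log(4R_0)$ is close enough to $1$ and $\log(c_1 c_3)/\log(4R_0)\le \eps/4$.

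\textbf{Main obstacle.} The genuinely difficult step is the single-scale transfer: the only available convergence of the RWLS toward the BLS is qualitative, whereas one needs a bound on the discrete $4$-arm probability that is uniform in the mesh size, with the correct leading behavior $R_0^{-\xi(\alpha)+o(1)}$. The separation and surgery techniques developed earlier in the paper are precisely what supplies this uniformity, by matching discrete and continuum $4$-arm events up to a multiplicative constant that does not depend on the scale $r$.
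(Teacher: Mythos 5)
Your overall strategy — localize via Proposition~\ref{prop:locality}, transfer a single-scale bound from the BLS through the convergence result Theorem~\ref{thm:convergence} and Proposition~\ref{prop:arm-exponent}, and iterate quasi-multiplicativity (Proposition~\ref{lem:quasi}) — coincides with the paper's. However, your treatment of the small-$r$ scales in the iteration step contains a genuine gap.

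Your uniform-in-$r$ estimate $\Pb(\Ac_{\loc}(r, R_0 r)) \le c_3(\alpha,\eps)\, R_0^{-\xi(\alpha)+\eps/2}$, asked to hold for \emph{all} $r \ge 1$, forces $c_3$ to be of order $R_0^{\xi - \eps/2}$. The convergence in Theorem~\ref{thm:convergence} is purely qualitative (no rate), so the threshold $r_0 = r_0(R_0,\alpha,\eps)$ beyond which the discrete four-arm probability comes within a factor $2$ of the continuum one can be arbitrarily large; for $1 \le r < r_0$, the only available estimate is the trivial $\Pb(\Ac_{\loc}(r, R_0 r)) \le 1$, which forces $c_3 \ge R_0^{\xi - \eps/2}$. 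With that $c_3$, your iterated bound $(c_1 c_3)^k R_0^{-k(\xi - \eps/2)} \ge c_1^k$ does not decay in $k$, and your final requirement $\log(c_1 c_3)/\log(4R_0)\le \eps/4$ cannot be met: the left-hand side tends to $\xi - \eps/2$ as $R_0 \to \infty$, which exceeds $\eps/4$ for $\eps$ small. The underlying error is that $c_3$ is raised to the power $k$, whereas the trivial bound it absorbs is only needed for a bounded number $m'(R_0,\alpha,\eps)$ of the $k$ factors.

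The fix — which is precisely how the paper argues — is to never iterate through the bad small scales. One first chooses $R$ so that Proposition~\ref{prop:arm-exponent} gives $\Pb(\wt\Ac_\alpha(1/(2R),1/2)) \le R^{-\xi+\eps/2}$, and then invokes Theorem~\ref{thm:convergence} to produce an $m=m(\alpha,\eps)$ with $c_1\, \Pb(\Ac_{\loc}((4R)^j, 4^j R^{j+1})) \le (4R)^{-\xi+\eps}$ for all $j \ge m$ (the paper's \eqref{eq:c1}). The quasi-multiplicativity chain is started at the scale $(4R)^{\max(m_1,m)}$, using the monotonicity $\Ac_{\loc}(d_1,d_2)\subseteq \Ac_{\loc}((4R)^{\max(m_1,m)},d_2)$ from Remark~\ref{rmk:mono}. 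This skips all scales below $(4R)^m$ at a \emph{one-time} multiplicative cost of order $(4R)^{(m+1)\xi}$, absorbed into $c_2(\alpha,\eps)$ and not raised to the power $k$. A short case analysis on the positions of $m_1$ and $m_2$ relative to $m$, as in the paper, then completes the proof.
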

\end{itemize}
We also obtain similar locality and quasi-multicativity results for three other types of arm events, namely, interior two-arm events, boundary two-arm events, and boundary four-arm events. The corresponding results are stated in Proposition~\ref{prop:2n-locality}, Proposition~\ref{prop:2n-quasi}, and Theorem~\ref{prop:2n-armp}, respectively.

\subsection{Structure of the paper}

Our paper is organized as follows. In Section~\ref{sec:prelim}, we first set up our notation, and we then provide some background on the random walk loop soup. In addition, we collect elementary results which are needed in the remainder of the paper. In Section~\ref{sec:BLS}, we define the Brownian loop soup and examine some of its properties, in relation with the random walk loop soup. We also consider the arm events in the Brownian loop soup, and we give the associated exponents. In Section~\ref{sec:sep}, we prove a key separation lemma, for random walks inside an independent loop soup. This result is then used in Section~\ref{sec:arm-lwrs} to derive properties of arm events in the random walk loop soup, in particular a locality result for the (interior) four-arm event in Section~\ref{sec:locality}, and then a reversed version of it in Section~\ref{sec:locality2}. In Section~\ref{subsec:other_arm}, we finally consider the corresponding results for the (interior and boundary) two-arm events, as well as for the boundary four-arm events. Next, we prove a quasi-multiplicativity upper bound for the four-arm events in Section~\ref{sec:quasi-mult}. This allows us to obtain the desired power-law upper bound on such events, by relating the discrete events to the corresponding events for the Brownian loop soup. We then explain how to derive the analogous results for the two-arm events, and for the boundary four-arm events.

\section{Preliminaries} \label{sec:prelim}
	
In this preliminary section, we first set notation in Section~\ref{subsec:notation}. We then provide the precise definition of the object which is studied throughout the paper, the random walk loop soup, in Section~\ref{subsec:RWLS}. Finally, we collect several elementary results in Section~\ref{subsec:elem}, that will be particularly useful for our proofs in the subsequent sections.
	
\subsection{Notation and conventions} \label{subsec:notation}

    In the following, $\Nb_0:=\{ 0,1,\ldots \}$ denotes the set of all non-negative integers. 
    Let $\Hb:=\{ (x,y)\in \Rb^2: y>0 \}$ be the upper half plane. If $A$ is a set in $\Rb^2$, we will use $A^+$ to denote its part in the upper half-plane, i.e., $A^+ := A\cap \Hb$. 
    Let $|\cdot|$ be the usual Euclidean norm on the plane $\Rb^2$. For all $z\in\Rb^2$ and $A\subseteq \Rb^2$, let $\dist(z,A) := \inf_{w\in A}|z-w|$ be the associated distance between $z$ and $A$, and write $\diam(A) := \sup_{w,w'\in A}|w-w'|$ for the diameter of $A$ with respect to $|\cdot|$.
    
    For $\delta>0$, we denote the \emph{$\delta$-sausage} of $A\subseteq \Rb^2$ by 
    \begin{equation}\label{eq:sausage}
    	A^{\delta}:=\{z \in \Rb^2 : \dist(z,A)\le\delta \}.
    \end{equation}
    This can be used to define the Hausdorff distance between any two subsets $A$ and $B$ of $\Rb^2$, as
	\[
	d_H\left(A, B\right) := \inf \left\{\delta>0: A \subseteq B^\delta \text { and } B \subseteq A^\delta\right\}.
	\]
	More generally, for any two finite collections $\Ac$ and $\Bc$ of subsets of the plane, their induced Hausdorff distance is given by
\begin{equation}\label{eq:dH}
d_H^*\left(\mathcal{A}, \mathcal{B}\right) :=
\begin{cases}
+\infty \quad \text{ if } |\Ac| \not= |\Bc|,\\
\min_{\sigma \in \mathrm{Bij} (\Ac, \Bc)} \, \max_{A \in \Ac} d_H(A, \sigma(A)) \quad \text{ otherwise},
\end{cases}
\end{equation}
where $\mathrm{Bij} (\Ac, \Bc)$ is the set of all bijections from $\Ac$ to $\Bc$.

    For $z=(x,y),$ $z'=(x',y')\in \Rb^2$, let
    $$d_{\infty}(z,z')=|z-z'|_{\infty} := |x-x'| \vee |y-y'|$$
    be the $l_{\infty}$ distance between $z$ and $z'$. If $A$ and $B$ are two subsets of $\Zb^2$, we write $$d_{\infty}(A,B):=\inf_{z \in A,z'\in B} |z-z'|_{\infty}.$$
    For $A\subseteq\Zb^2$, we denote $A^c := \Zb^2 \setminus A$, and we define its (inner) vertex boundary by
$$\partial A= \partial^{\mathrm{in}} A:= \{ z\in A : |z-w|_{\infty}=1 \text{ for some } w \in A^c\}.$$
    Moreover, we write $\mathring{A} := A\setminus \partial A$.

	For any vertex $z \in \Zb^2$, let
	$$B_R(z) := \{w \in \Zb^2 : |z-w|_{\infty}\le R\} = z + [-R,R]^2 \cap \Zb^2$$
	be the box of side length $2R$ centered on $z$, and write $B_R=B_R(0)$. For $0<r<R$, let
	$$A_{r,R}(z):=\{ w \in \Zb^2 : r\le|z-w|_{\infty}\le R \}$$
	be the annulus of radii $r$ and $R$ around $z$, and write $A_{r,R}$ for $A_{r,R}(0)$.
	We say that a set \emph{crosses} the annulus $A_{r,R}(z)$ if this set intersects both $\partial B_r(z)$ and $\partial B_R(z)$.
In the continuous setting, we use the notation
$$\Bc_{r}(z):=\{ w\in \Rb^2: |z-w|_{\infty} < r \}$$
for the open ball with center $z$ and radius $r$ in the plane, and we denote $\Bc_r = \Bc_r(0)$.
	
	A {\it path} $\eta$ in $\Zb^2$ is a finite sequence of vertices $(z_1,\cdots,z_j)$ in $\Zb^2$ such that $|z_i-z_{i+1}|=1$ for all $1\le i\le j-1$. For such a path $\eta$, the vertices $z_1$ and $z_j$ are called its starting and ending points, respectively, and we let $|\eta|=j-1$ be its length.
	Furthermore, the {\it reversal} of the path $\eta$ is the path $\eta^R := (z_j,\cdots,z_1)$.
	A \emph{rooted loop} is a path for which the starting and ending points coincide. 
	An \emph{unrooted loop} is an equivalence class of rooted loops under rerooting (relabeling).
	For example, the two rooted loops $(z_1,\cdots,z_{j-1},z_j)$ (rooted at $z_1=z_j$) and $(z_2,\cdots,z_j=z_1,z_2)$ (rooted at $z_2$) are in the same equivalence class under rerooting. 
	We denote the corresponding \emph{unrooting map} by $\Uc$, which maps each rooted loop $l$ to its equivalence class $\gamma=\Uc(l)$ as an unrooted loop. Later, we will simply use the term \emph{loop} to refer to unrooted loops. 
For $A\subseteq\Zb^2$, $\eta$ is called a path in $A$ if it is composed only of vertices in $A$, and it is an {\it excursion} in $A$ if in addition to staying in $A$, both of its endpoints belong to $\partial A$.

	We introduce some measures on discrete paths that will be used later. First, 
	let $\mu$ be the measure on paths which assigns, to any path $\eta$ in $\Zb^2$ with length $|\eta|\ge 1$, the weight $4^{-|\eta|}$ (and the weight $0$ otherwise). For $z,w\in \Zb^2$, let $\mu^{z,w}$ be the measure $\mu$ restricted to paths from $z$ to $w$ in $\Zb^2$.
	For $A \subseteq \Zb^2$, let $\mu^{\exc}_A$ be the {\it excursion measure} in $A$, which assigns to each excursion $\eta$ in $A$ the weight $4^{-|\eta|}$. 
	Let $\mu_0$ be the measure on (unrooted) loops, assigning to each loop $\gamma$ the weight $4^{-|\gamma|}$.
	Obviously, these measures are naturally related to the simple random walk in $\Zb^2$. For example, if $z\in D\setminus \partial D, w\in \partial D$, and $\Gamma^{z,w}_D$ is the set of paths $\eta$ from $z$ to $w$ in $D$ such that $\eta\cap\partial D=\{w\}$, then $\mu^{z,w}$ restricted to $\Gamma^{z,w}_D$ is the measure of a simple random walk started from $z$, conditioned on first hitting $\partial D$ at $w$, and stopped when this happens. Moreover, the probability measure of a simple random walk started from $z$ and stopped when it first hits $\partial D$ can be represented as 
	\begin{equation}\label{eq:path-measure}
	\sum_{w\in \partial D}\mu^{z,w}\big|_{\Gamma^{z,w}_D}.
	\end{equation}
	
 Let $A$ be a bounded (not necessarily connected) subset of the plane $\Rb^2$.
 The {\it filling} of $A$ is the complement of the unique unbounded connected component of $\Cb\setminus \bar A$, and we denote it by $\mathrm{Fill}(A)$.
 We write $A^b$ for the (external) {\it frontier} of $A$, which is defined as the boundary of $\mathrm{Fill}(A)$.
 We also view a discrete path (or a discrete loop) as a continuous curve, by linearly interpolating between neighboring vertices, so that it can be considered as a connected set in $\Rb^2$.
 Hence, the filling or the frontier of a path (or a loop) on the lattice is well-defined in this sense, and the notion extends naturally to sets of loops.  
	
For two positive functions $f(x)$ and $g(x)$, we write $f(x)\lesssim g(x)$ if there exists a constant $C>0$ such that $f(x)\le C \, g(x)$ for all $x$ (or for all $x$ large enough, depending on the context); and if the constant $C$ depends on $\alpha$, we write $f(x)\lesssim_{\alpha} g(x)$ instead. We use $c,c',C,C',\ldots$ to denote arbitrary constants which may vary from line to line, while the constants $c_1$ and $c_2$ will be fixed throughout the paper. If a constant depends on some other quantity, this will be made explicit. For example, if $c$ depends on $\alpha$, we write $c(\alpha)$ to indicate it.

\subsection{Random walk loop soup} \label{subsec:RWLS}

We can now define the random walk loop soup, but before that, we need to set briefly some additional notations for loops.

\subsubsection*{Notations for loops}

Denoting by $\Gamma$ the set of loops in $\Zb^2$, a loop configuration $L$ is a multiset of $\Gamma$, i.e., $L\in (\Nb_0)^{\Gamma}$. For any $\gamma \in \Gamma$, we denote by $n_L(\gamma)$ the number of occurrences of $\gamma$ in the loop configuration $L$. We define the union of two loop configurations $L_1$ and $L_2$ as $L:=L_1 \uplus L_2$, such that $n_L(\gamma)=n_{L_1}(\gamma)+n_{L_2}(\gamma)$ for all $\gamma\in \Gamma$. Similarly, let $L:=L_1\setminus L_2$ be the loop configuration that satisfies $n_L(\gamma)=(n_{L_1}(\gamma)-n_{L_2}(\gamma))\vee 0$ for all $\gamma\in \Gamma$. We use $L^b$ to denote the collection of all frontiers of loops in $L$.

A chain of loops (with length $j \geq 1$) is a sequence of loops $\gamma_1,\cdots,\gamma_j$ such that for all $1\le i\le j-1$, $\gamma_i$ intersects $\gamma_{i+1}$ (i.e. these two paths have at least one vertex in common). Two loops $\gamma$ and $\gamma'$ are said to be connected if there exists a finite chain of loops from $\gamma$ to $\gamma'$, that is, if for some $j \geq 1$, there exists a chain of $j$ loops with $\gamma_1 = \gamma$ and $\gamma_j = \gamma'$. This defines an equivalence relation on loops, whose classes are called {\it clusters}. In other words, a cluster is a maximal connected component in $L$, and the set of clusters induces a partition of $\Zb^2$.

An outermost cluster $\Cc$ of $L$ is such that there exists no other cluster $\Cc'$ of $L$ for which $\mathrm{Fill}(\Cc)\subseteq \mathrm{Fill}(\Cc')$.  For any subset $D$ of $\Zb^2$, we use $\Lambda(D,L)$ to denote the union of $D$ and all the clusters in $L$ that intersect $D$. We write $L_D$ for the set of loops in $L$ that are contained in $D$, and we write $L^D$ for the set of loops in $L$ that intersect $D$. We use the shorthand notation $L_d(z):=L_{B_d(z)}$. Moreover, for $B_d(z)\subseteq D$, we denote $L_{d,D}(z):=L_D\setminus L_d(z)$. As usual, we omit the parameter $z$ if $z=0$, so that for example, we write $L_d$ for $L_d(0)$.

\subsubsection*{Definition of RWLS}

For $\gamma\in \Gamma$, let $J(\gamma)$ be the \emph{multiplicity} of $\gamma$, i.e., the largest integer $j$ such that $\gamma$ is exactly the concatenation of $j$ copies of the same loop. Define the \emph{unrooted loop measure} $\nu$ by
\begin{equation}\label{eq:nu}
	\nu(\gamma):=\frac{\mu_0(\gamma)}{J(\gamma)}, \quad \gamma \in \Gamma,
\end{equation}
where $\mu_0(\gamma) = 4^{-|\gamma|}$ is the measure (on unrooted loops) defined in Section~\ref{subsec:notation}.
The {\it random walk loop soup} (RWLS) $\Lc$ (in $\Zb^2$) of intensity $\alpha>0$ is made of the Poisson point process with intensity $\alpha\nu$.

All the notations introduced for deterministic loop configurations can be extended to the RWLS in a natural way, simply replacing $L$ by $\Lc$. For example, $\Lc_D$ is the \emph{random walk loop soup in $D$}, consisting of all the loops in $\Lc$ which are fully contained in $D$. The definitions of $\Lc^D$, $\Lc_d(z)$, and $\Lc_{d,D}(z)$ follow similarly. Finally, $\Lc_D^b$, obtained as the collection of frontiers of the loops in $\Lc_D$, is called the {\it frontier loop soup in $D$}.

\subsection{Elementary results} \label{subsec:elem}

In this final section of preliminaries, we collect basic properties of random walks and loop soups, which play an important role in our later proofs.

\subsubsection*{Estimates on Green's function}

If $D \subseteq \Zb^2$ is finite, recall that the \emph{Green's function} $G_D(.,.)$ in $D$ is defined as follows: for all $u, v \in D$,
$$G_D(u,v) := \Eb^u\bigg( \sum_{n=0}^{\zeta_D} \ind_{\{S_n=v \}}  \bigg),$$
where $S$ is a simple random walk, we write
$$\zeta_{D}:=\min\{ n \ge 0: S_n \notin D \}$$
for its exit time from $D$, and $\Eb^u$ denotes the expectation with respect to $S$ starting from $u$. In the case when $D$ is an arbitrary subset of $\Rb^2$ (i.e., not necessarily consisting of vertices), with a slight abuse of notation, we also write $G_D$ for $G_{D \cap \Zb^2}$.

In several instances below, we use the following standard estimate about the Green's function, see e.g.\ Theorem 1.6.6 and Proposition 1.6.7 of \cite{La1991}.

\begin{lemma}\label{lem:green's}
	There exists a universal constant $c>0$ such that for all $n \geq 1$,
	\[
	G_{B_n}(0,0)=\frac2\pi\log n+c+O(n^{-1}),
	\]
	and furthermore, for all $x\in B_n\setminus\{0\}$, 
	\[
	G_{B_n}(x,0)=\frac2\pi(\log n-\log |x|)+O \big( |x|^{-1}+n^{-1} \big).
	\]
\end{lemma}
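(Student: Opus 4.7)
The plan is to follow the classical route via the potential kernel $a(\cdot)$ of simple random walk on $\Zb^2$, which is the standard way to obtain such Green's function asymptotics (and is precisely the argument underlying Theorem~1.6.6 and Proposition~1.6.7 of \cite{La1991}). Recall that $a: \Zb^2 \to [0,\infty)$ is the unique function with $a(0)=0$ which is harmonic off the origin (with $\Delta a(0) = 1$, in the discrete sense) and which grows sublinearly. A careful Fourier computation gives the asymptotic expansion
\[
a(x) = \frac{2}{\pi}\log|x| + \kappa_0 + O(|x|^{-2}) \qquad \text{as } |x|\to\infty,
\]
for an explicit constant $\kappa_0$ (Stöhr's constant). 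I would take this expansion as the starting input.

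Next, I would use the standard martingale identity connecting the Green's function in a finite set to the potential kernel. Namely, since $a(\cdot - y)$ is harmonic off $y$, optional stopping applied to $(a(S_n - y))_{n\ge 0}$ stopped at $\zeta_{B_n}$, together with $a(0)=0$ and the elementary computation of the single-step contribution at $y$, yields
\[
G_{B_n}(x,y) = \Eb^x\bigl[ a(S_{\zeta_{B_n}} - y) \bigr] - a(x-y), \qquad x,y \in B_n.
\]
Applying this with $y=0$ reduces both asymptotic statements to a uniform control of $\Eb^x[a(S_{\zeta_{B_n}})]$ for $x \in B_n$.

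To evaluate the latter, I would observe that $S_{\zeta_{B_n}} \in \partial B_n$, so $|S_{\zeta_{B_n}}|$ equals $n$ up to an $O(1)$ error (the side length versus the circumradius of the box). Plugging the asymptotic expansion of $a$ evaluated at the exit point, one gets
\[
\Eb^x\bigl[ a(S_{\zeta_{B_n}}) \bigr] = \frac{2}{\pi}\log n + \kappa_0 + O(n^{-1}),
\]
uniformly in $x \in B_n$. Taking $x=0$ (so $a(x-0)=0$) gives the first statement with $c = \kappa_0$. For general $x \neq 0$, subtracting $a(x) = \frac{2}{\pi}\log|x| + \kappa_0 + O(|x|^{-2})$ yields the second statement, with the error term governed by $|x|^{-1}+n^{-1}$ coming from the weaker of the two expansions (the $|x|^{-2}$ from the expansion of $a(x)$ is absorbed into $|x|^{-1}$).

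The only technical point worth emphasizing is that the expansion of $\Eb^x[a(S_{\zeta_{B_n}})]$ must be uniform in the starting point $x \in B_n$, including $x$ close to $\partial B_n$ where the exit law is concentrated near $x$ itself; this is where one uses the explicit form of the $O(|x|^{-2})$ error term in the expansion of $a$, together with $|S_{\zeta_{B_n}}| \asymp n$. Everything else is bookkeeping once the potential kernel asymptotic and the martingale identity are in hand, which is why in the paper the result is simply quoted from \cite{La1991}.
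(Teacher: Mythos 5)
Your approach — potential kernel asymptotics plus the optional-stopping identity $G_{B_n}(x,y)=\Eb^x[a(S_{\zeta_{B_n}}-y)]-a(x-y)$ — is exactly the route in Lawler's book, which is what the paper cites; so the method is the right one. However, there is a genuine gap at the step where you claim that $|S_{\zeta_{B_n}}|$ equals $n$ up to an $O(1)$ error. In this paper $B_n$ is the $\ell^\infty$ box $[-n,n]^2\cap\Zb^2$, while $|\cdot|$ in the expansion of $a$ is the Euclidean norm. The exit point has $|S_{\zeta_{B_n}}|_\infty=n+1$, but its Euclidean norm ranges from about $n$ (midpoints of the sides) to about $\sqrt 2\,n$ (corners). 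That is a multiplicative $O(1)$ discrepancy, not an additive one, so after taking logarithms you only get $a(S_{\zeta_{B_n}})=\tfrac{2}{\pi}\log n+O(1)$, and hence $G_{B_n}(0,0)=\tfrac2\pi\log n+O(1)$. Your argument as written does not produce an explicit constant $c$, let alone an $O(n^{-1})$ error.

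To recover the sharper statement one would have to work with the expectation $\Eb^0[\log(|S_{\zeta_{B_n}}|/n)]$, identify its limit as an integral of $\log|z|$ against the harmonic measure on the boundary of the unit square seen from its center, and then control the rate of convergence of the discrete exit law to this continuous harmonic measure; none of this is ``just bookkeeping.'' (In fact this also shows the constant for the box is \emph{not} Stöhr's constant $\kappa_0$; it is $\kappa_0$ plus a positive shape-dependent correction, and strictly speaking the second display in the lemma, which has no additive constant, inherits that correction as well — an imprecision in the paper's statement that is harmless here because the lemma is only ever used for order-of-magnitude bounds.) Your proof is correct, word for word, if $B_n$ is replaced by the discrete Euclidean ball $C_n=\{z:|z|<n\}$, which is what Lawler's Theorem~1.6.6 and Proposition~1.6.7 actually treat; the missing step is the transfer from Euclidean balls to $\ell^\infty$ boxes.
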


This result can be used to derive a handy lower bound for the Green's function inside the $(\delta n)$-sausage (see \eqref{eq:sausage} for its definition) of a subset with a diameter comparable to $n$, for each given $\delta > 0$ (uniformly in $n \geq 1$). Later, we make use of the specific version below.

\begin{lemma} \label{lem:green_sausage}
	For $n \geq 1$, and two vertices $x, y \in \partial B_n$, let $J_{x,y}$ be any of the two arcs (viewed as a continuous curve, interpolating between the vertices) along $\partial B_n$ with endpoints $x$ and $y$ (i.e., clockwise or counterclockwise). For each $\delta \in (0,1/100)$, there exists $C(\delta)>0$ such that the following holds. For all $n$, $x$ and $y$ as before, and such that $\diam(\partial B_n \setminus J_{x,y}) \geq 10 \delta n$, we have
	\[
	G_{J_{x,y}^{\delta n}}(x,y)\ge C.
	\]
\end{lemma}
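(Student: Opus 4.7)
The plan is to apply the strong-Markov decomposition
$$
G_D(x,y) \;=\; \Pb^x(\tau_y < \zeta_D)\cdot G_D(y,y),
$$
where $D := J_{x,y}^{\delta n}$, and to bound each factor by combining Lemma~\ref{lem:green's} with a chain-of-balls argument along the arc $J_{x,y}$. The case of small $n$ (say $n \leq N_0(\delta)$) is handled by direct inspection, since then $D$ is a bounded set containing a lattice path from $x$ to $y$, so one may assume $n$ large.

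First I would observe that since $y \in J_{x,y}$, the ball $B_{\delta n/2}(y)$ lies inside $D$, so by Lemma~\ref{lem:green's},
$$
G_D(y,y) \;\geq\; G_{B_{\delta n/2}(y)}(y,y) \;\geq\; \tfrac{1}{\pi}\log(\delta n)
$$
for $n$ sufficiently large depending on $\delta$. For the hitting probability factor I would parametrize $J_{x,y}$ by arclength and choose points $x = z_0, z_1, \ldots, z_m = y$ along the arc with $|z_{i+1}-z_i| \leq \delta n/100$. Since $J_{x,y}$ has length at most the perimeter of $B_n$, namely $8n$, we have $m \leq 800/\delta$. Each ball $B_{\delta n/2}(z_i)$ is contained in $D$ (any vertex within $\delta n/2$ of $z_i \in J_{x,y}$ is within $\delta n$ of $J_{x,y}$), and consecutive balls overlap substantially.

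By a standard harmonic-measure estimate for simple random walk, there is a universal $c_0 > 0$ with
$$
\Pb^{z_i}\!\big(\tau_{B_{\delta n/100}(z_{i+1})} < \tau_{\partial B_{\delta n/2}(z_i)}\big) \;\geq\; c_0,
$$
and iterating $m$ times via the strong Markov property gives $\Pb^x\!\big(\tau_{B_{\delta n/100}(y)} < \zeta_D\big) \geq c_0^m =: c_1(\delta)$. From any $w \in B_{\delta n/100}(y)$, a further application of Lemma~\ref{lem:green's} inside $B_{\delta n/2}(y) \subseteq D$ gives
$$
\Pb^w\!\big(\tau_y < \tau_{\partial B_{\delta n/2}(y)}\big) \;=\; \frac{G_{B_{\delta n/2}(y)}(w,y)}{G_{B_{\delta n/2}(y)}(y,y)} \;\geq\; \frac{c_2}{\log(\delta n)},
$$
the numerator being bounded below by a positive constant (as $|w-y| \leq \delta n/100$) while the denominator is $O(\log(\delta n))$. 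Combining via the strong Markov property at the hitting time of $B_{\delta n/100}(y)$ yields $\Pb^x(\tau_y < \zeta_D) \geq c_3(\delta)/\log(\delta n)$, and multiplying by the bound for $G_D(y,y)$ produces $G_D(x,y) \geq C(\delta)$.

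The main obstacle is the chain-of-balls step: one must uniformly bound the number of balls needed to chain $x$ to $y$ along $J_{x,y}$ (which uses the $O(n)$ arclength of the arc) and verify that every such ball sits inside $D$. The hypothesis $\diam(\partial B_n \setminus J_{x,y}) \geq 10\delta n$ conveniently prevents the degenerate situation where $x$ and $y$ are essentially coincident with $J_{x,y}$ sweeping out nearly all of $\partial B_n$, so that the endpoint geometry of the sausage stays well behaved; the remaining manipulations are direct consequences of Lemma~\ref{lem:green's}.
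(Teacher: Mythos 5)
Your argument is correct, but it is organized differently from the paper's proof, so a brief comparison is worthwhile. The paper decomposes $G_{J_{x,y}^{\delta n}}(x,y)$ at the first hitting of $\partial B_{\delta n/2}(y)$, uses the Green's function estimate to show that $G_{J_{x,y}^{\delta n}}(z,y)\ge G_{B_{\delta n}(y)}(z,y)$ is bounded below by a \emph{universal} constant for $z$ on that boundary (the log terms cancel there because $|z-y|\asymp\delta n$), and handles the hitting-probability factor by a winding argument: with probability bounded below (invariance principle), a walk from $x$ loops around $(\partial B_n)^{\delta n/2}$ before exiting and hence reaches $\partial B_{\delta n/2}(y)$. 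You instead write $G_D(x,y)=\Pb^x(\tau_y<\zeta_D)\,G_D(y,y)$, push the chain-of-balls estimate all the way to $B_{\delta n/100}(y)$, and let the $\log(\delta n)$ from $\Pb^w(\tau_y<\zeta_{B_{\delta n/2}(y)})$ cancel against $G_D(y,y)$. Both routes are sound. Your version is somewhat more explicit (the invariance-principle step is replaced by a concrete chain of $O(1/\delta)$ overlapping balls, each of which yields a universal hitting-probability constant), but it ends up using the Green's function lemma in two places rather than one.

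Two small points. First, your identity $\Pb^w(\tau_y<\tau_{\partial B_{\delta n/2}(y)})=G_{B_{\delta n/2}(y)}(w,y)/G_{B_{\delta n/2}(y)}(y,y)$ should really be stated with the exit time $\zeta_{B_{\delta n/2}(y)}$ on the left (that is what the Green's function ratio gives); since $\zeta_{B_{\delta n/2}(y)}\le\zeta_D$ this is exactly the inequality you need, so the slip is harmless. Also, in the iterated chaining step, the uniform lower bound $c_0$ has to hold starting from an arbitrary point of $B_{\delta n/100}(z_i)$ (the entry point into the $i$-th ball), not just from $z_i$; this is still fine because such a point is at $l_\infty$-distance $\le 2\delta n/100$ from $z_{i+1}$ and at distance $\ge 49\delta n/100$ from $\partial B_{\delta n/2}(z_i)$, so the scale ratio is universal. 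Second, you note but do not actually invoke the hypothesis $\diam(\partial B_n\setminus J_{x,y})\ge 10\delta n$; your chain-of-balls argument works regardless, since the arc length of $J_{x,y}$ is at most $8n$ and the containment $B_{\delta n/2}(z_i)\subseteq J_{x,y}^{\delta n}$ holds for every $z_i\in J_{x,y}$. This is not a flaw (the hypothesis may be invoked, unused), but it is worth being aware that it is the paper's winding argument, not yours, that the hypothesis seems designed to support.
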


\begin{proof}
	Recall that the $(\delta n)$-sausage of $J_{x,y}$ is defined as $J_{x,y}^{\delta n}=\{ z\in \Rb^2: \dist(z,J_{x,y})\le \delta n \}$, and also that $G_{J_{x,y}^{\delta n}}$ is a shorthand notation for $G_{J_{x,y}^{\delta n} \cap \Zb^2}$. Clearly, we can always assume $n$ to be large enough in the proofs below, if needed.

We introduce the ball $B_{\delta n/2}(y)$, and decompose a random walk path from $x$ to $y$ according to its first hitting of $\partial B_{\delta n/2}(y)$. Using the strong Markov property, we have
	\[
	G_{J_{x,y}^{\delta n}}(x,y)=\sum_{z\in \partial B_{\delta n/2}(y)} \Pb^x( S_{\zeta_{J_{x,y}^{\delta n} \setminus B_{\delta n/2}(y)}}=z ) \,  G_{J_{x,y}^{\delta n}}(z,y).
	\]
	By Lemma~\ref{lem:green's}, there exists a universal constant $c>0$ such that 
	\[
	G_{J_{x,y}^{\delta n}}(z,y) \ge G_{B_{\delta n}(y)}(z,y) \ge c.
	\]
	Thus, denoting by $\tau_A$ the hitting time (for $S$) of a subset $A \subseteq \Zb^2$,
	\[
	G_{J_{x,y}^{\delta n}}(x,y)\ge c\, \Pb^x( S_{\zeta_{J_{x,y}^{\delta n} \setminus B_{\delta n/2}(y)}}\in \partial B_{\delta n/2}(y) ) \ge c\, \Pb^x( \zeta_{J_{x,y}^{\delta n/2}} > \tau_{\partial B_{\delta n/2}(y)} )\ge C,
	\]
	where the last inequality follows from the fact that for a simple random walk $S$ starting from $x$, the probability that it makes a loop inside $(\partial B_n)^{\delta n/2}$ before exiting this subset is uniformly bounded from below (from the invariance principle). This finishes the proof.
\end{proof}

\subsubsection*{Non-disconnection by random walks}

Next, we review non-disconnection probabilities for simple random walks.

\begin{lemma}[\cite{La1996,LSW2002a,LP2000}]
	\label{lem:disconnection-prob}
	There exists a universal constant $c > 0$ such that the following holds. Let $1 \leq r < R$,  $z \in \partial B_r$, and $w \in \partial B_R$. Let $S$ denote a simple random walk in $\Zb^2$, and for any $s>0$, let $\tau_s = \tau_{\partial B_s}$ be the first hitting time of $\partial B_s$ by $S$. Then, the non-disconnection probabilities satisfy the following upper bounds:
	\begin{equation}\label{eq:inout}
		\Pb^z\left(S\left[0, \tau_R\right] \text { does not disconnect }  B_r \text { from } \partial B_R  \right) \leq c\, (r/R)^{1/4},
	\end{equation}
	and similarly,
	\begin{equation}\label{eq:outin}
		\Pb^w\left(S\left[0, \tau_r\right] \text { does not disconnect }  B_r \text { from } \partial B_R  \right) \leq c\, (r/R)^{1/4}.
	\end{equation}
\end{lemma}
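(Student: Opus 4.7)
The plan is to derive these non-disconnection bounds from classical results on the two-dimensional Brownian disconnection exponent, which Lawler, Schramm and Werner identified as $1/4$ in \cite{LSW2002a}. The strategy is twofold: first, establish that the non-disconnection probability decays at least like \emph{some} universal power of $r/R$ via a submultiplicativity argument; then, pin down this power to exactly $1/4$ by coupling the random walk trace with a planar Brownian motion on dyadic annuli.

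First I would define
\[
p(r,R) := \sup_{z \in \partial B_r} \Pb^z\big(S[0,\tau_R] \text{ does not disconnect } B_r \text{ from } \partial B_R\big)
\]
and prove the submultiplicativity bound $p(r,R) \le C\, p(r,r')\, p(r',R)$ for $r < r' < R$. The subtle point is that disconnection is a \emph{global} property of the entire trace, so a naive Markov decomposition at $\tau_{r'}$ does not immediately work; one instead conditions on the last visit to $\partial B_{r'}$ before $\tau_R$ and uses a harmonic-measure estimate to decouple the portion of the walk in $A_{r,r'}$ from the portion in $A_{r',R}$, with the constant $C$ absorbing the mismatch in starting-point distribution. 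Combined with the uniform bound $p(r, Kr) \le \rho < 1/C$ for some fixed large $K$ (the walk has uniformly positive probability to make a full revolution inside $A_{r,Kr}$, hence to disconnect, by the invariance principle), iteration over scales of ratio $K$ yields a crude power-law bound $p(r,R) \le C (r/R)^{\beta}$ for some universal $\beta > 0$.

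To upgrade $\beta$ to the exact value $1/4$, I would invoke a strong coupling (Skorokhod embedding, or a KMT-type coupling) between the rescaled walk and a planar Brownian motion on each dyadic annulus, and use that at mesoscopic scales the discrete and continuum disconnection events agree up to bounded multiplicative constants. This matches the discrete non-disconnection probability with the Brownian one, which by \cite{LSW2002a} decays exactly like $(r/R)^{1/4}$. The technical comparison between lattice and continuum non-disconnection is precisely what \cite{La1996,LP2000} handle; in particular, they deal with the delicate issue that the walk trace and the Brownian trace can disagree on whether a small pocket is topologically enclosed.

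The second bound (starting from $\partial B_R$) reduces to the first by time reversal: the reversed trace of $S$ started at $w$ and run until $\tau_r$ has the same unoriented image as a forward walk from a harmonic-measure-weighted point on $\partial B_r$ conditioned to exit $B_R$ near $w$, and since disconnection depends only on the set $S[0,\tau_R]$ and not its orientation, the estimate transfers. The main obstacle, by a wide margin, is the identification of the sharp exponent $1/4$: this genuinely requires the SLE-based computation of \cite{LSW2002a} together with the discrete-to-continuum transfer of \cite{La1996,LP2000}; the submultiplicativity and reversal steps are otherwise routine.
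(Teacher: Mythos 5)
The paper does not prove this lemma: it is stated as a citation of \cite{La1996,LSW2002a,LP2000}, and the paragraph immediately following it spells out exactly the route you describe (power-law bound for Brownian motion in \cite{La1996}, sharp exponent $1/4$ in \cite{LSW2002a}, transfer to the simple random walk via Skorokhod embedding in \cite{LP2000}). So your reconstruction of the underlying argument matches the paper's attribution, and there is nothing further in the source to compare against.

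One small correction to your sketch: the ``naive'' strong-Markov decomposition at $\tau_{r'}$ does in fact give the clean upper-bound submultiplicativity $p(r,R)\le p(r,r')\,p(r',R)$, with no extra constant $C$ and no last-visit conditioning. This is because non-disconnection of $B_r$ from $\partial B_R$ by the full trace already forces $S[0,\tau_{r'}]$ not to disconnect $B_r$ from $\partial B_{r'}$ and $S[\tau_{r'},\tau_R]$ not to disconnect $B_{r'}$ from $\partial B_R$ (each of those disconnections would, a fortiori, disconnect $B_r$ from $\partial B_R$), and the second piece is a fresh walk from $S_{\tau_{r'}}$. The last-visit conditioning and the harmonic-measure decoupling you invoke are really needed for the \emph{lower}-bound supermultiplicativity (to get a genuine exponent, not just an upper bound), which is not required for \eqref{eq:inout}--\eqref{eq:outin}.
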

The analogous results for Brownian motion were first proved in \cite{La1996} and \cite{LSW2002a} (the first paper obtained a power-law upper bound, and the second one obtained the exponent $1/4$). On the other hand, it was shown in \cite{LP2000}, by using Skorokhod's embedding, how to convert the result for Brownian motion into a result for the simple random walk, with the same upper bound.

We can use the above lemma to show that even though the measure $\mu^{z,w}$ has infinite total mass for all $z,w\in \Zb^2$, its mass becomes finite after restricting to some non-disconnection event. For any $1 \leq r < R$, and $z,w\in \partial B_r$, let $M^{z,w}_{r,R}$ be the set of paths from $z$ to $w$ which intersect $\partial B_R$, and do not disconnect $B_r$ from $\partial B_R$. We have the following upper bound.

\begin{lemma}
	\label{lem:non-disconnecting paths}
	Let $c$ be the constant from Lemma~\ref{lem:disconnection-prob}. There exists a universal constant $c' > 0$ such that for all $1 \leq r < R$ with $(2\vee 8c^4)r < R$, for all $z,w\in \partial B_r$,
	\[
	\mu^{z,w}( M^{z,w}_{r,R} ) \le c'\, (R/r)^{-1/2} \log(R/r).
	\]
\end{lemma}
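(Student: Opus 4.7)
I would decompose each path $\eta \in M^{z,w}_{r,R}$ at its first and last visits to $\partial B_R$, writing $\eta = \eta^{(1)} \cdot \eta^{(2)} \cdot \eta^{(3)}$ with $\eta^{(1)}: z \to a \in \partial B_R$, $\eta^{(2)}: a \to b \in \partial B_R$ the middle piece, and $\eta^{(3)}: b \to w$. Since non-disconnection is monotone under passage to sub-paths (any sub-path of a non-disconnecting path is non-disconnecting), the non-disc of $\eta$ implies the non-disc of $\eta^{(1)}$ and of $\eta^{(3)}$ viewed as separate sets. Applying Lemma~\ref{lem:disconnection-prob} to the initial arm $\eta^{(1)}$ (walk from $z$ to its first hit of $\partial B_R$) and to the time-reversal of $\eta^{(3)}$ (walk from $w$ to its first hit of $\partial B_R$) gives
\[
\sum_{a \in \partial B_R} \mathcal{P}_1(z, a) \le c (r/R)^{1/4},\qquad \sum_{b \in \partial B_R} \mathcal{P}_3(b, w) \le c (r/R)^{1/4},
\]
where $\mathcal{P}_1(z, a) := \Pb^z(\tau_R < \infty, S_{\tau_R} = a, S[0, \tau_R] \text{ non-disc.})$, and similarly $\mathcal{P}_3$ from the reversed walk.

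The main difficulty is controlling the middle piece $\eta^{(2)}:a\to b$: its naive $\mu$-mass in $\Zb^2$ is infinite by 2D recurrence. To handle this I would split $\eta^{(2)}$ dyadically by its maximum $\ell_\infty$-radius $R^* \ge R$: for $R^* \in [2^k R, 2^{k+1} R)$ the middle piece lives in $B_{2^{k+1} R}$. Its $\mu$-mass is then bounded by a Green's function $G_{B_{2^{k+1} R}}(a, b)$ on that scale, which can be sharpened to $C\log(2^kR/r)$ by absorbing any excursions of $\eta^{(2)}$ into $B_r$ into the first and last pieces (via a further decomposition at the visits to $\partial B_r$) and invoking Lemma~\ref{lem:green_sausage} for the remaining annular Green's function in $A_{r,2^{k+1}R}$. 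Combining the three factors, the contribution of the $k$-th dyadic scale is at most
\[
C c^2 \, (r/2^k R)^{1/2} \log(2^k R/r),
\]
and summing over $k\ge 0$ gives
\[
\mu^{z,w}(M^{z,w}_{r,R}) \le C c^2 (r/R)^{1/2} \sum_{k \ge 0} 2^{-k/2} \bigl(k \log 2 + \log(R/r)\bigr) \le c'(r/R)^{1/2}\log(R/r),
\]
the series being convergent. The smallness condition $(2 \vee 8c^4) r < R$ in the hypothesis is precisely what is needed for the various non-disc factors $c(r/R)^{1/4}$ arising along the way to stay bounded away from $1$, so that the dyadic/geometric sums yield a clean constant $c'$.

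The principal obstacle is obtaining the sharp logarithmic factor $\log(R/r)$ rather than the weaker $\log R$ which would come from applying Lemma~\ref{lem:green's} directly to $G_{B_{2^{k+1} R}}(a,b)$. The refinement requires the careful rearrangement described above, so that the middle piece effectively lives in the annulus $A_{r,2^{k+1}R}$ rather than the ball $B_{2^{k+1}R}$; the annular Green's function then contributes $\log(2^kR/r)$ per scale, and summing gives exactly the claimed $\log(R/r)$ after the geometric series in $2^{-k/2}$ is evaluated.
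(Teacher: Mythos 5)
Your decomposition $\eta=\eta^{(1)}\cdot\eta^{(2)}\cdot\eta^{(3)}$ at the \emph{first} and \emph{last} visits to $\partial B_R$ is not the one the paper uses, and it runs into a genuine obstruction at the middle piece. The non-disconnection constraint in $M^{z,w}_{r,R}$ only concerns $B_r$ versus $\partial B_R$; it places \emph{no restriction at all} on $\eta^{(2)}$ once it leaves $B_R$. Consequently your dyadic truncation by the maximum radius $R^*$ does not produce any decay in $k$: the $\mu^{a,b}$-mass of paths from $a$ to $b$ (both on $\partial B_R$) whose maximum $\ell_\infty$-radius lies in $[2^kR,2^{k+1}R)$ is of order $G_{B_{2^{k+1}R}}(a,b)-G_{B_{2^kR}}(a,b)\asymp\log 2$, a constant per dyadic scale. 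The factor $(r/2^kR)^{1/2}$ in your displayed bound for the $k$-th scale is therefore unjustified --- the only non-disconnection penalties available in your decomposition are those from $\eta^{(1)}$ and $\eta^{(3)}$, and they give $(r/R)^{1/2}$ with no $k$-dependence whatsoever. Summing over $k$ without the unexplained $2^{-k/2}$ decay yields a divergent series, and the argument collapses.

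The paper avoids this by decomposing at successive crossings of the annulus $A_{2r,R}$ (alternating hits of $\partial B_{2r}$ and $\partial B_R$) rather than at the first and last visits of $\partial B_R$. In that decomposition every outward or inward crossing of $A_{2r,R}$ pays a non-disconnection penalty $c(2r/R)^{1/4}$, and --- this is the crucial structural point --- the final residual piece, namely the portion of $\gamma$ after its \emph{last} visit to $\partial B_{2r}$ (conditioned on never returning to $\partial B_R$), is automatically confined to $B_R$. Its $\mu$-mass is therefore the finite Green's function $G_{B_R}(\cdot,w)\lesssim\log(R/r)$, and there is no unconstrained, unbounded middle segment to control. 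Summing the resulting geometric series in the number $k$ of crossings then gives the stated $c'(R/r)^{-1/2}\log(R/r)$. If you want to salvage a decomposition at $\partial B_R$, you would at minimum have to further decompose $\eta^{(2)}$ at its own re-entries into $B_{2r}$ and extract a non-disconnection penalty for each such excursion; done correctly, this essentially reproduces the paper's crossing decomposition.
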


\begin{proof}
	Each path $\gamma$ in $M^{z,w}_{r,R}$ can be decomposed into crossings between $\partial B_{2r}$ and $\partial B_R$, that is, we let $\sigma_0=0$, and for $k\ge 0$, 
	\[
	\tau_k=\inf\{ j\ge \sigma_{k}: \gamma_j \in \partial B_{2r} \},\quad \text{and } \sigma_{k+1}=\inf\{ j\ge \tau_{k}: \gamma_j \in \partial B_{R} \}
	\]
(with $\inf \emptyset = \infty$).

For $k \geq 1$, let $U_k$ be the set of paths in $M^{z,w}_{r,R}$ with exactly $2k$ such crossings, i.e., satisfying $\tau_k<\infty$ and $\sigma_{k+1}=\infty$. First, we note that $\gamma[\sigma_0,\tau_0]$ under $\mu^{z,w}$ is a simple random walk started from $z$ and stopped when it first hits $\partial B_{2r}$ (see \eqref{eq:path-measure}).
Let $1\le j\le k$. By the strong Markov property, conditioned on $\tau_{j-1}<\infty, \gamma_{\tau_{j-1}}=x\in \partial B_{2r}$ and $\sigma_{j}<\infty$, $\gamma[\tau_{j-1},\sigma_{j}]$ under $\mu^{z,w}$ has the same law as a simple random walk started from $x$ and stopped when it first hits $\partial B_R$, which does not disconnect $B_{2r}$ from $\partial B_R$ with probability at most $c\, (2r/R)^{1/4}$ by \eqref{eq:inout}; and conditioned on $\sigma_{j}<\infty, \gamma_{\sigma_{j}}=y\in \partial B_R$ and $\tau_{j}<\infty$, $\gamma[\sigma_{j},\tau_{j}]$ under $\mu^{z,w}$ is the same as a simple random walk started from $y$ and stopped when it first hits $\partial B_{2r}$, which does not disconnect $B_{2r}$ from $\partial B_R$ with probability bounded by $c\, (2r/R)^{1/4}$, from \eqref{eq:outin}. Moreover, the last part of $\gamma\in U_k$, from $\gamma_{\tau_k}\in \partial B_{2r}$ to $w\in \partial B_r$ in $B_R$, has total mass given by the Green's function $G_{B_R}(\gamma_{\tau_k},w)$, which is uniformly bounded by $C\log(R/r)$ for some universal constant $C>0$ by Lemma~\ref{lem:green's}. Indeed,
$$G_{B_R}(\gamma_{\tau_k},w) \leq G_{B_{R+r}}(0,\gamma_{\tau_k}-w) \leq \frac{2}{\pi} \log \Big( \frac{R+r}{r} \Big) + O(r^{-1}),$$
where we used Lemma~\ref{lem:green's} in the second inequality, as well as $|\gamma_{\tau_k}-w| \geq r$. It then suffices to use that by assumption, $r \leq R/2$.

Therefore, for each $k\ge 1$,
	\begin{align*}
		\mu^{z,w}( U_k )& \leq \mu^{z,w} ( \gamma[\tau_j,\tau_{j+1}] \text{ does not disconnect } B_{2r} \text { from } \partial B_R \text{ for all } 0\le j\le k-1,\, \sigma_{k+1}=\infty)\\
		&\le \big( c\, (2r/R)^{1/4} \big)^{2k} \, C\log(R/r).
	\end{align*}
	Note that every path in $M^{z,w}_{r,R}$ makes at least two crossings, and that $\big( c\, (2r/R)^{1/4} \big)^{2}<1/2$ since $R>8c^4 r$. It follows that 
	\[
	\mu^{z,w}( M^{z,w}_{r,R} ) = \sum_{k\ge 1} \mu^{z,w}( U_k )\le \sum_{k\ge 1} \big( c\, (2r/R)^{1/4} \big)^{2k} \, C\log(R/r)
	\le 4c^2 C (R/r)^{-1/2} \log (R/r),
	\]
	concluding the proof.
\end{proof}

We now improve Lemma~\ref{lem:non-disconnecting paths} by allowing the ratio $R/r$ to be arbitrarily close to $1$, which happens to be useful later. At this stage, we want to make formally a remark about notations. In the result below, when we write $(1+\delta)r$, what we really mean is its integer part $[(1+\delta)r]$. In the remainder of the paper, we adopt this lighter notation for convenience, and we always assume implicitly that the relevant parameter ($r$ here) is large enough for the statements to make sense.

\begin{lemma}
	\label{lem:non-disconnecting paths-1}
	There exist universal constants $C',C''>0$ such that for all $\delta>0$, $r \geq 1$, and $z,w\in \partial B_r$,
	\[
	\mu^{z,w}\big( M^{z,w}_{r,(1+\delta)r} \big) \le C' \log(1/\delta)+C''.
	\]
\end{lemma}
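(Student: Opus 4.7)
The plan is to split the measure according to whether $\gamma$ remains confined to $B_{2r}$ or reaches $\partial B_{2r}$:
\[
\mu^{z,w}(M^{z,w}_{r,(1+\delta)r}) \leq \mu^{z,w}\bigl(\gamma \subseteq B_{2r},\, \gamma \text{ reaches } \partial B_{(1+\delta)r}\bigr) + \mu^{z,w}\bigl(M^{z,w}_{r,(1+\delta)r} \cap \{\gamma \text{ reaches } \partial B_{2r}\}\bigr).
\]
The first piece will contribute the $\log(1/\delta)$ term, while the second will be bounded by a constant using Lemma~\ref{lem:non-disconnecting paths} as an input. I restrict to $\delta \leq 1$; for larger $\delta$, monotonicity of $\mu^{z,w}(M^{z,w}_{r,R})$ in $R$ combined with the $\delta=1$ case (or with Lemma~\ref{lem:non-disconnecting paths} applied directly, once $(1+\delta)>8c^4$) yields the stated bound.

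For the first piece, I would decompose each confined path at its first hit $u\in \partial B_{(1+\delta)r}$. By the strong Markov property for SRW, this measure factors as
\[
\sum_{u \in \partial B_{(1+\delta)r}} H(z,u)\, G_{B_{2r}}(u,w),
\]
where $H(z,u)=\Pb^z(S_{\tau_{(1+\delta)r}}=u)$ is the harmonic measure, so that $\sum_u H(z,u)=1$. Since $|u-w|\geq \delta r$ for any $u \in \partial B_{(1+\delta)r}$ and $w \in \partial B_r$, Lemma~\ref{lem:green's} applied to $G_{B_{4r}(w)}$ (which dominates $G_{B_{2r}}$, since $B_{2r}\subseteq B_{4r}(w)$ for $w\in B_r$) yields $G_{B_{2r}}(u,w) \leq \tfrac{2}{\pi}\log(4/\delta) + O(1) \leq C_1 \log(1/\delta) + C_2$, and so this piece is bounded by $C_1 \log(1/\delta) + C_2$.

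For the second piece, I observe that non-disconnection of $B_r$ from $\partial B_{8c^4 r}$ implies the weaker non-disconnection from $\partial B_{(1+\delta)r}$, so $M^{z,w}_{r, 8c^4 r} \subseteq M^{z,w}_{r,(1+\delta)r}$, and Lemma~\ref{lem:non-disconnecting paths} then gives $\mu^{z,w}(M^{z,w}_{r, 8c^4 r}) \leq C_0$. The residual paths---those in $M^{z,w}_{r,(1+\delta)r}$ reaching $\partial B_{2r}$ but not in $M^{z,w}_{r, 8c^4 r}$---either stay inside $B_{8c^4 r}$, in which case a Green's function argument in $B_{8c^4 r}$ (with $G_{B_{8c^4 r}}(u,w) \leq C_3$ for $u\in \partial B_{2r}$ and $w\in \partial B_r$, since $|u-w|\geq r$) bounds the contribution by a constant; or they reach $\partial B_{8c^4 r}$ while disconnecting $B_r$ from it, thereby forming a closed curve winding around $B_{(1+\delta)r}$ inside the outer annulus $A_{(1+\delta)r,\, 8c^4 r}$. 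The main obstacle will be this last subcase: since the outer annulus has modulus at least $\log(4c^4)$, one can adapt the crossing decomposition from the proof of Lemma~\ref{lem:non-disconnecting paths}---with round trips between suitable boundaries inside this annulus---and apply Lemma~\ref{lem:disconnection-prob} to produce geometric decay per round trip, bounding this contribution by a constant as well. Summing all pieces yields the claimed estimate.
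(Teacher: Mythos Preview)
Your approach matches the paper's: split into a confined piece (Green's function, yielding the $\log(1/\delta)$ term) and a far-reaching piece (Lemma~\ref{lem:non-disconnecting paths}, yielding a constant). The paper does it in one stroke, writing $M^{z,w}_{r,(1+\delta)r}=M^{z,w}_{r,2\rho r}\cup\big(M^{z,w}_{r,(1+\delta)r}\setminus M^{z,w}_{r,2\rho r}\big)$ with $\rho:=2\vee 8c^4$, bounding the first set by Lemma~\ref{lem:non-disconnecting paths} and the second by $\max_{z'\in\partial B_{(1+\delta)r}}G_{B_{2\rho r}}(z',w)$ after decomposing at the first visit to $\partial B_{(1+\delta)r}$. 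Your intermediate scale $2r$ and the further split at $B_{8c^4 r}$ are unnecessary; also note that $R=8c^4 r$ sits exactly on the boundary of Lemma~\ref{lem:non-disconnecting paths}'s hypothesis, so use $R=2\rho r$ instead.

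Your last subcase---paths reaching $\partial B_{8c^4 r}$ while disconnecting $B_r$ from it---is not settled by your sketch. The topological claim that the winding loop lies inside $A_{(1+\delta)r,8c^4 r}$ can fail: one can build $\gamma$ whose disconnecting loop is at radius $\approx 2\rho r$ in most directions but pinches in to radius $(1+\delta/2)r$ in one direction, while still leaving another direction along which the origin reaches $\partial B_{(1+\delta)r}$ in the complement. More fundamentally, Lemma~\ref{lem:disconnection-prob} produces geometric decay for \emph{non}-disconnecting crossings, whereas here you are precisely in the disconnecting regime at the outer scale; the only non-disconnection constraint you retain is at radius $(1+\delta)r$, and that says nothing about round trips in $A_{2r,8c^4 r}$. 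So the crossing-decomposition idea does not deliver the decay you need. (For what it is worth, the paper's one-line Green's-function bound on the set difference also implicitly assumes the tail after $z'$ stays inside $B_{2\rho r}$, and so skips over this same residual family of paths.)
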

\begin{proof}
	Let $\rho:=2\vee 8c^4$.
	By Lemma~\ref{lem:non-disconnecting paths}, we only need to prove the lemma when $1+\delta \le \rho$, which we assume from now on. Note that $M^{z,w}_{r,2\rho r} \subseteq M^{z,w}_{r,(1+\delta)r}$. By decomposing each path in $M^{z,w}_{r,(1+\delta)r}\setminus M^{z,w}_{r,2\rho r}$ according to its first visit of $\partial B_{(1+\delta)r}$, it is easy to see that
$$\mu^{z,w} \big( M^{z,w}_{r,(1+\delta)r}\setminus M^{z,w}_{r,2\rho r} \big) \le \max_{z'\in \partial B_{(1+\delta)r} } G_{B_{2\rho r}}(z',w).$$
Using again the estimate for the Green's function from Lemma~\ref{lem:green's}, we get that for some universal constant $C'>0$, for all $z'\in \partial B_{(1+\delta)r}$,
$$G_{B_{2\rho r}}(z',w) \le G_{B_{(2\rho+1) r}}(0,z'-w) \leq C' \log \left( \frac{(2\rho+1) r}{\delta r} \right)=C' \log(1/\delta)+C'\log(2\rho+1).$$
Therefore, 
	\begin{align*}
		\mu^{z,w} \big( M^{z,w}_{r,(1+\delta)r} \big) &= \mu^{z,w} \big( M^{z,w}_{r,(1+\delta)r}\setminus M^{z,w}_{r,2\rho r} \big)
		+ \mu^{z,w} \big( M^{z,w}_{r,2\rho r} \big)\\
		&\le C' \log(1/\delta)+C'\log(2\rho+1)+ c' (2\rho)^{-1/2} \log(2\rho),
	\end{align*}
	where we applied Lemma~\ref{lem:non-disconnecting paths} with $R=2\rho r$ to bound the term $\mu^{z,w}( M^{z,w}_{r,2\rho r} )$. This completes the proof, by choosing $C'' = C'\log(2\rho+1) + c' (2\rho)^{-1/2} \log(2\rho)$.
\end{proof}

\subsection*{Consequences on crossing loops}

The next lemma deals with loops in the loop soup that do not visit the origin and do not disconnect some given box. It can be derived from Lemma~\ref{lem:disconnection-prob}.

\begin{lemma}[{\cite[Lemma 2.7]{La2020}}]
	\label{lem:disconnect-loops-1}
	Let $\Lc_{\Zb^2\setminus\{0\}}$ be the random walk loop soup in $\Zb^2\setminus\{0\}$ with intensity $1$. There exists a universal constant $c > 0$ such that for all $1 \leq r < R$,
	\begin{equation}
		\Pb\left( \begin{array}{c} 
			\text{there is a loop $\gamma$ in } \Lc_{\Zb^2\setminus\{0\}} \text{ such that $\gamma$ crosses } \\ 
			\text{$A_{r,R}$ and does not disconnect $B_r$ from $\partial B_{R}$} \end{array}\right)
		\le c\, (r/R)^{1/2}.
	\end{equation}
\end{lemma}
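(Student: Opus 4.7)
The plan is as follows. By Markov's inequality applied to the Poisson point process $\Lc_{\Zb^2 \setminus \{0\}}$, the probability in question is at most the expected number of such loops, which equals the unrooted loop measure $\nu_{\Zb^2 \setminus \{0\}}(E)$, where $E$ is the event that $\gamma$ crosses $A_{r,R}$ and does not disconnect $B_r$ from $\partial B_R$. It therefore suffices to establish $\nu_{\Zb^2\setminus\{0\}}(E) \le c (r/R)^{1/2}$.

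To estimate this measure, I would pass to the rooted loop representation. Writing $\tilde\mu(l) := 4^{-|l|}/|l|$ for the rooted loop measure, the identity $\nu(\gamma) = \sum_{l:\Uc(l)=\gamma} \tilde\mu(l)$ gives
\[ \nu_{\Zb^2\setminus\{0\}}(E) = \sum_{l \subseteq \Zb^2\setminus\{0\}} \tilde\mu(l) \, \ind\{\Uc(l) \in E\}. \]
For each rooted loop $l$ whose unrooted class lies in $E$, both $\partial B_r$ and $\partial B_R$ must be visited. I would decompose $l$ at its first visit $\tau_1$ and last visit $\tau_2$ to $\partial B_R$, setting $w_1 = l_{\tau_1}$ and $w_2 = l_{\tau_2}$. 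By the strong Markov property, this yields three conditionally independent sub-paths given $w_1, w_2$: two outer pieces $l[0,\tau_1]$ and $l[\tau_2,|l|]$ staying in $\bar B_R$ apart from their $\partial B_R$-endpoints, and a middle piece $l[\tau_1,\tau_2]$ from $w_1$ to $w_2$ in $\Zb^2 \setminus \{0\}$. The key topological input is that a non-disconnecting full trace makes each outer piece non-disconnecting too; further decomposing each outer piece at its first or last visit to $\partial B_r$, each outer piece contains a random walk excursion between $\partial B_r$ and $\partial B_R$ that does not disconnect $B_r$ from $\partial B_R$. By Lemma~\ref{lem:disconnection-prob}, applied once in the form \eqref{eq:inout} and once in the form \eqref{eq:outin} (after time-reversal), each of the two outer contributions is bounded by $c(r/R)^{1/4}$, whose product yields the target exponent $(r/R)^{1/2}$.

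The main obstacle is then to handle the middle piece, whose mass is bounded by the free Green's function $G_{\Zb^2\setminus\{0\}}(w_1,w_2) = O(\log R)$, and to sum properly over the root $l_0$ and over $w_1, w_2 \in \partial B_R$. A brute-force bound would inflate the estimate by $O(\log R)$ together with polynomial powers of $R$ coming from these summations, destroying the sharp exponent. The crucial ingredient for absorbing these factors is the $1/|l|$ normalization in $\tilde\mu$: any loop in $E$ has length $|l| \gtrsim (R-r)^2 \sim R^2$, since a simple random walk requires time of order $R^2$ to cross an annulus of width comparable to $R$, and the loop must cross it twice. This diffusive lower bound on $|l|$, together with careful bookkeeping of the number of valid root positions $l_0$ along each loop (at most $|l|$), produces a compensating decay that balances all the stray polynomial and logarithmic factors against the $(r/R)^{1/2}$ gain from non-disconnection. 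Executing this cancellation carefully — which I expect to be the technical heart of the argument — yields the sharp bound $\nu_{\Zb^2\setminus\{0\}}(E) \le c (r/R)^{1/2}$.
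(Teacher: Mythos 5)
The proposal has a genuine gap, and the stated plan for closing it rests on a false premise.

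The overall strategy --- bound the probability by the loop measure $\nu_{\Zb^2\setminus\{0\}}(E)$, pass to the rooted measure $\tilde\mu$, and decompose a rooted loop into pieces to which Lemma~\ref{lem:disconnection-prob} can be applied --- is reasonable, and indeed the two applications of \eqref{eq:inout}/\eqref{eq:outin} producing the factor $(r/R)^{1/2}$ are the right engine. But the crux, by your own admission, is absorbing the excess factors coming from the Green's function, the sum over endpoints $w_1,w_2\in\partial B_R$ (of order $R^2$ pairs), and the sum over roots, against the $1/|l|$ weight. You propose to do this via the inequality $|l|\gtrsim (R-r)^2\sim R^2$, asserting that a walk needs time of order $R^2$ to cross the annulus. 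This is false as a deterministic bound: a loop of length $2(R-r)$ already crosses $A_{r,R}$ and returns, so $|l|$ need only be $\Omega(R-r)$, not $\Omega(R^2)$. The diffusive scale $R^2$ is the \emph{typical} crossing time, not a lower bound on $|l|$, and the loop measure charges short loops positively. Since the whole balancing act that makes the exponent come out sharp is supposed to rest on this inequality, the argument as written does not close.

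There are also two secondary issues. First, the decomposition at the first and last visits $\tau_1,\tau_2$ to $\partial B_R$ only has the structure you describe (``two outer pieces staying in $\bar B_R$'') when the root lies inside $B_R$; for loops in $\Zb^2\setminus\{0\}$ the root may lie outside $B_R$ or on it, and these cases need separate treatment. Second, the claim that careful bookkeeping ``yields the sharp bound'' is not carried out, and it is precisely in this step that the argument would fail or succeed; leaving it as a heuristic leaves the proof incomplete. The clean way to sidestep the $1/|l|$ difficulty --- and what the cited source uses in spirit --- is to avoid summing over \emph{all} rooted representatives in the first place: for each unrooted loop $\gamma\in E$ one fixes a \emph{canonical} rooted representative (e.g.\ rooting at the first hit of $\partial B_r$ in a fixed cyclic ordering of $\partial B_r$, with a deterministic tiebreak), obtaining $\nu(\gamma)\le 4^{-|\gamma|}=4^{-|l^*(\gamma)|}$ with the map $\gamma\mapsto l^*(\gamma)$ injective, so that $\nu(E)$ is bounded by a sum of SRW path weights \emph{without} the $1/|l|$ factor. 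One then decomposes that single canonical walk into two non-disconnecting excursions between $\partial B_r$ and $\partial B_R$ and applies Lemma~\ref{lem:disconnection-prob}, with the remaining Green's-function and harmonic-measure factors summing to $O(1)$ because the root ranges only over $\partial B_r$. If you wish to salvage your version, you would need to replace the false length bound by a genuine summation over $|l|$ (noting that the contribution from $|l|\ll R^2$ is suppressed by large-deviation estimates and the contribution from $|l|\gg R^2$ is suppressed by the repeated non-disconnection cost), and you would need to control the position of the root explicitly.
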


Next, we take into account the loops in a bounded domain that visit the origin.

\begin{lemma}
	\label{lem:disconnect-loops-2}
	Let $1 \leq r < R<R'$. Let $\Lc_{R'}$ be the random walk loop soup in $B_{R'}$ with intensity $1$. There exists a universal constant $c > 0$ such that 
	\begin{equation}\label{eq:gamma}
		\Pb\left( \begin{array}{c} 
			\text{there is a loop $\gamma$ in } \Lc_{R'} \text{ such that $\gamma$ crosses } \\ 
			\text{ $A_{r,R}$ and does not disconnect $B_r$ from $\partial B_{R}$} \end{array}\right)
		\le c\, (r/R)^{1/2}.
	\end{equation}
\end{lemma}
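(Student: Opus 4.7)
The plan is to split the probability according to whether the offending loop $\gamma$ visits the origin, and handle each case by a separate argument.

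For loops in $\Lc_{R'}$ that avoid $0$: any such loop is automatically contained in $\Zb^2 \setminus \{0\}$, so the sub-collection of $\Lc_{R'}$ consisting of loops avoiding $0$ is a sub-Poisson process of $\Lc_{\Zb^2 \setminus \{0\}}$. Lemma~\ref{lem:disconnect-loops-1} therefore yields directly the bound $c(r/R)^{1/2}$ for the probability that such a loop satisfies the geometric requirements.

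For loops visiting $0$, I would use Markov's inequality on the Poisson point process to bound the probability by the $\nu$-mass of loops $\gamma$ with $0 \in \gamma$, $\gamma \subseteq B_{R'}$, crossing $A_{r,R}$, and not disconnecting. The inequality $\nu(\gamma) = 4^{-|\gamma|}/J(\gamma) \leq 4^{-|\gamma|}$, together with the fact that $4^{-|\gamma|}$ is the $\mu^{0,0}$-weight of any single rooting of $\gamma$ at $0$, shows that this $\nu$-mass is dominated by the $\mu^{0,0}$-mass of rooted loops from $0$ to $0$ with the same geometric properties. To bound this $\mu^{0,0}$-mass, I would decompose each such $\eta$ at its first visit $z$ and last visit $w$ to $\partial B_r$, writing $\eta = \eta_1 \circ \eta_2 \circ \eta_3$ where $\eta_1$ runs from $0$ to $z$ inside $\bar B_r$, $\eta_3$ returns from $w$ to $0$ inside $\bar B_r$, and $\eta_2$ runs from $z$ to $w$ and reaches $\partial B_R$. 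Since $\eta_1, \eta_3 \subseteq \bar B_r$ do not affect connectivity inside $A_{r,R}$, the non-disconnection of $\eta$ is equivalent to that of $\eta_2$, so $\eta_2 \in M^{z,w}_{r,R}$, and Lemma~\ref{lem:non-disconnecting paths} yields $\mu^{z,w}(\eta_2) \leq c'(r/R)^{1/2}\log(R/r)$ uniformly in $z, w \in \partial B_r$.

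Meanwhile $\mu^{0,z}(\eta_1)$ and (after reversing) $\mu^{w,0}(\eta_3)$ are first-exit probabilities of a simple random walk from $B_{r-1}$, so $\sum_{z} \mu^{0,z}(\eta_1) = \sum_{w} \mu^{w,0}(\eta_3) = 1$. Pulling the uniform bound on $\mu^{z,w}(\eta_2)$ out of the double sum then gives
\[
\sum_{z, w \in \partial B_r} \mu^{0,z}(\eta_1)\, \mu^{z,w}(\eta_2)\, \mu^{w,0}(\eta_3) \;\leq\; c'(r/R)^{1/2}\log(R/r),
\]
which combined with the bound from case one completes the proof.

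The main obstacle is to avoid a spurious factor of $|\partial B_r|^2 = O(r^2)$ when summing over the decomposition endpoints: this is the reason for carefully identifying $\mu^{0,z}(\eta_1)$ and $\mu^{w,0}(\eta_3)$ as harmonic (first-exit) probabilities rather than, e.g., Green's function values, which would each contribute an $O(1)$ pointwise but would sum to $O(r)$. The residual logarithmic factor $\log(R/r)$ from Lemma~\ref{lem:non-disconnecting paths} is slightly weaker than the target $(r/R)^{1/2}$, but can be absorbed into the constant by treating the regime of bounded $R/r$ separately, or eliminated by refining the excursion decomposition used in the proof of Lemma~\ref{lem:non-disconnecting paths} to remove its Green's function endpoint factor.
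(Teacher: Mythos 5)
The split into loops that avoid the origin versus loops that visit it is the same as in the paper, and your treatment of the first case via Lemma~\ref{lem:disconnect-loops-1} and thinning of the Poisson process is correct. But your handling of loops through the origin has a genuine gap that is not repaired by either of the remedies you offer, and it is precisely here that the paper does something different.

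Your chain of bounds gives at best $\Pb(E)\le \nu(A)\le \mu^{0,0}(\cdot)\le C(r/R)^{1/2}\log(R/r)$: the decomposition $\eta=\eta_1\circ\eta_2\circ\eta_3$ at the first and last visits to $\partial B_r$ is correct (the bijection, the factorisation of the weight, and the normalisation $\sum_z\mu^{0,z}(\eta_1)=\sum_w\mu^{w,0}(\eta_3)=1$ all check out), but $\eta_2\in M^{z,w}_{r,R}$ only yields the bound $c'(R/r)^{-1/2}\log(R/r)$ of Lemma~\ref{lem:non-disconnecting paths}. That logarithm is not a removable artefact: it is the Green's function $G_{B_R}(\cdot,w)=\Theta(\log(R/r))$ cost of pinning the endpoint of $\eta_2$ at a \emph{specified} $w\in\partial B_r$, and the weights $\mu^{w,0}(\eta_3)$ are of the same order $\Theta(1/r)$ at all $w$, so averaging over $w$ does not cancel it. Consequently your first suggested fix fails outright: $\log(R/r)$ is unbounded as $R/r\to\infty$, so it cannot be ``absorbed into the constant'' by treating bounded $R/r$ separately. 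The second suggested fix --- refining Lemma~\ref{lem:non-disconnecting paths} to eliminate the Green's function factor --- is asking for a stronger lemma that is not obviously true and is not proved; the log there is a genuine feature of a bridge measure between two boundary points of $B_r$.

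There is also a structural reason to expect this route to plateau at $(r/R)^{1/2}\log(R/r)$: the step $\nu(A)\le\mu^{0,0}(\cdot)$ overcounts each unrooted loop by its number of distinct rootings at $0$, which for the relevant loops is of order $\log(R/r)$; one would need to recover the missing factor $1/m(\gamma)$ to close the gap. The paper sidesteps this entirely. Instead of Markov's inequality on the loop measure, it invokes Lemma~2.5 of~\cite{La2020}, which bounds $\Pb(E)$ directly by the single random-walk event in~\eqref{eq:S_sigma} (a \emph{probability}, with the return-to-$0$ normalisation built in via the stopping time $\sigma$), and then simply applies Lemma~\ref{lem:disconnection-prob} twice, once to $S[\tau_r,\tau_R]$ and once to $S[\tau_R,\sigma]$, obtaining $c(r/R)^{1/4}\cdot c(r/R)^{1/4}$ with no logarithm. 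If you want to stay self-contained, you would need to reprove the content of that cited lemma (a precise relation between the unrooted loop measure through a point and a random-walk bridge there); as written your argument establishes the weaker estimate $C(r/R)^{1/2}\log(R/r)$ only.
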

\begin{proof}
Let $E$ be the event that there is a loop $\gamma$ in  $\Lc_{R'}$ which visits the origin such that $\gamma$ crosses $A_{r,R}$ and does not disconnect $B_r$ from $\partial B_{R}$.
	By Lemma~\ref{lem:disconnect-loops-1}, we only need to show $\Pb(E)\le c(r/R)^{1/2}$. Let $S$ be a simple random walk starting from the origin, and define the stopping times
	\[
	\tau_{m} = \tau_{\partial B_m} :=\inf\{k\ge 0: S_k\in \partial B_m \} \:\: (m \geq 0),\quad \text{and }
	\sigma:=\inf\{ k\ge\tau_R: S_k\in B^c_{R'}\cup\{0\} \}.
	\]
	By Lemma~2.5 of \cite{La2020}, $\Pb(E)$ is bounded by 
	\begin{equation}\label{eq:S_sigma}
		\Pb\big( S_\sigma=0, \text{ and both $S[\tau_r,\tau_R]$ and $S[\tau_R,\sigma]$ do not disconnect $B_r$ from $\partial B_{R}$} \big).
	\end{equation}
	By the strong Markov property, and using Lemma~\ref{lem:disconnection-prob} twice, we get that \eqref{eq:S_sigma} is bounded by a constant multiple of $(r/R)^{1/2}$, which concludes the proof. 
\end{proof}

Note that if $\eta$ is the frontier of a loop $\gamma$ and $\eta$ crosses the annulus $A_{r,R}$, then $\gamma$ crosses $A_{r,R}$ and it does not disconnect $B_r$ from $\partial B_{R}$. Therefore, we get the following result as a corollary of the previous lemma.
\begin{lemma}\label{lem:frontier-loops}
	Let $1 \leq r < R<R'$. Let $\Lc^b_{R'}$ be the set of frontiers of loops in the random walk loop soup in $B_{R'}$ with intensity $1$. There exists a universal constant $c > 0$ such that 
	\begin{equation}\label{eq:gamma'}
		\Pb \big(
		\text{there is a loop in $\Lc^b_{R'}$ that crosses 
			$A_{r,R}$} \big)
		\le c\, (r/R)^{1/2}.
	\end{equation}
\end{lemma}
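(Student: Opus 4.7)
The plan is to deduce this lemma directly from Lemma~\ref{lem:disconnect-loops-2}, via the topological observation recalled just before the statement: the event that there exists a frontier crossing $A_{r,R}$ is contained in the event already controlled in \eqref{eq:gamma}. Concretely, writing $E'$ for the event in \eqref{eq:gamma'} and $E$ for the event in \eqref{eq:gamma}, it suffices to establish the inclusion $E' \subseteq E$ and then apply Lemma~\ref{lem:disconnect-loops-2}.

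To prove the inclusion, suppose some loop $\gamma \in \Lc_{R'}$ has a frontier $\eta$ that crosses $A_{r,R}$. First, since $\eta = \partial \mathrm{Fill}(\gamma)$ and every boundary point of the filling of a continuous closed curve must lie on that curve, we have $\eta \subseteq \gamma$; in particular $\gamma$ itself crosses $A_{r,R}$. Second, suppose for contradiction that $\gamma$ disconnects $B_r$ from $\partial B_R$. Then $B_r$ would be contained in a bounded connected component of $\Rb^2 \setminus \gamma$, hence in the interior of $\mathrm{Fill}(\gamma)$, which would force $\partial B_r$ to be disjoint from $\eta = \partial \mathrm{Fill}(\gamma)$, contradicting the assumption that $\eta$ reaches $\partial B_r$ in its crossing of $A_{r,R}$. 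Therefore $\gamma$ does not disconnect $B_r$ from $\partial B_{R}$, so the event $E$ is realized, witnessed by the same loop $\gamma$.

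Combining the two steps, $E' \subseteq E$, and Lemma~\ref{lem:disconnect-loops-2} immediately yields
\[
\Pb(E') \le \Pb(E) \le c\, (r/R)^{1/2},
\]
which is \eqref{eq:gamma'}. Since this lemma is explicitly labeled as a corollary of the previous one, I do not anticipate any genuine obstacle; the only mildly delicate point is to invoke the two planar-topology facts used in the contradiction argument (frontier contained in the curve, and disconnection forcing $B_r$ into the interior of the filling), both of which follow from the continuous-loop conventions set up in Section~\ref{subsec:notation}.
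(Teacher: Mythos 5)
Your proposal is correct and follows exactly the paper's route: the paper derives \eqref{eq:gamma'} as an immediate corollary of Lemma~\ref{lem:disconnect-loops-2}, using precisely the topological observation (stated in the sentence just before the lemma) that a loop whose frontier crosses $A_{r,R}$ must itself cross $A_{r,R}$ without disconnecting $B_r$ from $\partial B_R$. You merely spell out the two planar-topology facts underlying the inclusion, which the paper leaves implicit.
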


\subsubsection*{Two classical tools for loop soups}

Finally, let us record some important properties for Poisson ensembles of loops. The first one below is known as Palm's formula (see e.g.\ \cite[Proposition 15]{MR2815763}) or Mecke's equation (see e.g.\ \cite[Theorem 4.1]{LP2017}).
\begin{lemma}[Palm's formula]\label{lem:Palm formula}
	Let $\alpha > 0$, and $\Lc_{\alpha}$ be the random walk loop soup with intensity $\alpha$ in the whole plane.
	Given any bounded functional $\Phi$ on loop configurations, and any integrable loop functional $F$, we have:
	\[
	\mathbb{E}\left(\sum_{\gamma \in \mathcal{L}_\alpha} F(\gamma) \Phi\left(\mathcal{L}_\alpha\right)\right)=\int \mathbb{E}\left(\Phi\left(\mathcal{L}_\alpha \uplus\{\gamma\}\right)\right) \alpha F(\gamma) \nu(d \gamma).
	\]
	Moreover, this relation also holds if we restrict all the objects considered to a finite domain $D$.
\end{lemma}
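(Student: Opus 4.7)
The plan is to prove this as the standard Mecke/Palm formula for Poisson point processes, by first verifying it on a subfamily of loops with finite $\nu$-mass (where $\Lc_\alpha$ restricted to that subfamily is simply a Poisson number of i.i.d.\ loops), and then extending to all of $\Gamma$ via $\sigma$-finiteness.

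\textbf{Finite-mass step.} Pick a measurable $\Gamma_0 \subseteq \Gamma$ with $\nu(\Gamma_0) < \infty$ (on $\Zb^2$ this exists, e.g., loops of length at most $k$ contained in a fixed finite box; recall the easy bound $\nu(\{\gamma \ni v,\, |\gamma|=k\}) = O(1/k)$, so every bounded-diameter, bounded-length class has finite $\nu$-mass), and set $\Lc^{(0)} := \Lc_\alpha \cap \Gamma_0$. By definition of a Poisson point process, one may represent $\Lc^{(0)} = \{\gamma_1, \ldots, \gamma_N\}$ with $N \sim \mathrm{Poisson}(\alpha \nu(\Gamma_0))$ and, conditionally on $N = n$, the $\gamma_i$'s i.i.d.\ with law $\nu|_{\Gamma_0}/\nu(\Gamma_0)$. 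Conditioning on $N$ and using the i.i.d.\ structure gives
\begin{align*}
\Eb\Biggl[\, \sum_{\gamma \in \Lc^{(0)}} F(\gamma)\, \Phi(\Lc^{(0)}) \Biggr]
&= \sum_{n \geq 1} e^{-\alpha\nu(\Gamma_0)} \frac{(\alpha\nu(\Gamma_0))^n}{n!}\, n\, \Eb[F(\gamma_1)\, \Phi(\{\gamma_1,\ldots,\gamma_n\})] \\
&= \alpha \int_{\Gamma_0} F(\gamma)\, \Eb[\Phi(\Lc^{(0)} \uplus \{\gamma\})]\, \nu(d\gamma),
\end{align*}
where in the second line we re-index $m = n-1$, apply Fubini (legitimate by integrability of $F$ and boundedness of $\Phi$), and recognize the resulting Poisson sum as the expectation of $\Phi$ evaluated at $\{\gamma\}$ together with an independent copy of $\Lc^{(0)}$.

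\textbf{Extension to all of $\Gamma$.} Take an increasing sequence $\Gamma_0 \subseteq \Gamma_1 \subseteq \cdots$ with $\nu(\Gamma_k) < \infty$ and $\bigcup_k \Gamma_k = \Gamma$ (possible by $\sigma$-finiteness of $\nu$). Split $\Lc_\alpha = \Lc^{(k)} \uplus \Lc^{(k,+)}$ into independent Poisson pieces over $\Gamma_k$ and its complement, then apply the finite-mass identity above to $\Lc^{(k)}$ with bounded functional $\Phi_k(L) := \Eb[\Phi(L \uplus \Lc^{(k,+)})]$; the independence of $\Lc^{(k)}$ from $\Lc^{(k,+)}$ rewrites both sides in terms of the full soup $\Lc_\alpha$. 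Sending $k \to \infty$ by monotone (if $F, \Phi \geq 0$) or dominated convergence (general case, by splitting into positive and negative parts) yields the identity. The ``finite domain $D$'' version of the statement is in fact covered directly by the finite-mass step with $\Gamma_0 = \{\gamma : \gamma \subset D\}$, since $\nu(\{\gamma \subset D\}) < \infty$ whenever $D$ is finite.

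\textbf{Main obstacle.} The computation itself is routine bookkeeping of the Poisson factorial moments; the only delicate point is the $\sigma$-finite exhaustion on $\Zb^2$ and the attendant Fubini justification. Both are standard once one notes that $\nu$ is $\sigma$-finite (bounded-length loops meeting a fixed finite set have finite $\nu$-mass) and that the stated integrability of $F$ against $\nu$ makes the right-hand side finite whenever the left-hand side is.
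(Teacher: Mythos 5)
The paper does not prove this lemma; it is quoted directly from the literature (Proposition 15 of the cited Le Jan lecture notes, or Theorem 4.1 of Last--Penrose). Your argument is a correct, self-contained rendition of the standard proof of the Mecke equation — the finite-intensity Poisson reduction to a Poisson number of i.i.d.\ loops, the $\sigma$-finite exhaustion, and the domination steps using $\int |F|\,d\nu < \infty$ and $\|\Phi\|_\infty < \infty$ are all handled correctly.
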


The next property is the celebrated FKG inequality, which in fact holds for general Poisson point processes. Here, we state it specifically for the RWLS. Recall that a function $f$ is said to be increasing (on the space of loop configurations) if for any two $L \subseteq L'$, we have $f(L)\le f(L')$. The following result can be found in \cite[Theorem 20.4]{LP2017}.

\begin{lemma}[FKG inequality]\label{lem:FKG-RWLS}
	Let $f,g\in L^2(\Pb)$ be increasing functions on loop configurations, and $\alpha > 0$. Then,
	\[
	\Eb(fg)\ge \Eb(f)\cdot \Eb(g),
	\]
where the expectation is for the RWLS with intensity $\alpha$.
\end{lemma}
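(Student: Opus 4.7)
The plan is to prove the FKG inequality by reducing it, through a Bernoulli approximation, to the classical Harris inequality on a product of totally ordered probability spaces.

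First, I would reduce to bounded functionals of finitely many Poisson counts. Applying a preliminary truncation $f \wedge T$ and $g \wedge T$ (still increasing, with monotone convergence recovering the general case as $T \to \infty$), I may assume $f$ and $g$ are bounded. The conditional expectations $f_R := \Eb(f \mid \Lc_{B_R})$ and $g_R := \Eb(g \mid \Lc_{B_R})$ are still increasing, because the complement $\Lc \setminus \Lc_{B_R}$ is independent of $\Lc_{B_R}$ by the Poisson property, so averaging over it preserves monotonicity; they converge to $f, g$ in $L^2$ by the martingale convergence theorem. Hence I may assume $f$ and $g$ depend only on $\Lc_{B_R}$ for some fixed $R$. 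Enumerating the (countably many) loops contained in $B_R$ as $\gamma_1, \gamma_2, \ldots$, the counts $N_k := n_{\Lc_{B_R}}(\gamma_k)$ form an independent family of $\mathrm{Poisson}(\alpha \nu(\gamma_k))$ variables, and an analogous conditioning-and-truncation step lets me further assume $f, g$ depend only on $(N_1, \ldots, N_K)$ for a finite $K$.

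Next, I would Bernoullify the Poisson counts. For $M$ large enough that $\alpha \nu(\gamma_k)/M \leq 1$ for all $k \leq K$, introduce independent Bernoulli variables $\xi^{(M)}_{k,j}$ ($1 \leq k \leq K$, $1 \leq j \leq M$) with parameter $\alpha \nu(\gamma_k)/M$, and set $N^{(M)}_k := \sum_{j=1}^M \xi^{(M)}_{k,j}$. The Poisson limit theorem yields $(N^{(M)}_1, \ldots, N^{(M)}_K) \Rightarrow (N_1, \ldots, N_K)$ in distribution as $M \to \infty$, and because $f, g$ are bounded, this upgrades to $L^2$ convergence of the corresponding values of $f$ and $g$. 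Since the map $(\xi^{(M)}_{k,j})_{k,j} \mapsto (N^{(M)}_k)_k$ is coordinate-wise non-decreasing, $f$ and $g$ pull back to bounded increasing functions of the independent Bernoullis $(\xi^{(M)}_{k,j})$.

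The final step is the classical Harris--FKG inequality on $\{0,1\}^{KM}$ with the product Bernoulli measure: any two bounded increasing functions of independent real-valued random variables are positively correlated. This is proved by induction on the number of coordinates, the base case $n=1$ being the elementary identity $\Eb[(F(X)-F(Y))(G(X)-G(Y))] \geq 0$ for an independent copy $Y$ of $X$ and any two monotone $F, G$. Applying it to the Bernoulli system yields the desired inequality for the approximating configurations, and sending $M \to \infty$ and undoing the previous truncations concludes the proof. The only delicate point is ensuring $L^2$ convergence at every limit so that the inequality is preserved; this is precisely why all truncations are carried out before the Bernoulli approximation.
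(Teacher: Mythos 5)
The paper does not give a proof of this lemma; it simply cites \cite[Theorem~20.4]{LP2017}, so your argument is a self-contained alternative rather than a match to the paper's own reasoning. It is essentially correct, with two small points to fix. First, the Bernoulli discretization is an unnecessary detour: the base-case identity $\Eb[(F(X)-F(Y))(G(X)-G(Y))] \geq 0$ that you invoke holds for \emph{any} real-valued random variable $X$ with $Y$ an independent copy, in particular for a Poisson variable, and tensorizing by conditioning on one coordinate at a time already yields Harris's inequality on $\Nb_0^K$ for the vector of Poisson counts $(N_1,\ldots,N_K)$, without passing through $\{0,1\}^{KM}$ and the extra limit $M\to\infty$. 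That extra limit is also where your wording is least precise: $(N^{(M)}_k)_k$ and $(N_k)_k$ live on different probability spaces, so ``$L^2$ convergence'' is not the right notion; what you actually use is that weak convergence on the discrete space $\Nb_0^K$, combined with boundedness of $f,g$ (which makes them bounded continuous functions on $\Nb_0^K$), gives convergence of the three expectations $\Eb[fg]$, $\Eb[f]$, $\Eb[g]$. Second, the truncation $f\wedge T$ only caps $f$ from above, so it does not make $f$ bounded when $f$ is unbounded below; replace it by the two-sided truncation $(f\vee(-T))\wedge T$, which is still increasing, satisfies $|(f\vee(-T))\wedge T|\le|f|$, and converges to $f$ in $L^2$ by dominated convergence. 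With these adjustments, your reduction chain---to bounded $f,g$, then to $f,g$ measurable with respect to $\Lc_{B_R}$ via martingale convergence, then to finitely many loop types, then the finite-dimensional Harris inequality, then unwinding the limits---is sound and gives a valid self-contained proof of what the paper takes as a black box from \cite{LP2017}.
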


\section{Brownian loop soup} \label{sec:BLS}

We now exploit the link between the RWLS and its natural counterpart in the continuum, the Brownian loop soup (BLS). We first define this process in Section~\ref{sec:def_BLS}, and we explain how it is related to the conformal loop ensemble (CLE) processes. This connection is then used in Section~\ref{sec:csq_CLE_RWLS} to derive properties of the RWLS. Finally, we state arm estimates for the BLS in Section~\ref{sec:exp_cle}.

\subsection{Definition of BLS} \label{sec:def_BLS}

The Brownian loop soup is the continuous analog of the random walk loop soup. For a domain $D$ in $\Rb^2$, the \emph{Brownian loop soup} $\wt\Lc_D$ in $D$ with intensity $\alpha\in (0,\half]$ is defined as a Poisson point process with intensity $\alpha\wt\nu_D$. Here, $\wt\nu_D$ is a measure on Brownian loops given by 
\begin{equation}\label{eq:nu_D}
\wt\nu_D:=\int_D\int_{0}^{\infty} \frac1t\, p^D_t(z,z)\, \Pb^D_{t,z}\, dt\, dz,
\end{equation}
where $p^D_t(z,z)$ is the heat kernel in $D$ and $\Pb^D_{t,z}$ is the law of a Brownian bridge in $D$ from $z$ to $z$ of duration $t$. Then, $\wt\nu_D$ is a sigma-finite measure, which blows up as $t \searrow 0$. Therefore, $\wt\Lc_D$ forms an infinite collection of Brownian loops in $D$. 

It has been shown in \cite{SW2012} that the outer boundaries of the outermost clusters in the Brownian loop soup $\wt\Lc_D$ of intensity $\alpha(\kappa)$ is distributed as the conformal loop ensemble (CLE) in $D$ with parameter $\kappa \in (8/3,4]$, via the relation
\begin{align}\label{eq:alp-kap}
	\alpha(\kappa) =\frac{(3\kappa-8)(6-\kappa)}{4\kappa}.
\end{align}
Recall that $(\CLE_{\kappa})_{\kappa\in (8/3,4]}$ is defined in \cite{SW2012} as the only random collection of disjoint simple loops which satisfies conformal invariance and a certain restriction axiom. 
Note that $\alpha(\kappa)$ is increasing in $\kappa$ for $\kappa\in (8/3,4]$. When $\kappa$ varies from $8/3$ to $4$, $\alpha(\kappa)$ varies from $0$ to $1/2$. In this paper, we will always use $\alpha$ to denote the intensity of the loop soup, and $\kappa$ to denote the parameter of the corresponding CLE.

\subsection{Consequences for the RWLS} \label{sec:csq_CLE_RWLS}

Next, we consider clusters in a subcritical or critical loop soup, i.e. with intensity at most $1/2$, and control their sizes simultaneously using the connection with CLE.

We first recall the following result on the convergence of RWLS clusters towards CLE proved in \cite{BCL2016,Lu2019}.
\begin{theorem}[\cite{BCL2016,Lu2019}]
\label{thm:convergence}
For all $\alpha\in (0,\half]$, and $R \geq 1$, let $\Lc_R$ be the random walk loop soup  in $B_R$ with intensity $\alpha$. Let $\Fc(\Lc_R)$ be the collection of all frontiers (seen as continuous loops in $\Rb^2$, by linear interpolation, as explained in Section~\ref{subsec:notation}) of outermost clusters in $\Lc_R$. In the collection of rescaled loops $R^{-1}\Fc(\Lc_R)$ (each loop is rescaled by $R^{-1}$), if we consider the loops that have a diameter greater than $\delta$ (there are a.s.\ finitely many), then they converge in law  to the collection of loops in a $\mathrm{CLE}_{\kappa}$ in $\Bc_1$ with a diameter greater than $\delta$, with respect to $d^*_H$ (recall that $d^*_H$ was defined in \eqref{eq:dH}), where the relation between $\alpha$ and $\kappa$ is given by \eqref{eq:alp-kap}.
\end{theorem}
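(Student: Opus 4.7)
The plan is to realize the stated convergence as the composition of two ingredients already in the literature: first, convergence of the RWLS to the Brownian loop soup in an appropriate loop topology, as established by Lawler and Trujillo Ferreras in \cite{LTF2007}, and second, the identification by Sheffield and Werner \cite{SW2012} of the outer boundaries of outermost clusters in the (sub)critical BLS with the loops of $\CLE_\kappa$ under the relation $\alpha = \alpha(\kappa)$ given by \eqref{eq:alp-kap}. Once these are combined, the remaining content of the theorem is that the rescaled discrete cluster frontiers converge to the continuum cluster frontiers, rather than just that the underlying loop ensembles converge.

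For the first ingredient, I would work on the rescaled lattice $R^{-1}\Zb^2 \cap \Bc_1$ and invoke the loop-by-loop coupling of \cite{LTF2007}: for any fixed $\delta>0$, with probability tending to $1$ as $R \to \infty$, one can pair each rescaled RWLS loop of diameter $\geq \delta$ with a BLS loop in $\Bc_1$ of comparable diameter, in such a way that $d^*_H$ between the two truncated loop ensembles is $o(1)$. Since, for $\alpha \leq \half$, there are a.s.\ only finitely many loops at each such scale both in $R^{-1}\Lc_R$ and in $\wt\Lc_{\Bc_1}$, one then passes to convergence in law for the truncated ensembles in a standard way, e.g.\ via a Skorokhod representation.

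The main difficulty, and the step that is not merely a reformulation of existing results, is the passage from loops to outermost cluster frontiers. The cluster relation is not a continuous function of the loop configuration, since arbitrarily small loops could in principle bridge two otherwise disjoint clusters. To handle this, I would introduce an auxiliary cutoff $\delta' \ll \delta$ and argue that, with probability tending to $1$ as $\delta' \searrow 0$ (uniformly in $R$), the outermost cluster frontiers of diameter $\geq \delta$ coincide before and after discarding all loops of diameter $< \delta'$. On the BLS side this is a consequence of the fact that $\CLE_\kappa$ loops are simple and mutually disjoint for $\kappa \in (8/3,4]$, so that macroscopic outermost cluster frontiers sit at positive distance from each other. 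On the RWLS side this is the genuine obstacle: one must show that two macroscopic discrete clusters are connected only through a chain of sub-$\delta'$ loops with probability vanishing as $\delta' \searrow 0$, uniformly in the mesh. This is precisely the sort of ``almost-touching'' control that the rest of this paper develops in quantitative form via four-arm exponents, although for the present qualitative statement a weaker version suffices. Exactly this stability argument is carried out in \cite{BCL2016} for $\alpha < \half$ and in \cite{Lu2019} for the critical case $\alpha = \half$ (the latter exploiting Le Jan's isomorphism and the associated coupling to the discrete GFF), after which the claimed convergence in $d^*_H$ follows by combining with \cite{SW2012}.
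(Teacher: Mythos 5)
The paper does not prove Theorem~\ref{thm:convergence}: it is recalled from \cite{BCL2016,Lu2019} and used as a black box, so there is no internal proof to compare against. Your sketch is a sound account of how the proof goes in those references. You correctly combine the loop-level coupling of \cite{LTF2007} with the identification of BLS outermost-cluster boundaries with $\CLE_\kappa$ from \cite{SW2012}, and, more importantly, you correctly isolate the genuine obstacle: the map from a loop ensemble to its collection of outermost cluster frontiers is not continuous, so one must rule out, uniformly in the lattice mesh, chains of microscopic loops that spuriously bridge or split macroscopic clusters. Your attribution of this stability step to \cite{BCL2016} for $\alpha<\half$ and to \cite{Lu2019} for $\alpha=\half$ (the latter via the metric-graph GFF and Le Jan's isomorphism) is also accurate. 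The only caveat is that, as written, you ultimately defer that decisive stability argument to the cited papers rather than supplying it, so what you have is an annotated citation rather than a self-contained proof --- but that is consistent with the paper's own treatment, which likewise simply invokes the result.
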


\begin{lemma}\label{lem:cluster-size}
	For any $\delta\in (0,1)$, there exists $c(\delta)>0$ such that the following holds. For all $\alpha\in (0,\half]$, and $R \geq 1$, we have 
	\begin{equation}\label{eq:cluster-converge}
		\Pb \big( \text{all clusters in $\Lc_R$ have a diameter $< \delta R$} \big) \ge c(\delta).
	\end{equation}
\end{lemma}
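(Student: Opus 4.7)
The plan is to reduce the target bound to an analogous positivity statement for $\mathrm{CLE}_4$ via Theorem~\ref{thm:convergence}. First, observe that the event $\Ec_{R,\delta} := \{\text{all clusters in } \Lc_R \text{ have diameter} < \delta R\}$ is decreasing in the loop configuration, since adding a loop can only merge clusters or enlarge them. The standard monotone coupling of Poisson point processes in their intensity then gives $\Pb_\alpha(\Ec_{R,\delta}) \ge \Pb_{1/2}(\Ec_{R,\delta})$ for all $\alpha \in (0,\half]$, reducing matters to the critical case $\alpha = \half$ (which corresponds to $\kappa = 4$ via \eqref{eq:alp-kap}) and automatically making the resulting $c(\delta)$ uniform in $\alpha$.

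For $R$ in a bounded range $1 \le R \le R_0$, with $R_0 = R_0(\delta)$ to be chosen, I would crudely invoke $\{\Lc_R = \emptyset\} \subseteq \Ec_{R,\delta}$. A direct computation (grouping unrooted loops by length and using $p_n(v,v) = O(1/n)$ for the simple random walk on $\Zb^2$) shows that $\nu(\{\gamma \subseteq B_{R_0}\}) < \infty$, so
\[
\Pb(\Lc_R = \emptyset) \ge \exp\bigl( -\tfrac{1}{2}\nu(\{\gamma \subseteq B_{R_0}\}) \bigr) =: c_1(\delta) > 0
\]
uniformly over this range.

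For $R > R_0$, I would reinterpret $\Ec_{R,\delta}$ through the outermost cluster frontiers: every non-outermost cluster is contained in the filling of some outermost one, and $\diam$ is unchanged by taking fillings, so $\Ec_{R,\delta}$ is equivalent to the event that no loop in $\Fc(\Lc_R)$ has diameter $\ge \delta R$. After rescaling by $R^{-1}$, this event is implied by $\{X_R = \emptyset\}$, where $X_R$ denotes the collection of loops in $R^{-1}\Fc(\Lc_R)$ of diameter $> \delta/2$. Theorem~\ref{thm:convergence} (applied with threshold $\delta/2$) yields $X_R \to X_\infty$ in law with respect to $d_H^*$, where $X_\infty$ is the collection of $\mathrm{CLE}_4$ loops in $\Bc_1$ of diameter $> \delta/2$. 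Since $d_H^*$ assigns infinite distance between collections of different cardinality, the event $\{|X|=0\}$ is clopen in the relevant topology, and hence $\Pb(X_R = \emptyset) \to p_\delta := \Pb(X_\infty = \emptyset)$ as $R \to \infty$.

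It then suffices to know that $p_\delta > 0$, i.e., that $\mathrm{CLE}_4$ in $\Bc_1$ has no loop of diameter exceeding $\delta/2$ with positive probability. This is a known property of CLE, accessible via the distribution of the conformal radius of the outermost loop around a fixed point combined with a locally-finite argument to handle all loops simultaneously. Given $p_\delta > 0$, one picks $R_0(\delta)$ large enough so that $\Pb(X_R = \emptyset) \ge p_\delta/2$ for all $R > R_0$, and sets $c(\delta) := \min(c_1(\delta),\, p_\delta/2)$. The main obstacle is precisely this CLE positivity statement: it is the continuum counterpart of the lemma, and the only step that requires genuinely non-elementary continuum input, the remainder of the argument being essentially bookkeeping around monotonicity in $\alpha$ and Theorem~\ref{thm:convergence}.
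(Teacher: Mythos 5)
Your proof is correct and follows essentially the same route as the paper's: reduce to $\alpha = 1/2$ by monotonicity, invoke Theorem~\ref{thm:convergence} to pass to $\mathrm{CLE}_4$ for large $R$, and handle small $R$ via the positive probability that $\Lc_R$ is empty. You supply a bit more detail than the paper at two points, namely the explicit passage from cluster diameters to frontier-loop diameters (so that Theorem~\ref{thm:convergence}, which concerns outermost-cluster frontiers, applies), and the threshold $\delta/2$ trick that sidesteps any boundary-continuity issue in the convergence in law under $d_H^*$, both of which the paper treats implicitly.
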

\begin{proof}
First, we can assume that $\alpha=1/2$, since by monotonicity it is enough to prove \eqref{eq:cluster-converge} in that case. 
Our proof uses the continuous process $\mathrm{CLE}_{\kappa}$, for the associated value $\kappa = 4$.
	
By Theorem~\ref{thm:convergence}, as $R\rightarrow\infty$, the probability in \eqref{eq:cluster-converge} converges to 
	\[
	c'(\delta):=\Pb \big( \text{all loops in $\mathrm{CLE}_{4}$ in $\Bc_1$ have a diameter $< \delta$} \big)>0.
	\]
	We deduce that there exists some constant $R_0>0$ such that for all $R\ge R_0$, the probability in \eqref{eq:cluster-converge} is greater than $c'(\delta)/2$. When $R<R_0$, with a probability $c''(R_0)>0$ which only depends on $R_0$, the collection $\Lc_R$ is simply empty. Thus, choosing $c(\delta) := c'(\delta)/2 \wedge c''(R_0)$ completes the proof.
\end{proof}

Now, we deal with the clusters made by frontiers of loops, and we control the sizes of those intersecting some given box.

\begin{lemma}\label{lem:cluster-size-2}
	For all $\delta>0$, there exists $C(\delta)>0$ such that for all $\alpha\in (0,\half]$, and all $1 \leq r < R$, we have $\Pb( E(\delta,\alpha,r,R) )\ge C(\delta)$, where $E(\delta,\alpha,r,R)$ is the event that in the frontier loop soup $\Lc^b_R$ of intensity $\alpha$, all clusters intersecting $B_r$ have a diameter $< \delta r$.
\end{lemma}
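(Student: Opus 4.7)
The plan is to reduce to $\alpha = 1/2$ by monotonicity, and then combine Lemma~\ref{lem:cluster-size} (to control cluster sizes in a localized loop soup) with Lemma~\ref{lem:frontier-loops} (to prevent frontiers of faraway loops from attaching themselves to local clusters). The event $E(\delta, \alpha, r, R)$ is decreasing in the loop configuration, since adding loops can only enlarge clusters; monotone coupling of the RWLSs thus yields $\Pb(E(\delta, \alpha, r, R)) \ge \Pb(E(\delta, 1/2, r, R))$, and I can fix $\alpha = 1/2$ throughout. I choose a scale parameter $M = M(\delta)$ of order $(4c_0)^2 (1+\delta)$, where $c_0$ is the universal constant from Lemma~\ref{lem:frontier-loops}, so that $c_0 ((1+\delta/2)/M)^{1/2} \le 1/4$, and I split into the cases $R < M r$ and $R \ge M r$.

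The case $R < M r$ is handled directly: I apply Lemma~\ref{lem:cluster-size} to $\Lc_R$ with parameter $\delta r / (2R) \ge \delta / (2M)$, to conclude that with probability at least $c(\delta/(2M))$ every loop cluster in $\Lc_R$ has diameter less than $\delta r / 2$. Since two intersecting frontiers come from two intersecting loops, every cluster of the frontier loop soup $\Lc^b_R$ is contained inside a single loop cluster of $\Lc_R$, and therefore has diameter no larger.

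The case $R \ge M r$ is the main one. I introduce the two events
$$E_1 := \{\text{all loop clusters in } \Lc_{Mr} \text{ have diameter} < \delta r / 2\},$$
$$E_2 := \{\text{no frontier of a loop in } \Lc_R \setminus \Lc_{Mr} \text{ crosses } A_{(1+\delta/2)r,\, Mr}\},$$
which are independent because $\Lc_{Mr}$ and $\Lc_R \setminus \Lc_{Mr}$ are restrictions of the Poisson loop process to two disjoint classes of loops. Lemma~\ref{lem:cluster-size} gives $\Pb(E_1) \ge c(\delta/(2M))$, and Lemma~\ref{lem:frontier-loops} together with the choice of $M$ gives $\Pb(E_2) \ge 3/4$. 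On $E_1 \cap E_2$, any cluster $\Cc^b$ of $\Lc^b_R$ that intersects $B_r$ contains some frontier $\eta$ with $\eta \cap B_r \neq \emptyset$, and the underlying loop $\gamma$ is forced to lie in $\Lc_{Mr}$: otherwise an extremal vertex of $\gamma$ lies outside $B_{Mr}$ and belongs to the (connected) frontier $\eta$, which combined with $\eta \cap B_r \neq \emptyset$ forces $\eta$ to cross $A_{(1+\delta/2)r, Mr}$, contradicting $E_2$. The cluster of $\eta$ inside $\Lc^b_{Mr}$ then has diameter $< \delta r / 2$ (by $E_1$ and the comparison $\diam(\text{frontier cluster}) \le \diam(\text{loop cluster})$) and lies in $B_{(1+\delta/2)r}$. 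Applying the same reasoning once more rules out any frontier from $\Lc_R \setminus \Lc_{Mr}$ attaching to this cluster in $\Lc^b_R$, so $\Cc^b$ coincides with the $\Lc^b_{Mr}$-cluster of $\eta$, giving $\diam(\Cc^b) < \delta r$.

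I expect the most delicate ingredient to be the planar-topology input about frontiers of (possibly self-intersecting) random walk loops: that the frontier of $\gamma$ is a connected set (being the boundary of the simply connected domain $\Fill(\gamma)$), that extremal vertices of $\gamma$ belong to $\gamma^b$, and that two intersecting frontiers come from two intersecting loops. Each of these should follow from elementary planar topology, but they are exactly what make the localization argument on $E_1 \cap E_2$ succeed.
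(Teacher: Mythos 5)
Your proof is correct and follows essentially the same route as the paper: reduce by monotonicity, fix a scale parameter (you call it $M$, the paper $\rho$), use Lemma~\ref{lem:cluster-size} to control clusters of the loop soup restricted to $B_{Mr}$, use Lemma~\ref{lem:frontier-loops} to rule out frontiers of larger loops entering a slightly enlarged box, and then argue via planar topology that on the intersection of these two events every cluster of $\Lc^b_R$ meeting $B_r$ coincides with a small cluster of $\Lc^b_{Mr}$. One small improvement in your version: by defining $E_2$ in terms of $\Lc_R\setminus\Lc_{Mr}$ rather than all of $\Lc^b_R$ (as the paper's $E'$ does), you make the independence of $E_1$ and $E_2$ genuine, whereas the paper's formulation would strictly speaking need the FKG inequality or a restatement of $E'$; this is a cosmetic clean-up, not a different method.
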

\begin{proof}
	Let $c$ be the universal constant in Lemma~\ref{lem:frontier-loops}. Choose a large constant $\rho=\rho(\delta)>1+\delta$ such that 
	\[
	c':=c\, \bigg(\frac{1+\delta}{\rho}\bigg)^{1/2}<1.
	\]
	Let $E'(\delta,\alpha,r,R, \rho)$ be the event that there is no loop in $\Lc^b_{R}$ that crosses $A_{(1+\delta) r,\rho r}$. Note that if $R\le \rho r$, then $E'(\delta,\alpha,r,R, \rho)$ occurs with probability one. Since $E'(\delta,\alpha,r,R)$ is decreasing in $\alpha$, by Lemma~\ref{lem:frontier-loops}, 
	\[
	\Pb( E'(\delta,\alpha,r,R, \rho) )\ge \Pb( E'(\delta, 1,r,R,\rho ))\ge 1-c'.
	\]
	Let $E''(\delta,\alpha,r, \rho)$ be the event that in the loop soup $\Lc_{\rho r}$ with intensity $\alpha$, all clusters have a diameter $< \delta r$. By Lemma~\ref{lem:cluster-size}, we have 
	\[
	\Pb( E''(\delta,\alpha,r, \rho) )\ge c(\delta/\rho)>0.
	\]
	We observe that for all $\rho>1+\delta$,
	\[
	E'(\delta,\alpha,r,R, \rho)\cap E''(\delta,\alpha,r, \rho) \subseteq E(\delta,\alpha,r,R),
	\]
	and that the two events $E'(\delta,\alpha,r,R)$ and $E''(\delta,\alpha,r)$ are independent. Therefore,
	\[
	\Pb( E(\delta,\alpha,r,R) )\ge \Pb( E'(\delta,\alpha,r,R, \rho) ) \, \Pb( E''(\delta,\alpha,r, \rho) ) \ge (1-c') \, c(\delta/\rho).
	\]
	This completes the proof, by choosing $C(\delta):=(1-c') \, c(\delta/\rho)$.
\end{proof}

\begin{lemma}\label{lem:crossing loops}
	For all $\delta>0$, there exists $C(\delta)>0$ such that for all $\alpha\in (0,\half]$, and $1 \leq r < R$, we have
	\begin{equation}\label{eq:crossing loop in Lb}
		\Pb(\text{there is no loop in $\Lc^b_R$ crossing $A_{r,(1+\delta)r}$})\ge C(\delta),
	\end{equation}
and moreover,
	\begin{equation}\label{eq:crossing loop}
	\Pb(\text{there is no loop in $\Lc_R$ crossing $A_{r,(1+\delta)r}$ without disconnecting $B_r$ from $\partial B_{(1+\delta)r}$})\ge C(\delta).
\end{equation}
\end{lemma}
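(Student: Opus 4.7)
The plan is to derive both statements from Lemma~\ref{lem:cluster-size-2}: \eqref{eq:crossing loop in Lb} follows by a direct diameter argument, and \eqref{eq:crossing loop} is obtained by showing that its event contains the event of \eqref{eq:crossing loop in Lb}.

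For \eqref{eq:crossing loop in Lb}, I would apply Lemma~\ref{lem:cluster-size-2} with parameter $\delta/2$ in place of $\delta$. On the resulting event $E(\delta/2,\alpha,r,R)$, of probability at least $C(\delta/2) > 0$, every cluster in $\Lc^b_R$ intersecting $B_r$ has diameter strictly less than $\delta r/2$. On the other hand, any loop in $\Lc^b_R$ that crosses $A_{r,(1+\delta)r}$ must meet both $\partial B_r$ and $\partial B_{(1+\delta)r}$, which are at $\ell_\infty$-distance at least $\delta r$; such a loop would therefore have diameter at least $\delta r$, and so would its cluster (which necessarily meets $B_r$). This is a contradiction, so on $E(\delta/2,\alpha,r,R)$ no loop in $\Lc^b_R$ crosses the annulus.

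For \eqref{eq:crossing loop}, I would show the inclusion of events: if some $\gamma \in \Lc_R$ crosses $A_{r,(1+\delta)r}$ without disconnecting $B_r$ from $\partial B_{(1+\delta)r}$, then its frontier $\gamma^b \in \Lc^b_R$ also crosses $A_{r,(1+\delta)r}$. Let $U$ denote the unbounded connected component of $\Rb^2 \setminus \gamma$, so that $\mathrm{Fill}(\gamma) = \Rb^2 \setminus U$ and $\gamma^b = \partial U$. The non-disconnection hypothesis produces a continuous path $P \subseteq \Rb^2 \setminus \gamma$ from some $p_0 \in \partial B_r$ to some $p_1 \in \partial B_{(1+\delta)r}$; since $\gamma \subseteq B_R$, the point $p_1$ can be joined to infinity within $\Rb^2 \setminus \gamma$, hence $P \subseteq U$ and in particular $p_0 \in U$. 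Tracing the continuous square $\partial[-r,r]^2$ from $p_0$ until first meeting $\gamma$ (which exists because $\gamma$ crosses) yields a point $q \in \gamma$ that is a limit of points in $U$, hence $q \in \partial U = \gamma^b$; moreover $q$ lies on the lattice square $\partial B_r$. The symmetric argument at $p_1$ produces a point of $\gamma^b \cap \partial B_{(1+\delta)r}$, and the claim follows. Combined with \eqref{eq:crossing loop in Lb}, this yields \eqref{eq:crossing loop} with the same constant.

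The main obstacle is the topological step in deriving \eqref{eq:crossing loop}: one must carefully translate between the lattice picture (in which $\gamma$ is a polyline and $\partial B_r$, $\partial B_{(1+\delta)r}$ are discrete vertex sets) and the continuum picture (in which $\mathrm{Fill}(\gamma)$ and $\gamma^b$ are defined planar objects), and verify that the tracing argument really delivers a lattice vertex of $\gamma^b$ in $\partial B_r$ and in $\partial B_{(1+\delta)r}$. Once this is done, no new probabilistic input beyond Lemma~\ref{lem:cluster-size-2} is needed.
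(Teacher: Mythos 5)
Your derivation of \eqref{eq:crossing loop in Lb} from Lemma~\ref{lem:cluster-size-2} is correct and is exactly the paper's one-line argument (the paper uses the parameter $\delta$ directly rather than $\delta/2$, but that is immaterial).

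For \eqref{eq:crossing loop}, you follow the paper's reduction to the implication ``$\gamma$ crosses $A_{r,(1+\delta)r}$ and does not disconnect $B_r$ from $\partial B_{(1+\delta)r}$'' $\Rightarrow$ ``$\gamma^b$ crosses $A_{r,(1+\delta)r}$'', which the paper asserts without proof. Your attempted justification has a gap at the step ``since $\gamma\subseteq B_R$, the point $p_1$ can be joined to infinity within $\Rb^2\setminus\gamma$.'' This does not follow: $p_1$ lies on $\partial B_{(1+\delta)r}$, which in the non-trivial range $(1+\delta)r<R$ is strictly inside $B_R$, and a loop contained in $B_R$ can perfectly well surround $B_{(1+\delta)r}$. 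Concretely, fix an integer $\rho$ with $(1+\delta)r<\rho<R$ and let $\gamma$ trace $\partial B_{r/2}$, then a radial spoke along the positive $x$-axis out to $\partial B_\rho$, then $\partial B_\rho$, then the spoke back. This $\gamma$ crosses $A_{r,(1+\delta)r}$ via the spoke; the spoke is the only part of $\gamma$ in $\overline{A_{r,(1+\delta)r}}$, so $\gamma$ does not disconnect $B_r$ from $\partial B_{(1+\delta)r}$; yet $\Fill(\gamma)$ is the full box of side $2\rho$, so $\gamma^b=\partial B_\rho$ lies entirely outside the annulus and does not cross it. In this example the path $P$ produced by non-disconnection lives in the bounded ``cut-annulus'' component of $\Rb^2\setminus\gamma$, not in $U$, so $p_0,p_1\notin U$ and the tracing argument never gets started.

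So ``$\gamma\subseteq B_R$'' is not the hypothesis that makes the implication go; you correctly identify this topological step as the main obstacle, but the justification offered is the one place the non-disconnection hypothesis must be used in earnest, and here it is not. What one genuinely gets from the easy direction is only the dichotomy: if $\gamma$ crosses $A_{r,(1+\delta)r}$ and $\gamma^b$ does not, then $B_r\subseteq\Fill(\gamma)$. To close the argument one must either adopt a reading of ``disconnect $B_r$ from $\partial B_{(1+\delta)r}$'' under which a loop whose filling swallows $B_r$ automatically disconnects (a reading that should then be stated and reconciled with the random-walk usage in Lemma~\ref{lem:disconnection-prob}), or handle the surrounding case by a separate probabilistic estimate. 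As written, the proof of the second inequality is not complete.
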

\begin{proof}
First, \eqref{eq:crossing loop in Lb} is an immediate consequence of Lemma~\ref{lem:cluster-size-2}, since every loop in $\Lc^b_R$ is contained in some cluster of $\Lc^b_R$. On the other hand, \eqref{eq:crossing loop} follows from the fact that if $\gamma$ is a loop that crosses $A_{r,(1+\delta)r}$, but that does not disconnect $B_r$ from $\partial B_{(1+\delta)r}$, then $\gamma^b$ (the frontier of $\gamma$) crosses $A_{r,(1+\delta)r}$. 
\end{proof}

\begin{remark}\label{rmk:frontier}
Note that the results in Lemma~\ref{lem:cluster-size-2} and \eqref{eq:crossing loop in Lb} do not hold if we replace $\Lc_R^b$ by $\Lc_R$ in the statement.
	Indeed, with high probability (when $R$ is large), there are big loops in $\Lc_R$ that intersect $B_r$. This is the reason why we sometimes need to work with the frontier loop soup.
\end{remark}

\subsection{Arm exponents in BLS and CLE} \label{sec:exp_cle}

In this section, we introduce certain two-arm and four-arm events in the Brownian loop soup (BLS), which can be equivalently seen as events about the CLE, due to the correspondence invoked in Section~\ref{sec:def_BLS}. 
We state two results which follow from \cite{GNQ2024c} and which will be used as an input in this paper. We consider two different situations, which correspond to the interior and the boundary cases respectively. 

\subsubsection*{Interior arm exponents}
We first consider the interior case. In order to stress the dependence on $\alpha$, we denote by $\wt\Lc_{\alpha}$ the BLS in the unit box $\Bc_1$, with intensity $\alpha$. We define the following two-arm and four-arm events.

\begin{definition}[Two-arm events for BLS]\label{def:n_arm_bls}
	For $\alpha\in (0,\half]$ and $0<\eps<r<1$, let $\wt\Ac^{2}_{\alpha}(\eps, r)$ be the event that there is at least one outermost cluster in $\wt\Lc_{\alpha}$ whose frontier crosses $\Bc_r\setminus \Bc_\eps$.
\end{definition}

\begin{definition}[Four-arm event for BLS]\label{def:arm_bls}
For $\alpha\in (0,\half]$, and $0<\eps<r<1$, let $\wt\Ac_{\alpha}(\eps, r)$ be the  event  that there are at least two outermost clusters in $\wt\Lc_{\alpha}$ crossing $\Bc_r\setminus \Bc_\eps$.
\end{definition}

Since we introduce and analyze extensively the four-arm event for the RWLS later (see Definition~\ref{def:arm event for rwls}), we reserve the notation $\Ac(\cdot,\cdot)$, without a ``$\sim$'', for that discrete arm event.

\begin{remark}
Definitions~\ref{def:n_arm_bls} and~\ref{def:arm_bls} can be equivalently formulated for a CLE$_\kappa$ in $\Bc_1$, where $\alpha$ and $\kappa$ are related by \eqref{eq:alp-kap}, namely $\wt\Ac^2_{\alpha}(\eps, r)$ (resp.\ $\wt\Ac_{\alpha}(\eps, r)$) is equal to the event that there are at least one (resp.\ two) loop(s) in the CLE$_\kappa$ crossing $\Bc_r\setminus \Bc_\eps$. In the CLE$_\kappa$, on the event  $\wt\Ac^2_{\alpha}(\eps, r)$ (resp.\ $\wt\Ac_{\alpha}(\eps, r)$), there are two (resp.\ four) curves (arms) crossing $\Bc_r\setminus \Bc_\eps$.
However, without invoking CLE, there is another reason why we call it the ``four-arm event''. Indeed, we use this terminology also in analogy with the four-arm events in Bernoulli percolation, corresponding to the existence of four crossing paths with alternating types (occupied and vacant). This event encodes the existence of two disjoint connected components of occupied sites crossing a given annulus.
\end{remark}

For $\kappa\in (8/3,4]$, let 
\begin{align}\label{eq:arm-exponent}
 \eta^{2}(\kappa) := 1-\frac{\kappa}{8} \quad \text{and} \quad \eta(\kappa) :=\frac{(12-\kappa)(\kappa+4)}{8\kappa}.
\end{align}
These values $\eta^2(\kappa)$ and $\eta(\kappa)$ are also equal to the well-known interior two-arm and four-arm exponents of SLE$_\kappa$, which appeared in various contexts.
A mathematical proof for SLE$_\kappa$ arm exponents was first given by Smirnov and Werner \cite{SW2001} for $\kappa=6$ (due to the interest in percolation), using suitable SLE martingales. 
A derivation of $\eta^2(\kappa)$ for all $\kappa\in(0,8)$
was contained in \cite{MR2435854}, which focused on establishing the Hausdorff dimension of SLE$_\kappa$. These works relied on the earlier works \cite{MR2153402, LSW2001a,LSW2001b,MR1899232}.
In \cite{MR2112128,MR2581884}, physicists also used KPZ relations to obtain the formulas of the SLE$_\kappa$ arm exponents.
Finally, a proof of SLE$_\kappa$ arm exponents was written down in \cite{MR3846840}, again based on  SLE martingales.

For any $\alpha\in (0,1/2]$, we define $\xi^2(\alpha):=\eta^2(\kappa)$ and $\xi(\alpha):=\eta(\kappa)$, where $\alpha$ and $\kappa$ are related by \eqref{eq:alp-kap}. We will use the following upper bounds in the BLS as an input, which are consequences of the results established in \cite{GNQ2024c} (but weaker), relying on the connection between the BLS and $\mathrm{CLE}_{\kappa}$.

\begin{proposition}[\cite{GNQ2024c}]
\label{prop:arm-exponent}
	For all $\alpha\in (0,\half]$ and $0<r<1$, we have: as $\eps \searrow 0$,
	\begin{align*}
	\Pb(\wt\Ac^{2}_{\alpha}(\eps, r)) \le \eps^{ \xi^{2}(\alpha)+o(1)} \quad \text{and} \quad
		 \Pb(\wt\Ac_{\alpha}(\eps, r)) \le  \eps^{\xi(\alpha) +o(1)}.
	\end{align*}
\end{proposition}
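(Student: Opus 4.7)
The plan is to invoke the Sheffield--Werner identification between the Brownian loop soup and $\CLE_\kappa$, and then reduce the CLE crossing probabilities to the classical interior arm exponents for SLE$_\kappa$.

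First, by the main theorem of \cite{SW2012}, the collection of outer boundaries of outermost clusters of $\wt\Lc_\alpha$ in $\Bc_1$ is distributed as a $\CLE_\kappa$ in $\Bc_1$, with $\kappa$ determined by \eqref{eq:alp-kap}. Under this coupling, $\wt\Ac^{2}_{\alpha}(\eps, r)$ is exactly the event that at least one $\CLE_\kappa$ loop crosses $\Bc_r\setminus\Bc_\eps$, while $\wt\Ac_{\alpha}(\eps, r)$ is the event that at least two such loops do. The problem therefore reduces to bounding the probability of such crossing events for $\CLE_\kappa$.

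Second, I would use Sheffield's radial exploration of $\CLE_\kappa$ by a branching $\mathrm{SLE}_\kappa(\kappa-6)$ process \cite{MR2494457,SW2012}. Anchoring the exploration at a boundary point and targeting the origin, each $\CLE_\kappa$ loop surrounding $0$ that is encountered contributes a pair of arms of the exploration curve across the annulus $\Bc_r\setminus\Bc_\eps$. Up to standard separation-of-arms and Markovian restart arguments, the event that $k$ loops cross the annulus is dominated by the event that the radial exploration has $2k$ macroscopic arms across it. By conformal invariance and the Markov property of the branching trunk, this probability decays like $(\eps/r)^{\eta^{2}(\kappa)+o(1)}$ for $k=1$ and $(\eps/r)^{\eta(\kappa)+o(1)}$ for $k=2$, where the exponents $\eta^{2}$ and $\eta$ arise from the radial $\mathrm{SLE}_\kappa$ arm technology developed in \cite{MR2153402,LSW2001a,LSW2001b,MR1899232,MR2435854,MR3846840}. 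Since $0 < r < 1$ is fixed, the factor $r^{-\xi}$ gets absorbed into the $o(1)$, and the bounds claimed in the proposition follow.

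The main obstacle is the translation ``$k$ CLE loops cross the annulus $\Rightarrow$ $2k$ branching-SLE arms cross the annulus''. This requires a separation lemma for the branching exploration (so that each loop encounter can be restarted independently at a macroscopic scale), together with an iteration across dyadic scales that absorbs polylogarithmic-in-$\eps$ losses into the $o(1)$. A sharp version of this analysis, including matching lower bounds and the exact leading constant, is carried out in the companion paper \cite{GNQ2024c}; the weaker upper bounds stated here are an immediate consequence. Since this paper only needs an upper bound with $o(1)$ slack in the exponent, one can in fact bypass the most delicate parts of the separation argument by a crude dyadic union bound over scales, at the cost of the $o(1)$ error term.
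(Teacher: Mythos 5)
The paper itself does not prove Proposition~\ref{prop:arm-exponent}: it is explicitly imported as an external input from the companion paper \cite{GNQ2024c}, as indicated both by the citation in the proposition header and by the preceding sentence stating that these bounds ``are consequences of the results established in \cite{GNQ2024c}.'' There is therefore no proof in the present paper to compare your argument against. You in fact notice this yourself in your final paragraph, so the review reduces to assessing whether your sketch is a plausible account of what \cite{GNQ2024c} does, which this paper gives no way to verify.

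As a sketch, the broad strokes are sensible: the Sheffield--Werner identification of outermost BLS cluster boundaries with $\CLE_\kappa$ is exactly the translation the paper uses (see Section~\ref{sec:def_BLS} and the Remark following Definition~\ref{def:arm_bls}), and the exponents $\eta^2(\kappa)=1-\kappa/8$ and $\eta(\kappa)=(12-\kappa)(\kappa+4)/(8\kappa)$ in \eqref{eq:arm-exponent} are indeed the interior two- and four-arm SLE$_\kappa$ exponents (they recover $1/4$ and $5/4$ at $\kappa=6$). However, the step ``$k$ CLE loops crossing the annulus implies $2k$ arms of the branching radial exploration cross the annulus'' is stated too loosely: the crossing loops need not surround the origin, and the exploration trunk's arcs do not coincide with the CLE loops, so passing from the CLE crossing event to an arm event of the exploration curve requires a careful book-keeping of which arcs belong to loops and which to the trunk, plus a separation argument specific to the branching $\mathrm{SLE}_\kappa(\kappa-6)$. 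You flag exactly this as the main obstacle and correctly point out that since only an upper bound with $o(1)$ slack is needed, cruder unions over dyadic scales can absorb polylogarithmic losses; that is the right attitude, but the details are genuinely the content of \cite{GNQ2024c}, and this paper deliberately does not reproduce them.
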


\subsubsection*{Boundary arm exponents}

We now introduce the boundary two-arm and four-arm events.
We denote by $\wt\Lc_{\alpha}^+$ the BLS in $\Bc_1^+$ with intensity $\alpha$.

\begin{definition}[Boundary two-arm events for BLS]\label{def:2_arm_bls2}
	For $\alpha\in (0,\half]$ and $0<\eps<r<1$, let $\wt\Ac^{2,+}_{\alpha}(\eps, r)$ be the event that there is at least one outermost cluster in $\wt\Lc_{\alpha}^+$ crossing $(\Bc_r\setminus \Bc_\eps)^+$.
\end{definition}

\begin{definition}[Boundary four-arm event for BLS]\label{def:arm_bls2}
	For $\alpha\in (0,\half]$, and $0<\eps<r<1$, let $\wt\Ac^+_{\alpha}(\eps, r)$ be the event that there are at least two outermost clusters in $\wt\Lc_{\alpha}^+$ crossing  $(\Bc_r\setminus \Bc_\eps)^+$.
\end{definition}
Similarly, Definitions~\ref{def:2_arm_bls2} and~\ref{def:arm_bls2} can also be formulated for a CLE$_\kappa$ in $\Bc_1^+$, where $\alpha$ and $\kappa$ are related by \eqref{eq:alp-kap}, namely  $\wt\Ac^{2,+}_{\alpha}(\eps, r)$ (resp.\ $\wt\Ac_{\alpha}^+(\eps, r)$) is equal to the event that there are at least one (resp.\ two) loop(s) in the CLE$_\kappa$ crossing $(\Bc_r\setminus \Bc_\eps)^+$. 
For $\kappa\in (8/3,4]$, let 
\begin{align}\label{eq:b-arm-exponent}
\eta^{2,+}(\kappa) := \frac{8}{\kappa}-1 \quad \text{and} \quad \neweta(\kappa) := \frac{2(12-\kappa)}{\kappa}.
\end{align}
The values $\eta^{2,+}(\kappa)$ and $\neweta(\kappa)$ are also equal to the boundary two-arm and four-arm exponents of SLE$_\kappa$.
As before, for any $\alpha\in(0,1/2]$, we define $\xi^{2,+}(\alpha):=\eta^{2,+}(\kappa)$ and $\xi^+(\alpha):=\eta^+(\kappa)$, where $\alpha$ and $\kappa$ are related by \eqref{eq:alp-kap}. The following estimates on the boundary two-arm and four-arm events in the BLS also follow from \cite{GNQ2024c}.

\begin{proposition}[\cite{GNQ2024c}]
\label{prop:b-arm-exponent}
	For all $\alpha\in (0,\half]$ and $0<r<1$, we have: as $\eps \searrow 0$,
	\begin{align*}
			\Pb(\wt\Ac^{2,+}_{\alpha}(\eps, r)) \le \eps^{ \xi^{2,+}(\alpha)+o(1)} \quad \text{and} \quad
		\Pb(\wt\Ac^+_{\alpha}(\eps, r)) \le \eps^{ \newxi(\alpha)+o(1)}.
	\end{align*}
\end{proposition}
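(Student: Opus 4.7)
My plan is to transfer the claimed BLS estimates to the boundary case of CLE$_\kappa$, and from there to known boundary arm exponents for SLE$_\kappa$. By the Sheffield--Werner coupling (with $\alpha = \alpha(\kappa)$ as in \eqref{eq:alp-kap}), the collection of outer boundaries of outermost clusters of $\wt\Lc_\alpha^+$ is distributed as a CLE$_\kappa$ in $\Bc_1^+$, with its natural boundary conditions on $\partial \Bc_1^+$. Under this identification, $\wt\Ac^{2,+}_\alpha(\eps,r)$ is precisely the event that at least one CLE$_\kappa$ loop crosses the half-annulus $(\Bc_r \setminus \Bc_\eps)^+$, and $\wt\Ac^{+}_\alpha(\eps,r)$ is the event that at least two outermost CLE$_\kappa$ loops cross it. Thus it suffices to bound these CLE loop-crossing probabilities by $\eps^{\xi^{2,+}(\alpha)+o(1)}$ and $\eps^{\newxi(\alpha)+o(1)}$ respectively.

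Next, I would connect these CLE loop-crossing events to boundary arm events of SLE$_\kappa$. The natural tool is a boundary exploration of CLE$_\kappa$ rooted at $0$, for instance the boundary-conformal-loop-ensemble (BCLE) exploration introduced by Miller--Sheffield--Werner, whose trunk is an SLE$_\kappa$-type curve that successively reveals CLE$_\kappa$ loops touching the real line. On the event that an outermost loop in $(\Bc_r \setminus \Bc_\eps)^+$ crosses this half-annulus, the two sides of the loop produce two disjoint arcs of the exploration trunk that each connect a neighborhood of $\partial \Bc_\eps^+$ to $\partial \Bc_r^+$ while avoiding being swallowed from the boundary side; this is by definition a boundary two-arm event. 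If a \emph{second} outermost loop also crosses the half-annulus, then two additional such arcs appear, producing a boundary four-arm event. In both cases, conformal invariance and the SLE$_\kappa$ strong Markov property allow one to iterate the one-step bound from $\eps$ to $2\eps$, leading to the exponents $\eta^{2,+}(\kappa) = 8/\kappa -1$ and $\newxi(\kappa)\eta^+(\kappa) = 2(12-\kappa)/\kappa$ recorded in \eqref{eq:b-arm-exponent}, as computed for SLE$_\kappa$ in, e.g., \cite{MR3846840}. Absorbing constants and conformal-distortion factors into the $o(1)$ term yields the stated bounds.

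The main obstacle is the second step: one must check that an outermost CLE$_\kappa$ loop crossing the half-annulus really does produce the prescribed number of disjoint SLE boundary arms, and that conversely the SLE arm event is not \emph{much} more likely than the CLE loop-crossing event, or at least not by more than a subpolynomial factor (which is absorbed into the $o(1)$). The cleanest way I see is to iterate the exploration dyadically: condition on everything revealed outside of $\Bc_{2\eps}^+$, use conformal invariance to map what remains back to a fixed domain, and bound the one-step conditional probability by a universal constant strictly less than $1$ that matches the arm exponent. Because only an upper bound with an $o(1)$ exponent is needed (not a matching lower bound, nor a sharp prefactor), one has a lot of slack, and this iteration argument should go through without any genuinely new SLE computation beyond what is already available from the SLE$_\kappa$ boundary arm exponent literature.
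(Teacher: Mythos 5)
This proposition is not proved in the paper; it is imported verbatim from the companion work \cite{GNQ2024c}, where the corresponding CLE computations are carried out. There is therefore no in-paper proof for your sketch to be measured against, and it is only possible to assess the plausibility of your outline in its own right.

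Your broad plan --- pass through the Sheffield--Werner identification to a CLE$_\kappa$ in $\Bc_1^+$, then relate half-annulus crossings by CLE loops to boundary arm events for an SLE-type exploration, then invoke known SLE boundary-arm exponents --- is the natural route, and very likely the same circle of ideas that \cite{GNQ2024c} develops. However, as written the argument has a genuine gap at precisely the step you flag. First, it is not automatic that ``one crossing loop $\Rightarrow$ two disjoint trunk arcs'' and ``two crossing loops $\Rightarrow$ four disjoint trunk arcs'': for $\kappa\in(8/3,4]$ the CLE loops do not touch $\partial\Hb$, so a loop crossing the half-annulus $(\Bc_r\setminus\Bc_\eps)^+$ need not interact with a boundary exploration in a way that produces disjoint arcs anchored near $0$, and the second loop must also land on the ``correct side'' of the first to generate an honest boundary four-arm event rather than two nested two-arm events. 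Second, the choice of exploration matters: the BCLE trunk you invoke is an SLE$_{16/\kappa}$-type curve (with $16/\kappa>4$), whereas the exponents $\eta^{2,+}(\kappa)=8/\kappa-1$ and $\eta^{+}(\kappa)=2(12-\kappa)/\kappa$ in \eqref{eq:b-arm-exponent} are boundary arm exponents for SLE$_\kappa$ itself. Reconciling the two (for instance via an SLE$_\kappa(\kappa-6)$ branching exploration of CLE$_\kappa$, or via duality) is nontrivial and not a ``subpolynomial slack'' issue: using the wrong exploration changes the exponent, not just the constant. These are precisely the kinds of CLE/SLE surgery arguments that \cite{GNQ2024c} is devoted to, so your sketch correctly identifies the target but does not yet bridge it.
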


\section{Separation lemmas} \label{sec:sep}

The main goal of this section is to prove a separation lemma for two random walks inside a random walk loop soup, which we do in Section~\ref{subsec:a pair}, before considering, in Section~\ref{subsec:generalization}, extensions which are needed later in the paper. In general, such separation results state, roughly, the following: if two random sets are conditioned not to intersect each other, then with uniformly positive probability, their extremities are ``far apart'' (\emph{well-separated}, in a sense that needs to be made precise in each situation).
Note that a continuous version of our result, namely a separation lemma for Brownian motions inside a Brownian loop soup, was proved in a recent paper \cite{GLQ2022} by the first and third named authors and Li.

Separation properties are very useful and arise in many applications, and we only mention some of the most important instances where they appear. Such ideas were used to prove quasi-multiplicativity for arm probabilities in Bernoulli percolation, at or near criticality (see in particular the seminar work by Kesten \cite{Ke1987a} and subsequent works e.g.\ \cite{SW2001, No2008, SS2010, GPS2013, du2022sharp}). There is also a large amount of literature relying on well-separatedness for non-intersecting Brownian motions, see for instance  \cite{La1998,MR1901950,LV2012}. 

A general framework to establish separation was provided in the appendix of \cite{GPS2013}, that we are going to follow in our case. The proof is based on a suitable quantity called \emph{quality}, defined precisely in \eqref{eq:quality} below, which measures the separatedness of two random walks inside a random walk loop soup. It uses the following two crucial observations.
\begin{itemize}
	\item[(i)] At any scale, the quality at the next scale can be made macroscopic in a not-too-costly way, i.e. with a probability depending only on the quality at the current scale (see Lemma~\ref{lem:sep-1} below).
	\item[(ii)] If the quality is small enough, then intersection occurs at the next scale with high probability (Lemma~\ref{lem:sep-2}).
\end{itemize}
Then, separation can be obtained rather directly as a consequence of these two properties.

\subsection{A pair of non-intersecting random walks inside a RWLS} \label{subsec:a pair}

In this section, we first consider the case of $n=2$ random walks. Later, in Section~\ref{subsec:generalization}, we explain how it can be generalized easily to various, more complicated, situations.

We give the setup first. Let $1 \leq r \le R/2$ and $D \supseteq B_R$. Let $L_r$ be a loop configuration in $B_r$. Let $V_1$ and $V_2$ be two disjoint subsets of $B_r$, which both intersect $\partial B_r$, and let $z_i\in V_i\cap \partial B_r$, $i=1,2$. We view the quintuple $(L_r,V_1,V_2,z_1,z_2)$ as an ``initial configuration'', and we restrict to the case when $\Lambda(V_1,L_r)\cap V_2=\emptyset$ (recall the notation $\Lambda(\cdot,\cdot)$ from Section~\ref{subsec:RWLS}). In other words, there is no cluster of $L_r$ that intersects both $V_1$ and $V_2$.

Let $S^1$ and $S^2$ be two independent simple random walks, started from $z_1$ and $z_2$ respectively. Let $\Lc_{D}$ be a RWLS in $D$ with intensity $\alpha\in(0,1/2]$, which is independent of the random walks considered. Recall that we use $\Lc_{r,D} := \Lc_D \setminus \Lc_r$ to denote the loop soup made of the loops in $\Lc_{D}$ which are not entirely contained in $B_{r}$. We consider the frontier loop soup $\Lc^b_{r,D}$ induced by $\Lc_{r,D}$. For any $r\le s\le R$, let $\tau^i_{s}$ be the first time that $S^i$ hits $\partial B_s$, and let the {\it quality} at $s$ be defined by 
\begin{equation}\label{eq:quality}
	Q(s):=\sup \big\{ \delta \ge 0 \: : \: \Lambda \big( V_1\cup S^1[0,\tau^1_s]\cup B_{\delta s}(S^1_{\tau^1_s}),L_r\uplus\Lc^b_{r,D} \big) \cap \big( V_2\cup S^2[0,\tau^2_s]\cup B_{\delta s}(S^2_{\tau^2_s}) \big) = \emptyset \big\}. 
\end{equation}
Note that $Q(s) \in [0,1]$. Here, we use the same notion of quality as in \cite{SS2010} or \cite{GPS2013} (see the appendices of these two papers). Heuristically, $Q(s)$ measures how well each random walk is separated, at scale $s$, from the other random walk inside the loop soup. We can now state the separation lemma as follows (see Figure~\ref{fig:A} for an illustration).

\begin{proposition}[Separation lemma for a pair]\label{prop:sep}
	There exists a universal constant $c>0$ such that for all $1 \leq r \le R/2$ and $D \supseteq B_R$, for each initial configuration $(L_r,V_1,V_2,z_1,z_2)$, and for any intensity $\alpha\in (0,\half]$ of the loop soup under consideration,
	\begin{equation}\label{eq:Q>0}
		\Pb \big( Q(R)>1/10 \mid Q(R)>0 \big) \ge c.
	\end{equation}
	Moreover, let $\Dc$ be any event for the loop soup. If $\Dc$ is decreasing, then \eqref{eq:Q>0} still holds if one replaces $Q(R)$ by $\bar Q(R):=Q(R)\ind_\Dc$.
\end{proposition}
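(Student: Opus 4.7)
The plan is to adapt the framework developed in the appendix of \cite{GPS2013} (and used in \cite{SS2010}), which reduces separation to two one-scale inductive statements along dyadic scales. I fix the geometric sequence $s_k := 2^k r$ for $0 \le k \le N$ with $s_N \le R < 2s_N$, and analyze the process $(Q(s_k))_{k\ge 0}$. The two inputs I need are: (i) an \emph{improvement} lemma asserting that there exist universal constants $\delta_0, c_0 > 0$ such that, on $\{Q(2s) > 0\}$ and conditionally on the configuration constructed up to scale $s$, one has $Q(2s) > \delta_0$ with probability at least $c_0$; and (ii) a \emph{collapse} lemma asserting that when $Q(s) \le \delta$, the conditional probability of $\{Q(2s) > 0\}$ is at most $\psi(\delta)$, with $\psi(\delta) \to 0$ as $\delta \searrow 0$. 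Combining these via a standard Markov-chain-type argument on $(Q(s_k))_k$ yields \eqref{eq:Q>0}: visits to low quality are unlikely to survive, so on $\{Q(R) > 0\}$ the quality stays above $\delta_0$ at most scales, and the final step preserves this with uniform probability.

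To prove the improvement lemma, I would construct an ``ideal'' pair of extensions. Conditionally on the configuration up to scale $s$, each walk $S^i$ is forced from its endpoint on $\partial B_s$ to a target point $y_i \in \partial B_{2s}$, chosen so that $|y_1 - y_2|_\infty$ is of order $s$ and there is a macroscopic ball of room around each $y_i$. By first forcing $S^i$ into a ball of radius $\delta s$ around $S^i_{\tau^i_s}$ and then into a thin tube toward $y_i$, the probability of this extension is bounded below by a polynomial in $\delta$, using Lemma~\ref{lem:green's} and Lemma~\ref{lem:green_sausage}. To guarantee that the two extensions are not merged through the ambient frontier loop soup $\Lc^b_{r,D}$, I invoke Lemma~\ref{lem:cluster-size-2} and Lemma~\ref{lem:crossing loops}, which produce a uniformly positive-probability event on which all clusters of $\Lc^b_{r,D}$ intersecting the annulus $B_{2s}\setminus B_s$ have diameter less than $c_0 s$. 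The $\delta$-polynomial cost is absorbed against the a priori probability of survival at scale $2s$ provided by the collapse lemma, yielding the uniform constant $c_0$.

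The collapse lemma follows from the non-disconnection estimates of Section~\ref{subsec:elem}. When $Q(s) \le \delta$, the neighborhoods of $V_1\cup S^1[0,\tau^1_s]$ and $V_2\cup S^2[0,\tau^2_s]$ are within Hausdorff distance $\delta s$ of each other near $\partial B_s$. Then with probability $1 - O(\delta^c)$, either the walks themselves intersect before reaching $\partial B_{2s}$ (by Lemma~\ref{lem:disconnection-prob} and Lemma~\ref{lem:non-disconnecting paths-1}), or a loop of $\Lc_{r,D}$ crossing a small annulus at scale $\delta s$ merges their clusters (Lemma~\ref{lem:disconnect-loops-2}).

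For the extension to $\bar Q(R) = Q(R) \ind_\Dc$ with $\Dc$ decreasing, I observe that all the good one-scale events used in the proof are decreasing in $\Lc_D$ (adding loops can only merge clusters and shrink the quality), and so is $\{Q(R) > 1/10\}$; thus by FKG (Lemma~\ref{lem:FKG-RWLS}) each such probability only increases when conditioned on $\Dc$. Rerunning the induction inside the conditional measure $\Pb(\cdot \mid \Dc)$ therefore gives the same bound, which implies \eqref{eq:Q>0} with $\bar Q$ in place of $Q$. The main obstacle I anticipate is the improvement lemma: the forced trajectories must be constructed with probability genuinely polynomial in $\delta$, and the analysis must simultaneously decouple them from the frontier loop soup without any $\delta$-dependent loss --- this is where the cluster-diameter estimates from Section~\ref{sec:BLS} play an essential role.
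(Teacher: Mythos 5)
There is a genuine gap in the proposal: you run the iteration over the \emph{dyadic} scales $s_k = 2^k r$ only, starting at $s_0 = r$, and your iteration therefore cannot cope with the situation where the initial separation $Q(r)$ is positive but arbitrarily small (which is the generic case --- $V_1$ and $V_2$ may touch, and the clusters of $L_r$ need only miss them). Written in the paper's form, the iteration needs $\Pb(Q(s_N) > 1/10) \geq \underline{c}^N \cdot \Pb(Q(s_0) > \rho)$ for a fixed $\rho$, and the last factor is $\ind\{Q(r) > \rho\}$, which vanishes when $Q(r) \le \rho$; so the dyadic chain never gets started. The paper circumvents this with the \emph{relative quality} $Q^*(i)$ and the sub-geometric initial scales $r_i = r + 2^{i-1} r Q(r)$, $1 \le i \le M = \lfloor \log_2(1/Q(r)) \rfloor$: by construction $Q^*(0) = 1$ always, so the first improvement step has a uniform constant cost, and the number of extra scales $M$ is exactly tuned so that the total improvement cost and the total collapse discount are controlled by the same geometric series. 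This is the device your plan is missing, and the number $M$ of scales must itself depend on the initial configuration --- it cannot be replaced by a fixed dyadic partition.

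Your ``absorb the $\delta$-polynomial cost against the collapse probability'' idea is an attempt to patch exactly this hole, but as stated it does not work: it requires the one-step improvement exponent (call it $a$, from the cone/sausage construction, which is of order $\log_2(1/\bar C)$ with $\bar C$ the per-annulus constant) to be \emph{at most} the one-step collapse exponent $b = 1/4$ coming from the non-disconnection estimate of Lemma~\ref{lem:disconnection-prob}. There is no reason for $a \le b$, and in fact generically $a > b$, so the conditional probability $\Pb(Q(2s) > \delta_0 \mid Q(2s) > 0, \Fc_s)$ degenerates as $Q(s) \to 0$ in your setup. As written, your ``improvement lemma'' (uniform $c_0$ after conditioning on $\{Q(2s) > 0\}$) is essentially a single-scale version of the whole proposition and cannot be proved by a one-annulus construction; it has to be built from the multi-scale relative-quality argument. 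A secondary issue: for the $\bar Q$ statement, ``rerunning the induction inside $\Pb(\cdot \mid \Dc)$'' is not quite right, since conditioning a Poisson process on a decreasing event destroys the Poisson structure and hence the independence on which your one-scale lemmas rely; the paper instead keeps the unconditional measure and uses FKG \emph{inside} the proof of the improvement step (Lemma~\ref{lem:sep-1}), keeping the collapse step (Lemma~\ref{lem:sep-2}) unchanged.
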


The quantity $\bar Q(s):=Q(s)\ind_\Dc$ is called the \emph{modified quality} at $s$ for $\Dc$. Note that $\{Q(R)>0\}$ is also a decreasing event.

\begin{figure}[t]
	\centering
	\includegraphics[width=.5\textwidth]{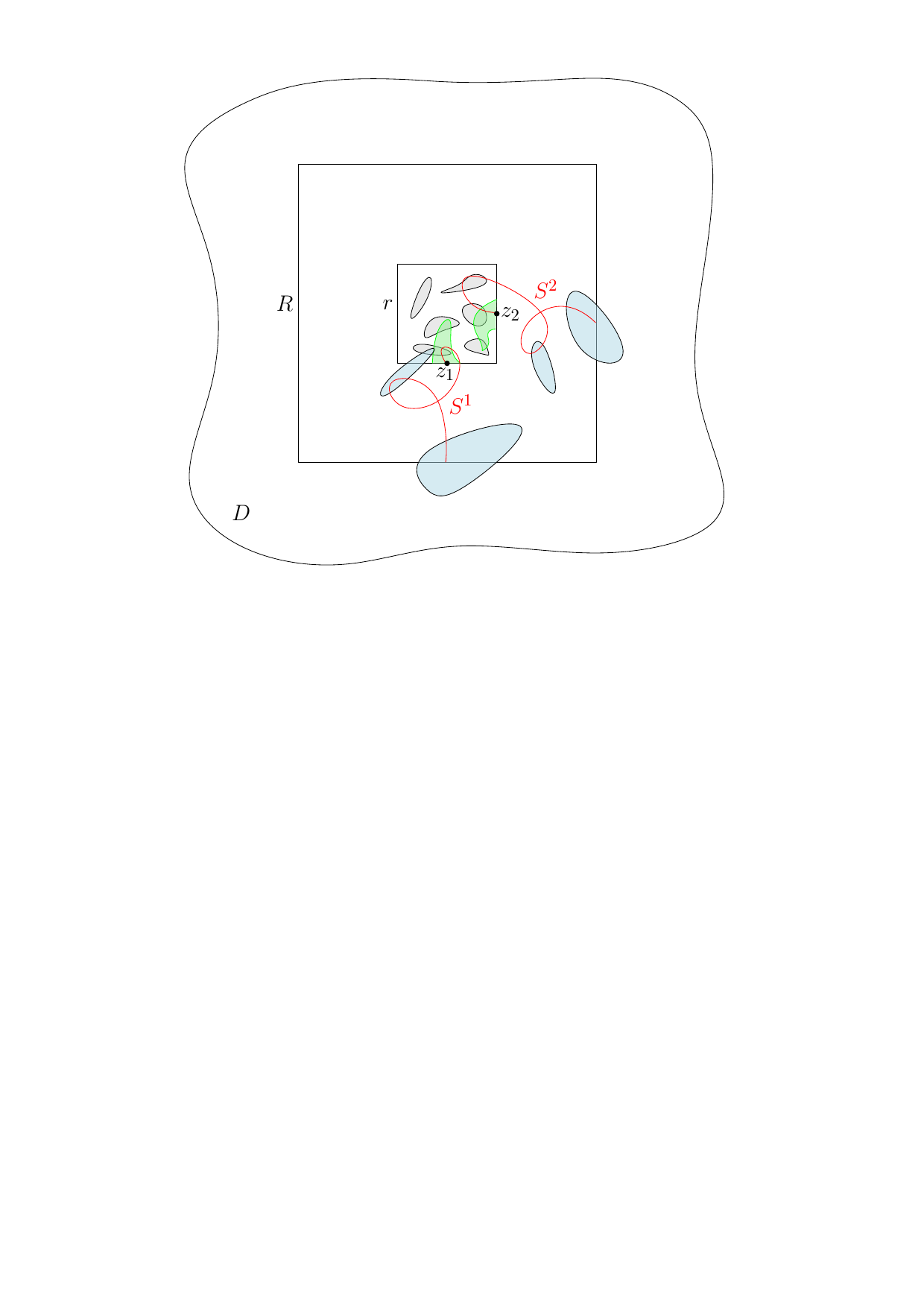}
	\caption{The non-intersection event $\{ Q(R)>0 \}$. The subsets $V_1$ and $V_2$ are shown in green, and clusters of $L_r$ in gray. The simple random walks $S^1$ and $S^2$ are drawn in red, the blue regions being clusters of $\Lc^b_{r,D}$ encountered by these two walks.}
	\label{fig:A}
\end{figure}

\begin{remark}\label{rmk:general-sep}
	It can be checked easily from the proof that the condition $R \geq 2r$ can be relaxed to $R \geq (1+\delta)r$, for any given $\delta>0$. In this case, the lower bound $1/10$ needs to be replaced by some suitable $c'(\delta)>0$, and we have
	$$\Pb \big( Q(R)>c' \mid Q(R)>0 \big) \ge c,$$
	where the constant $c > 0$ depends only on $\delta$. This also holds for later generalizations, in particular Propositions~\ref{prop:sep-jk}, \ref{prop:inv-sep} and \ref{prop:sep-disc}. 
\end{remark}

\begin{remark}\label{rmk:frontier-D}
	For technical reasons (cf Remark~\ref{rmk:frontier}, and the proof of Lemma~\ref{lem:sep-1} below), we often have to use the frontier loop soup $\Lc^b_{r,D}$ instead of the original loop soup $\Lc_{r,D}$. Let us discuss more precisely the impact of this. For a set $A$ and a configuration of loops $L$, a difference between $\Lambda(A,L)$ and $\Lambda(A,L^b)$ can only result from the existence of a loop $\gamma$ in $L$ such that $\gamma \cap A \neq \emptyset$ but $\gamma^b \cap A=\emptyset$. Therefore, if we include, in the event $\Dc$, the condition that all loops in $\Lc_{r,D}$ have a diameter at most $R/2$, then the modified quality $\bar Q(R):=Q(R)\ind_\Dc$ remains unchanged when $\Lc^b_{r,D}$ is replaced by $\Lc_{r,D}$. Indeed, a difference can only be caused by some big loop whose outer boundary encircles one of the simple random walks from $\partial B_r$ to $\partial B_R$: this is only possible if that loop has a diameter strictly larger than $R-r$, so in particular $> R/2$.
\end{remark}

Now, we turn to the proof of Proposition~\ref{prop:sep}, which relies on the next two lemmas. In order to state them precisely, we first need to introduce the notion of {\it relative quality} (see Eq.(A.4) of \cite{GPS2013}), which is used to handle the microscopic scales when going from $\partial B_r$ to $\partial B_{2r}$. Let $r':=rQ(r)$, $M:=\lfloor\log_2\left( r/r' \right)\rfloor = \lfloor\log_2\left( 1/Q(r) \right)\rfloor$ ($\geq 0$, since $Q(r) \leq 1$ by definition), and $L:=\lfloor\log_2\left(R/r\right)\rfloor$. First, set
\begin{equation*}
	r_i=\begin{cases}
		r, &i=0;\\
		r+2^{i-1}rQ(r), & 1\le i\le M;\\
		2^{i-M}r, & M+1\le i\le M+L.
	\end{cases}
\end{equation*}
For future use, note that by definition,
\begin{equation} \label{eq:bounds_rM}
	r_M \in \bigg(\frac{5}{4} r, \frac{3}{2} r \bigg] \quad \text{and} \quad r_{M+L} \in \bigg(\frac{1}{2} R, R \bigg].
\end{equation}
We then define the \emph{relative quality} as
\begin{equation*}
	Q^*(i):=\begin{cases}
		r_i Q(r_i)/(2^{i}rQ(r)), & 0\le i\le M;\\
		Q(r_i), & M+1\le i\le M+L.
	\end{cases}
\end{equation*}
Observe in particular that $Q^*(0)=1$. Additionally, we remind the reader that throughout the paper, we adopt the convention that integer parts are omitted from notations. In particular, we can assume all the scales $r'$ and $r_i$ ($0 \leq i \leq M+L$) to be integers.

We are now in a position to state the two intermediate lemmas mentioned above.
\begin{lemma}\label{lem:sep-1}
	For any $\rho>0$, there exists $c(\rho)>0$ such that for all $\alpha\in (0,\half]$, and $i \in \{0,\ldots,M+L-1\}$,
	\[
	\Pb( Q^*(i+1)>1/10 \mid Q^*(i)>\rho ) \geq c(\rho).
	\]
\end{lemma}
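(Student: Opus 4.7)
The plan is to follow the standard quality-boost strategy from the appendix of \cite{GPS2013}. Conditional on $\{Q^*(i)>\rho\}$, I will exhibit an explicit ``good'' event of probability $\ge c(\rho)>0$ on which $Q^*(i+1)>1/10$.

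First, I condition on the $\sigma$-algebra $\mathcal F_i$ generated by the stopped walks $S^j[0,\tau^j_{r_i}]$, $j=1,2$, together with all loops of $\Lc_{r,D}$ whose frontier lies in a cluster of $L_r\uplus\Lc^b_{r,D}$ that meets $V_j\cup S^j[0,\tau^j_{r_i}]\cup \partial B_{r_i}$ for some $j$. By the strong Markov property each walk restarts at $x_j:=S^j_{\tau^j_{r_i}}\in\partial B_{r_i}$, and by the Poisson property of $\Lc_{r,D}$ the remaining (``fresh'') loops form an independent Poisson configuration. Writing $\Delta_i:=r_{i+1}-r_i$ and $s_i:=\rho\cdot 2^i r Q(r)$ for $i\le M$, $s_i:=\rho r_i$ for $i>M$, the event $\{Q^*(i)>\rho\}$ forces the boxes $B_{s_i}(x_j)$ to be ``safe'' (lying in clusters of the revealed graph disjoint from the opposite side), and \eqref{eq:bounds_rM} gives that both $s_i/\Delta_i$ and $\Delta_{i+1}/\Delta_i$ are bounded above and below by explicit functions of $\rho$ and universal constants.

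Next, for each $j$ I pick a target $y_j\in\partial B_{r_{i+1}}$ radially outward from $x_j$, arranged so that $y_1, y_2$ are at mutual $l_\infty$-distance $\asymp \Delta_i$, and I construct a tube $T_j$ of width $\asymp_\rho \Delta_i$ connecting $x_j$ to $B_{\Delta_i/100}(y_j)$, contained in the safe box $B_{s_i}(x_j)$ near $x_j$. The good event is the intersection of (a) each $S^j$ remains in $T_j$ until time $\tau^j_{r_{i+1}}$ and exits in $B_{\Delta_i/200}(y_j)$, and (b) for the fresh loops, no cluster in $\Lc^b_{r,D}$ crosses a fixed finite collection of thin annular separators enclosing $T_1\cup T_2$ together with their target boxes. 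The conditional probability of (a) is bounded below by $c(\rho)>0$ by the invariance principle for simple random walk, applied in local coordinates of scale $\Delta_i$ around each $x_j$, since the local geometry is scale-invariant and controlled by $\rho$; and (b) is independent of (a) given $\mathcal F_i$ by Poisson thinning, with probability at least $c'(\rho)>0$ from finitely many applications of Lemma~\ref{lem:crossing loops}---whose constants depend only on the aspect ratio (fixed as a function of $\rho$) and are uniform in $\alpha\in(0,\half]$. Under (a)$\cap$(b), each box $B_{\Delta_i/200}(y_j)$ is still disconnected from the opposite side in $L_r\uplus\Lc^b_{r,D}$, so $Q(r_{i+1})\ge \Delta_i/(200\, r_{i+1})$; comparing with the definition of $Q^*(i+1)$ yields $Q^*(i+1)>1/10$ for suitable absolute constants in the construction.

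The main obstacle is uniformity in $i$ during the microscopic phase $i\le M$, where the annulus $B_{r_{i+1}}\setminus B_{r_i}$ can be very thin relative to $r$. This forces the invariance-principle estimates to be formulated in local coordinates of scale $\Delta_i$ around each $x_j$ (not around the origin), and the loop-soup input to be quantitative in the aspect ratio alone, which is exactly the form of Lemma~\ref{lem:crossing loops}. A secondary technical point is that the quality is defined through the frontier loop soup $\Lc^b_{r,D}$ rather than $\Lc_{r,D}$ itself (cf.\ Remark~\ref{rmk:frontier-D}), so (b) must control crossing frontier-clusters, which is exactly why Lemma~\ref{lem:crossing loops} is stated in the form used here.
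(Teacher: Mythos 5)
Your overall strategy matches the paper's: condition on the current configuration, route each walk out to the next scale through a tube/cone of width controlled by $\rho$, and separately control the fresh loop soup in the region between them, with the key observation that everything must be done in local coordinates of scale $\Delta_i$ during the microscopic phase $i\le M$. This is indeed how the paper proceeds (with cones $V_j^\pm$ emanating from the current endpoints $w_j$).

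There is, however, a genuine gap in part (b). You correctly state that (b) must control crossing frontier-\emph{clusters}, but then cite Lemma~\ref{lem:crossing loops} and claim it is ``stated in the form used here.'' It is not: Lemma~\ref{lem:crossing loops} only controls individual frontier \emph{loops} crossing a thin annulus. A chain of small frontier loops, none of which individually crosses any one of your finitely many annular separators, can still form a cluster in $\Lc^b_{r,D}$ connecting the two sides, which is exactly what the quality \eqref{eq:quality} forbids. The right ingredient is Lemma~\ref{lem:cluster-size-2}, which bounds the \emph{diameter of clusters} in $\Lc^b_R$; the paper applies it once per scale in a chain of $K(\rho)+1$ concentric boxes around each $w_j$, so that no cluster can span between consecutive boxes. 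Your argument can be repaired by this replacement, but as written it does not close.

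A secondary concern: conditioning on a $\sigma$-algebra $\mathcal F_i$ that reveals all loops belonging to clusters meeting $V_j\cup S^j[0,\tau^j_{r_i}]\cup\partial B_{r_i}$ is not obviously a legal Poisson thinning, since whether a loop lies in such a cluster depends on the entire configuration (chains can run through distant regions), and you do not supply a stopping-set justification. The paper avoids this entirely: $E_2$ (about the walks) factors out by the strong Markov property and the a priori independence of walks and soup, while both $\{Q^*(i)>\rho\}$ and the loop-soup event $E_1$ are \emph{decreasing} in $\Lc_{r,D}$, so the FKG inequality (Lemma~\ref{lem:FKG-RWLS}) gives $\Pb(\{Q^*(i)>\rho\}\cap E_1)\ge\Pb(Q^*(i)>\rho)\,\Pb(E_1)$ with no conditioning on cluster exploration. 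Adopting that route is cleaner and sidesteps the justification your proposal leaves implicit.
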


\begin{lemma}\label{lem:sep-2}
	For any $\delta>0$, there exists $\rho_0(\delta)>0$ such that for all $\rho\le \rho_0(\delta)$, $\alpha\in (0,\half]$, and $i \in \{0,\ldots,M+L-1\}$,
	\[
	\Pb( Q^*(i+1)>0 \mid 0<Q^*(i)<\rho ) \le \delta.
	\]
\end{lemma}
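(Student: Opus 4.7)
The plan is to show that when $Q^*(i)$ is extremely small, the two augmented clusters must come close enough at scale $r_i$ for a loop in the random walk loop soup, at a slightly larger scale, to merge them with very high probability, forcing $Q^*(i+1)=0$.

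First, I would condition on the initial configuration $(L_r,V_1,V_2,z_1,z_2)$, the walks $S^1[0,\tau^1_{r_i}]$ and $S^2[0,\tau^2_{r_i}]$, and the portion of $\Lc^b_{r,D}$ that determines the two augmented clusters $\Kc_j:=\Lambda(V_j\cup S^j[0,\tau^j_{r_i}], L_r\uplus\Lc^b_{r,D})$, where by assumption $\Kc_1\cap\Kc_2=\emptyset$. On the event $\{0<Q^*(i)<\rho\}$, unpacking the definition of $Q(r_i)$ produces points $q_j\in\Kc_j$ with $|q_1-q_2|\le C\rho\, s_i$, where $s_i=r_i$ if $i\ge M+1$ and $s_i=2^{i}r\,Q(r)$ if $0\le i\le M$. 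Furthermore, $q_j$ can be chosen within $\rho s_i$ of $S^j_{\tau^j_{r_i}}$ and on the outer boundary of $\Kc_j$ facing the other cluster.

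Next, I would exploit the conditionally independent randomness from loops in $\Lc_{r,D}$ whose ranges lie in a ball of radius of order $s_i$ around this bottleneck. Applying Palm's formula (Lemma~\ref{lem:Palm formula}) to the indicator that a loop $\gamma$ visits both $q_1$ and $q_2$, the total mass of such bridging loops reduces, via standard loop-measure identities, to a Green's function expression, and by Lemma~\ref{lem:green's} it is at least $c'\alpha\log(s_i/|q_1-q_2|)\ge c''\alpha\log(1/\rho)$. Poisson concentration then gives that no bridging loop exists with probability at most $\exp(-c''\alpha\log(1/\rho))=\rho^{c''\alpha}$. Choosing $\rho_0(\delta):=\delta^{1/(c''\alpha)}$ yields the desired upper bound $\delta$.

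The main obstacle is that $Q^*(i+1)$ is defined via the frontier loop soup $\Lc^b_{r,D}$, so a bridging loop $\gamma\in\Lc_{r,D}$ only merges $\Kc_1$ and $\Kc_2$ if its frontier $\gamma^b$ actually touches both. I would handle this by restricting the Palm count to small, ``simple'' bridging loops---of diameter comparable to $|q_1-q_2|$ and not encircling either $q_j$, controlled through the logic of Lemma~\ref{lem:disconnect-loops-2}---so that $\gamma^b$ passes close to both $q_j$, and hence, because $q_j$ were chosen on the outer boundary of $\Kc_j$, intersects $\Kc_j$. A Green's-function count restricted to this sub-class still yields mass of order $\log(1/\rho)$, so the bound $\rho^{c''\alpha}$ persists. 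The microscopic regime $i\le M$ uses a different scale $s_i=2^{i}r\,Q(r)$, but the argument is identical after rescaling the relevant balls accordingly.
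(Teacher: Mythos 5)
Your approach cannot work as stated, for two concrete reasons, and it also differs fundamentally in mechanism from the paper's proof.

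\textbf{The bound must be uniform in $\alpha$.} The lemma asserts the existence of $\rho_0(\delta)$ that works for \emph{all} $\alpha\in(0,1/2]$. Your argument estimates the probability of a bridging loop in the soup, which scales linearly with $\alpha$ in the exponent; you explicitly arrive at $\rho_0(\delta)=\delta^{1/(c''\alpha)}$, which tends to $0$ as $\alpha\to 0$. This is not a cosmetic loss: when $\alpha$ is tiny, the loop soup is nearly empty, so there is essentially no loop available to merge the two clusters, and any mechanism relying on loops to force $Q^*(i+1)=0$ must fail. The forcing mechanism in the paper is supplied not by the loop soup but by the random walks themselves. Conditioned on $\{0<Q^*(i)<\rho\}$, one of the two endpoints $S^j_{\tau^j_{r_i}}$ lies within distance $\approx\rho d_i$ of the other augmented cluster; for $Q^*(i+1)$ to remain positive, the continuing walk $S^j[\tau^j_{r_i},\tau^j_{r_{i+1}}]$ must avoid disconnecting the small ball $B_{4\rho d_i}(S^j_{\tau^j_{r_i}})$ from $\partial B_{r_{i+1}}$ (otherwise it wraps around the other cluster and intersects it). By Lemma~\ref{lem:disconnection-prob} this non-disconnection probability is $O(\rho^{1/4})$, which is $\alpha$-free and gives the uniform bound.

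\textbf{The conditioning and the Green's function count are also problematic.} You propose to condition on the clusters $\Kc_1,\Kc_2$ and then exploit ``conditionally independent'' loops near the bottleneck. But once you have determined $\Kc_1$ and $\Kc_2$ (which requires exploring all loops meeting either of them), the remaining loops are exactly those that avoid $\Kc_1\cup\Kc_2$ --- and such loops by definition cannot bridge $q_1\in\Kc_1$ to $q_2\in\Kc_2$. You would need to work with loops that were already counted as part of $\Kc_1$ or $\Kc_2$, destroying the independence your Palm computation relies on. Moreover, the Green's function count restricted to loops contained in a ball of radius $O(\rho s_i)$ around the bottleneck gives a mass $O(1)$, not $\Theta(\log(1/\rho))$: to accumulate a $\log(1/\rho)$ of mass you must sum over loops at all intermediate scales between $\rho s_i$ and $s_i$, and those larger loops are precisely the ones most likely to have been spent in the exploration of $\Kc_1,\Kc_2$. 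Either way, your claimed Poisson-concentration bound does not follow. The paper's route through random-walk non-disconnection avoids all of these issues.
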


We first explain how Proposition~\ref{prop:sep} follows from these two lemmas, and we subsequently prove them.

\begin{proof}[Proof of Proposition~\ref{prop:sep}]
	We first provide a detailed proof for \eqref{eq:Q>0}. 
We can assume that $R=2^L r$ since it only costs a constant probability to extend the separation from scale $2^{L} r$ to $R$ (similar to the proof of Lemma~\ref{lem:sep-1}).
	Then by definition, $Q^*(M+L) = Q(r_{M+L}) = Q(R)$. Thus, it is equivalent to prove that 
	\[
	\Pb( Q^*(M+L)>0 ) \le c^{-1} \,\Pb( Q^*(M+L)>1/10 ).
	\]
	Note that $Q^*(i)>0$ implies that $Q^*(j)>0$ for all $j \in \{0, \ldots, i\}$. Consider an arbitrary $\delta > 0$, that we explain how to choose later. Therefore, by Lemma~\ref{lem:sep-2}, for all $i \in \{1,\ldots,M+L\}$, all $\rho\le \rho_0(\delta)$,
	\begin{align*}
		\Pb( Q^*(i)>0 ) &\le \Pb( Q^*(i-1)>\rho )+ \Pb(0<Q^*(i-1)<\rho, \, Q^*(i)>0) \\
		& \le \Pb( Q^*(i-1)>\rho )+ \delta\, \Pb(Q^*(i-1)>0).
	\end{align*}
	Iterating the above inequality, we obtain 
	\begin{equation}\label{eq:sq1}
		\Pb( Q^*(M+L)>0 ) \le \sum_{i=0}^{M+L-1} \delta^{i}\, \Pb( Q^*(M+L-1-i)>\rho ) +\delta^{M+L}.
	\end{equation}
	By using Lemma~\ref{lem:sep-1} repeatedly, we have
	\begin{equation}\label{eq:sq2}
		\Pb( Q^*(M+L-1-i)>\rho ) \le c(\rho)^{-1} \Pb( Q^*(M+L-i)>1/10 ) \le c(\rho)^{-1} \underline{c}^{-i} \,
		\Pb( Q^*(M+L)>1/10 ),
	\end{equation}
	where $\underline{c}:=c(1/10)$ is provided by Lemma~\ref{lem:sep-1} with $\rho=1/10$.
	By definition $Q^*(0)=1$, so using Lemma~\ref{lem:sep-1} again, we obtain
	\begin{equation}\label{eq:sq3}
		\Pb( Q^*(M+L)>1/10 ) \ge \underline{c}^{M+L}.
	\end{equation}
	Plugging \eqref{eq:sq2} and \eqref{eq:sq3} into \eqref{eq:sq1}, we obtain that for all $\delta>0$ and $\rho\le \rho_0(\delta)$,
	\[
	\Pb( Q^*(M+L)>0 ) \le \Pb( Q^*(M+L)>1/10 )  \left( \frac{1}{c(\rho)}\sum_{i=0}^{M+L-1} \left(\frac{\delta}{\underline{c}}\right)^{i} \, + \left(\frac{\delta}{\underline{c}}\right)^{M+L} \right).
	\]
	Thus, we can conclude the proof of \eqref{eq:Q>0} by taking $\delta=\underline{c}/2$, and then $\rho=\rho_0(\delta)$.
	
	Finally, the same proof as above can be followed to obtain the result for $\bar Q(R) = Q(R)\ind_\Dc$ instead of $Q(R)$. For this purpose, we observe that the strategy of proof for Lemma~\ref{lem:sep-1} remains valid by using the FKG inequality, while the proof of Lemma~\ref{lem:sep-2} is exactly the same.
\end{proof}

\begin{figure}[t]
	\centering
	\includegraphics[width=1\textwidth]{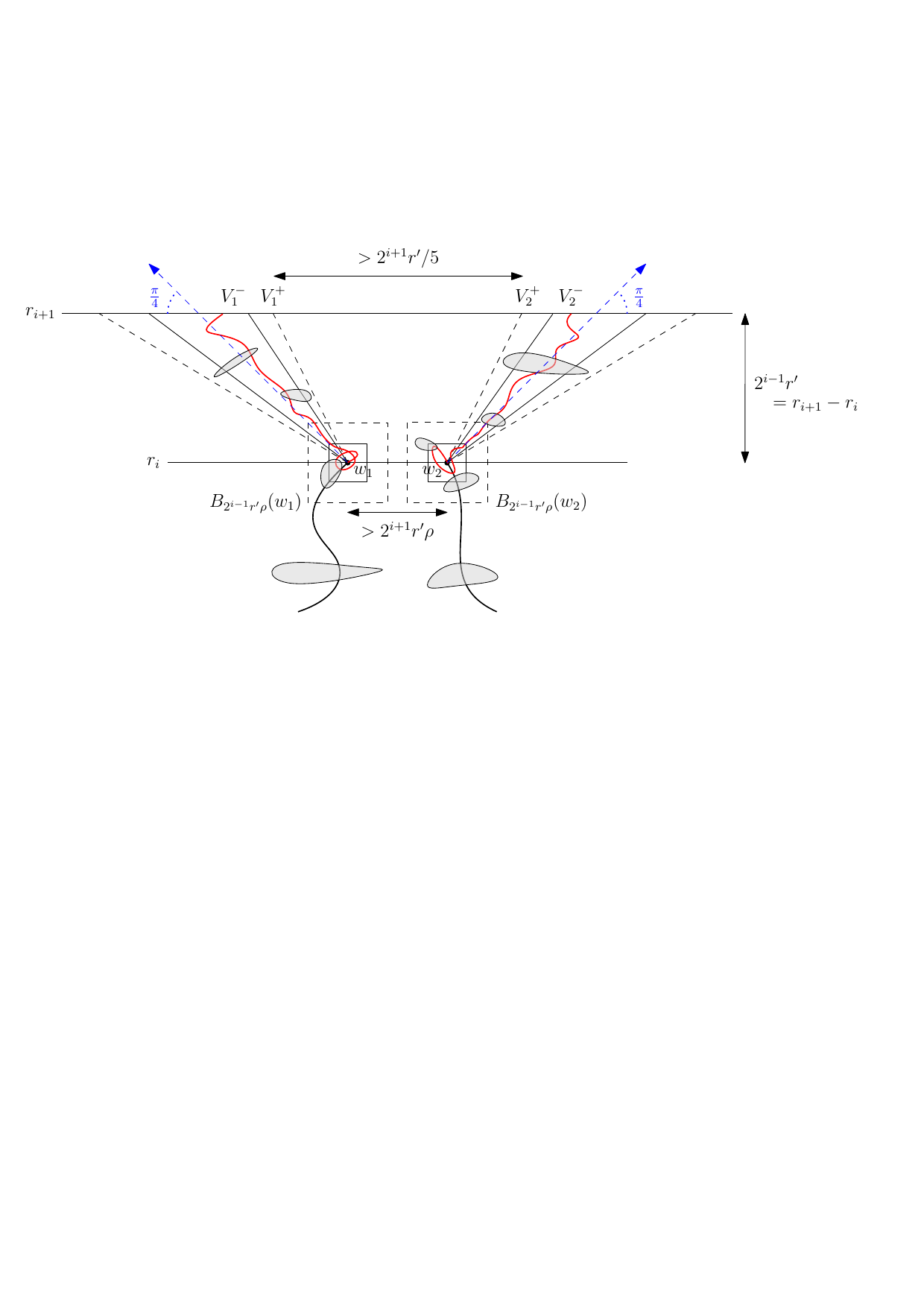}
	\caption{This figure depicts the construction used in the proof of Lemma~\ref{lem:sep-1}, to go from a relative quality which is possibly microscopic (at least $\rho$) on scale $r_i$, to a macroscopic one (at least $1/10$) on scale $r_{i+1}$. For each $j=1,2$, the two boxes around $w_j$ have radii $2^{i-2}r'\rho$ (in solid line) and $2^{i-1}r'\rho$ (in dashed line), and the two cones originating from $w_j$ are $V_j^-$ (solid) and $V_j^+$ (dashed), respectively.}
	\label{fig:cones}
\end{figure}

\begin{proof}[Proof of Lemma~\ref{lem:sep-1}]
	We first consider the case $0 \leq i \leq M-1$. Let $t^j_i := \tau^j_{r_i}$ and $w_j=S^j_{t^j_i}$, for $j = 1,2$. As illustrated on Figure~\ref{fig:cones}, it is easy to see that for each $j =1,2$, one can find two infinite cones $V_j^-$ and $V_j^+$, both with apex $w_j$, having small enough opening angles such that the following properties are verified:
	\begin{enumerate}
		\item $V_j^-$ is contained in $V_j^+$, and its opening angle is half that of $V_j^+$,
		
		\item $V_j^-$ and $V_j^+$ have the same axis of symmetry, with angle $\pi/4$ to the side of $\partial B_{r_i}$ containing $w_j$,
		
		\item $\partial V_j^+\cap \partial B_{r_i}=\{ w_j \}$,
	\end{enumerate}
	and so that furthermore,
	\begin{enumerate} \setcounter{enumi}{3}
		\item $\dist(V_1^+ \cap \partial B_{r_{i+1}}, V_2^+ \cap \partial B_{r_{i+1}}) > 2^{i+1} r'/5$. \label{it:cond_V1V2}
	\end{enumerate}
	Note that the choice of these four cones does not depend on $\rho$. In addition, by the definition \eqref{eq:quality} of $Q$, we have
	$$|w_1-w_2| \geq 2 r_i Q(r_i).$$
	If we now assume that $Q^*(i)>\rho$ holds, by definition $r_i Q(r_i) = (2^i r') Q^*(i) > (2^i r') \rho$, so $|w_1-w_2| > 2^{i+1} r' \rho$. Let 
	\[
	U_j:=\Lambda \big( B_{2^{i-2}r'\rho}(w_j) \cup \big( V_j^-\cap B_{r_{i+1}} \big), \Lc^b_{r,D} \big), \quad j=1,2,
	\]
	and consider the events
	\[
	E_1 := \bigcap_{j=1}^2 \big\{ U_j\subseteq B_{2^{i-1} r'\rho}(w_j) \cup V_j^+ \big\}, \quad \text{and } E_2 := \bigcap_{j=1}^2 \big\{ S^j[\tau^j_{r_i},\tau^j_{r_{i+1}}] \subseteq B_{2^{i-2}r'\rho}(w_j)\cup V_j^- \big\}.
	\]
	Let us give lower bounds on their respective probabilities. For this purpose, we introduce, for $j=1, 2$, the concentric boxes
	$$B_{2^{i-1+k}r'\rho}(w_j), \quad k = 0, \ldots, K(\rho):= \lceil \log_2(\rho^{-1}) \rceil+1$$
(note that in particular, the last box contains the whole cone $V^-_j$).
	For each such $k$, let $F_k$ be the event that all clusters in $\Lc_{D}^b$ intersecting
	$$\big( B_{2^{i-1+k}r'\rho}(w_1)\cup B_{2^{i-1+k}r'\rho}(w_2) \big) \cap B_{r_{i+1}}$$
	have a diameter smaller than $2^{i-1+k}r'\rho/100$. Note that $F_k$ is a decreasing event for each $k$, so the FKG inequality yields
	$$\Pb( E_1 )\ge \Pb\Bigg( \bigcap_{k=0}^{K(\rho)} F_k \Bigg)\ge \prod_{k=0}^{K(\rho)} \Pb( F_k ).$$
	Using Lemma~\ref{lem:cluster-size-2}, and again the FKG inequality, there exists a universal constant $C>0$ such that for all $k$ as before, $\Pb( F_k )\ge C$. Therefore,
	\begin{equation} \label{eq:lowerbd_E1}
		\Pb( E_1 ) \ge \delta_1(\rho) := C^{K(\rho)+1}.
	\end{equation}
	Next, we deal with $E_2$. For $j=1,2$, let $V_j^{--}$ be the cone with apex $w_j$, contained in $V_j^-$, with the same axis of symmetry as that cone, but half of its opening angle. Note that one can find some universal positive constant $\bar C>0$ such that for each $j=1,2$, each of the following events occurs with probability at least $\bar C$: 
	\begin{itemize}
		\item a simple random walk started from $w_j$ stays in $B_{2^{i-2}r'\rho}(w_j)\cup V_j^{--}$ until hitting $\partial B_{2^{i-1}r'\rho}(w_j)$;
		\item a simple random walk started from some point on $V_j^{--}\cap\partial B_{2^{i-1+k}r'\rho}(w_j)$ remains in $V^-_j$ before reaching $\partial B_{2^{i-1+k+1}r'\rho}(w_j)$ ($k \in \{0, \ldots, K(\rho)\}$).
	\end{itemize}
	By the Strong Markov property for a simple random walk, we conclude that (we require, for $j=1$ and $j=2$ simultaneously, the first bullet above to occur, and the second bullet to occur for all $k = 0, \ldots, K(\rho)$)
	\begin{equation} \label{eq:lowerbd_E2}
		\Pb( E_2 ) \ge \delta_2(\rho):=(\bar C^{K(\rho)+2})^2.
	\end{equation}
	Moreover, since the random walk loop soup and the random walks are independent, we know that $E_1$ and $E_2$ are independent. 
	
	It then follows from our construction with cones that 
	\begin{align*}
		\Pb( Q^*(i+1)>1/10 ) &\ge \Pb(\{ Q^*(i)>\rho \} \cap E_1\cap E_2 )\\
		&\ge \Pb(\{ Q^*(i)>\rho \} \cap E_1 ) \, \Pb( E_2 )\\
		&\ge \Pb( Q^*(i)>\rho )\, \Pb(E_1) \,\Pb(E_2)\\
		&\ge \Pb( Q^*(i)>\rho )\, \delta_1(\rho) \,\delta_2(\rho),
	\end{align*}
	where in the second inequality we separate $E_2$ from the other two events by using the strong Markov property of a simple random walk, we then separate $\{ Q^*(i)>\rho \}$ from $E_1$ on the next line by using the FKG inequality (since both events are decreasing with respect to the loop soup), and we finally use the lower bounds provided by \eqref{eq:lowerbd_E1} and \eqref{eq:lowerbd_E2}. Note that here, we also used that the condition \ref{it:cond_V1V2} above on $V_1^+$ and $V_2^+$ ensures that $r_{i+1} Q(r_{i+1}) > 2^i r' / 5$, so
	$$Q^*(i+1) = \frac{r_{i+1} Q(r_{i+1})}{2^{i+1} r'} > \frac{1}{10}.$$
	This finally gives the desired lower bound, with $c(\rho) := \delta_1(\rho) \,\delta_2(\rho)$.
	
	We can proceed in a similar way in the remaining cases $i=M$ and $M+1 \leq i \leq M+L-1$, and we skip the details since only a small tweak of the construction is needed. In the former case, \eqref{eq:bounds_rM} ensures that $r_{M+1} - r_M \geq r/2 = r_{M+1}/4$, and in the latter case, $r_{i+1} - r_i = r_{i+1}/2$. Hence, in both cases, $r_{i+1} - r_i \geq r_{i+1}/4$, and we can thus ensure that the intersections of the two cones $V_1^+$ and $V_2^+$ with $\partial B_{r_{i+1}}$ are sufficiently far apart. More precisely, we can require that $\dist(V_1^+ \cap \partial B_{r_{i+1}}, V_2^+ \cap \partial B_{r_{i+1}}) > r_{i+1}/5$, which then yields $Q^*(i+1) = Q(r_{i+1}) > 1/10$ at the end of the construction. This completes the proof of the lemma.
\end{proof}

\begin{proof}[Proof of Lemma~\ref{lem:sep-2}]
	Write $d_i := r_{i+1}-r_i$, and for $j = 1,2$, $t^j_i := \tau^j_{r_i}$. By definition of $Q$ and $Q^*$, on the event $\{0<Q^*(i)<\rho\}$, one has necessarily 
	$$B_{2 \rho d_i} \big( S^1_{t^1_i} \big) \cap \Lambda^2_{r_i} \neq \emptyset \quad \text{ or } \quad B_{2 \rho d_i} \big( S^2_{t^2_i} \big) \cap \Lambda^1_{r_i} \neq \emptyset,$$
	with
	$$\Lambda^j_{r_i} = \Lambda \big( V_j \cup S^j[0, t^j_i]\cup B_{2 \rho d_i} \big( S^j_{t^j_i} \big), L_r\uplus\Lc^b_{r,D} \big), \quad j = 1,2.$$
	Indeed, it is easy to check that in each of the following three cases, the inequality $Q^*(i)<\rho$ implies that $r_i Q(r_i) < 2 \rho d_i$.
	\begin{itemize}
		\item $0 \leq i \leq M-1$: $d_i = 2^{i-1} r'$ and $Q^*(i) = r_i Q(r_i)/(2^i r')$, so $r_i Q(r_i) < 2^i r' \rho = 2 \rho d_i$;
		
		\item $i=M$: $d_M = 2r - r_M \geq r/2$ (using \eqref{eq:bounds_rM}) and $Q^*(M) = r_M Q(r_M)/(2^M r')$, so $r_M Q(r_M) < 2^M r' \rho \leq r \rho \leq 2 \rho d_M$ (since $2^M \leq r/r'$);
		
		\item $M+1 \leq i \leq M+L$: $d_i = 2^{i-M} r = r_i$ and $Q^*(i) = Q(r_i)$, so $r_i Q(r_i) < \rho r_i = \rho d_i$.
	\end{itemize}
	Therefore, using the strong Markov property (at time $t^1_i$ for $S^1$ on the one hand, and at time $t^2_i$ for $S^2$ on the other hand), we obtain that $\Pb( Q^*(i+1)>0 \mid 0<Q^*(i)<\rho )$ is at most twice the probability that over the time interval $[\tau^1_{r_i},\tau^1_{r_{i+1}}]$, $S^1$ does not disconnect $B_{4 \rho d_i}(S^1_{t^1_i})$ from infinity. This latter probability is $O((4\rho d_i/r_{i+1})^{1/4}) = O(\rho^{1/4})$ by Lemma~\ref{lem:disconnection-prob} (using also $d_i \leq r_{i+1}$), which completes the proof.
\end{proof}

\begin{remark}\label{rmk:conditionl RW}
	Later we often use a more general separation result, where the simple random walks are replaced by conditional walks, which are conditioned on hitting $\partial B_R$ before some given subset of $B_{r/2}$. One can easily check that Lemmas~\ref{lem:sep-1} and~\ref{lem:sep-2} remain true for such conditional random walks, and thus Proposition~\ref{prop:sep} holds in this case as well. Furthermore, it remains true if one replaces only one of the simple random walks by a conditional one. Such generalizations hold similarly for Propositions~\ref{prop:sep-jk}, \ref{prop:inv-sep} and \ref{prop:sep-disc} below.  
\end{remark}

\subsection{Generalizations: packets of random walks and reversed versions} \label{subsec:generalization}

We now generalize the results from the previous section in several directions. First, we state a separation lemma for two packets of random walks inside a RWLS, and we then include a reversed version of this result. Finally, we analyze non-disconnection in the particular case of only one packet of random walks. The framework followed in the proof of Proposition~\ref{prop:sep} is very robust, and it can thus be employed here to show that all the generalizations considered hold true (in the same way as for the classical separation lemmas for random walks). Therefore, we only state the results that are needed later, and we omit their proofs.

\subsubsection*{Two packets of random walks}

First, we need to adapt some of the notations introduced at the beginning of Section~\ref{subsec:a pair}. For $j,k\ge 1$, we consider the initial configuration $(L_r,V_1,V_2,\bar x,\bar y)$ as before, with the requirement that $\Lambda(V_1,L_r)\cap V_2=\emptyset$, where $\bar x=(x_1,\ldots,x_j)$ is a vector of $j$ vertices in $V_1\cap \partial B_r$ and $\bar y=(y_1,\ldots,y_k)$ is a vector of $k$ vertices in $V_2\cap \partial B_r$ (some of the points in $\bar x$ may coincide, and similarly with $\bar y$). For $r\le s\le R$, let $\Pi^1_s$ (resp.\ $\Pi^2_s$) be the union of $j$ (resp.\ $k$) independent simple random walks started, respectively, from each of the $j$ points in $\bar x$ (resp.\ each of the $k$ points in $\bar y$), and stopped upon reaching $\partial B_s$. We require that all of these simple random walks are independent, and also that they are independent of the loop soup $\Lc_{D}$.

Analogously to Section~\ref{subsec:a pair}, the quality at $s$ is then defined as
\begin{align} \notag
	Q^{j,k}(s) := \sup & \Big\{ \delta \ge 0 \: : \: \Lambda \Big( V_1 \cup \Pi^1_s \cup \Big( \cup_{z \in \Pi^1_s \cap \partial B_s} B_{\delta s}(z) \Big), L_r \uplus \Lc^b_{r,D} \Big)\\
	& \hspace{4cm} \cap \Big( V_2 \cup \Pi^2_s \cup \Big( \cup_{z \in \Pi^2_s \cap \partial B_s} B_{\delta s}(z) \Big) \Big) = \emptyset \Big\}. \label{eq:Qjk}
\end{align}
In this definition, we consider the unions of, respectively, $j$ and $k$ balls, all with radius $\delta s$, centered on the hitting points along $\partial B_s$ of each of the $j$ random walks in $\Pi^1_s$, and of each of the $k$ random walks in $\Pi^2_s$.

The following separation result holds for the two packets of random walks.

\begin{proposition}[Separation lemma for two packets]\label{prop:sep-jk}
	For all $j,k\ge 1$, there exists a constant $c(j,k)>0$ such that the following holds. For all $1 \leq r \le R/2$ and $D \supseteq B_R$, for each initial configuration $(L_r,V_1,V_2,\bar x,\bar y)$ with $\bar x=(x_1,\ldots,x_j)$ in $V_1\cap \partial B_r$ and $\bar y=(y_1,\ldots,y_k)$ in $V_2\cap \partial B_r$, and for any intensity $\alpha\in (0,\half]$ of the loop soup under consideration,
	\begin{equation} \label{eq:sep_twopackets}
		\Pb \big( Q^{j,k}(R)>1/(10(j+k)) \mid Q^{j,k}(R)>0 \big) \ge c.
	\end{equation}
	Moreover, \eqref{eq:sep_twopackets} also holds with $Q^{j,k}(R)$ replaced by $\bar Q^{j,k}(R):=Q^{j,k}(R)\ind_\Dc$, for any event $\Dc$ which is decreasing (for the loop soup).
\end{proposition}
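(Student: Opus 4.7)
The plan is to follow verbatim the framework of Proposition~\ref{prop:sep}, proving analogues of Lemmas~\ref{lem:sep-1} and~\ref{lem:sep-2} for the quality $Q^{j,k}$ defined in \eqref{eq:Qjk}, and then combining them via the exact same iterative argument. We keep the same sequence of scales $r_0=r, r_1,\ldots,r_{M+L}$, and define the relative quality $Q^{*,j,k}(i)$ in the same way, so that $Q^{*,j,k}(0)=1$ and $Q^{*,j,k}(M+L)=Q^{j,k}(R)$ (after reducing to $R = 2^L r$ as in the original proof). The target is to show, for universal constants (depending only on $j+k$):
\begin{align*}
&\text{(i)} \quad \forall \rho>0, \ \exists c(\rho,j,k)>0:\ \Pb\bigl(Q^{*,j,k}(i+1)>1/(10(j+k))\bigm| Q^{*,j,k}(i)>\rho\bigr)\ge c(\rho,j,k),\\
&\text{(ii)} \quad \forall \delta>0, \ \exists \rho_0(\delta,j,k)>0:\ \forall \rho\le \rho_0,\ \Pb\bigl(Q^{*,j,k}(i+1)>0\bigm| 0<Q^{*,j,k}(i)<\rho\bigr)\le \delta.
\end{align*}
Assuming these, the same computation as in the proof of Proposition~\ref{prop:sep} (plugging (i) and (ii) into the telescoping bound \eqref{eq:sq1}, then optimizing $\delta$) yields \eqref{eq:sep_twopackets}, including the modified-quality statement thanks to FKG.

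For the analogue of Lemma~\ref{lem:sep-2}, the argument is essentially the same as for two walks. On $\{0<Q^{*,j,k}(i)<\rho\}$, at least one of the $j+k$ hitting points of $\Pi^1_{r_i}\cup\Pi^2_{r_i}$ on $\partial B_{r_i}$ must have a neighborhood of radius $2\rho d_i$ intersecting the $\Lambda$-cluster produced by walks/loops rooted in the opposite packet. By a union bound over the $j+k$ walks of the packet not containing that point, and by the strong Markov property applied at the corresponding hitting time, the probability that the walk then reaches $\partial B_{r_{i+1}}$ without disconnecting the $(4\rho d_i)$-ball around its starting point from infinity is $O(\rho^{1/4})$ by Lemma~\ref{lem:disconnection-prob}. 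Taking $\rho$ small compared to $\delta/(j+k)$ concludes.

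The main obstacle is the analogue of Lemma~\ref{lem:sep-1}, where we must push up to $j+k$ walks into well-separated ``safe tubes''. The construction is as follows: when $Q^{*,j,k}(i)>\rho$, the $j+k$ hitting points on $\partial B_{r_i}$ all lie in $V_1$ or $V_2$, and the packet structure implies the two groups are at distance at least $2\cdot 2^i r'\rho$ apart (and any two points within the same group are at distance at least $0$, which suffices). One then selects $j+k$ disjoint pairs of nested cones $V^-_{j'}\subset V^+_{j'}$, one rooted at each hitting point, with opening angles depending on $j+k$, chosen small enough so that:
\begin{itemize}
\item cones attached to $\Pi^1$-hitting points stay within a $\pi/4$-tilted neighborhood of $V_1$, cones attached to $\Pi^2$-hitting points similarly for $V_2$;
\item the intersections of the $V^+_{j'}$ with $\partial B_{r_{i+1}}$ are pairwise separated by at least $r_{i+1}/(5(j+k))$, which is where the factor $1/(10(j+k))$ comes from.
\end{itemize}
One then defines $E_1$ (loop-soup clusters near the hitting points are small, so the $\Lambda$-hulls stay inside small balls $\cup$ large cones) and $E_2$ (each of the $j+k$ walks follows its own cone to $\partial B_{r_{i+1}}$). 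The lower bound $\Pb(E_1)\ge \delta_1(\rho,j,k)$ is obtained from the FKG inequality and a product over $O(\log(1/\rho))$ dyadic annuli, using Lemma~\ref{lem:cluster-size-2}; the lower bound $\Pb(E_2)\ge \delta_2(\rho,j,k)$ is a product over the $j+k$ independent walks of their individual tube-following probabilities (a product of $O(\log(1/\rho))$ uniformly positive factors each). Independence between the loop soup and the packets, together with FKG to combine $\{Q^{*,j,k}(i)>\rho\}$ with $E_1$, then gives (i). The rest of the argument, including the reduction from $R$ to the largest dyadic scale, and the extension to the modified quality $\bar Q^{j,k}$ via FKG for decreasing events, is identical to the pair case.
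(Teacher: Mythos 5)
Your proposal is correct and follows precisely the route the paper intends: the authors explicitly state that Proposition~\ref{prop:sep-jk} (and its cousins \ref{prop:inv-sep}, \ref{prop:sep-disc}) ``can be established in exactly the same way as Proposition~\ref{prop:sep}'' and omit the proof, and your plan --- same scale sequence, same relative quality, analogues of Lemmas~\ref{lem:sep-1} and~\ref{lem:sep-2}, then the identical iteration and FKG step for the modified quality --- is exactly that adaptation, with the $j+k$ nested cones explaining the $1/(10(j+k))$ threshold. One minor wording glitch: in your analogue of Lemma~\ref{lem:sep-2}, the union bound is over which of the $j+k$ terminal hitting points is within $2\rho d_i$ of the opposite packet's $\Lambda$-hull (not ``over the $j+k$ walks of the packet not containing that point''); the rest of that paragraph is the right argument and gives $(j+k)\,O(\rho^{1/4})$, which is what is needed.
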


\subsubsection*{Reversed separation lemma}

We now derive a reversed separation lemma, which is an inward analog of Proposition~\ref{prop:sep-jk}. For simplicity, we often use the same notations as earlier, even though we are in a different setting, but this should not create any confusion. Let $L$ be a loop configuration in $D\setminus B_R$. Let $V_1$ and $V_2$ be two disjoint subsets of $D\setminus \mathring{B}_R$, which both intersect $\partial B_R$, and assume that $\Lambda(V_1,L) \cap V_2 = \emptyset$. Let $\bar x=(x_1,\ldots,x_j)$ and $\bar y=(y_1,\ldots,y_k)$ be vectors of $j$ and $k$ points, respectively in $V_1\cap \partial B_R$ and $V_2 \cap \partial B_R$. For $r\le s\le R$, let $\Pi^1_{s}$ be the union of $j$ independent simple random walks started from the $j$ points in $\bar x$, respectively, stopped upon reaching $\partial B_s$, and conditioned to stay in $D$; and similarly, let $\Pi^2_s$ be the union of $k$ walks started from $\bar y$, also stopped when they hit $\partial B_s$ and conditioned to stay in $D$.

Let $\Lc_{D}$ be the RWLS in $D$ with intensity $\alpha\in(0,1/2]$, independent of all the random walks considered. We write $\Lc^{B_R}_{D} := \Lc_{D} \setminus \Lc_{B_R^c}$ for the set of all loops in $\Lc_{D}$ intersecting $B_R$. The \emph{reversed quality} is then defined as 
\begin{align} \notag
	Q^{j,k}(s) := \sup & \Big\{ \delta \ge 0 \: : \: \Lambda \Big( V_1 \cup \Pi^1_s \cup \Big( \cup_{z \in \Pi^1_s \cap \partial B_s} B_{\delta s}(z) \Big), L \uplus (\Lc^{B_R}_{D})^b \Big)\\
	& \hspace{4cm} \cap \Big( V_2 \cup \Pi^2_s \cup \Big( \cup_{z \in \Pi^2_s \cap \partial B_s} B_{\delta s}(z) \Big) \Big) = \emptyset \Big\}. \label{eq:Qrjk}
\end{align}

In this setting, we have the result below.

\begin{proposition}[Reversed separation lemma for two packets] \label{prop:inv-sep}
	For all $j,k\ge 1$, there exists a constant $c(j,k)>0$ such that the following holds. For all $1 \leq r \le R/2$ and $D \supseteq B_R$, each initial configuration $(L,V_1,V_2,\bar x,\bar y)$ with $\bar x=(x_1,\ldots,x_j)$ in $V_1\cap \partial B_R$  and $\bar y=(y_1,\ldots,y_k)$ in $V_2\cap \partial B_R$, and any intensity $\alpha\in (0,\half]$ for the loop soup under consideration,
	\begin{equation} \label{eq:sep_reversed}
		\Pb \big( Q^{j,k}(r)>1/(10(j+k)) \mid Q^{j,k}(r)>0 \big) \ge c.
	\end{equation}
	Furthermore, \eqref{eq:sep_reversed} also holds when $Q^{j,k}(r)$ is replaced by $\bar Q^{j,k}(r):=Q^{j,k}(r)\ind_\Dc$, for any decreasing event $\Dc$.
\end{proposition}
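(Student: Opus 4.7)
The plan is to mirror the proof of Proposition~\ref{prop:sep} (and its $j,k$-packet extension Proposition~\ref{prop:sep-jk}), now transposed to the reversed, inward direction. The framework of \cite{GPS2013} is agnostic to the orientation of scales, so the same two-step scheme applies: set up a relative quality across a sequence of intermediate scales between $R$ and $r$, prove two estimates — ``it is not too costly to boost microscopic quality at one scale to macroscopic quality at the next'' and ``if the quality becomes too small, separation almost surely fails at the next scale'' — and iterate.

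First I would set up the scales going inward. Assuming without loss of generality $R = 2^L r$, the role played by the microscopic scales $r + 2^{i-1} r Q(r)$ near $r$ in the original proof is now taken by microscopic scales near $R$: set $R' := R\, Q^{j,k}(R)$, $M := \lfloor \log_2(1/Q^{j,k}(R)) \rfloor$, and define $R_0 := R$, $R_i := R - 2^{i-1} R'$ for $1 \leq i \leq M$, and $R_{M+i} := R/2^i$ for $1 \leq i \leq L$, so that $R_{M+L} = r$. Mirroring the formulas of the original proof, define the relative quality $(Q^{j,k})^*(i)$ as $R_i\, Q^{j,k}(R_i)/(2^i R')$ in the microscopic range and as $Q^{j,k}(R_i)$ in the dyadic range, so that $(Q^{j,k})^*(0) = 1$ and $(Q^{j,k})^*(M+L) = Q^{j,k}(r)$.

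Next I would establish the inward two-packet analogues of Lemmas~\ref{lem:sep-1} and~\ref{lem:sep-2}. The first asserts that, for any $\rho > 0$, $\Pb((Q^{j,k})^*(i+1) > 1/(10(j+k)) \mid (Q^{j,k})^*(i) > \rho) \geq c(\rho, j, k) > 0$; I would apply the cone construction of Figure~\ref{fig:cones} simultaneously at each of the $j+k$ hitting points on $\partial B_{R_i}$, with cone opening angles chosen so that the $j+k$ footprints on $\partial B_{R_{i+1}}$ are pairwise separated by at least $R_{i+1}/(5(j+k))$. The positive-probability inputs are Lemma~\ref{lem:cluster-size-2} (to exclude any cluster of $(\Lc_D^{B_R})^b$ from the sausages around the chosen cones), the FKG inequality (Lemma~\ref{lem:FKG-RWLS}) to aggregate the bounded-in-$\rho$ number of cluster-exclusion events in the relevant dyadic windows, and a standard lower bound on the probability that each conditioned walk follows its assigned cone, uniform in the conditioning by the local Markov property (cf.~Remark~\ref{rmk:conditionl RW}). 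The second lemma says that $\Pb((Q^{j,k})^*(i+1) > 0 \mid 0 < (Q^{j,k})^*(i) < \rho) \leq \delta$ for $\rho$ small enough; when the quality is less than $\rho$, some pair of walks from opposite packets is within $2\rho(R_i - R_{i+1})$ of each other on $\partial B_{R_i}$, and a union bound over the $jk$ critical pairs together with the inward non-disconnection estimate~\eqref{eq:outin} in Lemma~\ref{lem:disconnection-prob} yields a probability of order $jk \, \rho^{1/4}$.

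Granted these two lemmas, the iteration from the proof of Proposition~\ref{prop:sep} transfers verbatim, giving the required lower bound. For the modified-quality statement involving a decreasing event $\Dc$, insert $\ind_\Dc$ and appeal to FKG at each step exactly as in the original. The main obstacles I anticipate are twofold: (i) controlling the conditioning of the walks on staying in $D$ throughout the cone construction — this is by now a standard maneuver, since the conditioning only alters the walk through its terminal marginal while preserving the local Markov structure on any intermediate time interval; and (ii) handling loops of $\Lc_D^{B_R}$ that extend far outside $B_R$, which in the spirit of Remark~\ref{rmk:frontier-D} can be absorbed into $\Dc$ by requiring that all such loops have diameter at most $R/2$, an event that is decreasing and hence FKG-compatible.
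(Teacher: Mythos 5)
Your proposal is correct and is essentially the argument the paper has in mind: the paper explicitly omits the proofs of Propositions~\ref{prop:sep-jk}, \ref{prop:inv-sep}, and \ref{prop:sep-disc}, stating only that the framework of Proposition~\ref{prop:sep} is robust enough to carry over, and your inward transposition of the scales, the relative quality, the cone construction, and the two bootstrap lemmas is exactly that adaptation. One minor imprecision in your analogue of Lemma~\ref{lem:sep-2}: when the quality drops below $\rho$, what fails is that a $\delta s$-ball around some walk endpoint meets the loop-soup-extended $\Lambda$-set of the opposite packet (possibly mediated by clusters of $(\Lc^{B_R}_D)^b$), not necessarily that two walks are close to each other, so the union bound should run over the $j+k$ endpoints and their non-disconnection probabilities rather than $jk$ pairs --- this does not change the conclusion.
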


Let us mention that in Section~\ref{sec:arm-lwrs}, we use both Propositions~\ref{prop:sep-jk} and \ref{prop:inv-sep} in the particular case $j=k=2$ (two packets, containing two random walks each).

\subsubsection*{One packet of random walks}

Finally, we consider a specific event for a single packet of random walks, which takes into account the combined effect of non-intersection and non-disconnection. Let $1 \leq r \le R/2$ and $D\supseteq B_R$. Let $L_r$ be a set of loops in $B_r$. Let $V$ be a subset of $B_r$ with $V\cap \partial B_r\neq\emptyset$, and let $U$ be another subset of $B_r$, such that $U\cap \Fill(\Lambda(V,L_r)) = \emptyset$; that is, if we consider the union of $V$ and all the clusters in $L_r$ intersecting $V$, then this set does not intersect nor disconnect $U$. Let $\bar x=(x_1,\ldots,x_j)$ consist of $j$ vertices in $V \cap \partial B_r$ (not necessarily all distinct). In a similar way as before, we refer to a quadruple $(L_r,U,V,\bar x)$ satisfying the above conditions as an initial configuration.

For $r\le s\le R$, let $\Pi_s$ be the union of $j$ independent simple random walks started from $x_1,\dots,x_j$, respectively, and each stopped upon reaching $\partial B_s$. Consider an independent RWLS $\Lc_{r,D}$ with intensity $\alpha\in (0,\half]$. 
In this case, the quality at scale $s$ is, roughly speaking, the maximal (rescaled) radius of the balls that one can attach at the endpoints of the random walks such that, together with $V$, they do not intersect nor disconnect $U$ inside the whole loop soup. Formally, it is defined as
\begin{equation}\label{eq:quality-disc}
	Q^j(s) := \sup \Big\{ \delta \ge 0 \: : \: U \cap \Fill \Big( \Lambda \Big( \Pi_s \cup \Big( \cup_{z \in \Pi_s \cap \partial B_s} B_{\delta s}(z) \Big), L_r \uplus \Lc_{r,D}^b \Big) \Big) = \emptyset \Big\},
\end{equation}
where we recall the notation $\Fill(A)$ for the filling of $A$, introduced in Section~\ref{subsec:notation}.

The separation lemma for one packet and non-disconnection can then be stated as follows.

\begin{proposition}[Separation lemma for one packet] \label{prop:sep-disc}
	For all $j\ge 1$, there exists a constant $c(j)>0$ such that one has the following. For all $1 \leq r \le R/2$, $D\supseteq B_R$, each initial configuration $(L_r,U,V,\bar x)$ with $\bar x=(x_1,\ldots,x_j)$ in $V \cap \partial B_r$, and any intensity $\alpha\in (0,\half]$ of the loop soup under consideration,
	$$\Pb \big( Q^{j}(R)>1/(10j) \mid Q^{j}(R)>0 \big) \ge c.$$
	Moreover, as for earlier separation results, the above also holds with $Q^{j}(R)$ replaced by $\bar Q^{j}(R):=Q^{j}(R)\ind_\Dc$, for any event $\Dc$ (concerning the loop soup) which is decreasing.
\end{proposition}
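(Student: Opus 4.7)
The proof will follow the same multi-scale framework used for Propositions~\ref{prop:sep} and~\ref{prop:sep-jk}. My plan is to introduce the same dyadic scales $r = r_0 < r_1 < \cdots < r_{M+L}$ as in Section~\ref{subsec:a pair}, together with a corresponding relative quality $Q^{j,*}(i)$ normalized so that $Q^{j,*}(0) = 1$ and $Q^{j,*}(M+L)$ is comparable to $Q^j(R)$, and then to establish two inductive lemmas analogous to Lemmas~\ref{lem:sep-1} and~\ref{lem:sep-2}. Given these two lemmas, the iterative argument in the proof of Proposition~\ref{prop:sep} applies verbatim and yields $\Pb(Q^j(R) > 1/(10j) \mid Q^j(R) > 0) \geq c(j)$; the version for the modified quality $\bar Q^j(R) = Q^j(R) \ind_\Dc$ with decreasing $\Dc$ follows for the same reason as in Proposition~\ref{prop:sep}, since only the FKG inequality is needed in the upgrade step and the killing step does not involve $\Dc$.

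For the killing step (analog of Lemma~\ref{lem:sep-2}), if $Q^{j,*}(i) < \rho$ then, by definition of the quality, there exists some hitting point $w_k = S^k_{\tau^k_{r_i}}$ such that a ball of radius comparable to $\rho d_i$ (with $d_i = r_{i+1} - r_i$) around $w_k$ is already on the verge of producing a filling that touches $U$. For $Q^{j,*}(i+1)$ to remain positive, the walk portion $S^k[\tau^k_{r_i}, \tau^k_{r_{i+1}}]$ must \emph{not} disconnect this ball from $\partial B_{r_{i+1}}$; otherwise the ball would be engulfed by $\Fill(\Lambda(\cdots))$ and drag $U$ into the filling. Lemma~\ref{lem:disconnection-prob} bounds the probability of non-disconnection by $O(\rho^{1/4})$, and a union bound over the $j$ walks gives $\Pb(Q^{j,*}(i+1) > 0 \mid 0 < Q^{j,*}(i) < \rho) \leq C j\, \rho^{1/4}$, which can be made smaller than any prescribed $\delta$ by taking $\rho$ small.

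For the upgrade step (analog of Lemma~\ref{lem:sep-1}), I would adapt the cone construction. At each hitting point $w_k \in \partial B_{r_i}$, attach nested cones $V_k^- \subset V_k^+$ opening radially outward with common apex $w_k$ and with opening angles of order $\pi/(10j)$, chosen so small that their angular footprints on $\partial B_{r_{i+1}}$ are mutually disjoint and leave angular gaps bounded below. One then requires, for every $k$: (i) the walk $S^k$ remains inside a small ball around $w_k$ and then in $V_k^-$ until reaching $\partial B_{r_{i+1}}$; and (ii) no frontier loop of $\Lc^b_{r,D}$ meeting $S^k$ or the attached ball of radius $r_{i+1}/(10j)$ around its endpoint exits the thickening of $V_k^+$. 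The probability of (i) is bounded below by a constant depending only on $\rho$ and $j$, through a finite iteration of standard cone-containment estimates over dyadic subscales; the probability of (ii) is controlled by Lemma~\ref{lem:cluster-size-2} and the FKG inequality, exactly as in the proof of Lemma~\ref{lem:sep-1}.

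The main obstacle, absent from the non-intersection versions, is to check that this construction does not itself introduce a disconnection of $U$ from infinity. Since $U \subseteq B_r \subseteq B_{r_i}$, the filling of the augmented configuration can enclose $U$ only if the outward-pointing cones and walks close up entirely around $\partial B_{r_{i+1}}$; by choosing the apertures of the $V_k^+$ so that the sum of their angular footprints on $\partial B_{r_{i+1}}$ is strictly less than $2\pi$ with gaps bounded below, we guarantee that $U$ retains an escape route to infinity through those gaps. Combined with the hypothesis $Q^{j,*}(i) > 0$, which ensures that no disconnection is already present inside $B_{r_i}$, this yields $Q^{j,*}(i+1) > 1/(10j)$ and completes the upgrade.
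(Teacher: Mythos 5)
Your proposal follows the paper's own intended route (the paper states that Proposition~\ref{prop:sep-disc} is ``established in exactly the same way as Proposition~\ref{prop:sep}'' and omits the details), and your multi-scale framework, relative quality, killing step and upgrade step all line up with Lemmas~\ref{lem:sep-1}--\ref{lem:sep-2}. The killing step is correct: on $\{0<Q^{j,*}(i)<\rho\}$, if \emph{every} walk $S^k$ disconnects its ball $B_{c\rho d_i}(w_k)$ from $\partial B_{r_{i+1}}$ then those balls all lie in $\Fill(\Pi_{r_{i+1}})$, monotonicity of $\Fill\circ\Lambda$ gives $Q^{j,*}(i+1)=0$, and the union bound over the $j$ walks yields $j\,O(\rho^{1/4})$ — your phrasing suggesting a single ball is ``responsible'' is slightly misleading, but the conclusion is right.

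The one place that needs tightening is the disconnection check in the upgrade step. Your claim that ``the filling of the augmented configuration can enclose $U$ only if the outward-pointing cones and walks close up entirely around $\partial B_{r_{i+1}}$'' is too strong: enclosure can also occur through a hybrid cycle that uses a long arc of the cluster \emph{inside} $B_{r_i}$ together with just two cones bridged by their endpoint balls on $\partial B_{r_{i+1}}$, without the cone footprints coming close to covering the full circle. The cleaner argument uses the facts that (i) the cones $V_k^+$ touch $\partial B_{r_i}$ only at the singletons $\{w_k\}$ and are pairwise disjoint, (ii) the endpoint balls of radius $r_{i+1}Q^j(r_{i+1})\asymp d_i/j$ are chosen small enough that the footprints together with these balls are \emph{pairwise} disjoint on $\partial B_{r_{i+1}}$ (a sum-less-than-$2\pi$ condition alone does not enforce this), and (iii) $Q^{j,*}(i)>\rho$ already rules out a cycle around $U$ in the scale-$r_i$ cluster augmented by $B_{c\rho d_i}(w_k)$. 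Then any cycle around $U$ in the new set would have to thread two distinct cones, re-entering $B_{r_i}$ only through the balls around $w_k$ and $w_{k'}$ and connecting them on $\partial B_{r_{i+1}}$, and both alternatives are excluded by (ii) and (iii). Your ``combined with $Q^{j,*}(i)>0$'' gestures at this, but the positive relative quality at scale $i$ (not merely positivity) is what is actually used, and the pairwise-disjointness requirement should be stated explicitly. With that repair the upgrade step goes through, and the rest of your argument (iteration, and the FKG-only role of $\Dc$ for the modified quality) is as in the paper.
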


Proposition~\ref{prop:sep-disc} can be established in exactly the same way as Proposition~\ref{prop:sep}, so we omit the proof. We also remark that a reversed version of Proposition~\ref{prop:sep-disc}, similar to Proposition~\ref{prop:inv-sep}, holds as well. Later, we use Proposition~\ref{prop:sep-disc} in the special case $j=2$.

\section{Arm events in the RWLS} \label{sec:arm-lwrs}

In this section, we introduce a key geometric event for the random walk loop soup, called \emph{four-arm event}. It requires the existence of two \emph{distinct} clusters of loops crossing a given annulus, and it is analogous to the classical polychromatic four-arm event (with alternating types occupied and vacant) for Bernoulli percolation in two dimensions.

We first define precisely the four-arm event in Section~\ref{sec:def_fourarm}, and we mention some of its elementary properties, following directly from its definition. We then prove, in Section~\ref{sec:locality}, a useful locality result for the four-arm event (Proposition~\ref{prop:locality}), allowing us to discard the large loops, which wander too far from the annulus. Next, we derive the corresponding result in the reverse direction, for a version of the four-arm event which is truncated inward, in Section~\ref{sec:locality2} (Proposition~\ref{prop:in-locality}).
Finally, in Section~\ref{subsec:other_arm}, we also provide similar results for the (interior and boundary) two-arm events, and the boundary four-arm events.

All of these results are achievable thanks to the various separation lemmas derived in Section~\ref{sec:sep}. Indeed, separation plays an instrumental role throughout the proofs, making it possible for us to perform local surgery repeatedly with random walk loops.

\subsection{Four-arm event} \label{sec:def_fourarm}

The (interior) four-arm event, which is a central protagonist in the remainder of the paper, is defined as follows. Recall that for any $D \subseteq \Zb^2$, we use $\Lc_D$ to denote the random walk loop soup in $D$ with intensity $\alpha\in (0,\half]$.

\begin{definition}[Four-arm event for RWLS] \label{def:arm event for rwls}
Let $1 \leq l<d$, $z \in \Zb^2$ and $D\subseteq \Zb^2$. The \emph{four-arm event} in the annulus $A_{l,d}(z)$, for a RWLS configuration $\Lc_D$, is defined as $\Ac_{\Lc_D}(z;l,d) := \{$there are (at least) two outermost clusters in $\Lc_D$ crossing $A_{l,d}(z)\}$. Moreover, we define the \emph{truncated} four-arm event, incorporating further restriction on the sizes of the crossing clusters, to be $\overrightarrow\Ac_{\Lc_D}(z;l,d) := \Ac_{\Lc_D}(z;l,d) \cap \{ \Lambda(B_l(z), \Lc_D) \subseteq B_{2d}(z) \}$.
\end{definition}

Later on, we also need to consider four-arm events for random loop configurations other than $\Lc_D$. For these events, we just replace the subscript $\Lc_D$ in Definition~\ref{def:arm event for rwls} by the corresponding loop configuration. For brevity, we also write $\Ac_D(z;l,d)$ and $\overrightarrow\Ac_D(z;l,d)$ for $\Ac_{\Lc_D}(z;l,d)$ and $\overrightarrow\Ac_{\Lc_D}(z;l,d)$, respectively.
We denote $\Ac_{\mathrm{loc}}(z;l,d) := \Ac_{B_{2d}(z)}(z;l,d)$, and we think about it as a ``localized'' arm event. As always, we omit the parameter $z$ in the notations if $z=0$. For example, we write $\Ac_{\loc}(l,d)$ for $\Ac_{\loc}(0;l,d)$. Furthermore, we suppress the dependence of various quantities on the intensity $\alpha$ of the random walk loop soup, as we did previously.

\begin{remark}[Monotonicity]\label{rmk:mono}
	By definition, the following inclusions are automatically satisfied, for all $1 \leq l<d$, $z \in \Zb^2$, and $D \subseteq \Zb^2$:
	\begin{equation}\label{eq:arm-inclusion}
		\overrightarrow\Ac_{D}(z;l,d) \subseteq \Ac_{D}(z;l,d) \quad \text{ and } \quad \overrightarrow\Ac_{D}(z;l,d) \subseteq \Ac_{\loc}(z;l,d).
	\end{equation}
	Moreover, the arm event is monotone in the sense that for any $1 \leq l_1\le l_2<d_2\le d_1$,
	\begin{equation}\label{eq:monotone}
		\Ac_D(z;l_1,d_1)\subseteq \Ac_D(z;l_2,d_2).
	\end{equation}
	But throughout the proofs, we have to be careful that the event $\Ac_{D}(z;l,d)$ is \emph{not} monotone in the associated domain $D$. Indeed, enlarging $D$ provides additional loops: these loops may expand the clusters and help them to cross the annulus $A_{l,d}(z)$, but they may also merge two distinct clusters into a single one. Thus, from our convention, $\Ac_{\loc}(z;l,d)$ is not monotone in $d$ (the implicit domain under consideration, i.e. $D = B_{2d}(z)$, depending on $d$). However, for future use, note that we still have $\Ac_{\loc}(z;l_1,d) \subseteq \Ac_{\loc}(z;l_2,d)$ for all $1 \leq l_1\le l_2<d$.
\end{remark}

\subsection{Locality property} \label{sec:locality}

In this section, we prove that the probability of a ``global'' arm event $\Ac_D(z;l,d)$ can be bounded by that of the associated ``local'' one $\Ac_{\loc}(z;l,d)$, up to some uniform multiplicative constant. In fact, we prove a stronger result, namely that $\Pb( \overrightarrow\Ac_D(z;l,d) \mid \Ac_D(z;l,d) )\ge C$ for some universal constant $C>0$.

\begin{proposition}\label{prop:locality}
	There is a universal constant $C>0$ such that for all $z\in\Zb^2$, $1 \leq l\le d/2$, $D \supseteq B_{2d}(z)$, and any intensity $\alpha\in (0,\half]$,
	\begin{equation}\label{eq:local-1}
	\Pb(\Ac_D(z;l,d))\le C\, \Pb( \overrightarrow\Ac_D(z;l,d) ).
	\end{equation}
	As a consequence, and by \eqref{eq:arm-inclusion} as well,
	\begin{equation}\label{eq:local-2}
		\Pb(\Ac_D(z;l,d)) \le C\, \Pb(\Ac_{\loc}(z;l,d)).
	\end{equation}
\end{proposition}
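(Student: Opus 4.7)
We first note that $\overrightarrow\Ac_D(z;l,d) \subseteq \Ac_{\loc}(z;l,d)$: on $\overrightarrow\Ac_D$, both outermost crossing clusters of $\Lc_D$ lie in $B_{2d}(z)$ (since they meet $B_l(z)$ and the truncation condition $T := \{\Lambda(B_l(z), \Lc_D) \subseteq B_{2d}(z)\}$ holds), so they are also clusters of $\Lc_{B_{2d}(z)}$, and a short argument shows that the outermost-in-$\Lc_D$ property descends to outermost-in-$\Lc_{B_{2d}(z)}$ for contained clusters (any cluster of $\Lc_{B_{2d}(z)}$ whose filling surrounds them would, inside $\Lc_D$, be part of a cluster surrounding them there too, contradicting the outermost property). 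Hence \eqref{eq:local-2} follows from \eqref{eq:local-1}, and we focus on the latter. Taking $z=0$ for simplicity and writing $\overrightarrow\Ac_D = \Ac_D \cap T$, it suffices to show $\Pb(T \mid \Ac_D) \ge c$ for some universal $c>0$.

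On $\Ac_D$, we select two outermost crossing clusters $\Cc_1, \Cc_2$ of $A_{l,d}$ and extract from each a pair of arms: simple random walks inside the loops of the cluster realizing the crossing from $\partial B_l$ to $\partial B_d$. Applying Palm's formula (Lemma~\ref{lem:Palm formula}) to selected loops at the inner and outer extremes of each cluster, $\Pb(\Ac_D)$ is dominated, up to universal multiplicative constants, by an expectation over initial configurations $(L_l, V_1, V_2, \bar x, \bar y)$ with $|\bar x|=|\bar y|=2$ and two independent packets $\Pi^1, \Pi^2$ of two walks each from $\partial B_l$ to $\partial B_d$, constrained to lie in distinct clusters of the augmented loop soup. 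We then invoke Proposition~\ref{prop:sep-jk} with $j=k=2$, applied to the modified quality $\bar Q^{2,2}(d) = Q^{2,2}(d)\,\ind_\Dc$, for a decreasing event $\Dc$ chosen below to enforce $T$.

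For $\Dc$ we take $\Dc_1 \cap \Dc_2$, where $\Dc_1$ is the event that every loop-cluster of $\Lc_{(3/2)d}$ has diameter less than $d/4$, and $\Dc_2$ is the event that no loop in $\Lc_D \setminus \Lc_{(3/2)d}$ has a vertex in $B_{(5/4)d}$. Both are decreasing, $\Pb(\Dc_1) \ge c_1 > 0$ by Lemma~\ref{lem:cluster-size}, and $\Pb(\Dc_2) \ge c_2 > 0$ by the Poisson structure of $\Lc_D$ together with an elementary loop-measure bound (cf.\ Lemma~\ref{lem:frontier-loops}), so by the FKG inequality (Lemma~\ref{lem:FKG-RWLS}) one has $\Pb(\Dc) \ge c_1 c_2 > 0$. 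On $\Dc$, any loop-cluster of $\Lc_D$ intersecting $B_l$ must consist entirely of loops from $\Lc_{(3/2)d}$ (since $\Dc_2$ forbids the requisite bridging loops) and has diameter less than $d/4$ (by $\Dc_1$), so it is contained in $B_{(3/2)d} \subseteq B_{2d}$, yielding $T$. Proposition~\ref{prop:sep-jk} then gives, conditional on $\Ac_D$, a uniform lower bound on the probability that $\bar Q^{2,2}(d) > 1/40$; on this event the two packets are in distinct clusters (so $\Ac_D$ persists) and $\Dc$ holds, so $T$ is realized and $\overrightarrow\Ac_D$ occurs.

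The main technical hurdles are twofold. First, the Palm expansion converting the cluster-based event $\Ac_D$ into a non-intersection event for two random-walk packets inside the loop soup requires a careful selection of the boundary loops in each crossing cluster and a tracking of the combinatorial weights, so that the resulting setup truly matches the initial-configuration framework of Proposition~\ref{prop:sep-jk}. Second, the design of a decreasing event $\Dc$ with uniform positive probability that enforces $T$ in terms of loop-clusters is delicate: loop-clusters are not a priori controlled by the frontier-clusters appearing in the quality definition (cf.\ Remark~\ref{rmk:frontier-D}), so $\Dc$ must combine smallness of loop-clusters inside $B_{(3/2)d}$ with an absence of loops extending from outside $B_{(3/2)d}$ into $B_{(5/4)d}$, glued together via FKG; verifying that these two ingredients suffice to rule out any $B_l$-touching loop-cluster reaching $B_{2d}^c$ is the crux of the argument.
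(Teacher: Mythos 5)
Your overall template (separation lemma with a decreasing event $\Dc$ enforcing $T=\{\Lambda(B_l,\Lc_D)\subseteq B_{2d}\}$, plus FKG, plus Palm) is the right family of tools — and your design of $\Dc=\Dc_1\cap\Dc_2$ and the verification that $\Dc\Rightarrow T$ is actually correct. But the core step of your reduction has a genuine gap, and it is precisely the place where the paper invests all of its effort.

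The gap is in the reduction ``$\Pb(\Ac_D)$ is dominated \ldots\ by an expectation over initial configurations with two independent packets of two walks each from $\partial B_l$ to $\partial B_d$''. This is not something Palm's formula produces. A cluster crossing $A_{l,d}$ does so via a \emph{chain} of loops, not via any single random-walk piece traversing the annulus; there is in general no loop of the cluster that crosses $A_{l,d}$ at all, so no excursion decomposition yields walks from $\partial B_l$ to $\partial B_d$ inside the loops. Consequently the inclusion you implicitly need, $\Ac_D\subseteq\{\bar Q^{2,2}(d)>0\}$ (after a Palm change of measure), has no justification, and the claim ``Proposition~\ref{prop:sep-jk} then gives, conditional on $\Ac_D$, a uniform lower bound on $\Pb(\bar Q^{2,2}(d)>1/40)$'' is false as stated: the separation lemma conditions on $\{\bar Q>0\}$, which already incorporates $\Dc$, whereas $\Ac_D$ can occur precisely because a large loop (forbidden by $\Dc_2$) supplies the crossing. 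Since $\Ac_D$ is \emph{not} a decreasing event, FKG cannot be used to argue $\Pb(\Ac_D\cap\Dc)\gtrsim\Pb(\Ac_D)$, and your chain of inequalities has no way to bridge this.

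What the paper's proof does differently, and which your sketch omits entirely, is to identify the actual culprit: the single loops crossing $A_{d,1.8d}$ (the ``crossing loops'' $\cl_{d,1.8d}$). Because these are genuine individual loops, one can (a) classify $\Ac_D$ according to how many such loops are needed by the bad clusters ($\Ec_0,\Ec_1,\Ec_2$); (b) apply Palm's formula (Lemma~\ref{lem:Palm}) to reduce to configurations with \emph{exactly} $i$ crossing loops; (c) perform an excursion decomposition of those specific loops (not of the whole cluster arms), putting the walk pieces $\xi^i_j$ on the annulus $A_{1.2d,1.5d}$; (d) apply the separation lemma at scales $r=1.2d$, $R=1.5d$; and (e) resample the far-reaching part $\omega_i$ into a localized $\omega_i'$, so that the reconstructed loops stay in $B_{1.8d}$ and realize $\overrightarrow\Ac_D$. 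Steps (a)–(b) and (e) in particular have no counterpart in your proposal, and they are exactly what lets the separation lemma be applied to a valid non-intersection event, and what converts the well-separated configuration back into a configuration of $\overrightarrow\Ac_D$ carrying comparable measure. Your closing paragraph correctly flags these two hurdles, but the proposal leaves them unresolved; a full proof cannot bypass them.
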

\begin{remark}
	We can observe that the reverse inequality of \eqref{eq:local-2} is not true. In order to see this, we can take $z=0$ and  $D=B_R$: as $R\rightarrow \infty$,
	\[
	\Pb\big(\Ac_{B_R}(l,d)\big)\le \Pb\big( \text{there is no loop in $\Lc_{R}$ that disconnects $B_l$ from $\infty$} \big)\rightarrow 0.
	\]
\end{remark}

\begin{remark}
	As one can easily check from the proof of Proposition~\ref{prop:locality}, the scale $2d$ is of course not essential in the definitions of $\overrightarrow\Ac_D(z;l,d)$ and $\Ac_{\loc}(z;l,d)$, and it can be replaced by $(1+\delta)d$, for any given $\delta>0$ (with now some constant $C(\delta) > 0$ in \eqref{eq:local-1} and \eqref{eq:local-2}).
\end{remark}

In the remainder of this section, we always suppose that $z=0$ for simplicity, and we assume that $1 \leq l\le d/2$ and $D \supseteq B_{2d}$ are given. We first introduce some notation, and then establish a sequence of intermediate lemmas, before finally proving Proposition~\ref{prop:locality} itself.

\subsubsection*{Role of large ``crossing'' loops}

We now give definitions which are specific to the proof of Proposition~\ref{prop:locality} (and which are thus not used after this section, or used but with a different meaning). First, if $d+1 \leq k \leq 2d$, a loop in $D$ is called a \emph{$(d,k)$-crossing loop} if it crosses $A_{d, k}$, but it does not disconnect $B_d$ from $\partial B_k$. Let $\cl_{d,k}$ be the set of $(d,k)$-crossing loops. Later we use this definition in the case $k = 1.8d$, primarily: a $(d,1.8d)$-crossing loop is simply called a \emph{crossing loop}, and we write $\cl := \cl_{d,1.8d}$. We say that an outermost cluster $\Cc$ across $A_{l,d}$ is \emph{good} if $\Cc\setminus\cl$ still crosses $A_{l,d}$, and \emph{bad} otherwise. In other words, a bad cluster is such that it crosses $A_{l,d}$ only thanks to a large crossing loop, connecting it to the complement of $B_{1.8d}$. Recall that in this paper, such a notation actually means $B_{[1.8d]}$ (and similarly later, when we consider the scales $1.2d$, $1.5d$, and so on). Note that if such a cluster $\Cc$ is bad, then at least one of the loops in $\cl$ is enough to produce a crossing of $A_{l,d}$.

We consider the following three (disjoint) cases, depending on the number of bad clusters that come into play:
\begin{itemize}
	\item $\Ec_0 := \{\Ac_D(l,d)$ occurs without bad clusters$\}$,
	
	\item $\Ec_1 := \{\Ac_D(l,d)$ occurs with only one bad cluster$\}$,
	
	\item $\Ec_2 := \{\Ac_D(l,d)$ occurs with at least two bad clusters$\}$.
\end{itemize}
In the following, we also write $\Ec_i(\Lc')$ for any loop configuration $\Lc'$ in place of $\Lc_D$.

Note that
\begin{equation}\label{eq:E123}
	\Pb( \Ac_D(l,d) ) \le \Pb( \Ec_0 ) + \Pb(  \Ec_1 )+\Pb(  \Ec_2 ),
\end{equation}
hence it suffices to show that for each $i = 0,1,2$,
\begin{equation}\label{eq:Eci}
	\Pb( \Ec_i ) \lesssim \Pb \big( \overrightarrow\Ac_D(l,d) \big).
\end{equation}
Below, we do it first for $i=0$, in Lemma~\ref{lem:E0}. Next, we take care of $i=2$, and then $i=1$, in Lemmas~\ref{lem:E1} and \ref{lem:E2} respectively (these last two cases are obtained through a combination with Lemma~\ref{lem:Palm}).

In the following proofs, we perform successive surgery arguments with loops. For this purpose, we need to discover the loop soup $\Lc_D$ in concentric annuli, and we use the notation $\Lc_{k,D} := \Lc_D\setminus \Lc_k$, for any $1 \leq k \le 2d$.

\subsubsection*{Case $i=0$: no crossing loop needed}

We begin with $\Ec_0$, which is the simplest case as it only requires an application of the FKG inequality.

\begin{lemma}\label{lem:E0}
	There exists a universal constant $C>0$ such that for all $1 \leq l\le d/2$, $D \supseteq B_{2d}$, and any intensity $\alpha\in (0,\half]$,
	\[
	\Pb( \Ec_0 ) \le C \,\Pb \big(  \overrightarrow\Ac_D(l,d) \big).
	\]
\end{lemma}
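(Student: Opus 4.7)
The proof follows an FKG / independence strategy: introduce a ``blocking'' event $F$ on $\Lc_D$ such that $\Ec_0 \cap F \subseteq \overrightarrow\Ac_D(l,d)$ and $\Pb(\Ec_0 \cap F) \gtrsim \Pb(\Ec_0)$, so that the desired bound will follow from Lemmas~\ref{lem:cluster-size-2}, \ref{lem:crossing loops}, and~\ref{lem:FKG-RWLS}, together with the independence of disjoint spatial pieces of the Poisson loop ensemble.

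A natural choice of $F$ is the intersection of two sub-events: (a) all clusters of the frontier loop soup $\Lc^b_D$ intersecting $B_{1.9d}$ have diameter at most $d/20$, which has uniformly positive probability by Lemma~\ref{lem:cluster-size-2}; and (b) no loop of $\Lc_D$ crosses the annulus $A_{1.9d,2d}$ without disconnecting $B_{1.9d}$ from $\partial B_{2d}$, also of uniformly positive probability by~\eqref{eq:crossing loop}. Together these should block the propagation of any cluster of $\Lc_D$ from $B_l$ out to $B_{2d}^c$, yielding the inclusion $\Ec_0 \cap F \subseteq \overrightarrow\Ac_D(l,d)$.

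For the inequality $\Pb(\Ec_0 \cap F) \gtrsim \Pb(\Ec_0)$, I would split $\Lc_D = \Lc^{(1)} \uplus \Lc^{(2)}$, with $\Lc^{(1)}$ the loops contained in $B_{1.8d}$ and $\Lc^{(2)}$ the remaining loops, which are independent. Although $\Ec_0$ is not monotone in $\Lc_D$ overall, the ``good'' property of the outermost crossing clusters allows one to exhibit, on $\Ec_0$, a witnessing family of non-crossing loops. Conditioning on this witnessing family, $\Ec_0$ is determined while the remaining independent loops can be used to enforce $F$: both sub-events defining $F$ are decreasing in the extra loops in $\Lc^{(2)}$ (and loosely so in $\Lc^{(1)}$ as well), so the FKG inequality (Lemma~\ref{lem:FKG-RWLS}) delivers a uniform lower bound on $\Pb(F \mid \text{witness})$.

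The main obstacle is to carefully verify $\Ec_0 \cap F \subseteq \overrightarrow\Ac_D(l,d)$. The delicate point is that large disconnecting loops in the annulus $A_{1.9d,2d}$, which are not excluded by~(b), could in principle let a cluster touching $B_l$ extend beyond $B_{2d}$. However, such a disconnecting loop, together with neighboring loops extending the cluster across $\partial B_{1.9d}$, would produce a cluster in the frontier loop soup $\Lc^b_D$ meeting $B_{1.9d}$ and having diameter at least of order $d$, contradicting~(a). Making this interaction between disconnecting loops and the frontier cluster structure fully rigorous is where the main technical work lies.
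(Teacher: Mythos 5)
Your proposal follows the same high-level FKG strategy as the paper (exhibit a decreasing blocking event with uniformly positive probability and argue $\Ec_0\cap F\subseteq \overrightarrow\Ac_D(l,d)$), but it misses the key structural observation and has two genuine gaps, one of which you yourself flag.

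First, you correctly note that $\Ec_0$ is not monotone in $\Lc_D$ and even set up the right decomposition $\Lc_D=\Lc_{1.8d}\uplus\Lc_{1.8d,D}$, but you do not make the crucial observation that $\Ec_0$ \emph{is} decreasing in $\Lc_{1.8d,D}$: among loops not contained in $B_{1.8d}$, the ones reaching $B_d$ are (modulo disconnecting loops, which already kill $\Ac_D(l,d)$) exactly the crossing loops in $\cl$, and adding those can only spoil $\Ec_0$ by turning good clusters bad or merging outermost ones. Once this is seen, one simply conditions on $\Lc_{1.8d}$ and applies FKG to the two events $\Ec_0$ and $G$, both decreasing in $\Lc_{1.8d,D}$. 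Your substitute plan --- conditioning on a ``witnessing family of non-crossing loops'' --- is not workable as written: $\Ec_0$ imposes the \emph{absence} of any bad outermost crossing cluster, which cannot be certified by a finite family of loops, so conditioning on a witness does not determine $\Ec_0$, and it is unclear how any FKG-type inequality would follow from this.

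Second, your blocking event $F$ uses the frontier loop soup $\Lc^b_D$ of the \emph{full} ensemble, whereas the paper's $G$ is built from $\Lc^b_{1.8d,D}$, the frontiers of the loops not contained in $B_{1.8d}$. This matters in two ways. (i) The paper's $G$ is measurable with respect to $\Lc_{1.8d,D}$ and decreasing in it, matching $\Ec_0$ after conditioning on $\Lc_{1.8d}$; your (a) depends on all of $\Lc_D$ and does not fit this conditional FKG structure. (ii) The sketch you give for $\Ec_0\cap F\subseteq\overrightarrow\Ac_D(l,d)$ is flawed: two loops $\gamma,\gamma'$ with $\gamma\cap\gamma'\neq\emptyset$ need not satisfy $\gamma^b\cap(\gamma')^b\neq\emptyset$ (for instance $\gamma'$ can lie inside $\mathrm{Fill}(\gamma)$ and meet $\gamma$ only at an interior vertex, so that $\gamma^b$ misses $\gamma'$ entirely), so a cluster of $\Lc_D$ escaping $B_{2d}$ does not force a large cluster of $\Lc^b_D$ meeting $B_{1.9d}$, and your condition (a) is not contradicted. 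This is exactly the pitfall the paper discusses in Remark~\ref{rmk:frontier-D} and the reason $G$ is defined via the \emph{outer} frontier loop soup $\Lc^b_{1.8d,D}$, which does control how far the truncation can propagate. Your extra condition~(b) is not used in the paper and does not repair this gap.
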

\begin{proof}
	Note that $\Ec_0$ is decreasing in the loop soup $\Lc_{1.8d,D}$ ($= \Lc_D \setminus \Lc_{1.8d}$). Indeed, in that loop soup, only crossing loops can reach $B_d$, and such loops can only spoil $\Ec_0$. Let $G$ be the event that if we consider the frontier loop soup $\Lc_{1.8d,D}^b$, all clusters intersecting $B_{1.8d}$ have a diameter smaller than $d/100$. Clearly, $G$ is also a decreasing event in $\Lc_{1.8d,D}$. Therefore, the FKG inequality (Lemma~\ref{lem:FKG-RWLS}) implies 
	$$\Pb( \Ec_0\cap G ) = \Eb[ \Pb( \Ec_0\cap G \mid \Lc_{1.8d} ) ] \ge \Eb[ \Pb( \Ec_0 \mid \Lc_{1.8d} ) \, \Pb( G \mid \Lc_{1.8d} )] = \Pb(\Ec_0) \, \Pb(G),$$
	where we also used that $\Pb( G \mid \Lc_{1.8d} ) = \Pb(G)$ (since $G$ is independent of $\Lc_{1.8d}$). It follows from Lemma~\ref{lem:cluster-size-2} that $\Pb(G)$ is greater than some universal constant, so
	$$\Pb( \Ec_0\cap G ) \gtrsim \Pb(\Ec_0).$$
	Therefore,
	$$\Pb \big( \overrightarrow\Ac_D(l,d) \big) \ge \Pb( \Ec_0\cap G ) \gtrsim  \Pb(\Ec_0),$$
	which completes the proof.
\end{proof}

\subsubsection*{Observation: exact number of crossing loops}

As for $\Ec_1$ (resp.\ $\Ec_2$), we first show that a positive fraction of the configurations in $\Ec_1$ (resp.\ $\Ec_2$) contain exactly one (resp.\ two) crossing loop (resp.\ loops) which can be used to fulfill the arm event. This is done through Palm's formula (Lemma~\ref{lem:Palm formula}). For any loop configuration $\Lc'$ and each $i=1,2$, let $\bar\Ec_i(\Lc') := \Ec_i(\Lc') \cap \{\Lc'$ has exactly $i$ crossing loops$\}$, and write in particular $\bar\Ec_i := \bar\Ec_i(\Lc_D)$.

\begin{lemma}\label{lem:Palm}
	There exists a universal constant $C>0$ such that for any intensity $\alpha\in(0,1/2]$, and each $i=1,2$,
	\begin{equation} \label{eq:csqPalm}
		\Pb(\bar\Ec_i )\ge e^{-\alpha \nu(\cl)} \,\Pb(\Ec_i) \ge C \,\Pb(\Ec_i).
	\end{equation}
\end{lemma}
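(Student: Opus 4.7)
The second inequality $e^{-\alpha \nu(\cl)} \geq C$ is a quick consequence of Lemma~\ref{lem:disconnect-loops-2}: the number $N$ of crossing loops present in $\Lc_D$ is Poisson with parameter $\lambda := \alpha \nu(\cl)$, so $1 - e^{-\lambda} = \Pb(N \geq 1)$ is bounded above by $c/\sqrt{1.8}$, a universal constant strictly less than $1$; hence $\lambda$ is bounded above uniformly, giving $e^{-\lambda} \geq C$.

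For the first inequality, the plan is to exploit Poisson splitting: the crossing loops in $\Lc_D$ form an independent Poisson process from the non-crossing ones, and conditionally on $N = n$ the $n$ crossing loops are i.i.d.\ from $\nu|_{\cl}/\nu(\cl)$. The core observation, foreshadowed by the remark ``at least one of the loops in $\cl$ is enough to produce a crossing of $A_{l,d}$'' made right after the definition of a bad cluster, is the following path-wise combinatorial claim: if $\Ec_i$ holds with $n \geq i$ crossing loops $\{\gamma_1,\ldots,\gamma_n\}$, then there is a sub-collection $I \subseteq [n]$ of size exactly $i$ such that $\Ec_i$ still holds when all crossing loops outside $I$ are deleted. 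For $i=2$, one picks in each of two distinct bad clusters a single crossing loop $\gamma_j^*$ that, together with an adjacent base cluster reaching $\partial B_l$, already carries the crossing of $A_{l,d}$; retaining only $\{\gamma_1^*,\gamma_2^*\}$ then yields two disjoint crossing bad clusters in the truncated configuration, which automatically enforce both the arm event and $\Ec_2$. For $i=1$, the analogous selection inside the single bad cluster preserves that cluster's crossing, and the arm event in the truncated configuration reduces to the existence of a base cluster of the non-crossing sub-soup that crosses $A_{l,d}$ on its own.

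Given this combinatorial claim, exchangeability of the conditionally i.i.d.\ crossing loops combined with a union bound over the $\binom{n}{i}$ subsets yields $\Pb(\Ec_i \mid N = n) \leq \binom{n}{i}\,\Pb(\Ec_i \mid N = i)$ for every $n \geq i$. Multiplying by the Poisson weights and using the elementary identity $\sum_{n \geq i} \binom{n}{i}\, e^{-\lambda}\lambda^n/n! = \lambda^i/i!$ then gives
\[
\Pb(\Ec_i) = \sum_{n \geq i} \Pb(\Ec_i \mid N=n)\,\frac{e^{-\lambda}\lambda^n}{n!} \leq \Pb(\Ec_i \mid N = i)\,\frac{\lambda^i}{i!} = e^{\lambda}\,\Pb(\bar\Ec_i),
\]
which is equivalent to the desired first inequality. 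Palm's formula (Lemma~\ref{lem:Palm formula}), applied $i$ times to the designated size-$i$ selection, packages the same computation neatly: it represents $\Pb(\bar\Ec_i)$ as $e^{-\lambda}/i!$ times an $i$-fold integral against $\prod_j \alpha \mathbf{1}_{\cl}(\gamma_j)\,\nu(d\gamma_j)$ of the probability that $\Ec_i$ holds for the non-crossing sub-soup augmented by $\{\gamma_1,\ldots,\gamma_i\}$.

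The hard part will be verifying the combinatorial claim for $i=1$, specifically ensuring that after truncating to a single crossing loop the arm event still holds. This reduces to showing that on $\Ec_1$ there is always a good outermost crossing cluster whose crossing of $A_{l,d}$ is already witnessed by a single base cluster of the non-crossing sub-soup, rather than being carried jointly by disjoint base clusters bridged through crossing loops. The ``weakly good'' configurations where every good cluster relies on crossing loops to assemble its crossing are the sticking point; a handy observation is that they force $N \geq 2$, which should make them absorbable by a comparison with the $\Ec_2$ estimate, or by a refined selection rule that keeps several essential loops simultaneously.
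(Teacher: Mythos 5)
Your approach is essentially the paper's: the paper carries out exactly your ``conditional-on-$N$'' computation, just packaged as an iterated application of Palm's formula to obtain a recursion for $\Pb(\Ec_i^{(k)})$, after which a union bound using the combinatorial selection claim and the series $\sum_k (\alpha\nu(\cl))^{k-i}/(k-i)!$ gives $e^{\alpha\nu(\cl)}\Pb(\bar\Ec_i)$. Two points need correction or strengthening.

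First, your justification of the second inequality does not go through as written. Lemma~\ref{lem:disconnect-loops-2} gives $\Pb(\text{some loop of }\Lc_{R'}\text{ crosses }A_{d,1.8d}\text{ without disconnecting})\le c\,(1/1.8)^{1/2}$, but the universal constant $c$ there has no reason to be smaller than $\sqrt{1.8}$, so you cannot conclude that $\Pb(N\ge 1)<1$, hence that $\lambda=\alpha\nu(\cl)$ is bounded. What the paper actually invokes is \eqref{eq:crossing loop} of Lemma~\ref{lem:crossing loops}, which is precisely the statement $e^{-\alpha\nu(\cl)}\ge C(\delta)$; that bound, in turn, comes through Lemma~\ref{lem:cluster-size-2} and ultimately the BLS--CLE correspondence (Theorem~\ref{thm:convergence}), and is not a pure random-walk estimate.

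Second, you correctly put your finger on the delicate point: the combinatorial claim for $i=1$ (that one can always prune to a single crossing loop while preserving $\Ec_1$). The paper's justification only argues that a bad cluster using several crossing loops can be pruned to a single one and still cross, i.e., it only handles crossing loops sitting inside the bad cluster. Crossing loops that live in a good cluster and serve as bridges between pieces of it (your ``weakly good'' configurations) are not addressed by that sentence; under the paper's definition of ``good'' (the union $\Cc\setminus\cl$ meets both $\partial B_l$ and $\partial B_d$, with no connectivity requirement) such configurations do arise, and naively keeping only the bad cluster's witness loop would destroy the second crossing arm. You flag this but do not close it; your proposed fix (either a refined selection keeping one witness loop per ``essential'' cluster, or a reclassification so that weakly good clusters count as bad, in which case such configurations fall under $\Ec_2$ and the two-loop selection applies) is the right direction, but it needs to be carried out, together with the routine check that the retained loop cannot merge the surviving good sub-cluster with the bad one (they are disjoint in the ambient configuration) and that outermostness is preserved. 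The case $i=2$, by contrast, is fine as you set it up, because $\Ac_D(l,d)$ is already witnessed by the two pruned bad clusters and no good cluster is needed.
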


\begin{proof}
	Note that $e^{-\alpha \nu(\cl)}$ is the probability that there is no crossing loop in $D$ (across $A_{d,1.8d}$, by definition), which is uniformly bounded away from $0$ by \eqref{eq:crossing loop}. Thus, it suffices to show the first inequality in \eqref{eq:csqPalm}.
	
	For any $k \ge 1$, and any random loop configuration $\Lc'$, let $\Ec_i^{(k)}(\Lc') := \Ec_i(\Lc') \cap \{\Lc'$ has exactly $k$ crossing loops$\}$, and, as usual, write $\Ec_i^{(k)} := \Ec_i^{(k)}(\Lc_D)$. Hence, $\bar\Ec_i = \Ec_i^{(i)}$. Palm's formula (Lemma~\ref{lem:Palm formula}) applied to $F(\gamma)=\ind\{ \gamma\in \cl \}$ and $\Phi(\Lc')=\ind\{ \Ec_i^{(k)}(\Lc') \}$ yields
	\begin{equation} \label{eq:Pk}
		k \, \Pb(\Ec_i^{(k)}(\Lc_D))= \sum_{\gamma\in \cl} \Pb( \Ec_i^{(k)}(\Lc_D\uplus\{\gamma\}) ) \alpha\nu(\gamma).
	\end{equation}
	Given $j\ge 1$, and $\gamma_1,\dots,\gamma_j \in \cl$, if we use Palm's formula again with the same $F$ but $\Phi(\Lc')$ replaced by $\ind\{ \Ec_i^{(k)}(\Lc'\uplus\{\gamma_1,\dots,\gamma_j\}) \}$, this gives 
	\begin{equation} \label{eq:recursive}
		(k-j)\,\Pb( \Ec_i^{(k)}(\Lc_D\uplus\{\gamma_1,\dots,\gamma_j\}) )
		=\sum_{\gamma_{j+1}\in \cl} \Pb( \Ec_i^{(k)}(\Lc_D\uplus\{\gamma_1,\dots,\gamma_{j+1}\}) ) \alpha\nu(\gamma_{j+1}).
	\end{equation}
	The above recursive relation gives that 
	\begin{equation} \label{eq:rec}
		\Pb(\Ec_i^{(k)}(\Lc_D))= \frac{1}{k!} \sum_{\gamma_1\in\cl}\cdots  \sum_{\gamma_k\in\cl}
		\Pb( \Ec_i^{(k)}(\Lc_D\uplus\{\gamma_1,\dots,\gamma_k\}) ) \prod_{j=1}^{k}\alpha\nu(\gamma_{j}).
	\end{equation}
	
	We now deal with the case $i=1$. Note that for any bad cluster containing $k\ge 2$ crossing loops, it is possible to keep only one of the $k$ crossing loops and throw away the other $k-1$ crossing loops, in a way that the new cluster is still a bad cluster across $A_{l, d}$. Therefore
	for all $\gamma_1, \ldots, \gamma_k \in \cl$,
	\begin{equation} \label{eq:rec_union}
		\Ec_1^{(k)}(\Lc_D\uplus\{\gamma_1,\dots,\gamma_k\}) \subseteq \bigcup_{m=1}^k \Ec^{(1)}_1( \Lc_D\uplus\{\gamma_m\} ).
	\end{equation}
	Hence, applying the union bound to \eqref{eq:rec_union}, and plugging it into \eqref{eq:rec}, we have
	$$\Pb(\Ec_1^{(k)}(\Lc_D))\le \frac{(\alpha \nu(\cl))^{k-1}}{(k-1)!} \sum_{\gamma\in\cl} \Pb( \Ec_1^{(1)}(\Lc_D\uplus\{\gamma\}) ) \alpha\nu(\gamma).$$
	Combining the case $k=1$ of \eqref{eq:Pk} with the above inequality, we obtain: for all $k\ge 1$,
	$$\Pb(\Ec_1^{(k)})\le \frac{(\alpha \nu(\cl))^{k-1}}{(k-1)!} \Pb( \bar\Ec_1 )$$
	(we also used $\bar\Ec_1 = \Ec_1^{(1)}$). We deduce that
	$$\Pb( \Ec_1 )=\sum_{k\ge 1} \Pb(\Ec_1^{(k)})\le \sum_{k\ge 1} \frac{(\alpha \nu(\cl))^{k-1}}{(k-1)!} \Pb( \bar\Ec_1 ) = e^{\alpha \nu(\cl)} \, \Pb( \bar\Ec_1 ),$$
	which completes the proof of \eqref{eq:csqPalm} for $i=1$.
	
	The case $i=2$ is similar. Let $k \geq 2$. By definition of $\Ec_2^{(k)}$, we have: for all $\gamma_1,\dots,\gamma_k$ in $\cl$,
	\begin{equation} \label{eq:Ec2k}
		\Ec_2^{(k)}(\Lc_D\uplus\{\gamma_1,\dots,\gamma_k\}) \subseteq \bigcup_{1 \le m_1 < m_2 \le k} \Ec^{(2)}_2( \Lc_D\uplus\{\gamma_{m_1},\gamma_{m_2}\} ).
	\end{equation}
	Moreover, \eqref{eq:rec} with $k = 2$ gives
	\begin{equation}\label{eq:Ec23}
		\Pb( \bar\Ec_2 )=\Pb( \Ec^{(2)}_2) = \frac12 \sum_{\gamma_1\in\cl} \sum_{\gamma_2\in\cl} \Pb( \Ec^{(2)}_2( \Lc_D\uplus\{\gamma_{1},\gamma_{2}\} )) \alpha\nu(\gamma_1) \alpha\nu(\gamma_2).
	\end{equation}
	By the recursive formula \eqref{eq:rec} again, and the union bound applied to \eqref{eq:Ec2k}, we obtain: for all $k \ge 2$,
	\begin{align*}
		\Pb(\Ec_2^{(k)})& \le \frac{(\alpha \nu(\cl))^{k-2}}{k!} \cdot \frac{k(k-1)}{2} 
		\sum_{\gamma_1\in\cl} \sum_{\gamma_2\in\cl} \Pb( \Ec_2^{(2)}(\Lc_D\uplus\{\gamma_1,\gamma_2\}) ) \alpha\nu(\gamma_1) \alpha\nu(\gamma_2)\\
		& = \frac{(\alpha \nu(\cl))^{k-2}}{(k-2)!} \, \Pb( \bar\Ec_2 ),
	\end{align*}
	where the second equality follows from \eqref{eq:Ec23}. This yields
	$$\Pb( \Ec_2 )=\sum_{k\ge 2} \Pb(\Ec_2^{(k)})\le e^{\alpha \nu(\cl)} \, \Pb( \bar\Ec_2 ),$$
	which is the desired result \eqref{eq:csqPalm} for $i=2$, and completes the proof of the lemma.
\end{proof}

The above lemma implies that it is enough to prove \eqref{eq:Eci} for $\bar\Ec_i$ in place of $\Ec_i$. Consequently, in the remainder of this section, we show that $\Pb(\bar\Ec_i)\lesssim \Pb( \overrightarrow\Ac_D(l,d))$ for $i=2$ and then $i=1$ (in Lemmas~\ref{lem:E1} and \ref{lem:E2}, respectively), which then provides Proposition~\ref{prop:locality}. The proof involves a delicate surgery on the crossing loops, to make them stay inside $B_{1.6d}$ without spoiling the arm event. To this end, we use suitable separation results from Section~\ref{sec:sep}, to show that such modifications only cost a constant probability.

\subsubsection*{Case $i=2$: exactly two crossing loops needed}

We first deal with the probability of $\bar\Ec_2$. 

\begin{lemma}\label{lem:E1}
There exists a universal constant $C>0$ such that for all $1 \leq l\le d/2$, $D \supseteq B_{2d}$, and any intensity $\alpha\in (0,\half]$,
$$\Pb( \bar\Ec_2 ) \le C \,\Pb(  \overrightarrow\Ac_D(l,d) ).$$
\end{lemma}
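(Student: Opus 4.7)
The plan is to extend the Palm-formula surgery of Lemma~\ref{lem:Palm} by producing, out of any configuration in $\bar\Ec_2$, a new configuration that lies in $\overrightarrow\Ac_D(l,d)$, at a cost of only a universal multiplicative constant. On $\bar\Ec_2$ there are exactly two crossing loops $\gamma_1,\gamma_2\in\cl$ and exactly two bad clusters, so each bad cluster contains precisely one $\gamma_i$. I would replace $\gamma_1,\gamma_2$ by modified loops $\tilde\gamma_1,\tilde\gamma_2$ entirely contained in $B_{1.8d}$ that preserve this cluster structure; since the portions of $\gamma_1,\gamma_2$ inside $B_{1.6d}$ already reach $\partial B_d$, the modified loops still witness two distinct crossings of $A_{l,d}$, while the containment $\tilde\gamma_1,\tilde\gamma_2\subseteq B_{1.8d}$ together with the absence of other crossing loops yields $\Lambda(B_l,\Lc_D')\subseteq B_{2d}$.

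More precisely, I would decompose each $\gamma_i$ at its crossings of $\partial B_{1.6d}$ into its \emph{core} (the excursions lying inside $B_{1.6d}$, with endpoints on $\partial B_{1.6d}$) and its \emph{tails} (the matched excursions in $B_{1.6d}^c$, each starting and ending on $\partial B_{1.6d}$). The core already reaches $\partial B_d$; the tails are then replaced, one by one, by shortcut paths in $A_{1.6d,1.8d}$ joining the same pairs of endpoints. To carry out this replacement without merging the two bad clusters, I would apply the appropriate separation lemma (Proposition~\ref{prop:sep-jk}, or its reversed variant Proposition~\ref{prop:inv-sep} depending on the direction in which the tails are decomposed) to the tail endpoints on $\partial B_{1.6d}$: conditional on the non-disconnection property forced by $\gamma_i\in\cl$, these endpoints land, with uniformly positive probability, in two disjoint macroscopic arcs of $\partial B_{1.6d}$, one per crossing loop. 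Shortcut paths may then be placed in disjoint sub-sausages of $A_{1.6d,1.8d}$, and the ratio of the $\mu$-mass of such shortcuts to the $\nu$-mass of the original tails is uniformly positive, by Lemma~\ref{lem:non-disconnecting paths-1} (upper bound on the original tails) and Lemma~\ref{lem:green_sausage} (lower bound on the shortcut mass inside the sausage). Putting these ingredients together with the Palm-type identity \eqref{eq:rec} (exactly as in the proof of Lemma~\ref{lem:Palm}) would yield $\Pb(\bar\Ec_2)\le C\,\Pb(\overrightarrow\Ac_D(l,d))$.

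The main obstacle I anticipate is controlling the combinatorial complexity of the tails, because a crossing loop can in principle cross $\partial B_{1.6d}$ arbitrarily often. To reduce to a bounded number of tails, I would use the FKG inequality (Lemma~\ref{lem:FKG-RWLS}) to insert, at constant cost, the event that in the frontier loop soup $\Lc^b_{1.8d,D}$ all clusters intersecting $B_{1.8d}$ have diameter smaller than $d/100$, which holds with uniformly positive probability by Lemma~\ref{lem:cluster-size-2}. On this event, only a uniformly bounded number of tails of each $\gamma_i$ can reach $\partial B_{1.8d}$; the remaining tails, which are already confined to $A_{1.6d,1.8d}$, can be absorbed into the core data and require no modification. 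One must finally verify that the reassembly of the surviving core excursions and the replacement shortcuts produces a single connected loop rather than a collection of pieces, which follows directly from the matching structure of tail endpoints on $\partial B_{1.6d}$ inherited from $\gamma_i$.
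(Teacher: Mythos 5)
Your high-level strategy — Palm's formula to reduce to ``exactly two crossing loops'', then a surgery that replaces the outer portions of $\gamma_1,\gamma_2$ by local shortcut paths, with the separation lemma guaranteeing the shortcuts can be placed in disjoint regions, and Lemma~\ref{lem:non-disconnecting paths-1} and Lemma~\ref{lem:green_sausage} providing the mass comparison — is indeed the spirit of the paper's proof. But there is a genuine gap in the way you propose to control the decomposition.

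You decompose each crossing loop $\gamma_i$ at all of its crossings of $\partial B_{1.6d}$ into a ``core'' and finitely many ``tails'', and you plan to replace each tail that reaches $\partial B_{1.8d}$ individually. To make this tractable you want a deterministic bound on the number of such tails, and you propose to get it by inserting, via FKG, the event that in the frontier loop soup $\Lc^b_{1.8d,D}$ all clusters intersecting $B_{1.8d}$ have diameter $<d/100$. This does not work: under Palm's formula the crossing loops $\gamma_1,\gamma_2$ are sampled from $\mu_{\cl}$ separately from $\Lc_D$, so the FKG-inserted diameter event constrains only the ambient soup and says nothing about how many times $\gamma_i$ itself crosses $\partial B_{1.6d}$. (And if one tried to include $\gamma_i$ among the loops constrained by the diameter event, the event would simply become empty, since $\gamma_i\in\cl$ already forces a cluster of diameter $\gtrsim d$.) There is no almost sure bound on the number of excursions of a $\cl$-loop across $A_{1.6d,1.8d}$, so the ``bounded number of tails'' step fails.

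The paper sidesteps this by never decomposing the outer part of $\gamma_i$ into individual tails. It fixes a \emph{single} excursion $\eta_i$ of $\gamma_i$ inside $B_{1.2d}$ (the first one hitting $\Lambda_0=\Lambda(B_l,\Lc_{1.2d})$, seen from an arbitrary visit of $\gamma_i$ to $\partial B_{1.5d}$), attaches two random walks $\xi^i_1,\xi^i_2$ to $\partial B_{1.5d}$, and lumps \emph{everything else} into one piece $\omega_i$ connecting the endpoints of $\xi^i_1$ and $\xi^i_2$. The only thing one needs about $\omega_i$ is an upper bound on its total $\mu^{z,w}$-mass restricted to $M^{z,w}_{1.5d,1.8d}$, which is exactly what Lemma~\ref{lem:non-disconnecting paths-1} provides, uniformly in $d$. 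Then $\omega_i$ is swapped, as a single object, for a path $\omega'_i$ confined to a $d/100$-sausage of an arc of $\partial B_{1.5d}$, with the two arcs $l_1,l_2$ disjoint by the separation event $\{\bar Q>1/40\}$. This also takes care of a point your sketch doesn't address: the resulting loop $\gamma'_i=\Uc(\eta_i\oplus\xi^i_2\oplus\omega'_i\oplus[\xi^i_1]^R)$ has multiplicity $1$ and a \emph{unique} such decomposition, which is needed when undoing the Palm formula so that the backward step does not over-count. If you want to repair your version, the cleanest fix is precisely to abandon the per-tail replacement and replace the whole outside part at once, using the total-mass bound of Lemma~\ref{lem:non-disconnecting paths-1} instead of a tail count.
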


\begin{proof}
\textbf{Step 1: Setup}. 
Let $\Ec_2^*$ be the event that there are exactly two clusters in $\Lc_D$ across $A_{l,d}$ and they are bad. Then, it is not hard to show that
\begin{equation}\label{eq:E2*}
\Pb( \bar\Ec_2 ) \lesssim \Pb( \Ec_2^* ).
\end{equation}
To see it, we condition on $\bar\Ec_2$ and explore the two bad clusters, and let $\Lc_*$ be the collection of unexplored loops. Consider the event  that all clusters in $\Lc_*^b$ that intersect $B_d$ have diameter smaller than $d/10$, which occurs with probability greater than some universal constant $C$ by Lemma~\ref{lem:cluster-size-2}. This implies \eqref{eq:E2*}. Therefore, it suffices to work on the event $\Ec_2^*$ below.

We use $\Pb$ to denote the law of $\Lc_D$ throughout the proof. Recall from \eqref{eq:nu} that $\mu_0$ is the measure on unrooted loops which assigns the weight $4^{-|\gamma|}$ to each loop $\gamma$, while $\nu$ is obtained from $\mu_0$ by dividing by the loop multiplicity. Let $\nu_{\cl}$ and $\mu_{\cl}$ be $\nu$ and $\mu_0$ restricted to crossing loops, respectively.
Since \eqref{eq:Ec23} also holds with $\Ec_2^*$ in place of $\bar\Ec_2$, we have that
\begin{equation}\label{eq:E1A}
	\Pb( \Ec_2^* )= \frac 12 \alpha^2 \, \Pb\times\nu_{\cl}\times\nu_{\cl} (\Ec_2^*)\le \alpha^2 \, \Pb\times\mu_{\cl}\times\mu_{\cl} (\Ec_2^*),
\end{equation}
where the event $\Ec_2^*$ under the product measure should be understood as the collection of (ordered) triples $(L,\gamma_{1},\gamma_{2})$ such that $\Ec_2^*( L\uplus\{\gamma_{1},\gamma_{2}\} )$ holds.
  
\bigskip    

\begin{figure}[t]
	\centering
	\includegraphics[width=.6\textwidth]{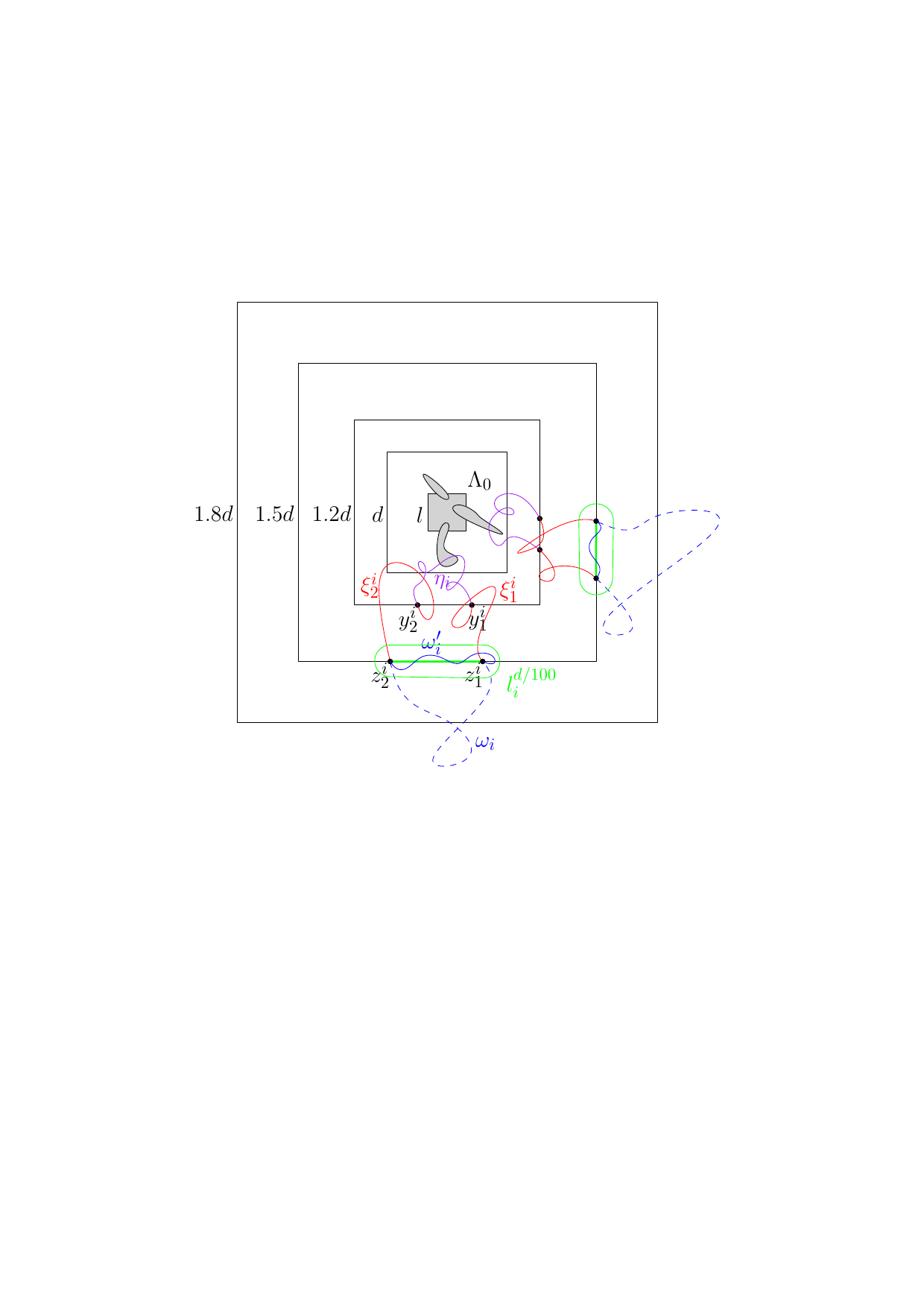}
	\caption{This figure illustrates the decomposition \eqref{eq:dec} for two crossing loops $\gamma_1$ and $\gamma_2$, which plays a central role in the proof of Lemma~\ref{lem:E1}. The filling of $\Lambda_0$ is colored in gray. For each $i =1,2$, the excursion $\eta_i$ in $B_{1.2d}$ is shown in purple, and its starting and ending points along $\partial B_{1.2d}$ are called $y^i_1$ and $y^i_2$, respectively. The red curves indicate the random walks $\xi^i_1$ and $\xi^i_2$ originating from $y^i_1$ and $y^i_2$ (resp.), and stopped upon reaching $\partial B_{1.5d}$. The corresponding hitting points are denoted by $z^i_1$ and $z^i_2$. The remaining part of the decomposition for the loop $\gamma_i$ is denoted by $\omega_i$, and we draw it in dashed blue line. Its refreshed version $\omega'_i$ is in solid blue: it stays in the green region $l_i^{d/100}$, which is the $d/100$-sausage of the arc $l_i$ (also in green, and thicker) of $\partial B_{1.5d}$ between $z^i_1$ and $z^i_2$.}
	\label{fig:E2}
\end{figure}

\textbf{Step 2: Markovian decomposition}. Next, we use the Markov property of $\Lc_D$ and $\mu_0$ to explore the configuration from inside to outside, on the occurrence of $\Ec_2^*$, as illustrated in Figure~\ref{fig:E2}. Roughly speaking, we first freeze the loop soup $\Lc_{1.2d}$, as well as two excursions inside $B_{1.2d}$ of the crossing loops (one for each), which are together used as the initial configuration. Then, we start two packets, of two random walks each, from the endpoints of the previous excursions, stopping them when they hit $\partial B_{1.5d}$. Finally, we require these two packets not to intersect, even with the addition of the loop soup $\Lc_{1.2d, D}$ ($= \Lc_D \setminus \Lc_{1.2d}$ by definition). This culminates with the use of a separation lemma for such a non-intersection event.
	
	More concretely, we first sample the loop soup $\Lc_{1.2d}$, and we then consider
\begin{equation}\label{eq:Lam0}
		\Lambda_0 := \Lambda(B_l,\Lc_{1.2d}).
	\end{equation}
    Let $\gamma_1$ and $\gamma_2$ be the two crossing loops in $\Ec_2^*$ that intersect $\Lambda_0$. For each $i=1,2$, we choose an arbitrary point along $\gamma_i$ lying on $\partial B_{1.5d}$, and from that distinguished point, we consider the first excursion of $\gamma_i$ inside $B_{1.2d}$ which intersects $\Lambda_0$, that we denote by $\eta_i$. Let $y^i_1$ and $y^i_2$ be the starting and ending points (resp.) of $\eta_i$ along $\partial B_{1.2d}$. Let $\xi^i_1$ be the subpath of $\gamma_i^R$ (recall that this denotes the time-reversal of $\gamma_i$) from $y^i_1$ to its first visit to $\partial B_{1.5d}$, and let $\xi^i_2$ be the subpath of $\gamma_i$ from $y^i_2$ to its first visit to $\partial B_{1.5d}$. Observe that one has necessarily $\xi^i_1\cap \Lambda_0 = \emptyset$, from our choice of $\eta_i$ as the \emph{first} excursion intersecting $\Lambda_0$.
    
    Let $\omega_i$ be the remaining part of $\gamma_i$, i.e., so that we have the decomposition
	\begin{equation}\label{eq:dec}
	\gamma_i=\Uc\left(\eta_i\oplus \xi^i_2\oplus \omega_i\oplus [\xi^i_1]^R\right),
	\end{equation}
where $\Uc$ is the unrooting map introduced in Section~\ref{subsec:notation}. Note that this decomposition is not necessarily unique, since there might be multiple choices for $\eta_i$ (indeed, the choice of a particular visit point of $\gamma_i$ along $\partial B_{1.5d}$ was arbitrary). 
	However, this does not matter because we are only after an upper bound at the moment, and this non-uniqueness issue can only increase the probability.
	
	We deduce that $\Pb\times\mu_{\cl}\times\mu_{\cl} (\Ec_2^*)$ can be upper bounded by the product of the following measures on specific events.
	\begin{enumerate}[(i)]
		\item \label{it:1} Sample the loop soup $\Lc_{1.2d}$, and sample the excursions $\eta_1$ and $\eta_2$ according to $\mu^{\exc}_{B_{1.2d}}$ (the excursion measure in $B_{1.2d}$, see Section~\ref{subsec:notation}), independently, such that the following event holds:
		\[
		\Kc:=\{
		\Lambda_0 \subseteq \mathring{B}_d, \, \eta_1\cap \Lambda_0 \neq\emptyset, \, \eta_2\cap \Lambda_0 \neq\emptyset, \text{ and } \Lambda(\eta_1,\Lc_{1.2d}) \cap \eta_2=\emptyset\}.
		\]
		
		\item For each $i=1,2$, let $\xi^i_1$ (resp.\ $\xi^i_2$) be a random walk started from $y^i_1$ (resp.\ $y^i_2$) and stopped upon reaching $\partial B_{1.5d}$. Additionally, we further require $\xi^i_1$ to avoid $\Lambda_0$.
		
		\item \label{it:4} Sample the loop soup $\Lc_{1.2d,D}$, and restrict to the event that
		$$\bar Q=\bar Q^{2,2}(1.5d):= Q^{2,2}(1.5d) \ind_\Dc>0,$$
		where $Q^{2,2}(1.5d)$ is the quality at the scale $1.5d$ (see \eqref{eq:Qjk}) induced by the two packets of two random walks $(\xi^1_1,\xi^1_2)$ and $(\xi^2_1,\xi^2_2)$ inside the frontier loop soup $\Lc_{1.2d,D}^b$, with initial configuration given by $(\Lc_{1.2d},\eta_1,\eta_2,\bar y^1,\bar y^2)$ (where $\bar y^i=(y^i_1,y^i_2)$, for $i=1,2$), for $r=1.2d$ and $R=1.5d$, and where the event $\Dc$ is defined as
		\[
		\Dc:=\{ \text{no loop in $\Lc_{1.2d,D}$ intersects $\Lambda_0$, and no cluster of $\Lc_{1.2d,D}$ disconnects $B_{1.2d}$ from $\infty$} \}
		\]
		(observe that it is clearly decreasing in $\Lc_{1.2d,D}$).
		
		\item \label{it:6} Choose $\omega_i$ according to $\mu^{z_2^i,z_1^i}$ (recall this measure from Section~\ref{subsec:notation}), where $z_1^i$ and $z_2^i$ denote the ending points of $\xi^i_1$ and $\xi^i_2$, respectively, and restrict to the case when $\omega_i$ intersects $\partial B_{1.8d}$, but it does not disconnect $B_{1.5d}$ from $\partial B_{1.8d}$. 
	\end{enumerate}
In fact, the modified quality $\bar Q$ in (\ref{it:4}) is a little bit different from that given in \eqref{eq:Qjk} since the random walks $\xi^1_1$ and $\xi^2_1$ used here should avoid $\Lambda_0$, due to the decomposition (there were no such restrictions in the original setup). However, since $\Lambda_0 \subseteq \mathring{B}_d$, from Remarks~\ref{rmk:general-sep} and~\ref{rmk:conditionl RW}, we can still apply Proposition~\ref{prop:sep-jk} in the case $j=k=2$ for $\bar Q$ to get that 
\begin{equation}\label{eq:E1-2}
	\ind_{\Kc}\,\wt\Pb( \bar Q >0 \mid \Lc_{1.2d}, \eta_1,\eta_2 ) \lesssim \ind_{\Kc}\,\wt\Pb( \bar Q >1/40 \mid \Lc_{1.2d}, \eta_1,\eta_2 ),
\end{equation}
where $\wt\Pb$ is the joint law of $\xi^1$, $\xi^2$ and $\Lc_{1.2d,D}$ (note that they are independent of each other).

Next, we deal with the last condition (\ref{it:6}) on $\omega_i$. In fact, this is just a necessary condition for $\Ec_2^*$ to occur, to prevent $\omega_i$ from spoiling the construction. The total mass of $\omega_i$ in (\ref{it:6}) under $\mu^{z_2^i,z_1^i}$ is given by $\mu^{z_2^i,z_1^i}( M^{z_2^i,z_1^i}_{1.5d,1.8d} )$ (recall this notation from the paragraph before Lemma~\ref{lem:non-disconnecting paths}), which is bounded from above uniformly in the scale $d$ and the ending points $z_2^i,z_1^i$, by Lemma~\ref{lem:non-disconnecting paths-1}. That is, for some universal constant $0<c<\infty$,
 \begin{equation}\label{eq:omega_i}
 	\max_{u_1,u_2\in \partial B_{1.5d}} \mu^{u_1,u_2}( M^{u_1,u_2}_{1.5d,1.8d} ) \le c.
 \end{equation}
Write $\Pb_{1.2d}$ for the law of $\Lc_{1.2d}$.
From the decomposition, we have 
\begin{align}
	\notag
\Pb\times\mu_{\cl} & \times\mu_{\cl} (\Ec_2^*)\\ \notag
&\le \Pb_{1.2d}\times\mu^{\exc}_{B_{1.2d}}\times\mu^{\exc}_{B_{1.2d}} \big[\ind_{\Kc}\,\wt\Pb( \bar Q >0 \mid \Lc_{1.2d}, \eta_1,\eta_2 )\big] \cdot \max_{u_1,u_2\in \partial B_{1.5d}} \mu^{u_1,u_2}( M^{u_1,u_2}_{1.5d,1.8d} )\\ \notag
&\le c\, \Pb_{1.2d}\times\mu^{\exc}_{B_{1.2d}}\times\mu^{\exc}_{B_{1.2d}} \big[\ind_{\Kc}\,\wt\Pb( \bar Q >0 \mid \Lc_{1.2d}, \eta_1,\eta_2 )\big]\\
&\lesssim \Pb_{1.2d}\times\mu^{\exc}_{B_{1.2d}}\times\mu^{\exc}_{B_{1.2d}} \big[\ind_{\Kc}\,\wt\Pb( \bar Q >1/40 \mid \Lc_{1.2d}, \eta_1,\eta_2 )\big], \label{eq:E1-1}
\end{align}
where we used \eqref{eq:omega_i} in the second inequality, and \eqref{eq:E1-2} in the last inequality.

\bigskip

\textbf{Step 3: Surgery}. In the remainder of the proof, on the event $\{\bar Q >1/40\}$, we reverse the above procedure and modify the crossing loops $\gamma_1,\gamma_2$ so that they stay in $B_{1.8d}$, in such a way that $\overrightarrow\Ac_D({l,d})$ occurs after the alteration. We refer again the reader to Figure~\ref{fig:E2} for an illustration.

Define the following event, on the sizes of the clusters in $\Lc_{1.2d,D}$:
\begin{equation}\label{eq:G}
G:=\{ \text{all clusters in $\Lc^b_{1.2d,D}$ that intersect $B_{1.8d}$ have a diameter smaller than $d/1000$} \}.
\end{equation}
By Lemma~\ref{lem:cluster-size-2}, there is a universal constant $C'>0$ such that
\begin{equation}\label{eq:GC'}
	 \Pb(G)\ge C'.
\end{equation}
Note that both $G$ and $\{ \bar Q >1/40 \}$ are decreasing events with respect to $\Lc_{1.2d,D}$. Therefore, it follows from the FKG inequality that 
\begin{align} 
	  \ind_{\Kc}\,\wt\Pb( \bar Q >1/40, G \mid \Lc_{1.2d}, \eta_1,\eta_2 ) & \geq \ind_{\Kc}\,\wt\Pb( \bar Q >1/40 \mid \Lc_{1.2d}, \eta_1,\eta_2 ) \, \wt\Pb( G \mid \Lc_{1.2d}, \eta_1,\eta_2 ) \nonumber\\
	  & \geq C'\,\ind_{\Kc}\,\wt\Pb( \bar Q >1/40 \mid \Lc_{1.2d}, \eta_1,\eta_2 ), \label{eq:E2'}
\end{align}
where we used that $\wt\Pb( G \mid \Lc_{1.2d}, \eta_1,\eta_2 ) = \wt\Pb( G )$ (since $G$ is measurable with respect to $\Lc_{1.2d,D}$).

For $i=1,2$, let $l_i$ be the arc along $\partial B_{1.5d}$ joining $z^i_1$ and $z^i_2$ (the ending points of $\xi^i_1$ and $\xi^i_2$, resp.), chosen so that $l_1\cap l_2=\emptyset$.
We repeat our previous sampling (\ref{it:1})-(\ref{it:4}), and in the last step (\ref{it:6}), we choose $\omega_i$ according to $\mu^{z_2^i,z_1^i}$, but we require that $\omega_i\subseteq l_i^{d/100}$ (the $d/100$-sausage of $l_i$, see \eqref{eq:sausage}) instead. For clarity, we denote the last path obtained now by $\omega_i'$, in order to distinguish it from the path $\omega_i$ constructed before. 
We then concatenate all the pieces, with $\omega_i$ replaced by  $\omega'_i$, to construct the new loop 
\begin{equation}\label{eq:decompose'}
\gamma'_i := \Uc\left(\eta_i\oplus \xi^i_2\oplus \omega'_i\oplus [\xi^i_1]^R\right),
\end{equation}
which is now an unrooted loop remaining inside $B_{1.8d}$. Moreover, the above decomposition becomes \emph{unique}, since we can always identify the first unique excursion $\eta_i$ of $\gamma'_i$, starting from any point of $\gamma'_i$ along $\partial B_{1.5d}$ (this was not the case earlier, since $\omega_i$ could well visit $B_{1.2 d}$). Note also that because of the part $\omega'_i$, the two loops $\gamma'_1$ and $\gamma'_2$ have multiplicity $1$.

We observe that on the event $\Kc\cap\{ \bar Q >1/40 \}\cap G$, we have
\begin{equation}\label{eq:l12}
\Lambda(\eta_1\cup \xi^1_1\cup \xi^1_2\cup l_1^{d/100}, \Lc_{D})\cap (\eta_2\cup \xi^2_1\cup \xi^2_2\cup l_2^{d/100})=\emptyset.
\end{equation}
Indeed, the requirement $\bar Q >1/40$ ensures in particular that $\{z^1_1,z^1_2\}$ and $\{z^2_1,z^2_2\}$ lie at a distance $> 1/20 \cdot 1.5 d$ from each other, using the definition of $\bar Q$ (see \eqref{eq:Qjk}). This implies that $\Lambda(\gamma'_1,\Lc_{D})$ and $\Lambda(\gamma'_2,\Lc_{D})$ are two disjoint outermost clusters in $\Lc_{D}$, containing $\gamma'_1$ and $\gamma'_2$ respectively, and obviously they cross $A_{l,d}$. Moreover, the event $G$ prevents the existence of a crossing loop in $\Lc_D$. Thus, we conclude that
\begin{equation}\label{eq:inclusion}
\Kc\cap\{ \bar Q >1/40 \}\cap G \subseteq \overrightarrow\Ac_{\Lc_{D}\uplus\{ \gamma'_1,\gamma'_2 \}}(l,d).
\end{equation}

\bigskip

\textbf{Step 4: Resampling}. In this last part of the proof, we use the previous procedure to resample the loop soup together with the modified crossing loops $\gamma'_1$ and $\gamma'_2$, which allows us to conclude the proof.
Define the following collection of ordered triples of loop configurations (in a similar way as for $\Ec_2^*$):
    \begin{equation}\label{eq:F1}
  \Upsilon:=\left\{ \begin{array}{c} (L,\gamma_1,\gamma_2): L \text{ is a loop configuration in $D$ without any} \\ 
  	\text{loop in $\cl_{d,1.5d}$, and $(\gamma_1,\gamma_2)\in\cl_{d,1.5d}^2$, both with multiplicity $1$,} \\
 	\text{such that $\Lambda(B_l,L)\subseteq \mathring{B}_d$ and $\overrightarrow\Ac_{L\uplus\{\gamma_1,\gamma_2\}}(l,d)$ occurs} 
 \end{array}	\right\}.
\end{equation}
We can deduce from the unique decomposition of $\gamma'_i$ in \eqref{eq:decompose'} and the inclusion \eqref{eq:inclusion} that 
\begin{equation}\label{eq:reverse}
	\Pb\times\mu_0\times\mu_0 (\Upsilon)
	\gtrsim \Pb_{1.2d}\times\mu^{\exc}_{B_{1.2d}}\times\mu^{\exc}_{B_{1.2d}} \big[\ind_{\Kc}\,\wt\Pb( \bar Q >1/40, G \mid \Lc_{1.2d}, \eta_1,\eta_2 )\big], 
\end{equation}
where we also used the fact that the total mass under $\mu^{z_2^i,z_1^i}$ of all the paths $\omega_i'\subseteq l_i^{d/100}$ is bounded away from $0$ uniformly (in $d$, $z_2^i$, and $z_1^i$), since the size of $l_i^{d/100}$ is of order $d$ (note that the total mass is just given by the Green's function between $z_1^i$ and $z_2^i$ in $l_i^{d/100}\cap\Zb^2$, so we can use Lemma~\ref{lem:green_sausage}).
Plugging \eqref{eq:E2'} and \eqref{eq:E1-1} into \eqref{eq:reverse}, we obtain that 
\begin{equation}\label{eq:F1-A}
\Pb\times\mu_0\times\mu_0 (\Upsilon)
\gtrsim \Pb\times\mu_\cl\times\mu_\cl (\Ec_2^*). 
\end{equation}
Let $E := \{$there are exactly two loops with multiplicity $1$ in $\Lc_D$ that belong to $\cl_{d,1.5d} \}$.
Using Palm's formula again, similarly to \eqref{eq:E1A},
\begin{align}\label{eq:barA-E}
 \Pb( \overrightarrow\Ac_D(l,d)\cap\{ \Lambda(B_l,\Lc_{1.5d})\subseteq \mathring{B}_d \}\cap E )
= \frac 12 \alpha^2 \, \Pb\times\mu_0\times\mu_0 (\Upsilon).
\end{align}
Therefore, using \eqref{eq:barA-E}, \eqref{eq:F1-A} and \eqref{eq:E1A} successively, we get that
\begin{align*}
	\Pb( \overrightarrow\Ac_D(l,d) ) \ge \frac 12 \alpha^2 \, \Pb\times\mu_0\times\mu_0 (\Upsilon) \gtrsim \alpha^2 \, \Pb\times\mu_\cl\times\mu_\cl (\Ec_2^*) \ge \Pb( \Ec_2^* ). 
\end{align*}
This combined with \eqref{eq:E2*} finishes the proof of Lemma~\ref{lem:E1}.
\end{proof}

\subsubsection*{Case $i=1$: exactly one crossing loop needed}

There remains to prove the result for $\bar\Ec_1$, which we do now.

\begin{lemma}\label{lem:E2}
	There exists a universal constant $C>0$ such that for all $1 \leq l\le d/2$, $D \supseteq B_{2d}$, and any intensity $\alpha\in (0,\half]$,
\[
\Pb( \bar\Ec_1 ) \le C \,\Pb(  \overrightarrow\Ac_D(l,d) ).
\]
\end{lemma}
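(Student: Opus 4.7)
The proof will mirror the four-step strategy used for Lemma~\ref{lem:E1}, but with only one crossing loop to modify and a pre-existing good cluster to preserve. First, by the same cluster-size control based on Lemma~\ref{lem:cluster-size-2}, I would reduce $\bar\Ec_1$ to an event $\Ec_1^*$ on which exactly two outermost clusters cross $A_{l,d}$: one good cluster $\Cc_1$ and one bad cluster containing the unique crossing loop $\gamma$. Applying Palm's formula to $\gamma$, exactly as in Lemma~\ref{lem:Palm}, would yield
\[
\Pb(\bar\Ec_1) \lesssim \Pb(\Ec_1^*) \le \alpha\, \Pb \times \mu_{\cl}(\Ec_1^*).
\]

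Next, I would perform the same Markovian decomposition as in the proof of Lemma~\ref{lem:E1}: sample $\Lc_{1.2d}$, set $\Lambda_0 := \Lambda(B_l,\Lc_{1.2d})$, and write $\gamma = \Uc(\eta \oplus \xi_2 \oplus \omega \oplus \xi_1^R)$, where $\eta$ is the first excursion of $\gamma$ in $B_{1.2d}$ meeting $\Lambda_0$, $\xi_1,\xi_2$ are its two extensions to $\partial B_{1.5d}$, and $\omega$ is the outer portion. The new requirement is that the good cluster $\Cc_1$, which crosses $A_{l,d}$ on its own, remains disjoint both from $\gamma$ and from the modified loop $\gamma'$ produced by surgery. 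To arrange this, I would apply a separation lemma of the type of Proposition~\ref{prop:sep-jk} with $j=k=2$, whose first packet is $(\xi_1,\xi_2)$ and whose second packet is obtained by an analogous decomposition of outermost loops of $\Cc_1$ that exit $B_{1.2d}$, yielding a pair of walks from $\partial B_{1.2d}$ to $\partial B_{1.5d}$. Combined with the FKG-type event $G$ from \eqref{eq:G}, this would give, with uniformly positive probability, that the endpoints $z_1,z_2$ of $\xi_1,\xi_2$ on $\partial B_{1.5d}$ lie at distance $\gtrsim d$ from $\Cc_1$ and from each other. The same $d/100$-sausage surgery as in Step~3 of Lemma~\ref{lem:E1} would then replace $\omega$ by some $\omega'$, producing a modified loop $\gamma' \subseteq B_{1.8d}$ whose cluster stays disjoint from $\Cc_1$, so that both clusters survive as outermost crossing clusters contained in $B_{2d}$. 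Reversing the decomposition together with Palm's formula, in the spirit of Step~4 of Lemma~\ref{lem:E1}, would finally yield $\Pb(\overrightarrow\Ac_D(l,d)) \gtrsim \Pb(\Ec_1^*) \gtrsim \Pb(\bar\Ec_1)$.

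The main obstacle will be the treatment of $\Cc_1$ in the separation step. Unlike the crossing loop $\gamma$, which is an independent object cleanly extracted by Palm's formula, $\Cc_1$ is an intrinsic geometric feature of $\Lc_D$ that may combine loops from both $\Lc_{1.2d}$ and $\Lc_{1.2d,D}$. Isolating a canonical pair of outgoing walks from $\Cc_1$ that can serve as the second packet in the separation lemma, without prematurely conditioning on too much of $\Lc_{1.2d,D}$, requires a layered exploration — for instance, first exploring the outermost frontier in $\Lc_D^b$ that crosses $A_{l,d}$ from the outside, and only then resampling its ingoing excursions conditionally — so that the proofs of Section~\ref{sec:sep} apply verbatim to the extracted walks. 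Once this exploration is arranged, the rest of the argument is a direct adaptation of the surgery carried out in Lemma~\ref{lem:E1}.
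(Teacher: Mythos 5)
Your proposal identifies the right obstacle but does not resolve it, and the route you sketch to get around it is exactly where the argument would break. You propose to treat the good cluster $\Cc_1$ as a second packet for Proposition~\ref{prop:sep-jk}, extracting ``outgoing walks'' from its boundary loops. But the separation lemmas in Section~\ref{sec:sep} are proved for random walks that are \emph{independent} of the loop soup $\Lc_{1.2d,D}$: the FKG argument in Lemma~\ref{lem:sep-1} and the strong Markov argument in Lemma~\ref{lem:sep-2} both rely on this independence. The loops making up $\Cc_1$ are elements of the loop soup itself, so a pair of walks carved out of them is neither Markovian with respect to the filtration you need nor decoupled from $\Lc_{1.2d,D}$, and the separation proofs do \emph{not} apply ``verbatim'' as you claim. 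There is no Palm's formula to hand here that would let you extract these two walks the way you extract the crossing loop $\gamma$, precisely because $\Cc_1$ is good (not built from a crossing loop). Your acknowledged ``main obstacle'' is in fact fatal to this route without a genuinely new separation lemma.

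The paper avoids this entirely by never touching $\Cc_1$'s loops. It first splits on the auxiliary event $H=\{$no cluster in $\Lc_{1.1d}$ crosses $A_{l,d}\}$. On $\bar\Ec_1\cap H$, the good cluster also needs a loop crossing $A_{d,1.1d}$ to span $A_{l,d}$, so the configuration is effectively the two-crossing-loop situation and the proof of Lemma~\ref{lem:E1} applies with $\cl_{d,1.1d}$ in place of $\cl$. On $\bar\Ec_1\cap H^c$, the good cluster $\Cc$ is already determined by $\Lc_{1.2d}$ (and contained in $B_{1.2d}$ after an FKG step with $G'$ and discarding $F_1$ via Lemma~\ref{lem:E0}), so the surgery uses the \emph{one-packet} separation lemma, Proposition~\ref{prop:sep-disc}, with $U=\Cc$, $V=\eta$, and $j=2$: the quality $Q^2$ in \eqref{eq:quality-disc} asks that the two legs of $\gamma$ (together with the frontier loop soup) neither intersect nor \emph{disconnect} $\Cc$, which is precisely what is needed to reattach $\omega'$ without destroying $\Cc$'s crossing. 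Your sketch misses both the $H$ vs.\ $H^c$ dichotomy (so it does not handle the case where the good cluster itself needs outer loops) and the role of Proposition~\ref{prop:sep-disc}, which is the device that makes $\Cc$ a passive obstacle rather than an active participant in the separation.
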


\begin{proof}
	Recall from the definition that on the event $\bar\Ec_1$, there is exactly one crossing loop $\gamma$ in $\Lc_D$, which is used by the only one bad cluster of $\Lc_{D}$ to cross $A_{l,d}$.
	We discuss two cases, depending on the occurrence (or not) of the event 
	\[
	H:=\{\text{there is no cluster in $\Lc_{1.1d}$ across $A_{l,d}$}\}.
	\]
	
	\textbf{Case (a).}
	If $\bar\Ec_1\cap H$ occurs, then we are in the same situation as with $\bar\Ec_2$ in Lemma~\ref{lem:E1} (existence of two crossing loops in $\cl_{d,1.1d}$ to ensure the occurrence of the arm event), if we use $\cl_{d,1.1d}$ instead of $\cl=\cl_{d,1.8d}$. Therefore, by the same proof as for Lemma~\ref{lem:E1}, we have 
	\begin{equation}\label{eq:H}
	\Pb(\bar\Ec_1\cap H) \lesssim \Pb(\overrightarrow\Ac_D(l,d)).
	\end{equation}
     
   \textbf{Case (b).}
If $\bar\Ec_1\cap H^c$ occurs, then we consider the following event $G'$, which plays the same role as $G$ in \eqref{eq:G}:
\[
G':=\{ \text{all clusters in $(\Lc_{1.1d,D}\setminus\cl)^b$ intersecting $B_{1.8d}$ have a diameter smaller than $d/1000$} \},
\]
(recall that $\Lc_{1.1d,D} = \Lc_D \setminus \Lc_{1.1d}$). This event is used to ensure that on $\bar\Ec_1\cap H^c\cap G'$, all clusters in $\Lc_D\setminus\cl$ across $A_{l,d}$ are contained in $B_{1.2d}$.

Before proceeding any further, we first show that conditionally on $\bar\Ec_1\cap H^c$, $G'$ occurs with positive probability, similarly to \eqref{eq:E2'}. 
It is easy to verify that both $G'$ and $\bar\Ec_1\cap H^c$ are decreasing in $\Lc_{1.1d,D}\setminus\cl$. 
Therefore, by first conditioning on $\Lc_{1.1d}$ and the crossing loops in $\Lc_D$, and then applying the FKG inequality for decreasing events in $\Lc_{1.1d,D}\setminus\cl$, we obtain that 
\begin{equation}\label{eq:FKG-G'}
	\Pb(\bar\Ec_1\cap H^c\cap G')\ge \Pb(\bar\Ec_1\cap H^c)\, \Pb( G' ) \gtrsim \Pb(\bar\Ec_1\cap H^c),
\end{equation}
where in the last inequality, we used the same estimate for $G'$ as \eqref{eq:GC'}. 

We also want to show that there is only one cluster in $\Lc_D\setminus\cl$ across $A_{l,d}$. For this, we consider the event $F_1:=\Ac_{\Lc_D\setminus\cl}(l,d)$. Let $F_2$ be the event that there is no crossing loop in $\Lc_{D}$. Then, as explained earlier in the proof of Lemma~\ref{lem:Palm}, we have $\Pb( F_2 )= e^{-\alpha \nu(\cl)}$, which is greater than some universal constant.
	Note that $F_1$ and $F_2$ are independent, and $F_1\cap F_2\subseteq \Ec_0$. Therefore,
	\begin{equation}\label{eq:E0A}
		\Pb( F_1 )\lesssim \Pb( F_1 )\, \Pb( F_2 ) \lesssim \Pb(\Ec_0)\lesssim \Pb \big(  \overrightarrow\Ac_D(l,d) \big),
\end{equation} 
where we used Lemma~\ref{lem:E0} in the last inequality.
This combined with \eqref{eq:FKG-G'} tells us that it is sufficient to deal with the event $\bar\Ec_1\cap H^c\cap G'\cap F_1^c$ below. This combination of events is good enough for us to apply the separation lemma, Proposition~\ref{prop:sep-disc}, and conclude, as we now explain.

\begin{figure}[t]
	\centering
	\includegraphics[width=.5\textwidth]{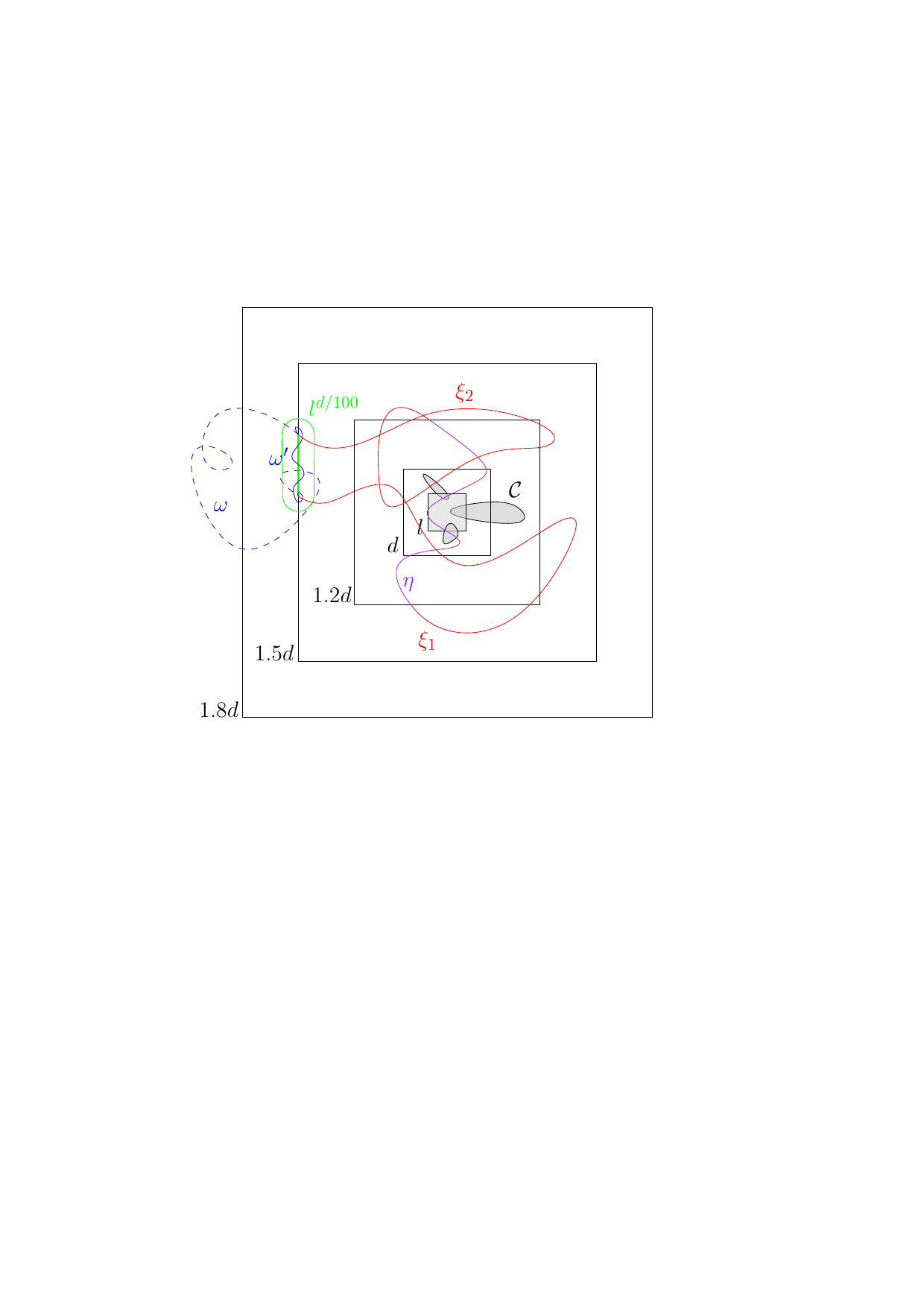}
	\caption{This figure illustrates the construction used for case (b) in the proof of Lemma~\ref{lem:E2}. 
		The gray parts are clusters in $\Lc_{1.2d}$ intersecting $B_l$, and there is only one cluster $\Cc$ that crosses $A_{l,d}$. The excursion $\eta$ in $B_{1.2d}$, depicted in purple, intersects the gray parts, but it avoids $\Cc$. The red curves indicate the random walks $\xi_1$ and $\xi_2$, originating, respectively, from the starting and ending points of $\eta$, and stopped upon reaching $\partial B_{1.5d}$. We draw $\omega$, the remaining part of the decomposition for the loop $\gamma$, in dashed and blue line. Its refreshed version $\omega'$ is in solid and blue: it remains in the green region $l^{d/100}$, which is the $d/100$-sausage of the arc $l$ (shown in green as well, and thicker) of $\partial B_{1.5d}$ between the ending points of $\xi_1$ and $\xi_2$.}
	\label{fig:tE2}
\end{figure}
	
	From Palm's formula (more specifically, \eqref{eq:Pk} with $k=1$), we have 
\begin{equation}\label{eq:P1}
	\Pb(\bar\Ec_1\cap H^c\cap G' \cap F_1^c)=  \alpha\,\Pb \times \nu_{\cl} ( \bar\Ec_1\cap H^c\cap G' \cap F_1^c ),
\end{equation}
where the event on right hand side should be understood as the collection of all pairs $(L,\gamma)$ such that $\bar\Ec_1\cap H^c\cap G'\cap F_1^c$ holds for $L\uplus\{\gamma\}$.
On the event $\bar\Ec_1\cap H^c\cap G'\cap F_1^c$, there is only one cluster in $\Lc_D\setminus\{\gamma\}$ crossing $A_{l,d}$, and it remains in $B_{1.2d}$, as explained earlier.
We are in a position to apply a decomposition for $\gamma$ similar to \eqref{eq:dec}, as follows (see Figure~\ref{fig:tE2} for an illustration).
	\begin{enumerate}[(i)]
		\item Sample the loop soup $\Lc_{1.2d}$, and sample the excursion $\eta$ in $B_{1.2d}$ according to $\mu^{\exc}_{B_{1.2d}}$, independently, and restrict to the case when the following event holds:
		\[
		\Kc:=\{ \text{only one cluster $\Cc$ of $\Lc_{1.2d}$ across $A_{l,d}$, $\eta\cap\Cc=\emptyset$, and $\eta\cap\Lambda(B_l,\Lc_{1.2d})\neq\emptyset$} \}.
		\]
		We denote the starting and ending points of $\eta$ by $x_1$ and $x_2$, respectively.
		
		\item Let $\xi_1$ and $\xi_2$ be two random walks, starting from $x_1$ and $x_2$ (resp.), and stopped upon reaching $\partial B_{1.5d}$. Moreover, we further require $\xi_1$ to avoid $\Lambda(B_l,\Lc_{1.2d})$.
		
		\item Sample the loop soup $\Lc_{1.2d,D}$, and restrict to the event that
		\begin{equation}\label{eq:bQ}
			\bar Q=\bar Q^2(1.5d):= Q^2(1.5d)\ind_\Dc>0,
		\end{equation}
		where $Q^2(1.5d)$ is defined as  in \eqref{eq:quality-disc}, with $r=1.2d$, $R=1.5d$, $L_r=\Lc_{1.2d}$, $U=\Cc$, $V=\eta$, and $\bar x=(x_1,x_2)$, except that the simple random walk $\xi_1$ (starting from $x_1$) should be replaced by a conditional one (avoiding $\Lambda(B_l,\Lc_{1.2d})$), and the event $\Dc$ is now given by
		\[
		\Dc:=\left\{ \begin{array}{c}
			\text{no loop in $\Lc_{1.2d,D}$ intersects $\Cc$, no cluster of $\Lc_{1.2d,D}$ disconnects $B_{1.2d}$ from $\infty$,}\\ 
			\text{and all clusters in $\Lc^b_{1.2d,D}$ intersecting $B_{1.8d}$ have a diameter smaller than $d/1000$}\end{array}\right\},
		\]
		which is clearly decreasing in $\Lc_{1.2d,D}$.
		
		\item Finally, sample $\omega$ according to $\mu^{z_2, z_1}$, where $z_i = \xi_i \cap \partial B_{1.5d}$ for $i=1,2$. Furthermore, we require that $\omega$ intersects $\partial B_{1.8d}$, but does not disconnect $B_{1.5d}$ from $\partial B_{1.8d}$.
	\end{enumerate}

    From the above decomposition, we have 
\begin{align}\label{eq:construction}
	\Pb \times \nu_{\cl} ( \bar\Ec_1\cap H^c\cap G'\cap F_1^c )  
	\lesssim \Pb_{1.2d}\times\mu^{\exc}_{B_{1.2d}} \big[\ind_{\Kc}\,\wt\Pb( \bar Q >0 \mid \Lc_{1.2d}, \eta )\big],
\end{align}
where $\wt\Pb$ is now the joint law of $\xi_1$, $\xi_2$ and $\Lc_{1.2d,D}$. We can then use Proposition~\ref{prop:sep-disc}, which implies
\begin{equation}\label{eq:sepa}
	\ind_{\Kc}\,\wt\Pb( \bar Q >0 \mid \Lc_{1.2d}, \eta )\lesssim \ind_{\Kc}\,\wt\Pb( \bar Q >1/20 \mid \Lc_{1.2d}, \eta ).
\end{equation}
On the event $\Kc\cap \{\bar Q>1/20\}$, one can find at least one arc $l$ of $\partial B_{1.5d}$ such that 
\begin{equation}\label{eq:CFill}
	\Cc\cap\Fill\big(\Lambda(\eta\cup\xi_1\cup\xi_2\cup l^{d/100},\Lc_{D})\big)=\emptyset.
\end{equation}
Hence, we can hook up the ending points of $\xi_1$ and $\xi_2$ (along $\partial B_{1.5d}$) with a refreshed path $\omega'$ (playing the role of $\omega$), remaining in $l^{d/100}$ and such that $\Ac_{\Lc_{D}\uplus\{\gamma'\}}(l,d)$ occurs with the new loop
\[
\gamma' = \Uc \left( \eta\oplus \xi_2\oplus \omega'\oplus [\xi_1]^R \right)
\]
(this recombination is shown in Figure~\ref{fig:tE2}). Note that the multiplicity of $\gamma'$ is $1$, so it has the same weight under $\nu$ and $\mu_0$.
Moreover, the total weight of all compatible paths $\omega'$ is bounded away from $0$, as in the proof of Lemma~\ref{lem:E1}.

We then consider the following collection of ordered pairs (playing the same role as $\Upsilon$ in \eqref{eq:F1}):
\begin{equation}\label{eq:F'}
	\Upsilon':=\left\{ \begin{array}{c} (L,\gamma): 
		L \text{ is a loop configuration in $D$ without any loop in $\cl_{d,1.5d}$,}\\ 
		\text{which contains only one cluster crossing $A_{l,d}$, this cluster stays in $B_{1.2d}$,}\\ 
		\text{and $\gamma\in\cl_{d,1.5d}$, with multiplicity $1$, is such that $\overrightarrow\Ac_{L\uplus\{\gamma\}}(l,d)$ occurs} 
	\end{array}	\right\}.
\end{equation}
Using the same arguments as in the proof of Lemma~\ref{lem:E1}, we have 
\begin{align*}
	\Pb(\overrightarrow\Ac_{D}(l,d)) 
	&\gtrsim\   \alpha\,\Pb\times \mu_0 ( \Upsilon' )   &\text{similarly to \eqref{eq:barA-E}}\\
	&\gtrsim\   \alpha\,\Pb_{1.2d}\times\mu^{\exc}_{B_{1.2d}} \big[\ind_{\Kc}\,\wt\Pb( \bar Q >1/20 \mid \Lc_{1.2d}, \eta )\big] &\text{using \eqref{eq:CFill}}\\
	&\gtrsim\  \alpha\,\Pb\times\nu_{\cl}(\bar\Ec_1\cap H^c\cap G'\cap F_1^c) &\text{using \eqref{eq:construction} and \eqref{eq:sepa}}\\
	&\gtrsim\  \Pb(\bar\Ec_1\cap H^c \cap G'\cap F_1^c) &\text{from \eqref{eq:P1}}.
\end{align*}
This last result, combined with \eqref{eq:FKG-G'} and \eqref{eq:E0A}, shows that 
\[
 \Pb(\bar\Ec_1\cap H^c) \lesssim \Pb(\bar\Ec_1\cap H^c\cap G'\cap F_1^c) + \Pb( F_1 ) \lesssim \Pb(\overrightarrow\Ac_{D}(l,d)). 
\]
Combining the above with \eqref{eq:H}, we obtain that 
\[
\Pb(\bar\Ec_1)= \Pb(\bar\Ec_1\cap H)+ \Pb(\bar\Ec_1\cap H^c)\lesssim \Pb(\overrightarrow\Ac_{D}(l,d)),
\]
which finishes the proof.
\end{proof}

\subsubsection*{Locality: wrap-up of the proof}

We are finally in a position to prove Proposition~\ref{prop:locality}.

\begin{proof}[Proof of Proposition~\ref{prop:locality}]
	By using \eqref{eq:E123}, and applying Lemma~\ref{lem:Palm} twice, to $\Ec_1$ and $\Ec_2$, we obtain
	\[
	\Pb(\Ac_D(l,d))\lesssim \Pb(  \Ec_0 )+ \Pb( \bar\Ec_1 )+\Pb( \bar\Ec_2 ).
	\]
	Then, combining Lemmas~\ref{lem:E0}, \ref{lem:E1} and~\ref{lem:E2} yields
	\[
	\Pb(\Ac_D(l,d))\lesssim \, \Pb(\overrightarrow\Ac_{D}(l,d)),
	\]
	completing the proof of Proposition~\ref{prop:locality}.
\end{proof}

\subsection{Reversed locality property} \label{sec:locality2}

In this section, we derive a locality result in the reversed, ``inward'' direction. For this purpose, we introduce the \emph{truncated inward arm event} as follows: for all $z\in \Zb^2$, $2 \leq l<d$, and any random loop configuration $\Lc'$,
\begin{equation}\label{eq:inward-ae}
	\overleftarrow\Ac_{\Lc'}(z;l,d) := \Ac_{\Lc'}(z;l,d) \cap\{ \Lambda(\partial B_d(z), \Lc') \subseteq B_{l/2}(z)^c \}.
\end{equation}
As before, we abbreviate $\overleftarrow\Ac_D(z;l,d):=\overleftarrow\Ac_{\Lc_D}(z;l,d)$ and $\overleftarrow\Ac_D(l,d):=\overleftarrow\Ac_D(0;l,d)$.
Note that by definition, $\overleftarrow\Ac_D(z;l,d)$ is independent of the loop soup $\Lc_{l/2}(z)$.
In a similar fashion as in the outward direction (Proposition~\ref{prop:locality}), the following inward locality result holds true.

\begin{proposition}\label{prop:in-locality}
	There is a universal constant $C>0$ such that for all $z\in\Zb^2$, $2 \leq l\le d/2$, $D \supseteq B_{d}(z)$, and any intensity $\alpha\in (0,\half]$,
	\[
	\Pb(\Ac_D(z;l,d))\le C\, \Pb(\overleftarrow\Ac_D(z;l,d)).
	\]
\end{proposition}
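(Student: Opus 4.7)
The plan is to mirror the proof of Proposition~\ref{prop:locality}, with every construction reflected inward: the role there played by loops extending a crossing cluster out past $B_{2d}$ is now played by loops extending a cluster inward into $B_{l/2}$. Assume $z=0$. Introduce the set $\cl^\leftarrow$ of \emph{inward crossing loops}, defined as the loops in $\Lc_D$ crossing the annulus $A_{l/2,\,0.9l}$ without disconnecting $B_{l/2}$ from $\partial B_{0.9l}$. An outermost cluster $\Cc$ across $A_{l,d}$ is called \emph{inward-bad} when $\Cc\cap B_{l/2}\neq\emptyset$ while $(\Cc\setminus\cl^\leftarrow)\cap B_{l/2}=\emptyset$, and \emph{inward-good} otherwise. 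A simple topological observation shows that if $\Ac_D$ holds and the inward locality condition $\Lambda(\partial B_d,\Lc_D)\subseteq B_{l/2}^c$ fails, then at least one outermost crossing cluster must intersect $B_{l/2}$, because any cluster touching $\partial B_d$ and entering $B_{l/2}$ is, as a connected subset of $\Zb^2$, forced to cross $A_{l,d}$. Decompose $\Ac_D=\overleftarrow\Ec_0\cup\overleftarrow\Ec_1\cup\overleftarrow\Ec_2$ according to whether the number of inward-bad outermost crossing clusters equals $0$, exactly $1$, or is at least $2$, respectively, and bound each $\Pb(\overleftarrow\Ec_i)$ by a constant multiple of $\Pb(\overleftarrow\Ac_D)$.

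The case $\overleftarrow\Ec_0$ is handled by an FKG argument, mirroring Lemma~\ref{lem:E0}. The event $\overleftarrow\Ec_0$ is decreasing in the sub-soup of $\Lc_D$ formed by loops intersecting the annulus $A_{l/2,\,0.9l}$, since adding such loops can only turn inward-good clusters inward-bad. Intersect $\overleftarrow\Ec_0$ with an auxiliary decreasing event $G^\leftarrow$ asserting that all clusters of frontiers in this sub-soup intersecting $B_{0.9l}$ have diameter below $l/1000$; by Lemma~\ref{lem:cluster-size-2} (applied at scale $0.9l$) it has uniformly positive probability. On $\overleftarrow\Ec_0\cap G^\leftarrow$ the inward locality condition holds: inward-goodness excludes dragging a crossing cluster into $B_{l/2}$ via any single inward crossing loop, and $G^\leftarrow$ excludes the creation of a chain of short loops reaching from $\partial B_{0.9l}$ down to $\partial B_{l/2}$. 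Applying Lemma~\ref{lem:FKG-RWLS} to $\overleftarrow\Ec_0$ and $G^\leftarrow$ yields $\Pb(\overleftarrow\Ec_0)\lesssim\Pb(\overleftarrow\Ec_0\cap G^\leftarrow)\le\Pb(\overleftarrow\Ac_D)$.

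For $\overleftarrow\Ec_1$ and $\overleftarrow\Ec_2$, I follow the two-step strategy of Lemmas~\ref{lem:Palm}, \ref{lem:E1} and~\ref{lem:E2}. First, Palm's formula (Lemma~\ref{lem:Palm formula}) reduces to sub-events with exactly $i$ inward crossing loops; the multiplicative factor $e^{-\alpha\nu(\cl^\leftarrow)}$ is bounded below uniformly, since the reversed version of Lemma~\ref{lem:frontier-loops} (itself a direct consequence of Lemma~\ref{lem:disconnect-loops-2} applied to $A_{l/2,\,0.9l}$) shows that the probability of there being no inward crossing loop is uniformly bounded below. Second, perform inward surgery on each inward crossing loop $\gamma$ using the reversed separation lemma Proposition~\ref{prop:inv-sep}. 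The Markov decomposition mirrors the one in Lemma~\ref{lem:E1}: first expose the loops of $\Lc_D$ fully outside $B_{0.8l}$ (the analog of $\Lc_{1.2d}$ in the outward proof); along each $\gamma$, identify an excursion $\eta$ in $B_{0.8l}^c$ meeting the outer portion of the containing cluster; attach two random walks $\xi_1,\xi_2$ starting at the endpoints of $\eta$ on $\partial B_{0.8l}$ and going inward to $\partial B_{0.65l}$ (conditioned, when necessary, to avoid previously exposed outer cluster pieces); and leave a closing path $\omega$ that originally dipped into $B_{l/2}$. Applying Proposition~\ref{prop:inv-sep} with one packet of two walks for $\overleftarrow\Ec_1$ and with two packets for $\overleftarrow\Ec_2$ ensures that the endpoints of $\xi_1,\xi_2$ on $\partial B_{0.65l}$ are macroscopically separated with uniformly positive conditional probability. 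The closing path $\omega$ can then be refreshed by a short path $\omega'$ staying in a tubular neighborhood of an arc of $\partial B_{0.65l}$ of width $l/100$, which is contained in $B_{l/2}^c$, so that the refreshed loop $\gamma'$ avoids $B_{l/2}$. The mass of admissible $\omega'$ is uniformly bounded below by Lemma~\ref{lem:green_sausage}, while the mass of the discarded $\omega$ is uniformly bounded above by the reversed analog of Lemma~\ref{lem:non-disconnecting paths-1}. Reassembling via Palm's formula as in Lemmas~\ref{lem:E1} and~\ref{lem:E2} then gives $\Pb(\overleftarrow\Ec_i)\lesssim\Pb(\overleftarrow\Ac_D)$ for $i=1,2$, completing the proof.

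The main obstacle will be the inward surgery step: setting up the Markov decomposition of each inward crossing loop in a way compatible with Proposition~\ref{prop:inv-sep}, leveraging the non-disconnection clause in the definition of $\cl^\leftarrow$ so that the outer part of the containing cluster survives the refresh, and arranging that the refreshed loops have multiplicity $1$ so that $\nu=\mu_0$ can be used in the Palm's formula exchange. Each of these requirements is the exact mirror of a step in Section~\ref{sec:locality}, and is handled by the reversed analogs of the same a-priori estimates together with Proposition~\ref{prop:inv-sep}.
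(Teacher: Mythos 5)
Your proposal follows the same route as the paper: decompose according to the number of clusters that rely on an inward crossing loop, reduce to a fixed number of such loops via Palm's formula, perform an inward Markovian decomposition of each crossing loop into an excursion in the outer region plus two inward random walks and a closing path, apply the reversed separation lemma at an intermediate scale, and replace the closing path by one confined to a sausage of an arc lying outside $B_{l/2}$; the FKG argument for the zero-bad-cluster case also matches the paper's Lemma~\ref{lem:E0} analogue. One small correction: for $\overleftarrow\Ec_1$ you cite Proposition~\ref{prop:inv-sep}, but that result concerns two packets with mutual non-intersection; the one-packet case (a single crossing loop that must avoid disconnecting the remaining good cluster) requires the reversed analogue of Proposition~\ref{prop:sep-disc} (non-disconnection for one packet), which the paper points out holds by the same argument, and one should also not forget the additional case split of Lemma~\ref{lem:E2} (whether the non-bad crossing cluster already exists without crossing loops), which your sketch leaves implicit just as the paper's sketch does.
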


\begin{proof}[Proof of Proposition~\ref{prop:in-locality}]
	We give a sketch of the proof, since it is similar to that of Proposition~\ref{prop:locality}, except that we have to follow an inward exploration procedure instead. Hence, we only emphasize the main differences. Throughout the proof, we assume $z=0$, and we consider as crossing loops the elements of $\cl := \cl_{0.6l,l}$.
	By examining the number of crossing loops used to fulfill the arm event, we get three cases as before (see the paragraph preceding \eqref{eq:E123}), which are, respectively, $0$, $1$, or $2$ crossing loop(s), and we use the corresponding notation $\Ec_i$, for $i=0,1,2$. Then, it is easy to verify that Lemma~\ref{lem:Palm} also holds in this setting (with an obvious change of definition for $\bar\Ec_i$ as well).
	
	First, $\Pb(\Ec_0)$ can be upper bounded by  $C\,\Pb(\overleftarrow\Ac_D(l,d))$ as in Lemma~\ref{lem:E0}, through an application of the FKG inequality.
	
	As for $\bar\Ec_2$, we can use an inward Markovian decomposition, similar to Step 2 in the proof of Lemma~\ref{lem:E1}. More precisely, we first sample $\Lc_{D\setminus B_{0.9l}}$, to get
	$$\Lambda_0 := \Lambda(\partial B_d, \Lc_{D\setminus B_{0.9l}}),$$
	with the requirement that $\Lambda_0 \subseteq B_{l}^c$. Then, we decompose each crossing loop $\gamma_i\in\cl$ as
	$$\gamma_i = \Uc \left( \eta_i\oplus \xi^i_2\oplus \omega_i\oplus [\xi^i_1]^R \right),$$
	where $\eta_i$ is an excursion in $D\setminus \mathring{B}_{0.9l}$ with starting and ending points along $\partial B_{0.9l}$, $\xi^i_1$ (resp.\ $\xi^i_2$) is a random walk from the starting (resp.\ ending) point of $\eta_i$ to its first visit of $\partial B_{0.8l}$, which avoids $\Lambda_0$ (resp.\ $D^c$), and $\omega_i$ is the remaining part.
	
	Next, we use the reversed separation lemma, Proposition~\ref{prop:inv-sep}, with $j=k=2$ (i.e. two packets, of two random walks each), to conclude that $\xi^1_1\cup\xi^1_2$ and $\xi^2_1\cup\xi^2_2$ are well-separated at the scale $0.8l$ within the RWLS, with the initial configuration inside $D\setminus \mathring{B}_{0.9l}$. On such a well-separation event, with positive probability, we can replace $\omega_i$ by a path $\omega'_i$ that stays in a local region around the ending points of $\xi^i_1$ and $\xi^i_2$, which produces $\gamma'_i$ as before, such that the desired arm event $\overleftarrow\Ac_{\Lc_D\uplus\{\gamma'_1,\gamma'_2\}}(l,d)$ occurs. Hence, we can get the inward version of Lemma~\ref{lem:E1} in this way, from the same arguments as in the proof of Lemma~\ref{lem:E1}.
	
	Finally, the same bound for $\bar\Ec_1$, i.e. an inward version of Lemma~\ref{lem:E2}, can be obtained in a similar way as that result. More specifically, we use an inward exploration, and we then apply the reversed separation lemma, now for one packet of two random walks (see Proposition~\ref{prop:sep-disc}, and the paragraph below it). Since only simple adaptations are required, we omit the details. This concludes the proof of Proposition~\ref{prop:in-locality}.
\end{proof}

\subsection{Other two-arm and four-arm events}\label{subsec:other_arm}

In the companion paper \cite{GNQ2024b}, our proofs also involve two-arm and four-arm events in the RWLS near the boundary of a domain (corresponding to the situation when one or two big clusters of loops approach each other at a vertex close to that boundary). We thus have to define the boundary two-arm and four-arm events. For completeness, we also define the interior two-arm event, even though it is not used in \cite{GNQ2024b}.

\begin{definition} \label{def:2n arm event for rwls}
	Let $1 \leq l<d$ and $D\subseteq \Zb^2$. For a RWLS configuration $\Lc_D$, the \emph{interior two-arm event} in the annulus $A_{l,d}$, \emph{boundary two-arm event} in the annulus $A^+_{l,d}$ and the \emph{boundary four-arm event} in the annulus $A^+_{l,d}$, are respectively defined as
\begin{itemize}
\item $\Ac^{2}_{D}(l,d) := \{$there is at least one outermost cluster in $\Lc_D$ whose frontier crosses $A_{l,d}\}$
\item $\Ac^{2,+}_{D}(l,d) := \{$there is at least one outermost cluster in $\Lc_{D\cap \Hb}$ whose frontier crosses $A^+_{l,d}\}$
\item $\Ac^+_{D}(l,d) := \{$there are at least two outermost clusters in $\Lc_{D\cap \Hb}$ crossing $A_{l,d}^+\}$.
\end{itemize}
We also define the \emph{truncated arm event} by $\overrightarrow\Ac^{\boldsymbol{\cdot}}_{D}(l,d) := \Ac^{\boldsymbol{\cdot}}_{D}(l,d) \cap \{ \Lambda(B_l, \Lc_D) \subseteq B_{2d} \}$, 
 the \emph{truncated inward arm event} by
	$\overleftarrow\Ac^{\boldsymbol{\cdot}}_{D}(l,d) := \Ac^{\boldsymbol{\cdot}}_{D}(l,d) \cap\{ \Lambda(\partial B_d, \Lc_D) \subseteq B_{l/2}^c \}$, and the \emph{local arm event} by  $\Ac^{\boldsymbol{\cdot}}_{\mathrm{loc}}(l,d):=\Ac^{\boldsymbol{\cdot}}_{B_{2d}}(l,d)$, where the superscript $\boldsymbol{\cdot}$ can be either ``$2$'' (for interior two-arm),  ``$2,+$'' (for boundary two-arm) or ``$+$'' (for boundary four-arm) on both sides of each equality.
\end{definition}

We summarize the corresponding results for the (interior and boundary) two-arm events and boundary four-arm events.
The idea of the proof is similar to that employed in the interior four-arm event. In fact, for the two-arm events, the proof is much simpler because of the lack of interplay between the two crossing clusters.

\begin{proposition}[Locality]\label{prop:2n-locality}
	There exists a universal constant $C>0$ such that for all $1 \leq l\le d/2$, $D \supseteq B_{2d}$, and any intensity $\alpha\in (0,\half]$,
	\begin{align}\label{eq:2n-local-1}
		&\Pb(\Ac^{\boldsymbol{\cdot}}_D(l,d))\le C\, \Pb( \overrightarrow\Ac^{\boldsymbol{\cdot}}_D(l,d) )\le C\, \Pb(\Ac^{\boldsymbol{\cdot}}_{\loc}(l,d)),\\[2mm]
		\label{eq:2n-local-2}
    	&\Pb(\Ac^{\boldsymbol{\cdot}}_D(l,d))\le C\, \Pb( \overleftarrow\Ac^{\boldsymbol{\cdot}}_D(l,d) ),
    \end{align}
where the superscript $\boldsymbol{\cdot}$ can be either ``$\,2$'', ``$\,2,+$'' or ``$\,+$'', simultaneously in \eqref{eq:2n-local-1} and \eqref{eq:2n-local-2}.
\end{proposition}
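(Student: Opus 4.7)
The plan is to mimic the strategies developed in the proofs of Propositions~\ref{prop:locality} and~\ref{prop:in-locality}, with natural modifications for each of the three types of arm events under consideration. The general framework---splitting according to the number of ``crossing loops'' (elements of $\cl = \cl_{d,1.8d}$ in the outward case, or of $\cl_{0.6l,l}$ in the inward case) used to realize the arm event, combining FKG with Palm's formula, and then performing a Markovian decomposition of those loops and applying a suitable separation result to recombine them inside a smaller annulus---applies here with minor adjustments.

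For the outward inequality \eqref{eq:2n-local-1}, I split the event into the cases $\Ec_0$ (the arm event is realized without any crossing loop), $\Ec_1$ (exactly one crossing loop is needed), and---only in the boundary four-arm case $\Ac^+$---$\Ec_2$ (two are needed). The case $\Ec_0$ is handled verbatim by the FKG argument of Lemma~\ref{lem:E0}, using that each of $\Ac^{2}$, $\Ac^{2,+}$, $\Ac^+$ is decreasing with respect to $\Lc_{1.8d,D}$ together with the estimate from Lemma~\ref{lem:cluster-size-2} (applied in the half-plane for the boundary events, by symmetrizing the loop soup across $\partial\Hb$) to force clusters in the frontier loop soup outside $B_{1.8d}$ to have small diameter. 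Palm's formula then reduces $\Ec_1$ (respectively $\Ec_2$) to the case of a single (respectively exactly two) crossing loop(s) of multiplicity one, exactly as in Lemma~\ref{lem:Palm}.

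The surgery step is simpler for the two-arm events, because only a single crossing cluster needs to be preserved: I decompose the unique crossing loop $\gamma$ into a first excursion $\eta$ in $B_{1.2d}$ that meets $\Lambda(B_l,\Lc_{1.2d})$ (respectively its half-plane analog), two random walks $\xi_1,\xi_2$ from the endpoints of $\eta$ to $\partial B_{1.5d}$, and an outer piece $\omega$; the separation input is Proposition~\ref{prop:sep-jk} with $j=k=1$ applied to $\xi_1$ and $\xi_2$, viewed as the two packets emanating from the two endpoints of $\eta$, which guarantees, with uniformly positive probability and inside a decreasing event of the type considered in Lemma~\ref{lem:E1}, that $\omega$ may be rerouted inside a $(d/100)$-sausage of an arc of $\partial B_{1.5d}$ without spoiling the crossing property of the cluster's frontier. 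For the boundary four-arm event $\Ac^+$, the argument is a half-plane analog of Lemmas~\ref{lem:E1} and~\ref{lem:E2}, using the same two-loop (or one-loop, for $\bar\Ec_1$) decomposition and invoking Proposition~\ref{prop:sep-jk} with $j=k=2$. The half-plane versions of the separation results of Section~\ref{sec:sep} hold because the constructions of Lemmas~\ref{lem:sep-1} and~\ref{lem:sep-2} are purely local inside annuli; restricting to $\Hb$ only forces walks to be killed when they hit $\partial\Hb$, which is compatible with the conditional-walk extension discussed in Remark~\ref{rmk:conditionl RW}.

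The inward inequality \eqref{eq:2n-local-2} is obtained by the same procedure applied to an inward exploration, in direct analogy with Proposition~\ref{prop:in-locality}: crossing loops are now elements of $\cl_{0.6l,l}$, the Markovian decomposition is performed from the outside inward (starting by sampling $\Lc_{D\setminus B_{0.9l}}$ and then decomposing each inward-crossing loop into an excursion from $\partial B_{0.9l}$ plus two random walks reaching $\partial B_{0.8l}$, conditioned to stay in $D$ and, in the boundary case, in $\Hb$), and the separation result used is the reversed one, Proposition~\ref{prop:inv-sep}. The main technical nuisance I expect is the bookkeeping for the boundary cases---in particular, ensuring that excursions, random walks, and the rerouted outer arcs all respect the constraint of remaining in $\Hb$ (and, inwardly, in $D$); this is handled as in Section~\ref{subsec:generalization} by running appropriately conditioned walks and invoking the half-plane / conditional versions of Propositions~\ref{prop:sep-jk} and~\ref{prop:inv-sep} (see Remark~\ref{rmk:conditionl RW}). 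Once these adaptations are in place, the Palm-formula bookkeeping and the surgery estimates are identical to those of Lemmas~\ref{lem:E0}--\ref{lem:E2}, and assemble to give both inequalities of Proposition~\ref{prop:2n-locality}.
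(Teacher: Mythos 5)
The overall architecture you lay out---casework on the number of crossing loops, FKG for the no-crossing-loop case, Palm's formula to reduce to a fixed number of crossing loops, Markovian decomposition and surgery via separation, and the boundary quality $Q_+^{j,k}$ to keep the rerouted pieces inside $\Hb$---matches the paper's strategy. But there is a concrete error in your treatment of the two-arm case, which is where the paper's proof actually differs most from that of Proposition~\ref{prop:locality}.

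You propose to control the recombination step for the interior (and by extension the boundary) two-arm event by ``Proposition~\ref{prop:sep-jk} with $j=k=1$ applied to $\xi_1$ and $\xi_2$, viewed as the two packets emanating from the two endpoints of $\eta$.'' That proposition is a \emph{non-intersection} separation for two packets started from disjoint subsets $V_1$, $V_2$ with $\Lambda(V_1,L_r)\cap V_2=\emptyset$. Here the two walks $\xi_1,\xi_2$ start from the two endpoints of the \emph{same} excursion $\eta$ of the single crossing loop $\gamma$; those endpoints are connected by $\eta$ itself, so the hypothesis of Proposition~\ref{prop:sep-jk} is not satisfied, and more to the point, non-intersection of the two sides is not the condition the two-arm event requires. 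The two-arm event $\Ac^2_D(l,d)$ demands that the \emph{frontier} of one cluster crosses the annulus, i.e.\ that the unique crossing cluster does \emph{not disconnect} $B_l$ from infinity. The relevant separation statement is therefore the one-packet non-disconnection lemma, Proposition~\ref{prop:sep-disc}, applied with $j=2$ (the single packet $(\xi_1,\xi_2)$), $V=\eta$, and $U=B_l$; this is precisely the modification the paper makes to the quality $\bar Q=Q^2(1.5d)\ind_{\Dc}$ from case (b) of Lemma~\ref{lem:E2}, replacing $U=\Cc$ by $U=B_l$. Using Proposition~\ref{prop:sep-jk} instead does not give you what you need, and the hypothesis cannot be arranged in any case. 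The rest of your sketch (the $\Ac^+$ case via a half-plane analog of Lemmas~\ref{lem:E1}--\ref{lem:E2}, and the inward versions via Proposition~\ref{prop:inv-sep} / an inward $Q_+^{j,k}$) is consistent with the paper.
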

In the following, we provide a sketch of proof for the above proposition, which only places emphasis on the potential modifications that need to be made.
\begin{proof}[Sketch of proof]
We start with the interior two-arm events, namely we let the  superscript $\boldsymbol{\cdot}$ be ``$2$''.
For \eqref{eq:2n-local-1}, we can use a similar exploration procedure as before and do surgery for the crossing loop. In this case, we only need to deal with the non-disconnection event (note that we require the frontier of the cluster to cross $A_{l,d}$, hence this cluster does not disconnect $B_l$), instead of the non-intersection event between two clusters. 
Suppose that we need a crossing loop $\gamma$ to fulfill the event $\Ac^{2}_D(l,d)$, otherwise the application of FKG-inequality (see Lemma~\ref{lem:E0}) will give the result. 
It is also safe to assume that there is only one crossing loop by using Palm's formula (see Lemma~\ref{lem:Palm}).
The crossing loop $\gamma$ must intersect the random set $\Lambda(B_l,\Lc_{1.2d})$. One can do a similar excursion-decomposition for $\gamma$ as in \eqref{eq:dec}.
Suppose we have explored the loop soup $\Lc_{1.2d}$ and the excursion part of $\gamma$. The restriction on what remains to be explored is given by the (non-disconnection) event
$\{ \bar Q:=Q^2(1.5d)\ind_{\Dc}>0 \}$, where $\bar Q$ is the same as \eqref{eq:bQ} with only $U=\Cc$ there replaced by $U=B_l$. 
Then, we can use the same surgery to deal with the crossing loop as we did for $\gamma$ in case (b) in the proof of Lemma~\ref{lem:E2}. It follows immediately that 
\[
\Pb( \Ac^{2}_D(l,d) )\lesssim \Pb( \overrightarrow\Ac^{2}_D(l,d) ),
\]
concluding the proof of \eqref{eq:2n-local-1}. 
It is not hard to show \eqref{eq:2n-local-2} by using an inward exploration (see the proof of Proposition~\ref{prop:in-locality} for similar idea).

As for the boundary two-arm or four-arm events, the exploration procedure is the same as in the interior case, while the surgery part needs some additional care. That is, we need to avoid the case when the crossing loop is close to $\partial\Hb$. To fix idea, we take the boundary four-arm event $\Ac^{+}_D(l,d)$ for illustration. Suppose further that we are in the case where exactly two crossing loops are needed, corresponding to $\bar\Ec_2$ in Section~\ref{sec:locality}. Then, we do the same decomposition for the two crossing loops $\gamma_1$ and $\gamma_2$ as in \eqref{eq:dec}, but in $\Hb$. Now, the two sausages $l_1^{d/100}$ and $l_2^{d/100}$ (see Figure~\ref{fig:E2}) need to be in $\Hb$ (or the endpoints $z_1^i$ and $z_2^i$ need to be macroscopically away from $\partial\Hb$ for $i=1,2$), and also need to satisfy the condition in \eqref{eq:l12}. This can be done by using a boundary version of the separation lemma, incorporating the distance between the endpoints of the simple random walks and $\partial\Hb$. More precisely, we use the setup in the paragraph containing Proposition~\ref{prop:sep-jk}, and we assume that all the quantities there are defined in $\Hb$ instead. We define the boundary version of \eqref{eq:Qjk} by 
\[
Q_+^{j,k}(s):=Q^{j,k}(s)\wedge \sup\{ \delta\ge0: \dist(z,\partial\Hb)\ge \delta s \text{ for all } z \in (\Pi^1_s\cup\Pi^2_s) \cap \partial B_s \}.
\]
It is not hard to check that \eqref{eq:sep_twopackets} also holds with $Q_+^{j,k}(R)$ in place of $Q^{j,k}(R)$. Using the boundary quality $Q_+^{j,k}(s)$, and the separation lemma associated with it, in the corresponding parts of the proof of Lemma~\ref{lem:E1}, we are able to conclude the proof of locality for the boundary four-arm events in the case of exactly two crossing loops. The other cases, and the boundary two-arm events, can be analyzed in a similar way, by using the boundary separation lemmas. 
\end{proof}

\section{Quasi-multiplicativity} \label{sec:quasi-mult}

In this final part, we show that the four-arm probabilities are quasi-multiplicative, in Section~\ref{sec:quasi-mult_proof} (Proposition~\ref{lem:quasi}). The proof is based on the locality and reversed locality properties established in Section~\ref{sec:arm-lwrs}, and again, it also relies crucially on the separation lemmas derived in Section~\ref{sec:sep}. More specifically, we obtain an upper bound, which is what is needed for future proofs. As a conclusion to this paper, we then use this result, in Section~\ref{sec:qm_four_arm}, to give an upper bound on the four-arm probability in the RWLS, involving the same exponent as for the BLS (Theorem~\ref{prop:armp}). As we mentioned heuristically in the introduction, this estimate plays a key role in the companion paper \cite{GNQ2024b}. Finally, in Section~\ref{subsec:other_arm2}, we state the analogous upper bounds on (interior and boundary) two-arm events, as well as boundary four-arm events (Theorem~\ref{prop:2n-armp}), which can be proved by following the same roadmap.

\subsection{Statement and proof} \label{sec:quasi-mult_proof}

We now prove that the four-arm event is quasi-multiplicative, which is a classical property shared by many statistical physics models, such as Bernoulli percolation at and near criticality (in dimension two).

\begin{proposition}[Quasi-multiplicativity]\label{lem:quasi}
	For any intensity $\alpha\in (0,\half]$, there exists a constant $c_1(\alpha)>0$ such that for all $z\in \Zb^2$, $1 \leq d_1 \le d_2/2\le d_3/16$, and $D \supseteq B_{2d_3}(z)$,
	\begin{equation}\label{eq:quasi-1}
	\Pb( \Ac_{D}(z;d_1,d_3) ) \le c_1\, \Pb( \Ac_{\loc}(z;d_1,d_2) )\, \Pb( \Ac_{D}(z;4d_2,d_3) ).
	\end{equation}
\end{proposition}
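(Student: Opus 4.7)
The plan is to decouple the four-arm event at the intermediate scale $d_2$, combining the monotonicity of $\Ac_D$ with the locality results of Section~\ref{sec:arm-lwrs} and the separation lemmas of Section~\ref{sec:sep}. First, by Remark~\ref{rmk:mono}, any two outermost clusters of $\Lc_D$ crossing $A_{d_1,d_3}(z)$ also cross $A_{4d_2,d_3}(z)$, so $\Ac_D(z;d_1,d_3) \subseteq \Ac_D(z;4d_2,d_3)$ and
\[
\Pb(\Ac_D(z;d_1,d_3)) = \Pb(\Ac_D(z;4d_2,d_3))\, \Pb\bigl(\Ac_D(z;d_1,d_3) \,\bigm|\, \Ac_D(z;4d_2,d_3)\bigr).
\]
It then suffices to show that the conditional probability on the right is bounded by $c_1\,\Pb(\Ac_{\loc}(z;d_1,d_2))$.

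The next step exposes the outer skeleton and applies separation. Using the Poisson decomposition $\Lc_D = \Lc_{2d_2}(z)\uplus \Lc_{2d_2,D}(z)$ into two independent parts, I would expose $\Lc_{2d_2,D}(z)$ together with the two outer outermost clusters realizing $\Ac_D(z;4d_2,d_3)$, including the excursions of their loops that enter $B_{2d_2}(z)$. By the reversed separation lemma for two packets (Proposition~\ref{prop:inv-sep}), applied inward at scale $2d_2$, the conditional probability that the resulting two packets of excursion endpoints on $\partial B_{2d_2}(z)$ are well-separated (with a macroscopic gap between them) is uniformly bounded below; up to a bounded multiplicative cost, we may therefore assume this well-separation event holds.

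Conditionally on the well-separated outer data, the occurrence of $\Ac_D(z;d_1,d_3)$ requires that the \emph{independent} inner loop soup $\Lc_{2d_2}(z)$, together with the revealed entering excursions, builds two outermost clusters of $\Lc_D$ crossing $A_{d_1,d_2}(z)$. A surgery argument in the spirit of Lemmas~\ref{lem:E1} and~\ref{lem:E2} re-routes the two incoming packets at $\partial B_{2d_2}(z)$ via localized random-walk paths of total mass $O(1)$ (Lemma~\ref{lem:non-disconnecting paths-1}) to produce a genuine local four-arm pattern entirely inside $B_{2d_2}(z)$. Combined with the interior locality property (Proposition~\ref{prop:locality}), this bounds the conditional inner probability by $C\,\Pb(\Ac_{\loc}(z;d_1,d_2))$, concluding the proof of \eqref{eq:quasi-1}.

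\textbf{Main obstacle.} The delicate point is this last step: outer loops from $\Lc_{2d_2,D}(z)$ that penetrate $B_{2d_2}(z)$ may either merge what would be two distinct inner crossing clusters into a single cluster of $\Lc_D$, or surround an inner cluster inside the filling of a larger outer cluster, destroying the outermost property in $\Lc_D$. Both pathologies must be ruled out, at uniformly positive probability cost, by enforcing a decreasing "no bad intruding loop'' event analogous to the event $G$ in~\eqref{eq:G}, which has lower-bounded probability by Lemma~\ref{lem:cluster-size-2} and can be combined with the separation event via the FKG inequality (Lemma~\ref{lem:FKG-RWLS}). The final constant $c_1 = c_1(\alpha)$ is inherited from the separation and surgery constants.
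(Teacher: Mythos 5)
Your opening reformulation
\[
\Pb(\Ac_D(z;d_1,d_3)) = \Pb(\Ac_D(z;4d_2,d_3))\,\Pb\bigl(\Ac_D(z;d_1,d_3)\mid\Ac_D(z;4d_2,d_3)\bigr)
\]
is vacuous: proving the conditional probability is at most $c_1\Pb(\Ac_{\loc}(z;d_1,d_2))$ is \emph{exactly} the target inequality \eqref{eq:quasi-1}, so nothing has been gained by conditioning. Worse, the way you then exploit the conditioning is not sound. You write that, after exposing $\Lc_{2d_2,D}(z)$ and the outer crossing clusters, ``the independent inner loop soup $\Lc_{2d_2}(z)$'' must build the two inner crossings; but the clusters of $\Lc_D$ realizing $\Ac_D(z;4d_2,d_3)$ are built from \emph{all} of $\Lc_D$, so they may contain loops of $\Lc_{2d_2}(z)$. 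Conditioning on $\Ac_D(z;4d_2,d_3)$ therefore entangles the inner and outer pieces, and the inner loop soup is no longer a fresh unconditioned Poisson process. The independence you want to invoke at the end (between a local inner event and an outer event) is precisely what has to be \emph{manufactured}, not assumed.

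What actually makes the proof work in the paper is a rather different mechanism, which your sketch misses. The event $\Ac_D(z;d_1,d_3)$ is first decomposed, without any conditioning, into nine subevents $\Ec_{i,j}$ according to how many crossing clusters at scales $d_2$ and $4d_2$ are ``bad'' (i.e. rely on a loop in $\cl_{d_2,1.8d_2}$ or $\cl_{2.2d_2,4d_2}$). Palm's formula (as in Lemma~\ref{lem:Palm}) then reduces each case to having exactly the minimal number of such crossing loops. The genuinely hard subcase is when a \emph{single} loop $\gamma\in\cl_{d_2,4d_2}$ traverses the whole intermediate annulus and serves both the inner and outer arm events at once; the surgery must \emph{split} that one loop into two disjoint loops $(\gamma_1)'$ and $(\gamma_2)'$, one contributing to $\overrightarrow\Ac_D(d_1,d_2)$ and the other to $\overleftarrow\Ac_D(4d_2,d_3)$. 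To make that cut one needs a simultaneous forward and reversed separation (the two-sided quality $\overleftrightarrow Q$), a two-sided Markovian decomposition of $\gamma$ into inner excursions, outer excursions, and two linking paths $\omega_1,\omega_2$, Lemma~\ref{lem:non-disconnecting paths-1} for the mass of the linking paths, Lemma~\ref{lem:green_sausage} for the re-routed paths, and finally a second application of Palm's formula to reassemble. Only after this does one obtain the product $\Pb(\Ac_{\loc}(d_1,d_2))\,\Pb(\Ac_D(4d_2,d_3))$, because $\overrightarrow\Ac_D(d_1,d_2)\subseteq\Ac_{\loc}(d_1,d_2)$ depends on $\Lc_{2d_2}$ while $\overleftarrow\Ac_D(4d_2,d_3)$ depends on $\Lc_{2d_2,D}$, so the two are genuinely independent. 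Your sketch does gesture at separation, FKG, and a ``no bad intruding loop'' event, which are all real ingredients, but without the case decomposition, the Palm's formula bookkeeping, and the loop-splitting surgery, the argument does not close.
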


Before turning to the proof of Proposition~\ref{lem:quasi}, we want to mention that we expect the corresponding quasi-multiplicativity lower bound in \eqref{eq:quasi-1} to hold true, as is the case for e.g.\ Bernoulli percolation. Establishing it would be important to get the exact exponent for the discrete arm event (not just a lower bound in terms of the exponent for the BLS). However, we are not tackling this question in the present paper, since it seems to require a more detailed analysis for clusters of loops. In any case, it is arguably the upper bound on probabilities which is most relevant for applications, especially for our specific purpose in \cite{GNQ2024b} (connectivity properties of level sets in the RWLS and the GFF).

\begin{proof}
We establish this result through a combination of Propositions~\ref{prop:locality} and~\ref{prop:in-locality}, that is, a surgery on crossing loops in both directions. The constructions are quite similar to those in Section~\ref{sec:arm-lwrs}, although the precise ``combinatorics'' turns out to be somewhat more complicated, and we only sketch the key points. As usual, we assume that $z=0$.

Similarly to the paragraph above \eqref{eq:E123}, we can divide the arm events $\Ac_{D}(d_1,d_2)$ and $\Ac_{D}(4d_2,d_3)$ into disjoint unions of events according to the number of associated ``bad clusters''. More precisely, for $\Ac_{D}(d_1,d_2)$ (resp.\ $\Ac_{D}(4d_2,d_3)$), an associated bad cluster is defined as an outermost cluster across $A_{d_1,d_2}$ (resp.\ $A_{4d_2,d_3}$) which still crosses $A_{d_1,d_2}$ (resp.\ $A_{4d_2,d_3}$) if we remove all the loops in $\cl_{d_2,1.8d_2}$ (resp.\ $\cl_{2.2d_2,4d_2}$) contained in it. Depending on the number of associated bad clusters, we can define three disjoint subevents $\Ec_i(d_1,d_2)$ (resp.\ $\Ec_i(4d_2,d_3)$) of $\Ac_{D}(d_1,d_2)$ (resp.\ $\Ac_{D}(4d_2,d_3)$), for $i=0$ (no bad cluster), $1$ (exactly one bad cluster), $2$ (at least two bad clusters), so that
\begin{align*}
	\Ac_{D}(d_1,d_2) &= \Ec_0(d_1,d_2) \cup \Ec_1(d_1,d_2) \cup \Ec_2(d_1,d_2),\\
	\Ac_{D}(4d_2,d_3) &= \Ec_0(4d_2,d_3) \cup \Ec_1(4d_2,d_3) \cup \Ec_2(4d_2,d_3).
\end{align*}
This then leads us to consider the nine events
\begin{equation}
	\Ec_{i,j}:=\Ec_i(d_1,d_2)\cap \Ec_j(4d_2,d_3), \quad \text{ for all } 0\le i,j\le 2.
\end{equation}  
Since $\Ac_{D}(d_1,d_3)\subseteq \Ac_{D}(d_1,d_2)\cap \Ac_{D}(4d_2,d_3)$ by monotonicity, it follows from the union bound that 
\[
\Pb(\Ac_{D}(d_1,d_3))\le \sum_{0\le i,j\le 2} \Pb( \Ec_{i,j} ).
\]
Hence, we only need to show that for each $(i,j) \in \{0,1,2\}^2$,
\begin{equation}\label{eq:Ecij}
	\Pb(\Ec_{i,j})\lesssim_{\alpha} \Pb( \Ac_{\loc}(d_1,d_2) )\, \Pb( \Ac_{D}(4d_2,d_3) ).
\end{equation}

Each of these nine cases can be addressed by using the same strategy as we applied in the previous section. However, listing and analyzing all the subcases one by one would be quite tedious, so we focus on $\Ec_{2,2}$, which already contains all the potential difficulties.

We first observe that if $\Ec_{2,2}$ occurs for $\Lc_D$, then one can always find at most $4$ loops $\gamma^1_1, \gamma^2_1, \gamma^1_2, \gamma^2_2$ in $\cl_{d_2, 1.8 d_2} \cup \cl_{2.2d_2, 4d_2}$ so that  $\Ec_{2,2}$ also occurs for $\big( \Lc_D \setminus (\cl_{d_2, 1.8 d_2} \cup \cl_{2.2d_2, 4d_2}) \big) \uplus \{\gamma^1_1, \gamma^2_1, \gamma^1_2, \gamma^2_2\}$.
At this point, note that an extra difficulty arises (compared to the proofs of the locality results), because the sets $\{\gamma^1_1, \gamma^2_1\}$ and $\{\gamma^1_2, \gamma^2_2 \}$ can involve the same loop(s) in $\cl_{d_2,1.8d_2} \cap \cl_{2.2d_2,4d_2} = \cl_{d_2,4d_2}$. Note that $\Ec_{2,2}$ is included in the union of the three subevents below.

\begin{enumerate}[(a)]
	\item There exist $4$ loops $\gamma^1_1, \gamma^2_1, \gamma^1_2, \gamma^2_2 \in \Lc_D$ so that $(\gamma^1_1, \gamma^2_1, \gamma^1_2, \gamma^2_2) \in (\cl_{d_2,1.8d_2}) ^2 \times (\cl_{2.2d_2,4d_2}) ^2$,
	and that $\Ec_{2,2}$ also occurs for $\big( \Lc_D \setminus (\cl_{d_2, 1.8 d_2} \cup \cl_{2.2d_2, 4d_2}) \big) \uplus \{\gamma^1_1, \gamma^2_1, \gamma^1_2, \gamma^2_2\}$.
	
	\item There exist $3$ loops $\gamma^1_1, \gamma^1_2, \gamma^2_1 = \gamma^2_2 \in \Lc_D$ so that $(\gamma^1_1, \gamma^1_2, \gamma^2_1 = \gamma^2_2) \in (\cl_{d_2,1.8d_2})  \times (\cl_{2.2d_2,4d_2})  \times \cl_{d_2,4d_2}$,
	and that $\Ec_{2,2}$ also occurs for $\big( \Lc_D \setminus (\cl_{d_2, 1.8 d_2} \cup \cl_{2.2d_2, 4d_2}) \big) \uplus \{\gamma^1_1, \gamma^2_1, \gamma^1_2\}$.
	
	\item  There exist $2$ loops $\gamma^1_1 = \gamma^1_2, \gamma^2_1 = \gamma^2_2\in \Lc_D$ so that  $(\gamma^1_1 = \gamma^1_2, \gamma^2_1 = \gamma^2_2) \in (\cl_{d_2,4d_2})^2$, and that $\Ec_{2,2}$ also occurs for $\big( \Lc_D \setminus (\cl_{d_2, 1.8 d_2} \cup \cl_{2.2d_2, 4d_2}) \big) \uplus \{\gamma^1_1, \gamma^2_1\}$.
\end{enumerate}

Suppose for example that we are in the case (c), to fix ideas, and denote the corresponding subevent by $\Ec^{(c)}_{2,2}$. Let $\bar \Ec^{(c)}_{2,2}$ be the event that $\Ec_{2,2}^{(c)}$ occurs with exactly two loops in $\cl_{d_2,4d_2}$, which are the only two loops in $\Lc_D$ belonging to $\cl_{d_2,1.8d_2}\cup \cl_{2.2d_2,4d_2}$. Using Palm's formula, analogously to the proof of Lemma~\ref{lem:Palm}, we can obtain 
\begin{equation}\label{eq:E22c}
	\Pb( \Ec^{(c)}_{2,2} ) \lesssim \Pb( \bar\Ec^{(c)}_{2,2} ).
\end{equation}

Let $\Ec_{2,2}^*\subseteq \bar\Ec^{(c)}_{2,2}$ be the event that $\bar\Ec^{(c)}_{2,2}$ holds and except for the bad clusters, all the other clusters intersecting $B_{4d_2}$ have diameter smaller than $d_2/10$.
Then, similarly to \eqref{eq:E2*}, we have $\Pb( \bar\Ec^{(c)}_{2,2} ) \lesssim \Pb( \Ec_{2,2}^* )$. Hence, it suffices to work with $\Ec_{2,2}^*$ below.
Note that on $\Ec_{2,2}^*$, the following events both hold:
$$ \Lambda(B_{d_1}, \Lc_D \setminus \cl_{d_2, 1.8 d_2}) \subseteq \mathring{B}_{d_2} \quad \text{and} \quad \Lambda(D\setminus \mathring{B}_{d_3}, \Lc_D \setminus \cl_{2.2 d_2, 4d_2}) \subseteq B_{4d_2}^c.$$

From now on, we consider the set of (very big) crossing loops $\cl:=\cl_{d_2,4d_2}$, and we denote by $\nu_{\cl}$ and $\mu_{\cl}$ the associated measures. Similarly to \eqref{eq:E1A}, we have 
\begin{equation}\label{eq:E22}
	\Pb( \Ec_{2,2}^* )= \frac 12 \alpha^2 \, \Pb\times\nu_{\cl}\times\nu_{\cl} (\Ec_{2,2}^*)\le \alpha^2 \, \Pb\times\mu_{\cl}\times\mu_{\cl} (\Ec_{2,2}^*).
\end{equation}

\begin{figure}[t]
	\centering
	\subfigure{\includegraphics[width=.47\textwidth]{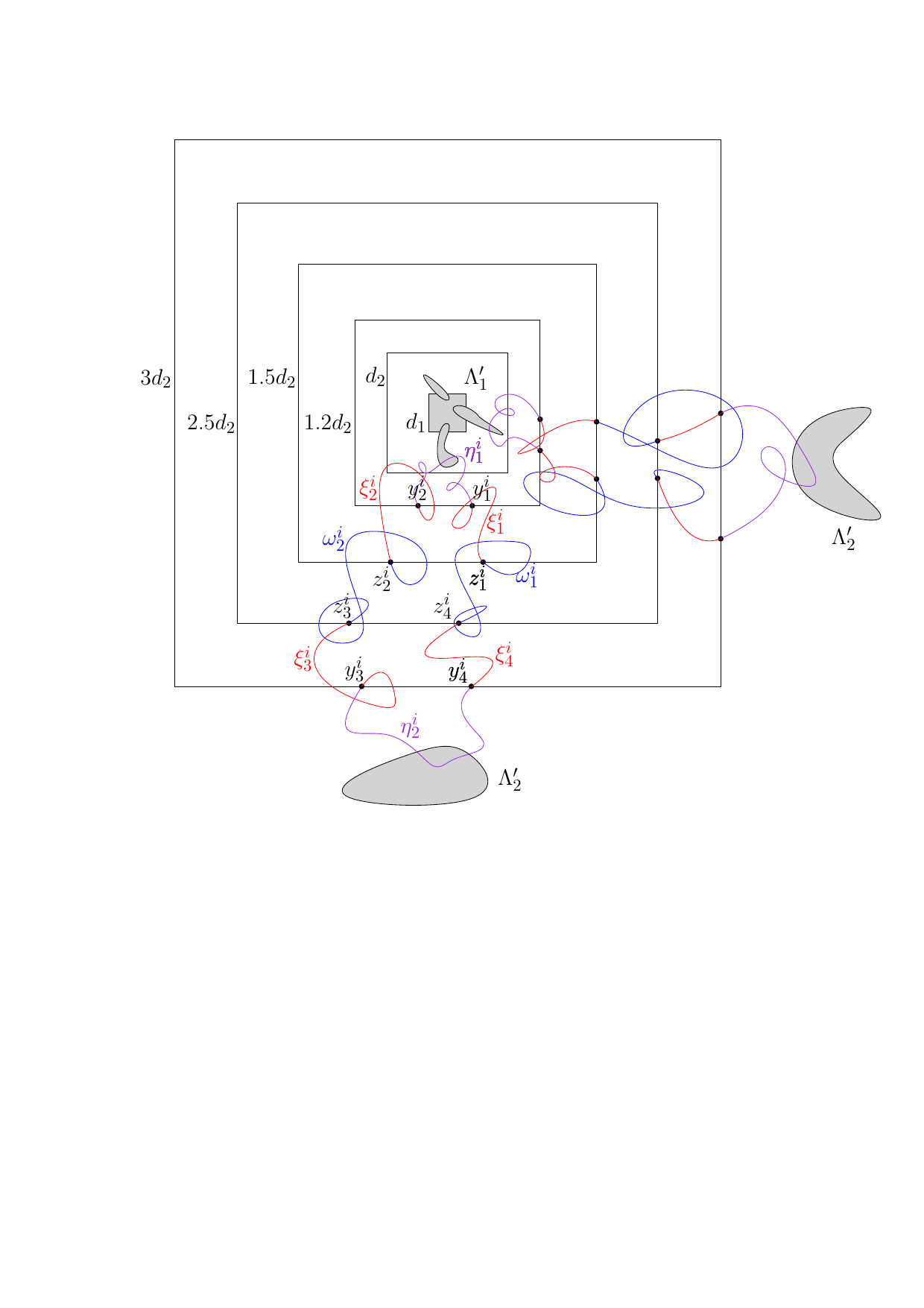}}
	\hspace{0.5cm}
	\subfigure{\includegraphics[width=.47\textwidth]{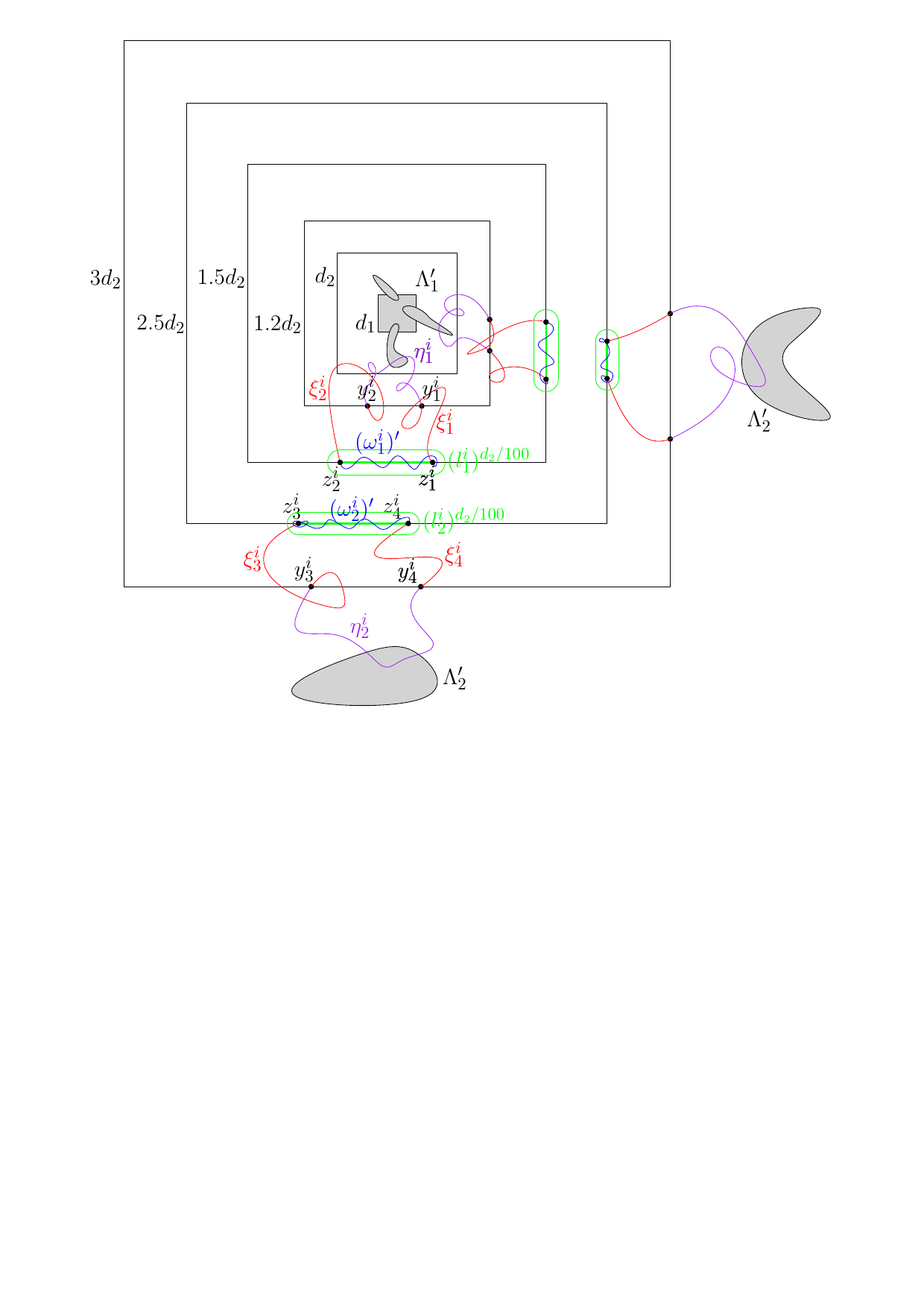}}
	\caption{This figure depicts the proof of Proposition~\ref{lem:quasi}, in the particular case of the subevent $\Ec^{*}_{2,2}$. \emph{Left:} We use the decomposition \eqref{eq:quasi-decom}, where the excursions $\eta^i_1$ and $\eta^i_2$ are drawn in purple, the walks $\xi^i_j$, for $1 \leq j \leq 4$, are in red, and the remaining parts $\omega^i_1$ and $\omega^i_2$ are in blue. The subset $\Lambda'_1$ consists of the inner gray parts, inside $B_{d_2}$, while the outer gray parts represent $\Lambda'_2$. \emph{Right:} We reconnect the endpoints of $\omega^i_1$ and $\omega^i_2$ within the local (green) regions $(l^i_j)^{d_2/100}$, for $j=1,2$, to produce two non-crossing loops $(\gamma^i_1)'$ and $(\gamma^i_2)'$. The corresponding arcs $l^i_j$ are shown in thicker line.} 
	\label{fig:quasi}
\end{figure}

Let $\gamma^1$ and $\gamma^2$ be the two crossing loops. In order to show that \eqref{eq:Ecij} holds for $\Ec_{2,2}^*$, the basic idea is to split each of them into two loops which are not in $\cl_{d_2,1.8d_2}\cup \cl_{2.2d_2,4d_2}$, to produce the occurrence of both $\Ac_{\loc}(d_1,d_2)$ and $\overleftarrow\Ac_{D}(4d_2,d_3)$. Note that these two events are independent by definition. Indeed, they depend on $\Lc_{2 d_2}$ and $\Lc_{2 d_2,D}$, respectively. For this purpose, we explore the configuration from both directions. More precisely, we consider the sampling procedure below (see Figure~\ref{fig:quasi} for an illustration).
\begin{enumerate}[(1)]
	\item Sample the loop soup $\Lc_{1.2d_2}$, and sample the excursions $\eta^1_1$ and $\eta^2_1$ according to $\mu^{\exc}_{B_{1.2d_2}}$, independently, in such a way that the following event holds, with $\Lambda'_1 := \Lambda(B_{d_1},\Lc_{1.2d_2})$:
	\[
	\Kc_1:=\{
	\Lambda'_1\subseteq \mathring{B}_{d_2}, \,\eta^1_1\cap \Lambda'_1\neq\emptyset, \, \eta^2_1\cap \Lambda'_1\neq\emptyset, \text{ and } \Lambda(\eta^1_1,\Lc_{1.2d_2}) \cap \eta^2_1 = \emptyset\}.
	\]
	For $i=1,2$, we denote the starting and ending points of $\eta^i_1$ by $y^i_1$ and $y^i_2$ (resp.).
	
	\item Let $\xi^i_1$ (resp.\ $\xi^i_2$) be the random walk starting from $y^i_1$ (resp.\ $y^i_2$), and stopped upon reaching $\partial B_{1.5d_2}$. Moreover, we further require $\xi^i_1$ to avoid $\Lambda'_1$.
	
	\item Sample the loop soup $\Lc_{D\setminus B_{3d_2}}$, and sample the excursions $\eta^1_2$ and $\eta^2_2$ according to $\mu^{\exc}_{D\setminus \mathring{B}_{3d_2}}$, with endpoints on $\partial B_{3d_2}$, independently, such that the following event holds, with $\Lambda'_2 := \Lambda(D\setminus \mathring{B}_{d_3},\Lc_{D\setminus B_{3d_2}})$:
	\[
	\Kc_2:=\{ \Lambda'_2\subseteq B^c_{4d_2}, \, \eta^1_2\cap \Lambda'_2\neq\emptyset, \, \eta^2_2\cap \Lambda'_2\neq\emptyset, \text{ and } \Lambda(\eta^1_2,\Lc_{D\setminus B_{3d_2}})\cap \eta^2_2=\emptyset \}.
	\]
	Let $y^i_3$ and $y^i_4$ be the starting and ending points (resp.) of $\eta^i_2$, for $i=1,2$.
	
	\item Let $\xi^i_3$ (resp.\ $\xi^i_4$) be the random walk starting from $y^i_3$ (resp.\ $y^i_4$), and stopped upon reaching $\partial B_{2.5d_2}$. Furthermore, we require $\xi^i_3$ to avoid $\Lambda'_2$.
	
	\item Sample the loop soup
	$$\Lc_{\mathrm{int}}:=\Lc_{D}^{B_{3d_2}\setminus B_{1.2d_2}}$$
	(the loops in $\Lc_D$ intersecting $B_{3d_2}\setminus B_{1.2d_2}$), and restrict to the event that
	$$\overleftrightarrow Q:= \overrightarrow Q^{2,2}(1.5d_2) \wedge \overleftarrow Q^{2,2}(2.5d_2) \ind_\Dc>0.$$
	Here,
	\begin{enumerate}[(i)]
	\item $\overrightarrow Q^{2,2}(1.5d_2)$ is the (forward) quality at the scale $1.5d_2$, as in \eqref{eq:Qjk}, induced by the two packets of two random walks $(\xi^1_1,\xi^1_2)$ and $(\xi^2_1,\xi^2_2)$ inside the frontier loop soup $\Lc_{\mathrm{int}}^b$, with initial configuration $(\Lc_{1.2d_2},\eta^1_1,\eta^2_1,\bar y^1,\bar y^2)$, for $\bar y^i=(y^i_1,y^i_2)$ ($i=1,2$), $r=1.2d_2$, and $R=1.5d_2$ (see \eqref{eq:Qjk}),
	
	\item $\overleftarrow Q^{2,2}(2.5d_2)$ is the (reversed) quality at the scale $2.5d_2$, as in \eqref{eq:Qrjk}, induced by the two packets of two random walks $(\xi^1_3,\xi^1_4)$ and $(\xi^2_3,\xi^2_4)$ inside the frontier loop soup $\Lc_{\mathrm{int}}^b$, with initial configuration $(\Lc_{D\setminus B_{3d_2}},\eta^1_2,\eta^2_2,\underline y^1,\underline y^2)$, for $\underline y^i=(y^i_3,y^i_4)$ ($i=1,2$), $r=2.5d_2$, and $R=3d_2$ (see \eqref{eq:Qrjk}),
	
	\item and the event $\Dc$ is given by
$$\Dc:=\{ \text{no loop in $\Lc_{\mathrm{int}}$ intersects $\Lambda'_1\cup\Lambda'_2$, and no cluster in $\Lc_{\mathrm{int}}$ disconnects $B_{1.2d_2}$ from $\partial B_{3d_2}$} \},$$
which is decreasing in $\Lc_{\mathrm{int}}$.
	\end{enumerate}
	
	\item Sample $\omega^i_1$ and $\omega^i_2$ according to $\mu^{z_4^i,z_1^i}$ and $\mu^{z_2^i,z_3^i}$, respectively, where $z_j^i$ is the ending point of $\xi^i_j$, for $j=1, \ldots, 4$, and restrict to the case that each of $\omega^i_1$ and $\omega^i_2$ does not disconnect $B_{1.5d_2}$ from $\partial B_{2.5d_2}$. 
\end{enumerate}
We have thus decomposed the two crossing loops $\gamma^1$ and $\gamma^2$ in $\Ec_{2,2}^*$ as follows (see Figure~\ref{fig:quasi}):
\begin{equation}\label{eq:quasi-decom}
	\gamma^i=\Uc\left(\eta^i_1\oplus \xi^i_2\oplus \omega^i_2\oplus [\xi^i_3]^R \oplus \eta^i_2\oplus \xi^i_4\oplus \omega^i_1 \oplus [\xi^i_1]^R\right), \quad \text{ for each } i=1,2.
\end{equation} 

By following a similar strategy as for the separation and reversed separation results, one can check that separation also holds for the ``two-sided'' quality $\overleftrightarrow Q$, namely,
\begin{align}\notag
	&\ind_{\Kc_1\cap \Kc_2}\,\wt\Pb( \overleftrightarrow Q >0 \mid \Lc_{1.2d_2}, \Lc_{D\setminus B_{3d_2}}, \eta^1_1,\eta^2_1,\eta^1_2,\eta^2_2 ) \\ 
	&\hspace{3cm}\lesssim \ind_{\Kc_1\cap \Kc_2}\,\wt\Pb( \overleftrightarrow Q >1/40 \mid \Lc_{1.2d_2}, \Lc_{D\setminus B_{3d_2}}, \eta^1_1,\eta^2_1,\eta^1_2,\eta^2_2 ), \label{eq:tsQ}
\end{align}
where $\wt\Pb$ is the joint law (product measure) of $\Lc_{\mathrm{int}}$ and $(\xi^i_j)_{i=1,2, 1\le j\le 4}$.

Lemma~\ref{lem:non-disconnecting paths-1} can be adapted here to show that the total mass of $\omega^i_1$ and $\omega^i_2$ is uniformly bounded. Hence, similarly to \eqref{eq:E1-1},
\begin{align}\notag
	\Pb\times\mu_{\cl}\times\mu_{\cl} (\Ec_{2,2}^*) & \lesssim \Pb_{1.2d_2}\times\Pb_{D\setminus B_{3d_2}}\times(\mu^{\exc}_{B_{1.2d_2}})^2\times(\mu^{\exc}_{D\setminus \mathring{B}_{3d_2}})^2 \\ \label{eq:E-decom}
	& \hspace{2.5cm} \Big[\ind_{\Kc_1\cap \Kc_2}\,\wt\Pb( \overleftrightarrow Q >1/40 \mid \Lc_{1.2d_2}, \Lc_{D\setminus B_{3d_2}}, \eta^1_1,\eta^2_1,\eta^1_2,\eta^2_2 )\Big],
\end{align}
where $\Pb_{D\setminus B_{3d_2}}$ denotes the law of $\Lc_{D\setminus B_{3d_2}}$.

Let $l^1_1$ and $l^2_1$ be the arcs along $\partial B_{1.5d_2}$ joining $z^1_1$ and $z^1_2$, such that $l^1_1\cap l^2_1=\emptyset$, and similarly, consider $l^1_2$ and $l^2_2$ along $\partial B_{2.5d_2}$ joining $z^1_3$ and $z^1_4$, with $l^1_2\cap l^2_2=\emptyset$. 
For $1\le i,j\le 2$, sample $(\omega^i_j)'$ according to $\mu^{z^i_2,z^i_1}$ if $j=1$, or $\mu^{z^i_4,z^i_3}$ if $j=2$, such that $(\omega^i_j)'\subseteq (l^i_j)^{d_2/100}$.
Replacing $\omega^i_j$ by $(\omega^i_j)'$ in
\eqref{eq:quasi-decom}, we obtain two disjoint non-crossing loops $(\gamma^i_1)'$ and $(\gamma^i_2)'$, with
\begin{equation}\label{eq:gamma12}
	(\gamma^i_1)' := \Uc\left( \eta^i_1\oplus \xi^i_2\oplus (\omega^i_1)'\oplus [\xi^i_1]^R \right) \quad \text{and} \quad 
	(\gamma^i_2)' := \Uc\left(\eta^i_2\oplus \xi^i_4\oplus (\omega^i_2)' \oplus [\xi^i_3]^R\right).
\end{equation}

Let $G$ be the event that all clusters in $\Lc_{\mathrm{int}}$ have a diameter smaller than $d/1000$. Then, we observe that
\begin{equation}\label{eq:Q-inclusion}
	\Kc_1\cap \Kc_2\cap \{\overleftrightarrow Q >1/40\}\cap G \subseteq \overrightarrow\Ac_{\Lc_{D}\uplus\{ \gamma^1_1,\gamma^2_1 \}}(d_1,d_2) \cap \overleftarrow\Ac_{\Lc_{D}\uplus\{ \gamma^1_2,\gamma^2_2 \}}(4d_2,d_3).
\end{equation}
Let $\overline\Upsilon$ be the collection of all quintuples $(L,\gamma^1_1,\gamma^2_1,\gamma^1_2,\gamma^2_2)$ such that together, they satisfy the event on the right-hand side of \eqref{eq:Q-inclusion}, with $L$ in place of $\Lc_D$, and moreover, $\Lambda(B_{d_1},L)\subseteq \mathring{B}_{d_2}$, $\Lambda(D\setminus \mathring{B}_{d_3},L)\subseteq B_{4d_2}^c$, and $\gamma^1_1$ and $\gamma^2_1$ (resp.\ $\gamma^1_2$ and $\gamma^2_2$) are the only two loops in this collection that cross $A_{d_2,1.5d_2}$ (resp.\  $A_{2.5d_2,4d_2}$), and they have multiplicity $1$.   
Hence, by \eqref{eq:Q-inclusion},
\begin{align}\notag
	\Pb\times \mu_0^4 (\overline\Upsilon) & \gtrsim \Pb_{1.2d_2}\times\Pb_{D\setminus B_{3d_2}}\times(\mu^{\exc}_{B_{1.2d_2}})^2\times(\mu^{\exc}_{D\setminus \mathring{B}_{3d_2}})^2 \\ \label{eq:mu04}
	& \hspace{2.5cm} \Big[\ind_{\Kc_1\cap \Kc_2}\,\wt\Pb( \overleftrightarrow Q >1/40, \, G \mid \Lc_{1.2d_2}, \Lc_{D\setminus B_{3d_2}}, \eta^1_1,\eta^2_1,\eta^1_2,\eta^2_2 )\Big].
\end{align}
This, combined with \eqref{eq:E-decom}, and the FKG inequality for the decreasing events $\{\overleftrightarrow Q >1/40\}$ and $G$, shows that 
\begin{equation}\label{eq:E-U}
	\Pb\times\mu_{\cl}\times\mu_{\cl} (\Ec_{2,2}^*) \lesssim \Pb\times \mu_0^4 (\overline\Upsilon).
\end{equation}
Using Palm's formula again, we have
\begin{equation}\label{eq:Aarrows}
	\Pb( \overrightarrow\Ac_{D}(d_1,d_2) \cap \overleftarrow\Ac_{D}(4d_2,d_3) ) \ge \frac{1}{4!} \alpha^4\,  \Pb\times \mu_0^4 (\overline\Upsilon).
\end{equation}
Combining \eqref{eq:Aarrows}, \eqref{eq:E-U} and \eqref{eq:E22}, we obtain that 
\begin{equation}\label{eq:Aarrows2}
		\Pb( \Ec_{2,2}^* ) \lesssim \alpha^{-2}\, \Pb( \overrightarrow\Ac_{D}(d_1,d_2)\cap \overleftarrow\Ac_{D}(4d_2,d_3) ). 
\end{equation}
Noting that $\overrightarrow\Ac_{D}(d_1,d_2)\subseteq \Ac_{\loc}(d_1,d_2)$ and $\overleftarrow\Ac_{D}(4d_2,d_3)\subseteq \Ac_{D}(4d_2,d_3)$, and that $\Ac_{\loc}(d_1,d_2)$ and $\overleftarrow\Ac_{D}(4d_2,d_3)$ are independent by definition, we deduce from \eqref{eq:Aarrows2} (and \eqref{eq:E22c}) that 
\[
\Pb( \Ec^{(c)}_{2,2} ) \lesssim
\Pb( \Ec_{2,2}^* ) \lesssim \alpha^{-2}\, \Pb( \Ac_{\loc}(d_1,d_2) )\, \Pb(\Ac_{D}(4d_2,d_3) ).
\]

Therefore, we have proved the desired result \eqref{eq:Ecij}, for the particular subevent $\Ec^{(c)}_{2,2}$ of $\Ec_{2,2}$ in the left-hand side. As we mentioned in the beginning, the other cases can be handled in a similar way. We hope that after this detailed proof for $\Ec^{(c)}_{2,2}$, the reader is now convinced that in each of the subcases, a well-suited surgery argument, adapted to the particular configuration of loops, can be employed. This completes the proof of the proposition.
\end{proof}

\subsection{Consequence: four-arm exponent in the RWLS} \label{sec:qm_four_arm}

Finally, as an application of the quasi-multiplicativity property, we relate the discrete four-arm event to the corresponding event in the continuum. This enables us to upper bound the four-arm probabilities in terms of the arm exponents computed in \cite{GNQ2024c} (see Section~\ref{sec:exp_cle}).

\begin{theorem} \label{prop:armp}
For any $\alpha\in(0,1/2]$ and $\eps>0$, there exists a constant $c_2(\alpha,\eps)>0$ such that for all $1 \leq d_1<d_2$ and $D \supseteq B_{2d_2}$,
	\begin{equation}\label{eq:armp}
	\Pb( \Ac_{D}(d_1,d_2) ) \le c_2\, (d_2/d_1)^{-\xi(\alpha)+\eps}.
	\end{equation}
\end{theorem}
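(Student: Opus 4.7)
The plan is to iterate the quasi-multiplicativity estimate of Proposition~\ref{lem:quasi} along a geometric sequence of scales, reducing $\Pb(\Ac_D(d_1,d_2))$ to a product of local arm probabilities, each of which will then be controlled by comparison with the Brownian loop soup. Concretely, I would fix $\eps>0$, choose a large geometric ratio $K=K(\alpha,\eps)\ge 4$ to be tuned at the end, and set $r_k:=(4K)^k d_1$ for $0\le k\le N$ with $N:=\lfloor\log_{4K}(d_2/(4d_1))\rfloor$. For every $k<N$ the triple $(r_k, Kr_k, d_2)$ satisfies $Kr_k\le d_2/16$, so Proposition~\ref{lem:quasi} applies, and iterating it yields
\begin{equation*}
	\Pb(\Ac_D(d_1,d_2))\ \le\ c_1^N \prod_{k=0}^{N-1}\Pb(\Ac_{\loc}(r_k, Kr_k)),
\end{equation*}
after trivially bounding the leftover factor $\Pb(\Ac_D(r_N,d_2))$ by $1$.

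The heart of the proof will be a per-scale bound of the form $\Pb(\Ac_{\loc}(r,Kr)) \le K^{-\xi(\alpha)+\eps/2}$ for $K$ large (depending on $\alpha,\eps$) and $r\ge r_0(K,\eps)$. To see this, I would rescale the RWLS $\Lc_{B_{2Kr}}$ by $(2Kr)^{-1}$: the domain $B_{2Kr}$ becomes $\Bc_1$ and the annulus $A_{r,Kr}$ rescales to $\Bc_{1/2}\setminus\Bc_{1/(2K)}$. On $\Ac_{\loc}(r,Kr)$, every crossing outermost cluster has rescaled diameter at least $1/4$, so Theorem~\ref{thm:convergence} (with $\delta=1/4$) gives convergence in law (for $d_H^*$) of these macroscopic cluster frontiers toward the corresponding $\mathrm{CLE}_\kappa$ loops in $\Bc_1$. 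The event ``at least two outermost clusters cross $\Bc_{1/2}\setminus\Bc_{1/(2K)}$'' should be a continuity set of the limit law, since $\mathrm{CLE}_\kappa$ almost surely consists of disjoint simple loops that avoid the deterministic circles $\partial\Bc_{1/(2K)}$ and $\partial\Bc_{1/2}$. Hence $\limsup_{r\to\infty}\Pb(\Ac_{\loc}(r,Kr))\le \Pb(\wt\Ac_\alpha(1/(2K),1/2))$, and Proposition~\ref{prop:arm-exponent} bounds the right-hand side by $K^{-\xi(\alpha)+\eps/4}$ for $K$ large enough.

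For the finitely many indices $k$ with $r_k<r_0$ I would bound $\Pb(\Ac_{\loc}(r_k, Kr_k))\le 1$ trivially; since $r_0$ depends only on $K$ and $\eps$, this costs a constant factor $C(K,\eps)$. Plugging the per-scale bound into the iteration then gives
\begin{equation*}
	\Pb(\Ac_D(d_1,d_2))\ \le\ C(K,\eps)\,\bigl(c_1 K^{-\xi(\alpha)+\eps/2}\bigr)^{N}\ \le\ C'(K,\eps)\,(d_2/d_1)^{\theta(K)},
\end{equation*}
with $\theta(K):=(\log c_1+(-\xi(\alpha)+\eps/2)\log K)/\log(4K)\to -\xi(\alpha)+\eps/2$ as $K\to\infty$. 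Choosing $K$ large enough so that $\theta(K)\le -\xi(\alpha)+\eps$ and setting $c_2:=C'(K,\eps)$ will conclude. The hard part will be the per-scale comparison: Theorem~\ref{thm:convergence} only provides convergence in law of macroscopic cluster frontiers, so one must verify carefully that the discrete arm event is a continuity set of the BLS law (using that $\mathrm{CLE}_\kappa$ loops a.s.\ avoid the fixed circles), and that the threshold $r_0$ depends only on $K$ and $\eps$, so that the small-scale contribution remains a bounded multiplicative factor.
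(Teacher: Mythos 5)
Your proposal is correct and follows essentially the same approach as the paper: reduce to local arm events via Proposition~\ref{prop:locality}, iterate the quasi-multiplicativity of Proposition~\ref{lem:quasi} along a geometric sequence of scales, then control each local factor by invoking the convergence of the RWLS to the BLS/CLE (Theorem~\ref{thm:convergence}) together with the continuum arm exponent (Proposition~\ref{prop:arm-exponent}). The only differences are bookkeeping — you anchor the geometric sequence at $d_1$ and fold the constant $c_1$ into the $\theta(K)\to -\xi+\eps/2$ limit, whereas the paper fixes scales at powers of $4R$ and absorbs $c_1$ in the choice of $R$ via \eqref{eq:c1} — and you flag the continuity-set issue behind \eqref{eq:lim_armp1} a bit more explicitly than the paper does.
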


\begin{proof}
First, we can choose a large enough constant $c_2$ such that \eqref{eq:armp} holds for all $d_2/2 \le d_1<d_2$. Thus, we only need to consider the case $d_1\le d_2/2$ below. Furthermore, by the locality result in Proposition~\ref{prop:locality}, it suffices to prove the upper bound for the local arm event $\Ac_{\loc}(d_1,d_2)$, when $d_1\le d_2/2$.

By Theorem~\ref{thm:convergence} and Definition~\ref{def:arm_bls}, for all $R >1$,
\begin{equation} \label{eq:lim_armp1}
\lim_{n\rightarrow\infty}\Pb(\Ac_{\loc}(n,nR))=\Pb( \wt\Ac_{\alpha}(1/(2R), 1/2) ).
\end{equation}
	Consider an arbitrary $\eps>0$. It follows from Proposition~\ref{prop:arm-exponent} that for some $R_0(\alpha, \eps)$,
	\begin{equation} \label{eq:lim_armp2}
\Pb( \wt\Ac_{\alpha}(1/(2R), 1/2) ) \leq R^{-\xi(\alpha)+\eps/2} \quad \text{for all $R \geq R_0$}.
\end{equation}
	In the remainder of the proof, we write $\xi := \xi(\alpha)$, forgetting about the (key) dependence on $\alpha$ for the ease of notation. It follows from \eqref{eq:lim_armp1} and \eqref{eq:lim_armp2} that we can choose an integer $R(\alpha, \eps)$ large enough, and an associated $m(\alpha, \eps)$, such that 
	\begin{equation}\label{eq:c1}
	 \text{for all } j\ge m, \quad c_1 \, \Pb(\Ac_{\loc}((4R)^j,4^jR^{j+1}))\le (4R)^{-\xi+\eps},
	\end{equation}
	where $c_1$ is the constant given in Proposition~\ref{lem:quasi}.
	Let $m_1$ and $m_2$ be the integers such that, respectively,
	\begin{equation} \label{eq:def_m1_m2}	
	(4R)^{m_1-1}< d_1\le (4R)^{m_1} \quad \text{and} \quad (4R)^{m_2}< d_2 \le (4R)^{m_2+1}.
\end{equation}

	If we first assume that $m_1\ge m$, we have
\begin{align*}
	\Pb( \Ac_{\loc}(d_1,d_2) ) 
	&\le \Pb( \Ac_{\loc}((4R)^{m_1},d_2) ) \\
	&\le c_1^{m_2-m_1}\, \bigg( \prod_{j=m_1}^{m_2-1} \Pb( \Ac_{\loc}((4R)^j,4^jR^{j+1}) ) \bigg) \, \Pb( \Ac_{\loc}((4R)^{m_2},d_2) ) \\
	&\le \prod_{j=m_1}^{m_2-1} (4R)^{-\xi+\eps} ,
\end{align*}
where we used the fact $\Ac_{\loc}(d_1,d_2)\subseteq \Ac_{\loc}((4R)^{m_1},d_2)$ (see Remark~\ref{rmk:mono}) in the first inequality, Proposition~\ref{lem:quasi} in the second inequality (applied repeatedly, $(m_2-m_1)$ times), and \eqref{eq:c1} in the third inequality. Hence, from \eqref{eq:def_m1_m2},
$$\Pb( \Ac_{\loc}(d_1,d_2) ) \le c_2\, (d_2/d_1)^{-\xi+\eps},$$
for some constant $c_2(\alpha,\eps)$.

If $m_1< m$ and $m_2> m$, then a similar reasoning as above gives, using $d_1 \leq (4R)^{m_1} < (4R)^m$ (from \eqref{eq:def_m1_m2}),
\[
\Pb( \Ac_{\loc}(d_1,d_2) ) \le \Pb(\Ac_{\loc}((4R)^{m},d_2)) \le \prod_{j=m}^{m_2-1} (4R)^{-\xi+\eps}.
\]
We deduce that for some $c'_2(\alpha,\eps)>0$,
$$\Pb( \Ac_{\loc}(d_1,d_2) ) \le c'_2\, (d_2/d_1)^{-\xi+\eps}.$$

Finally, if $m_1< m$ and $m_2\le m$, it is easy to see that \eqref{eq:armp} holds uniformly in this regime for some large constant $c_2$, e.g., $c_2=(4R)^{(m+1)\xi}$. This completes the proof.
\end{proof}

\subsection{Other two-arm and four-arm exponents}\label{subsec:other_arm2}

In this section, we summarize the corresponding results for the two-arm events and the boundary four-arm events, defined in Section~\ref{subsec:other_arm}. Since the proof strategy is very similar to that of Proposition~\ref{lem:quasi}, we omit the proofs for the sake of brevity.
\begin{proposition}[Quasi-multiplicativity]\label{prop:2n-quasi}
	For any $\alpha\in (0,\half]$, there exists a constant $c_3(\alpha)>0$ such that for all $1 \leq d_1\le d_2/2\le d_3/16$ and $D \supseteq B_{2d_3}$,
	\begin{align}\label{eq:2n-quasi-1}
		\Pb( \Ac^{\boldsymbol{\cdot}}_{D}(d_1,d_3) ) \le c_3\, \Pb( \Ac^{\boldsymbol{\cdot}}_{\loc}(d_1,d_2) )\, \Pb( \Ac^{\boldsymbol{\cdot}}_{D}(4d_2,d_3) ),
	\end{align}
where the superscript $\boldsymbol{\cdot}$ can be either ``$2$'', ``$2,+$'' or ``$+$'', simultaneously.
\end{proposition}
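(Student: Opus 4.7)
The plan is to mirror the proof of Proposition~\ref{lem:quasi}, with adaptations specific to each of the three cases: interior two-arm, boundary two-arm, and boundary four-arm. In all three cases, the architecture is identical: one decomposes the global event $\Ac^{\boldsymbol{\cdot}}_D(d_1, d_3)$ according to which ``very large'' crossing loops, in $\cl_{d_2, 1.8 d_2}$ (inner) and $\cl_{2.2 d_2, 4 d_2}$ (outer), are needed to make the inner and outer annulus crossings happen. A Palm's formula argument analogous to Lemma~\ref{lem:Palm} reduces the analysis to finitely many configurations with a prescribed number of crossing loops. After a Markovian decomposition of each crossing loop into an excursion near the innermost (resp.\ outermost) scale, two random walks moving outward (resp.\ inward), and a return path, one applies a separation lemma to ensure the endpoints at the intermediate scales $\partial B_{1.5 d_2}$ and $\partial B_{2.5 d_2}$ are macroscopically separated. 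One then replaces each return path by one constrained to a short sausage along an arc of the corresponding sphere, splitting each crossing loop into one inner and one outer piece. This simultaneously produces the truncated inner event $\overrightarrow{\Ac}^{\boldsymbol{\cdot}}_D(d_1, d_2) \subseteq \Ac^{\boldsymbol{\cdot}}_{\loc}(d_1, d_2)$ and the truncated outer event $\overleftarrow{\Ac}^{\boldsymbol{\cdot}}_D(4 d_2, d_3) \subseteq \Ac^{\boldsymbol{\cdot}}_D(4 d_2, d_3)$, which are independent by definition, yielding the claimed factorization.

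For the interior two-arm event, the surgery is actually simpler than in the four-arm case, since only one cluster is involved: the relevant geometric constraint is non-disconnection rather than non-intersection of two clusters. At most one crossing loop is required on each side, and the appropriate separation tool is the one-packet, non-disconnection version of Proposition~\ref{prop:sep-disc} together with its reversed analog. This mirrors the $\bar{\Ec}_1$ step of Lemma~\ref{lem:E2} on the inner side and a reversed counterpart on the outer side, bypassing the more delicate $\bar{\Ec}_2$-type arguments of Lemma~\ref{lem:E1}.

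For the boundary two-arm and boundary four-arm events, the overall structure is unchanged, but the surgery must keep all the constructions inside $\Hb$. In particular, the return arcs $l^i_j$ along $\partial B_{1.5 d_2}$ and $\partial B_{2.5 d_2}$ must be chosen in $\Hb$ and macroscopically away from $\partial \Hb$, so that their $(d_2/100)$-sausages remain in $\Hb$. Accordingly, the one- and two-packet separation lemmas must be replaced by their boundary versions, in which the quality also records the distance to $\partial \Hb$; for two packets this amounts to using
$$Q_+^{j,k}(s) := Q^{j,k}(s) \wedge \sup\big\{\delta \geq 0 : \dist(z, \partial \Hb) \geq \delta s \text{ for all } z \in (\Pi^1_s \cup \Pi^2_s) \cap \partial B_s\big\},$$
with analogous modifications for the one-packet and reversed versions. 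These boundary separation statements follow from exactly the same scheme as in Section~\ref{sec:sep}, combined with standard boundary estimates for the simple random walk, and are precisely the ones already invoked in the sketch of proof of Proposition~\ref{prop:2n-locality}.

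The main obstacle will be the combinatorial bookkeeping in the boundary four-arm case: as in the proof of Proposition~\ref{lem:quasi}, one must handle separately the possibility that a single very large loop in $\cl_{d_2, 4 d_2}$ simultaneously plays both the inner and outer crossing roles (case (c) in that proof). This requires a boundary two-sided quality $\overleftrightarrow{Q}_+$ combining forward and reversed separation together with the distance to $\partial \Hb$, and a correspondingly more elaborate surgery that splits such a loop into two loops, one inner and one outer, both staying in $\Hb$. Once this boundary two-sided separation is in place, the argument closes in the same way as for the interior four-arm event, so that the desired bound \eqref{eq:2n-quasi-1} follows by summing over all the (finitely many) subcases, as in the end of the proof of Proposition~\ref{lem:quasi}.
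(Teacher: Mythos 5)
The proposal is correct and takes essentially the same approach as the paper, which in fact omits the proof entirely, stating only that it is ``very similar to that of Proposition~\ref{lem:quasi}.'' Your reconstruction correctly identifies the necessary adaptations---the simpler one-cluster, non-disconnection surgery (Proposition~\ref{prop:sep-disc} and its reversed analog) for the two-arm cases, the boundary quality $Q_+^{j,k}$ already introduced in the sketch of Proposition~\ref{prop:2n-locality}, and the subtle subcase in which a single loop of $\cl_{d_2,4d_2}$ plays both the inner and outer crossing roles---so it is a faithful account of the omitted argument.
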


We recall Proposition~\ref{prop:arm-exponent} and Proposition~\ref{prop:b-arm-exponent}, which provide the interior two-arm exponent $\xi^2$, the boundary two-arm exponent $\xi^{2,+}$ and the boundary four-arm exponent $\xi^+$ in the BLS.
Combined with the above quasi-multiplicativity, following the same proof strategy as that of Theorem~\ref{prop:armp}, we obtain the following upper bound on the arm probabilities.

\begin{theorem} \label{prop:2n-armp}
	For any $\alpha\in (0,\half]$ and $\eps>0$, there exists a constant $c_4(\alpha,\eps)>0$ such that for all $1 \leq d_1<d_2$ and $D \supseteq B_{2d_2}$,
	\begin{align}\label{eq:2n-armp}
		\Pb( \Ac^{\boldsymbol{\cdot}}_{D}(d_1,d_2) ) \le c_4\, (d_2/d_1)^{-\xi^{\boldsymbol{\cdot}}(\alpha)+\eps},
	\end{align}
where the superscript $\boldsymbol{\cdot}$ can be either ``$\,2$'', ``$\,2,+$'' or ``$\,+$'', simultaneously on both sides.
\end{theorem}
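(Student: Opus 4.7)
The plan is to mirror the proof of Theorem~\ref{prop:armp}, with each input replaced by its appropriate counterpart for the three superscripts $\boldsymbol{\cdot}\in\{2,\,2{,}+,\,+\}$: the locality property of Proposition~\ref{prop:2n-locality}, the BLS arm-exponent upper bounds of Propositions~\ref{prop:arm-exponent} and~\ref{prop:b-arm-exponent}, and the quasi-multiplicativity of Proposition~\ref{prop:2n-quasi}, together with the convergence of RWLS cluster frontiers to $\mathrm{CLE}_\kappa$ from Theorem~\ref{thm:convergence}. Note that all of these ingredients have been assembled in the paper precisely so that this template runs through in parallel for all three events.

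In more detail, fix $\eps > 0$. By Proposition~\ref{prop:2n-locality} (together with the choice of a sufficiently large $c_4$ to handle the range $d_2/2 < d_1 < d_2$), it suffices to bound $\Pb(\Ac^{\boldsymbol{\cdot}}_{\loc}(d_1, d_2))$ when $d_1 \le d_2/2$. The convergence of rescaled RWLS cluster frontiers to $\mathrm{CLE}_\kappa$ gives, for every fixed $R>1$,
\begin{equation}
\lim_{n\to\infty} \Pb(\Ac^{\boldsymbol{\cdot}}_{\loc}(n, nR)) = \Pb(\wt\Ac^{\boldsymbol{\cdot}}_{\alpha}(1/(2R), 1/2)),
\end{equation}
and Propositions~\ref{prop:arm-exponent}--\ref{prop:b-arm-exponent} then produce integers $R(\alpha, \eps)$ and $m(\alpha, \eps)$ such that, for all $j \ge m$,
\begin{equation}
c_3\, \Pb(\Ac^{\boldsymbol{\cdot}}_{\loc}((4R)^j, 4^j R^{j+1})) \le (4R)^{-\xi^{\boldsymbol{\cdot}}(\alpha)+\eps},
\end{equation}
where $c_3$ is the constant from Proposition~\ref{prop:2n-quasi}. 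One then defines $m_1, m_2$ by $(4R)^{m_1-1} < d_1 \le (4R)^{m_1}$ and $(4R)^{m_2} < d_2 \le (4R)^{m_2+1}$, and iterates the quasi-multiplicativity bound \eqref{eq:2n-quasi-1} along the geometric scales $(4R)^{m_1}, (4R)^{m_1+1}, \ldots, (4R)^{m_2}$, exactly as in the last step of the proof of Theorem~\ref{prop:armp}. The small-scale regimes ($m_1<m$ or $m_2 \le m$) are again absorbed into $c_4(\alpha, \eps)$, delivering \eqref{eq:2n-armp} simultaneously for the three superscripts.

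The one point requiring genuine attention is in the boundary cases ($\boldsymbol{\cdot}\in\{2{,}+,\, +\}$): there, one needs the \emph{boundary} analogue of Theorem~\ref{thm:convergence}, namely convergence (as $R\to\infty$, after rescaling by $R^{-1}$) of the frontiers of the outermost clusters of $\Lc_{B_R \cap \Hb}$ to the loops of a $\mathrm{CLE}_\kappa$ in $\Bc_1^+$ with respect to $d_H^*$. This is not literally stated in the excerpt, but it follows from the same arguments as in \cite{BCL2016, Lu2019}: by definition, $\Lc_{B_R \cap \Hb}$ retains only the loops entirely contained in $B_R \cap \Hb$, so no new analytic input is required beyond restricting the ambient domain, and the finite-dimensional convergence of macroscopic cluster frontiers transfers verbatim. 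Modulo this half-plane extension, the proof is a direct transcription of that of Theorem~\ref{prop:armp}, and the main obstacle is simply the bookkeeping required to check that the surgery and separation arguments previously used for the interior four-arm event retain their validity near $\partial\Hb$, which has already been absorbed into the statements of Propositions~\ref{prop:2n-locality} and~\ref{prop:2n-quasi}.
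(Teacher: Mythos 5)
Your proposal is correct and follows exactly the route the paper intends: reduce to the local arm event via Proposition~\ref{prop:2n-locality}, use the scaling-limit convergence to transfer the BLS arm-exponent bounds from Propositions~\ref{prop:arm-exponent} and~\ref{prop:b-arm-exponent} to the discrete local events, and iterate the quasi-multiplicativity of Proposition~\ref{prop:2n-quasi} along geometric scales as in the proof of Theorem~\ref{prop:armp}. You also rightly flag that the boundary cases implicitly require a half-plane analogue of Theorem~\ref{thm:convergence}, which the paper glosses over but which does follow by restricting the arguments of \cite{BCL2016,Lu2019} to the upper half-plane domain.
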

Note that we can first obtain upper bounds for the corresponding local arm events as in Theorem~\ref{prop:armp}, and then use the locality result in Proposition~\ref{prop:2n-locality} to obtain upper bounds for the arm events in a general set $D$.

\subsection*{Acknowledgments}

YG and PN are partially supported by a GRF grant from the Research Grants Council of the Hong Kong SAR (project CityU11307320). WQ is partially supported by a GRF grant from the Research Grants Council of the Hong Kong SAR (project CityU11308624).

\bibliographystyle{abbrv}
\bibliography{percolation_GFF1}

\begin{thebibliography}{10}

\bibitem{MR2435854}
V.~Beffara.
\newblock The dimension of the {SLE} curves.
\newblock {\em Ann. Probab.}, 36(4):1421--1452, 2008.

\bibitem{BN2022}
J.~van~den Berg and P.~Nolin.
\newblock A 2{D} forest fire process beyond the critical time.
\newblock {\em arXiv preprint arXiv:2210.05642}, 2022.

\bibitem{BDG2001}
E.~Bolthausen, J.-D. Deuschel, and G.~Giacomin.
\newblock Entropic repulsion and the maximum of the two-dimensional harmonic
  crystal.
\newblock {\em Ann. Probab.}, 29(4):1670--1692, 2001.

\bibitem{BCL2016}
T.~van~de Brug, F.~Camia, and M.~Lis.
\newblock Random walk loop soups and conformal loop ensembles.
\newblock {\em Probab. Theory Related Fields}, 166(1-2):553--584, 2016.

\bibitem{du2022sharp}
H.~Du, Y.~Gao, X.~Li, and Z.~Zhuang.
\newblock Sharp asymptotics for arm probabilities in critical planar
  percolation.
\newblock {\em Comm. Math. Phys.}, 405(182):1--51, 2024.

\bibitem{MR2112128}
B.~Duplantier.
\newblock Conformal fractal geometry \& boundary quantum gravity.
\newblock In {\em Fractal geometry and applications: a jubilee of {B}eno\^\i t
  {M}andelbrot, {P}art 2}, volume~72 of {\em Proc. Sympos. Pure Math.}, pages
  365--482. Amer. Math. Soc., Providence, RI, 2004.

\bibitem{MR2581884}
B.~Duplantier.
\newblock Conformal random geometry.
\newblock In {\em Mathematical statistical physics}, pages 101--217. Elsevier
  B. V., Amsterdam, 2006.

\bibitem{GLQ2022}
Y.~Gao, X.~Li, and W.~Qian.
\newblock Multiple points on the boundaries of {B}rownian loop-soup clusters.
\newblock {\em arXiv preprint arXiv:2205.11468}, 2022.

\bibitem{GNQ2024c}
Y.~Gao, P.~Nolin, and W.~Qian.
\newblock Arm exponents in {CLE} and the {B}rownian loop soup.
\newblock {\em preprint}, 2024.

\bibitem{GNQ2024b}
Y.~Gao, P.~Nolin, and W.~Qian.
\newblock Percolation of discrete {GFF} in dimension two {II}. {C}onnectivity
  properties of two-sided level sets.
\newblock {\em preprint}, 2024.

\bibitem{GPS2013}
C.~Garban, G.~Pete, and O.~Schramm.
\newblock Pivotal, cluster, and interface measures for critical planar
  percolation.
\newblock {\em J. Amer. Math. Soc.}, 26(4):939--1024, 2013.

\bibitem{Ke1987a}
H.~Kesten.
\newblock Scaling relations for {$2$}{D}-percolation.
\newblock {\em Comm. Math. Phys.}, 109(1):109--156, 1987.

\bibitem{KMS2015}
D.~Kiss, I.~Manolescu, and V.~Sidoravicius.
\newblock Planar lattices do not recover from forest fires.
\newblock {\em Ann. Probab.}, 43(6):3216--3238, 2015.

\bibitem{LP2017}
G.~Last and M.~Penrose.
\newblock {\em Lectures on the {P}oisson process}, volume~7 of {\em Institute
  of Mathematical Statistics Textbooks}.
\newblock Cambridge University Press, Cambridge, 2018.

\bibitem{La1991}
G.~F. Lawler.
\newblock {\em Intersections of random walks}.
\newblock Probability and its Applications. Birkh\"{a}user Boston, Inc.,
  Boston, MA, 1991.

\bibitem{La1996}
G.~F. Lawler.
\newblock The dimension of the frontier of planar {B}rownian motion.
\newblock {\em Electron. Comm. Probab.}, 1(5):29--47, 1996.

\bibitem{La1998}
G.~F. Lawler.
\newblock Strict concavity of the intersection exponent for {B}rownian motion
  in two and three dimensions.
\newblock {\em Math. Phys. Electron. J.}, 4(5):1--67, 1998.

\bibitem{La2020}
G.~F. Lawler.
\newblock The infinite two-sided loop-erased random walk.
\newblock {\em Electron. J. Probab.}, 25(87):1--42, 2020.

\bibitem{LP2000}
G.~F. Lawler and E.~E. Puckette.
\newblock The intersection exponent for simple random walk.
\newblock {\em Combin. Probab. Comput.}, 9(5):441--464, 2000.

\bibitem{LSW2001a}
G.~F. Lawler, O.~Schramm, and W.~Werner.
\newblock Values of {B}rownian intersection exponents. {I}. {H}alf-plane
  exponents.
\newblock {\em Acta Math.}, 187(2):237--273, 2001.

\bibitem{LSW2001b}
G.~F. Lawler, O.~Schramm, and W.~Werner.
\newblock Values of {B}rownian intersection exponents. {II}. {P}lane exponents.
\newblock {\em Acta Math.}, 187(2):275--308, 2001.

\bibitem{LSW2002a}
G.~F. Lawler, O.~Schramm, and W.~Werner.
\newblock Analyticity of intersection exponents for planar {B}rownian motion.
\newblock {\em Acta Math.}, 189(2):179--201, 2002.

\bibitem{MR1901950}
G.~F. Lawler, O.~Schramm, and W.~Werner.
\newblock Sharp estimates for {B}rownian non-intersection probabilities.
\newblock In {\em In and out of equilibrium ({M}ambucaba, 2000)}, volume~51 of
  {\em Progr. Probab.}, pages 113--131. Birkh\"{a}user Boston, Boston, MA,
  2002.

\bibitem{MR1899232}
G.~F. Lawler, O.~Schramm, and W.~Werner.
\newblock Values of {B}rownian intersection exponents. {III}. {T}wo-sided
  exponents.
\newblock {\em Ann. Inst. H. Poincar\'e Probab. Statist.}, 38(1):109--123,
  2002.

\bibitem{LTF2007}
G.~F. Lawler and J.~A. Trujillo~Ferreras.
\newblock Random walk loop soup.
\newblock {\em Trans. Amer. Math. Soc.}, 359(2):767--787, 2007.

\bibitem{LV2012}
G.~F. Lawler and B.~Vermesi.
\newblock Fast convergence to an invariant measure for non-intersecting
  3-dimensional {B}rownian paths.
\newblock {\em ALEA Lat. Am. J. Probab. Math. Stat.}, 9(2):717--738, 2012.

\bibitem{LW2004}
G.~F. Lawler and W.~Werner.
\newblock The {B}rownian loop soup.
\newblock {\em Probab. Theory Related Fields}, 128(4):565--588, 2004.

\bibitem{LeJa2010}
Y.~Le~Jan.
\newblock Markov loops and renormalization.
\newblock {\em Ann. Probab.}, 38(3):1280--1319, 2010.

\bibitem{MR2815763}
Y.~Le~Jan.
\newblock {\em Markov paths, loops and fields}, volume 2026 of {\em Lecture
  Notes in Mathematics}.
\newblock Springer, Heidelberg, 2011.
\newblock Lectures from the 38th Probability Summer School held in Saint-Flour,
  2008, {\'E}cole d'{\'E}t{\'e} de Probabilit{\'e}s de Saint-Flour.
  [Saint-Flour Probability Summer School].

\bibitem{Lu2019}
T.~Lupu.
\newblock Convergence of the two-dimensional random walk loop-soup clusters to
  {CLE}.
\newblock {\em J. Eur. Math. Soc. (JEMS)}, 21(4):1201--1227, 2019.

\bibitem{No2008}
P.~Nolin.
\newblock Near-critical percolation in two dimensions.
\newblock {\em Electron. J. Probab.}, 13(55):1562--1623, 2008.

\bibitem{MR2153402}
S.~Rohde and O.~Schramm.
\newblock Basic properties of {SLE}.
\newblock {\em Ann. of Math.}, 161(2):883--924, 2005.

\bibitem{SS2010}
O.~Schramm and J.~E. Steif.
\newblock Quantitative noise sensitivity and exceptional times for percolation.
\newblock {\em Ann. of Math. (2)}, 171(2):619--672, 2010.

\bibitem{MR2494457}
S.~Sheffield.
\newblock Exploration trees and conformal loop ensembles.
\newblock {\em Duke Math. J.}, 147(1):79--129, 2009.

\bibitem{SW2012}
S.~Sheffield and W.~Werner.
\newblock Conformal loop ensembles: the {M}arkovian characterization and the
  loop-soup construction.
\newblock {\em Ann. of Math. (2)}, 176(3):1827--1917, 2012.

\bibitem{SW2001}
S.~Smirnov and W.~Werner.
\newblock Critical exponents for two-dimensional percolation.
\newblock {\em Math. Res. Lett.}, 8(5-6):729--744, 2001.

\bibitem{MR3846840}
H.~Wu.
\newblock Alternating arm exponents for the critical planar {I}sing model.
\newblock {\em Ann. Probab.}, 46(5):2863--2907, 2018.

\end{thebibliography}

\end{document}